\providecommand{\U}[1]{\protect \rule{.1in}{.1in}}
\newtheorem{theorem}{Theorem}[section]
\newtheorem{corollary}[theorem]{Corollary}
\newtheorem{definition}[theorem]{Definition}
\newtheorem{lemma}[theorem]{Lemma}
\newtheorem{proposition}[theorem]{Proposition}
\newtheorem{remark}[theorem]{Remark}
\newenvironment{proof}[1][Proof]{\noindent \textbf{#1.} }{\  \rule{0.5em}{0.5em}}
\numberwithin{equation}{section}
\begin{document}

\title{On the strong Markov property for stochastic differential equations driven by $G$-Brownian motion}
\author{Mingshang Hu \thanks{Zhongtai Securities Institute for Financial Studies, Shandong University, humingshang@sdu.edu.cn. Research supported by NSF (No. 11671231) and Young Scholars Program of Shandong University (No. 2016WLJH10)} \and Xiaojun Ji \thanks{School of Mathematics, Shandong University, xiaojunji@163.com} \and Guomin Liu \thanks{Zhongtai Securities Institute for Financial Studies, Shandong University, gmliusdu@163.com (Corresponding author). Research supported by NSF (11601282) and Shandong Province NSF (No. ZR2016AQ10).
Hu, Ji and Liu's  research
was partially supported by NSF (No. 11526205 and
No. 11626247) and the 111 Project (No. B12023). }}
\date{}
\maketitle

\textbf{Abstract}. The objective of this paper is to study the strong Markov property for the
stochastic differential equations driven by $G$-Brownian motion ($G$-SDEs for short).  We first extend the deterministic-time conditional $G$-expectation to optional times. The strong Markov property for $G$-SDEs is then obtained by Kolmogorov's criterion for tightness. In particular,
for any given optional time $\tau$ and $G$-Brownian motion $B$, the reflection
principle for $B$ holds and $(B_{\tau+t}-B_{\tau})_{t\geq0}$ is still a
$G$-Brownian motion.

{\textbf{Key words:} } $G$-expectation, Strong Markov property, Stochastic differential equations, $G$-Brownian motion, Reflection principle.

\textbf{AMS 2010 subject classifications:} 60H10, 60H30
\addcontentsline{toc}{section}{\hspace*{1.8em}Abstract}

\section{Introduction}
The  strong Markov property for stochastic differential equations (SDEs)  is  one of the most fundamental results in the theory of classical stochastic processes. It
claims that for any given optional time $\tau$ we have
\begin{equation}\label{strong markov for classical sde}
{{E}}_P[\varphi(X_{\tau+t_1}^x,\cdots,X_{\tau+t_m}^x)|\mathcal{F}_{{\tau+}}]
={{E}}_P[\varphi(X_{t_1}^y,\cdots,X_{t_m}^y)]_{y=X_\tau^x}
\end{equation}
for SDEs $(X^x_t)_{t\geq 0}$ with initial value $x$. Here ${{E}}_P$ and ${{E}}_P[\cdot|\mathcal{F}_{{\tau+}}]$ stands for the expectation and conditional expectation, respectively, related to a probability measure $P$. It was obtained by K. It\^{o} in his pioneering work \cite{It0}, and since then,  it has been widely applied to
 stochastic control,  mathematical finance and probabilistic method for
partial differential equations (PDEs); see,  e.g., \cite{BL,Fr,Ok}.

Recently, motivated by probabilistic interpretations for fully nonlinear PDEs and financial problems with model uncertainty,  Peng \cite{P3,P4,P7}  systematically introduced the notion of nonlinear $G$-expectation $\hat{\mathbb{E}}[\cdot]$   by stochastic control and
PDE methods. Under the $G$-expectation framework, a new kind of Brownian motion, called $G$-Brownian motion, was constructed. The corresponding stochastic calculus
of It\^{o}'s type was also established.  Furthermore, by  the contracting mapping theorem, Peng  obtained the existence and
uniqueness of the solution of $G$-SDEs:
\begin{equation}%
\begin{cases}
dX_{t}^{x}=b(X_{t}^{x})dt+\sum_{i,j=1}^{d}h_{ij}(X_{t}^{x})d\langle
B^{i},B^{j}\rangle_{t}+\sum_{j=1}^{d}\sigma_{j}(X_{t}^{x})dB_{t}%
^{j},\  \  \  \ t\in \lbrack0,T],\\
X_{0}^{x}=x,
\end{cases}
\label{GSDE in intro}%
\end{equation}
where  $B=(B^{1},\ldots,B^{d})$ is $G$-Brownian motion and  $\langle B^{i},B^{j}\rangle$ is its cross-variation process, which is  not deterministic unlike the classical case.

A very interesting problem is whether, for $G$-SDEs, the following
generalized  strong Markov property is true:
\begin{equation}
\hat{\mathbb{E}}_{\tau+}[\varphi(X_{\tau+t_{1}}^{x},\cdots,X_{\tau+t_{m}}%
^{x})]=\hat{\mathbb{E}}[\varphi(X_{t_{1}}^{y},\cdots,X_{t_{m}}^{y}%
)]_{y=X_{\tau}^{x}}.\label{Strongmar}%
\end{equation}
In this paper, we first  construct the conditional $G$-expectation $\hat{\mathbb{E}}_{\tau+}[\cdot]$ for any given optional time $\tau$ by extending  the definition of  conditional $G$-expectation $\hat{\mathbb{E}}_t[\cdot]$ to   optional times.  The main tools in this construction are  a  universal continuity estimate   for
$\hat{\mathbb{E}}_{t}[\cdot]$ (see Lemma \ref{Et continuity lemma}) and a new
kind of consistency property (see Proposition \ref{main proposition}). We also show that $\hat{\mathbb{E}}_{\tau+}[\cdot]$ can preserve most useful properties of classical conditional expectations except the linearity.
Based on the conditional expectation $\hat{\mathbb{E}}_{\tau+}[\cdot]$, we then further obtain the strong Markov property (\ref{Strongmar}) for
$G$-SDEs by adapting the standard discretization method. In contrast to the linear case, the main difficulty is that  in the nonlinear expectation context the dominated convergence theorem does not hold in general. We tackle this problem  by using
Kolmogorov's criterion for tightness and the properties of $\hat{\mathbb{E}%
}_{\tau+}[\cdot]$.
In particular, for  $G$-Brownian motion $B$, we obtain
that the reflection principle for $B$  holds and $(B_{\tau+t}-B_{\tau
})_{t\geq0}$ is still a $G$-Brownian motion. Finally, with the help of the strong Markov
property, the level set of $G$-Brownian motion is also investigated.

We note that problem of constructing  $\hat{\mathbb{E}}_{\tau+}[\cdot]$ was first considered in   \cite{NH}, where $\hat{\mathbb{E}}_{\tau+}[\cdot]$ is defined for all upper semianalytic (more general than Borel-measurable) functions by the analytic sets theory. But the corresponding conditional expectation is also upper semianalytic and when the usual Borel-measurablity can be attained remains unknown. In our paper, by a   completely different approach, our construction focuses on a large class of Borel functions to obtain more regularity properties for $\hat{\mathbb{E}}_{\tau+}[\cdot]$, among which is its measurability  with respect to $\mathcal{F}_{\tau+}$. Moreover, some of these  properties are important for the derivation of strong Markov property for $G$-SDEs.

This paper is organized as follows. In Section 2, we recall some basic notions
of $G$-expectation, $G$-Brownian motion and $G$-SDEs. Section 3 is devoted to
the construction of the conditional $G$-expectation $\hat{\mathbb{E}}_{\tau+}[\cdot]$
and the investigation of its properties. Then, in Section 4, we study the strong
Markov property for $G$-SDEs. Finally, in Section 5, we use the strong Markov
property to prove that the level set of $G$-Brownian motion has no isolated point.

\section{Preliminaries}
In this section, we review some basic notions and results of $G$-expectation. More relevant details can be found in \cite{GJ,Linq,Liny,LW,P3,P4,P7,P9}
\subsection{$G$-expectation space}
Let $\Omega$ be a given nonempty set and $\mathcal{H}$ be a linear space of
real-valued functions on $\Omega$ such that if $X_{1}$,$\dots$,$X_{d}%
\in \mathcal{H}$, then $\varphi(X_{1},X_{2},\dots,X_{d})\in \mathcal{H}$ for
each $\varphi \in C_{b.Lip}(\mathbb{R}^{d})$, where $C_{b.Lip}(\mathbb{R}^{d})$ is the space of bounded, Lipschitz functions on $\mathbb{R}^{d}$.
$\mathcal{H}$ is considered as the space of random variables.
\begin{definition}
A sublinear expectation $\hat{\mathbb{E}}$ on $\mathcal{H}$ is a functional
$\mathbb{\hat{E}}:\mathcal{H}\rightarrow \mathbb{R}$ satisfying the following
properties: for each $X,Y\in \mathcal{H}$,
\begin{description}
\item[{\rm (i)}] {Monotonicity:}\quad$\mathbb{\hat{E}}[X]\geq \mathbb{\hat{E}%
}[Y]\  \  \text{if}\ X\geq Y$;
\item[{\rm (ii)}] {Constant preserving:}\quad$\mathbb{\hat{E}}%
[c]=c\  \  \  \text{for}\ c\in \mathbb{R}$;
\item[{\rm (iii)}] {Sub-additivity:}\quad$\mathbb{\hat{E}}[X+Y]\leq
\mathbb{\hat{E}}[X]+\mathbb{\hat{E}}[Y]$;
\item[{\rm (iv)}] {Positive homogeneity:}\quad$\mathbb{\hat{E}}[\lambda
X]=\lambda \mathbb{\hat{E}}[X]\  \  \  \text{for}\  \lambda \geq0$.
\end{description}
The triple $(\Omega,\mathcal{H},\mathbb{\hat{E}})$ is called a sublinear
expectation space.
\end{definition}
\begin{definition}
Two $d$-dimensional random vectors $X_{1}$ and $X_{2}$ defined respectively on
sublinear expectation spaces $(\Omega_{1},\mathcal{H}_{1},\mathbb{\hat{E}}%
_{1})$ and $(\Omega_{2},\mathcal{H}_{2},\mathbb{\hat{E}}_{2})$ are
called identically distributed, denoted by $X_{1}\overset{d}{=}X_{2}$, if%
\[
\mathbb{\hat{E}}_{1}[\varphi(X_{1})]=\mathbb{\hat{E}}_{2}[\varphi
(X_{2})], \ \ \ \ \text{for each} \ \varphi \in C_{b.Lip}(\mathbb{R}^{d}).
\]
\end{definition}

\begin{definition}
On the sublinear expectation space $(\Omega,\mathcal{H},\hat{\mathbb{E}})$, an $n$-dimensional random vector $Y$ is said to be independent from a $d$-dimensional random vector $X$, denoted by $Y\bot X$, if
$$\hat{\mathbb{E}}[\varphi(X,Y)]=\hat{\mathbb{E}}[\hat{\mathbb{E}}[\varphi(x,Y)]_{x=X}], \ \ \ \ \text{for each}\ \varphi\in C_{b.Lip}(\mathbb{R}^{d+n}).$$
\end{definition}
A $d$-dimensional random vector $\bar{X}$ is said to be an independent copy of  $X$ if $\bar{X}\overset{d}{=} X$ and $\bar{X}\bot X$.
\begin{definition}\textbf{($G$-normal distribution)}
A $d$-dimensional random vector $X$ defined on $(\Omega,\mathcal{H},\hat{\mathbb{E}})$ is called $G$-normally distributed if for any $a,b\geq0$,
$$aX+b\bar{X}\overset{d}{=}\sqrt{a^2+b^2}X,$$
where $\bar{X}$ is an independent copy of  $X$. Here the letter $G$ denotes the function $G(A):=\frac12 \hat{\mathbb{E}}[\langle AX,X\rangle]$ for $A\in \mathbb{S}(d)$, where $\mathbb{S}(d)$ denotes the
space of all $d \times d$ symmetric matrices.
\end{definition}

In the rest of this paper, we denote by $\Omega:=C([0,\infty); \mathbb{R}^d)$ the space of all $\mathbb{R}^d$-valued continuous paths $(\omega_t)_{t\geq0}$, equipped with the distance
$$\rho_d(\omega^1,\omega^2):=\sum_{i=1}^\infty\frac1{2^i}[(||\omega^1-\omega^2||_{C^d[0,i]}\wedge 1)],$$
where $||\omega^1-\omega^2||_{C^d[0,T]}:=\max_{t\in[0,T]}|\omega_t^1-\omega_t^2|$ for $T>0$.
Given any $T>0$, we also define $\Omega_T:=\{(\omega_{t\wedge T})_{t\geq 0}:\omega\in\Omega\}$.

Let $B_t(\omega):=\omega_t$ for $\omega\in \Omega$, $t\geq 0$ be the canonical process. We set
$$L_{ip}(\Omega_T):=\{\varphi(B_{t_1},B_{t_2}-B_{t_1}\cdots,B_{t_n}-B_{t_{n-1}}):n\in\mathbb{N},0\leq t_1<t_2\cdots<t_n\leq T,\varphi\in C_{b.Lip}(\mathbb{R}^{d\times n})\}$$
as well as
\begin{equation}\label{9237257894334}
L_{ip}(\Omega):=\bigcup_{m=1}^ \infty L_{ip}(\Omega_m).
\end{equation}

Let $G:\mathbb{S}(d)\rightarrow\mathbb{R}$ be a given monotonic and sublinear function. The $G$-expectation  on $L_{ip}(\Omega)$ is defined by
$$
\hat{\mathbb{E}}[X]:=\widetilde{\mathbb{E}}[\varphi(\sqrt{t_1}\xi_1,\sqrt{t_2-t_1}\xi_2,\cdots,\sqrt{t_n-t_{n-1}}\xi_n)],
$$
for all $X=\varphi(B_{t_1}, B_{t_2}-B_{t_1},\cdots,B_{t_n}-B_{t_{n-1}}), 0\leq t_1<\cdots<t_n<\infty,$
where $\{\xi_i\}_{i=1}^n$ are $d$-dimensional identically distributed random vectors on a sublinear expectation space $(\widetilde{\Omega},\widetilde{\mathcal{H}},\widetilde{\mathbb{E}})$ such that $\xi_i$ is $G$-normal distributed and $\xi_{i+1}$ is independent from $(\xi_1,\cdots,\xi_i)$ for $i=1,\cdots,n-1.$ Then under $\hat{\mathbb{E}}$, the canonical process $B_t=(B_t^1,\cdots,B_t^d)$ is  a $d$-dimensional $G$-Brownian motion in the sense that:
	\begin{itemize}
		\item [{\rm(i)}] $B_0=0$;
		\item [{\rm(ii)}] For each $t,s\geq 0$, the increments $B_{t+s}-B_t$ is independent from $(B_{t_1},\cdots,B_{t_n})$ for each $n\in \mathbb{N}$ and $0\leq t_1\leq \cdots\leq t_n\leq t$;
		\item [{\rm(iii)}]  $B_{t+s}-B_t\overset{d}{=}\sqrt{s}\xi$ for $t,s\geq 0$, where $\xi$ is $G$-normal distributed.
	\end{itemize}

\begin{remark}
	\upshape{
 {\rm(i)}	It is easy to check  that $G$-Brownian motion  is symmetric, i.e., $(-B_t)_{t\geq 0}$ is also a $G$-Brownian motion.

{\rm(ii)} If specially $G(A)=\frac{1}{2}\text{tr}(A)$, then the $G$-expectation is a linear expectation which corresponds to the Wiener measure $P$, i.e., $\hat{\mathbb{E}}=E_{P}$.}
\end{remark}

The conditional $G$-expectation for $X=\varphi(B_{t_1}, B_{t_2}-B_{t_1},\cdots,B_{t_n}-B_{t_{n-1}})$ at $t=t_j$, $1\leq j\leq n$ is defined by
$$
\hat{\mathbb{E}}_{t_j}[X]:=\phi(B_{t_1}, B_{t_2}-B_{t_1},\cdots,B_{t_j}-B_{t_{j-1}}),
$$
where $\phi(x_1, \cdots,x_j)=\hat{\mathbb{E}}[\varphi(x_1, \cdots,x_j, B_{t_{j+1}}-B_{t_j},\cdots,B_{t_n}-B_{t_{n-1}})]$.

For each $p\geq1$, we denote by $L_G^p(\Omega_t)$ ($L_G^p(\Omega)$ resp.) the completion of $L_{ip}(\Omega_t)$ ($L_{ip}(\Omega)$ resp.) under the norm $||X||_p:=(\hat{\mathbb{E}}[|X|^p])^{1/p}$.
The conditional $G$-expectation $\hat{\mathbb{E}}_t[\cdot]$ can be extended continuously to $L_G^1(\Omega)$ and satisfies the following proposition.
\begin{proposition}\label{condition expectation property}
For $X,Y\in L_G^1(\Omega)$, $t,s\geq 0$,
\begin{description}
  \item [{\rm (i)}] $\hat{\mathbb{E}}_t[X]\leq \hat{\mathbb{E}}_t[Y] \ \text{for} \ X\leq Y$;
  \item [{\rm (ii)}] $\hat{\mathbb{E}}_t[\eta]=\eta \ \text{for} \ \eta\in L_G^1(\Omega_t)$;
  \item  [{\rm (iii)}] $\hat{\mathbb{E}}_t[X+Y]\leq \hat{\mathbb{E}}_t[X]+\hat{\mathbb{E}}_t[Y]$;
  \item [{\rm (iv)}] If$\ \eta\in L_G^1(\Omega_t)$ and is bounded, then $ \hat{\mathbb{E}}_t[\eta X]=\eta^+\hat{\mathbb{E}}_t[X]+\eta^-\hat{\mathbb{E}}_t[-X]$;
 \item [{\rm (v)}] $ \hat{\mathbb{E}}_t[\varphi(\eta,X)]=\hat{\mathbb{E}}_t[\varphi(p,X)]_{p=\eta}$, for each $\ \eta\in L_G^1(\Omega_t;\mathbb{R}^d)$, $X\in L_G^1(\Omega;\mathbb{R}^n)$ and  $\varphi\in C_{b.Lip}(\mathbb{R}^{d+n})$;
 \item [{\rm (vi)}] $\hat{\mathbb{E}}_s[\hat{\mathbb{E}}_t[X]]=\hat{\mathbb{E}}_{t\wedge s}[X]$.
\end{description}
\end{proposition}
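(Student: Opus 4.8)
The plan is to follow the standard two–step scheme: first verify (i)--(vi) on the dense subspace $L_{ip}(\Omega)$ by a direct computation from the definition of $\hat{\mathbb{E}}_{t}[\cdot]$, and then propagate each statement to $L_G^1(\Omega)$ by density together with a contraction estimate for $\hat{\mathbb{E}}_t[\cdot]$. Throughout the first step one uses the elementary observation that, by inserting finitely many extra deterministic times into the partition, any $X,Y\in L_{ip}(\Omega)$ may be represented simultaneously as $X=\varphi(B_{t_1},B_{t_2}-B_{t_1},\ldots,B_{t_n}-B_{t_{n-1}})$ and $Y=\psi(B_{t_1},\ldots,B_{t_n}-B_{t_{n-1}})$ with $t=t_j$ for some $j$; then $\hat{\mathbb{E}}_t[X]=\Phi_X(B_{t_1},\ldots,B_{t_j}-B_{t_{j-1}})$, where $\Phi_X(x_1,\ldots,x_j)$ is obtained by averaging out the ``future'' increments $\sqrt{t_{j+1}-t_j}\,\xi_{j+1},\ldots$ against the finite-dimensional sublinear expectation $\widetilde{\mathbb{E}}$. (One first has to check that this refinement is compatible with the explicit definition, so that $\hat{\mathbb{E}}_t[\cdot]$ does not depend on the chosen representation.)

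Granting this normal form, (i) and (iii) follow because the pointwise inequalities $\Phi_X\le\Phi_Y$ and $\Phi_{X+Y}\le\Phi_X+\Phi_Y$ are inherited from the monotonicity and sub-additivity of $\widetilde{\mathbb{E}}$; (ii) holds because an $\eta\in L_{ip}(\Omega_t)$ is, after refinement, a function of $B_{t_1},\ldots,B_{t_j}-B_{t_{j-1}}$ only, so averaging out the future changes nothing; (iv) follows because once $x_1,\ldots,x_j$ are frozen the factor coming from $\eta$ is a scalar constant $q$, and $\widetilde{\mathbb{E}}[qZ]=q^{+}\widetilde{\mathbb{E}}[Z]+q^{-}\widetilde{\mathbb{E}}[-Z]$ (positive homogeneity, splitting into $q\ge 0$ and $q<0$); (v) is exactly the independence built into $\widetilde{\mathbb{E}}$ (equivalently, independence of the post-$t$ increments from the pre-$t$ ones), applied after freezing the argument of $\eta$. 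Property (vi) is the delicate one: by inserting both $s\wedge t$ and $s\vee t$ into the partition one reduces to a single refined partition, and then $\hat{\mathbb{E}}_s\circ\hat{\mathbb{E}}_t=\hat{\mathbb{E}}_{s\wedge t}$ becomes the associativity of the iterated $\widetilde{\mathbb{E}}$-averages, which is guaranteed by $\xi_{i+1}\bot(\xi_1,\ldots,\xi_i)$ and constant preservation.

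The passage to $L_G^1(\Omega)$ rests on the contraction estimate $\hat{\mathbb{E}}\big[|\hat{\mathbb{E}}_t[X]-\hat{\mathbb{E}}_t[Y]|\big]\le\hat{\mathbb{E}}[|X-Y|]$ for $X,Y\in L_{ip}(\Omega)$: from (i) and (iii) one gets $|\hat{\mathbb{E}}_t[X]-\hat{\mathbb{E}}_t[Y]|\le\hat{\mathbb{E}}_t[|X-Y|]$, and then applying $\hat{\mathbb{E}}$ and using $\hat{\mathbb{E}}[\hat{\mathbb{E}}_t[\,\cdot\,]]=\hat{\mathbb{E}}[\,\cdot\,]$ (immediate from the definition) gives the claim. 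In particular $\hat{\mathbb{E}}_t[\cdot]$ extends to a contraction from $L_G^1(\Omega)$ to $L_G^1(\Omega_t)$. Taking $X_k,Y_k\in L_{ip}(\Omega)$ with $X_k\to X$, $Y_k\to Y$ in $L^1_G$, the statements (i), (ii), (iii), (vi) pass to the limit at once. For (iv) one uses $|\eta X_k-\eta X|\le\|\eta\|_\infty|X_k-X|$ together with the continuity of $z\mapsto z^{\pm}$. For (v) one needs the uniform-in-$p$ bound $|\hat{\mathbb{E}}_t[\varphi(p,X_k)]-\hat{\mathbb{E}}_t[\varphi(p,X)]|\le L_\varphi\,\hat{\mathbb{E}}_t[|X_k-X|]$ and the fact that $p\mapsto\hat{\mathbb{E}}_t[\varphi(p,X)]$ is a bounded Lipschitz function, which together with $\eta_k\to\eta$ in $L^1_G(\Omega_t)$ yield $\hat{\mathbb{E}}_t[\varphi(p,X_k)]_{p=\eta_k}\to\hat{\mathbb{E}}_t[\varphi(p,X)]_{p=\eta}$ in $L^1_G$.

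The main obstacle is twofold. On $L_{ip}(\Omega)$ it is the bookkeeping behind (vi): one must verify that refining the partition is consistent with the explicit definition and then that the multi-fold nesting of $\widetilde{\mathbb{E}}$ collapses correctly, which is precisely where the structure of $G$-normality and of the independence relations among the $\xi_i$ is used. In the limiting step the only non-routine point is the uniform (in the frozen parameter $p$) control needed to pass to the limit in (iv) and (v), since in the sublinear setting there is no dominated convergence theorem; the Lipschitz structure of $\varphi$ and the contraction property of $\hat{\mathbb{E}}_t[\cdot]$ are exactly what make this go through. We note that this proposition is essentially contained in Peng's monograph, and the argument above follows that line.
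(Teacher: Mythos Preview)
Your proposal is correct and follows the standard line of argument from Peng's monograph, which you yourself acknowledge at the end. Note, however, that the paper does not give its own proof of this proposition: it appears in the Preliminaries section as a background result quoted from the $G$-expectation literature (in particular \cite{P3,P4,P7}), so there is nothing to compare against beyond observing that your sketch is the expected one.
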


We define
$$\mathcal{F}_t:=\sigma(B_s:s\leq t) \ \ \ \ \text{and} \ \ \ \ \mathcal{F}:=\bigvee_{t\geq 0}\mathcal{F}_t$$
as well as
$$
L^0(\mathcal{F}_t):=\{X:X\ \text{is} \ \mathcal{F}_t\text{-measurable}\} \ \ \ \ \text{and} \ \ \ \ L^0(\mathcal{F}):=\{X:X\ \text{is} \ \mathcal{F}\text{-measurable}\}.
$$
The following is the representation theorem.
\begin{theorem}(\cite{DHP,HP})\label{DHP representation}
There exists a family  $\mathcal{P}$ of weakly compact  probability measures on $(\Omega,\mathcal{F})$ such that
$$\hat{\mathbb{E}}[X]=\sup_{P\in\mathcal{P}}E_P[X], \qquad \text{for  each}\ X\in L_G^1(\Omega).$$
$\mathcal{P}$ is called a set that represents $\hat{\mathbb{E}}$.
\end{theorem}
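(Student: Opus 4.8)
The plan is to represent the sublinear functional $\hat{\mathbb{E}}$ as an upper expectation in two stages: a Hahn--Banach/Daniell--Stone step producing a family of probability measures on $(\Omega,\mathcal{F})$, and a Prokhorov step making this family weakly compact. First I would observe that $L_{ip}(\Omega)$ is a Stone vector lattice containing the constants and that $\hat{\mathbb{E}}$ is sublinear on it; by the Hahn--Banach theorem, for every $X_{0}\in L_{ip}(\Omega)$ there is a linear functional $\ell\le\hat{\mathbb{E}}$ with $\ell(X_{0})=\hat{\mathbb{E}}[X_{0}]$. Collecting all linear functionals dominated by $\hat{\mathbb{E}}$ into a set $\mathcal{Q}$ gives $\hat{\mathbb{E}}[X]=\sup_{\ell\in\mathcal{Q}}\ell(X)$ at once, and from $\hat{\mathbb{E}}[c]=c$ together with monotonicity each $\ell\in\mathcal{Q}$ satisfies $\ell(1)=1$ and $\ell(X)\ge 0$ for $X\ge 0$, i.e.\ $\ell$ is a normalized positive linear functional.

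The next, and \emph{main}, step is to upgrade each $\ell\in\mathcal{Q}$ to integration against a countably additive Borel probability measure $P_{\ell}$ on $\Omega$. By the Daniell--Stone theorem this amounts to \emph{continuity from above at $0$}: $X_{n}\in L_{ip}(\Omega)$ with $X_{n}\downarrow 0$ pointwise $\Rightarrow \ell(X_{n})\downarrow 0$; and since $0\le\ell(X_{n})\le\hat{\mathbb{E}}[X_{n}]$, it suffices to prove the regularity property $\hat{\mathbb{E}}[X_{n}]\downarrow 0$ for such $X_{n}$. Since $\Omega_{T}$ is not compact this is not Dini's theorem, and here the structure of $G$-Brownian motion is indispensable: starting from the moment estimate $\hat{\mathbb{E}}[|B_{t}-B_{s}|^{2m}]=c_{m}|t-s|^{m}$ (immediate from $B_{t}-B_{s}\overset{d}{=}\sqrt{t-s}\,\xi$ with $\xi$ $G$-normal) and a Garsia--Rodemich--Rumsey/Kolmogorov-type continuity argument, one builds for each $\varepsilon>0$ a compact set $K_{\varepsilon}\subset\Omega_{T}$ of uniformly bounded, equi-H\"older paths whose complement has small $\hat{\mathbb{E}}$-capacity. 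Splitting $X_{n}=X_{n}\mathbf{1}_{K_{\varepsilon}}+X_{n}\mathbf{1}_{K_{\varepsilon}^{c}}$, Dini's theorem forces $\sup_{K_{\varepsilon}}X_{n}\to 0$ while the second term is bounded by $||X_{1}||_{\infty}$ times the small capacity of $K_{\varepsilon}^{c}$; letting $n\to\infty$ then $\varepsilon\to 0$ yields $\hat{\mathbb{E}}[X_{n}]\to 0$. Turning the heuristic ``$\mathbf{1}_{K_{\varepsilon}^{c}}$'' into a rigorous outer-capacity estimate realized within $L_{ip}(\Omega)$ is the technical heart of the argument and the place I expect the real work to lie.

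With $\mathcal{P}_{0}:=\{P_{\ell}:\ell\in\mathcal{Q}\}$ in hand, monotone approximation transfers the moment bounds to each $P\in\mathcal{P}_{0}$, so $\sup_{P\in\mathcal{P}_{0}}E_{P}[\,||B||_{C^{d}[0,T]}^{2m}\,]<\infty$ and the continuity estimates above show that $\mathcal{P}_{0}$ is tight on $\Omega$ (first on each $\Omega_{T}$, then globally, using that $\rho_{d}$ metrizes uniform convergence on compacts); Prokhorov's theorem then gives relative weak compactness, and I would take $\mathcal{P}$ to be the closure of $\mathcal{P}_{0}$ in the topology of weak convergence, which is weakly compact. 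For $X\in L_{ip}(\Omega)$, $X$ is bounded and continuous, so $E_{P}[X]\le\hat{\mathbb{E}}[X]$ survives passage to weak limits, whence $\hat{\mathbb{E}}[X]=\sup_{P\in\mathcal{P}_{0}}E_{P}[X]=\sup_{P\in\mathcal{P}}E_{P}[X]$. Finally, for general $X\in L_{G}^{1}(\Omega)$ I would approximate by $X_{n}\in L_{ip}(\Omega)$ with $||X-X_{n}||_{1}\to 0$ and pass to the limit using $|\hat{\mathbb{E}}[X]-\hat{\mathbb{E}}[X_{n}]|\le\hat{\mathbb{E}}[|X-X_{n}|]$ and $|\sup_{P}E_{P}[X]-\sup_{P}E_{P}[X_{n}]|\le\sup_{P\in\mathcal{P}}E_{P}[|X-X_{n}|]\le\hat{\mathbb{E}}[|X-X_{n}|]$. (An alternative, more constructive route avoids the abstract Daniell--Stone step by exhibiting $\mathcal{P}$ directly as the laws of stochastic integrals $\int_{0}^{\cdot}h_{s}\,dW_{s}$ with $h$ ranging over a suitable $G$-determined set of processes, but the tightness/moment estimates of the second paragraph are still required.)
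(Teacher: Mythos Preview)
The paper does not prove this theorem: it is stated with the citation \cite{DHP,HP} and used as a black box, with no argument given. So there is no ``paper's own proof'' to compare against.

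That said, your sketch is essentially the strategy of the cited references. Denis--Hu--Peng \cite{DHP} obtain the representing family via the Hahn--Banach/Daniell--Stone route you describe (sublinear $\hat{\mathbb{E}}$ on a lattice of bounded continuous functions, dominated linear functionals, continuity from above at $0$ to get countably additive measures), and the tightness/weak compactness comes exactly from the moment bound $\hat{\mathbb{E}}[|B_t-B_s|^{4}]\le C|t-s|^{2}$ together with a Kolmogorov-type argument, followed by Prokhorov. Hu--Peng \cite{HP} give the alternative ``constructive'' description you allude to at the end, realizing $\mathcal{P}$ as laws of stochastic integrals $\int_0^{\cdot}\theta_s\,dW_s$ with $\theta$ valued in the set $\Gamma$ determined by $G$. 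Your outline is correct in spirit; the one place that genuinely requires care is, as you note, making the ``$\hat{\mathbb{E}}[X_n]\downarrow 0$'' step rigorous, since at that stage you do not yet have a representing family and must produce the tightness estimate directly from the $G$-normal moment bounds before any measures exist.
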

\begin{remark}
	\upshape{
Under each $P\in\mathcal{P}$,  the $G$-Brownian motion  $B$ is a martingale.}
\end{remark}
Given $\mathcal{P}$ that represents $\hat{\mathbb{E}}$, we define the capacity
$$c(A):=\sup_{P\in\mathcal{P}} P(A), \ \ \ \ \text{for  each}\ A\in \mathcal{F}.$$
A set $A\in\mathcal{B}(\Omega)$ is said to be \textit{polar} if $c(A)=0$. A property is said to \textit{holds ``quasi-surely''} (\textit{q.s.}) if it holds outside a polar set.
In the following, we do not distinguish two random variables $X$ and $Y$ if $X=Y$ q.s.
\begin{lemma}\label{upward mct for capacity}
Let  $\{A_n\}_{n=1}^\infty$ be a sequence in $\mathcal{B}(\Omega)$ such that $A_n\uparrow A$. Then $c(A_n)\uparrow c(A)$.
\end{lemma}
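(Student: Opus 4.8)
The plan is to split the claim into the two inequalities $\limsup_n c(A_n)\le c(A)$ and $\liminf_n c(A_n)\ge c(A)$. The first is free: since $A_n\subseteq A_{n+1}\subseteq A$ and $c(\cdot)=\sup_{P\in\mathcal P}P(\cdot)$ is monotone, the sequence $\{c(A_n)\}_{n\ge1}$ is nondecreasing and bounded above by $c(A)$, so $\lim_n c(A_n)$ exists and is $\le c(A)$. Only the reverse inequality needs an argument.

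For the reverse inequality I would argue straight from the definition of $c$. Fix $\varepsilon>0$ and pick $P\in\mathcal P$ with $P(A)\ge c(A)-\varepsilon$ (possible since $c(A)$ is a supremum). Because $P$ is a countably additive probability measure and $A_n\uparrow A$, continuity from below of $P$ gives $P(A_n)\uparrow P(A)$, so there is $N$ with $P(A_N)\ge c(A)-2\varepsilon$. Then $c(A_N)=\sup_{Q\in\mathcal P}Q(A_N)\ge P(A_N)\ge c(A)-2\varepsilon$, and since $\{c(A_n)\}$ is nondecreasing we get $\lim_n c(A_n)\ge c(A)-2\varepsilon$. Letting $\varepsilon\downarrow0$ concludes.

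Equivalently, the whole thing can be written as an interchange of suprema:
$$\lim_n c(A_n)=\sup_n\sup_{P\in\mathcal P}P(A_n)=\sup_{P\in\mathcal P}\sup_n P(A_n)=\sup_{P\in\mathcal P}P(A)=c(A),$$
where the second equality is the (always valid) exchange of two suprema and the third is continuity from below of each individual $P\in\mathcal P$.

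I do not anticipate any genuine obstacle here. In contrast to downward continuity of $c$ along decreasing sequences of closed sets — which really does exploit the weak compactness of $\mathcal P$ in Theorem \ref{DHP representation} — upward continuity holds for an arbitrary family of countably additive probability measures, so no compactness is needed and the argument is essentially a one-line reduction to the classical monotone continuity of each $P$.
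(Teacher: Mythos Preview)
Your proof is correct; both the $\varepsilon$-argument and the interchange-of-suprema formulation are valid, and your remark that no compactness of $\mathcal{P}$ is needed here is spot on. The paper itself states this lemma without proof (it is treated as a standard fact about upper probabilities), so there is nothing to compare against beyond noting that your argument is exactly the expected one.
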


For each $p\geq1$, we set
$$\mathbb{L}^p(\Omega):=\{X\in {L}^0(\mathcal{F}): \sup_{P\in\mathcal{P}}E_P[|X|^p]<\infty\}$$
and the larger space
$$
\mathcal{L}(\Omega):=\{X\in L^0(\mathcal{F}):E_P[X]\ \text{exists for each }\ P\in\mathcal{P}\}.
$$
We extend the $G$-expectation to $\mathcal{L}(\Omega)$, still denote it by $\hat{\mathbb{E}}$, by setting
$$\hat{\mathbb{E}}[X]:=\sup_{P\in\mathcal{P}}E_P[X],  \ \ \ \ \text{for}\ X\in \mathcal{L}(\Omega).$$
From \cite{DHP}, we know that $\mathbb{L}^p(\Omega)$ is a Banach space under the norm $||\cdot||_p:=(\hat{\mathbb{E}}[|\cdot|^p])^{1/p}$ and $L_G^p(\Omega)\subset \mathbb{L}^p(\Omega)$.
For $\{X_n\}_{n=1}^\infty\subset \mathbb{L}^p(\Omega)$, $X\in \mathbb{L}^p(\Omega)$, we say that $X_n\rightarrow X$ {in} $\mathbb{L}^p$, denoted by $X=\mathbb{L}^p\text{-}\lim_{n\rightarrow\infty}X_n,$ if $\lim_{n\rightarrow\infty}\hat{\mathbb{E}}[|X_n-X|^p]=0$.
\begin{lemma}\label{upward mct for rv}
Let $X_n\in \mathcal{L}(\Omega)$  be a sequence such that $X_n\uparrow X\ q.s.$ and $-\hat{\mathbb{E}}[-X_1]>-\infty$. Then $$\hat{\mathbb{E}}[X_n]\uparrow \hat{\mathbb{E}}[X].$$
\end{lemma}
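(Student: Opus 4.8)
The plan is to deduce the statement from the classical monotone convergence theorem applied measure-by-measure, using the representation $\hat{\mathbb{E}}[\cdot]=\sup_{P\in\mathcal{P}}E_P[\cdot]$ from Theorem \ref{DHP representation}, and then to interchange two suprema (over $n$ and over $P$), which is always legitimate. First I would unwind the hypothesis: by the representation theorem, $-\hat{\mathbb{E}}[-X_1]=-\sup_{P\in\mathcal{P}}E_P[-X_1]=\inf_{P\in\mathcal{P}}E_P[X_1]=:c>-\infty$, so that $E_P[X_1]\ge c$, and in particular $X_1^-\in L^1(P)$, for every $P\in\mathcal{P}$.

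Next I would pass from the quasi-sure hypothesis to an a.s.\ one under each $P$: since a polar set is $P$-null for every $P\in\mathcal{P}$, the statement $X_n\uparrow X$ holds $P$-a.s.\ for each $P\in\mathcal{P}$, and I would fix the representative $X=\limsup_n X_n\in L^0(\mathcal{F})$. From $X_1\le X_n\le X$ q.s.\ one gets $X_n^-\le X_1^-$ and $X^-\le X_1^-$, hence $X_n^-,X^-\in L^1(P)$; therefore $E_P[X_n]$ and $E_P[X]$ are well defined in $(-\infty,+\infty]$, $X\in\mathcal{L}(\Omega)$, and $\hat{\mathbb{E}}[X]$ makes sense. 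For a fixed $P\in\mathcal{P}$ I would then split $X_n=X_n^+-X_n^-$; monotonicity of $n\mapsto X_n$ gives $X_n^+\uparrow X^+$ and $X_n^-\downarrow X^-$ $P$-a.s., so the classical monotone convergence theorem yields $E_P[X_n^+]\uparrow E_P[X^+]$ (possibly $+\infty$), dominated convergence with dominating function $X_1^-$ gives $E_P[X_n^-]\to E_P[X^-]<\infty$, and subtracting gives $E_P[X_n]\uparrow E_P[X]$ for every $P\in\mathcal{P}$.

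Finally, combining this with the representation and the trivial exchange of suprema,
\[
\lim_{n\to\infty}\hat{\mathbb{E}}[X_n]=\sup_{n}\sup_{P\in\mathcal{P}}E_P[X_n]=\sup_{P\in\mathcal{P}}\sup_{n}E_P[X_n]=\sup_{P\in\mathcal{P}}E_P[X]=\hat{\mathbb{E}}[X],
\]
which is the asserted identity. I do not expect a genuine obstacle here: the only delicate point is that the $X_n$ need not be nonnegative, so the classical convergence theorems must be applied to $X_n^+$ and $X_n^-$ separately, and it is precisely the uniform lower bound $-\hat{\mathbb{E}}[-X_1]>-\infty$ that supplies the $L^1(P)$-domination making the $X_n^-$ part converge.
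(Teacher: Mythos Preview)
Your argument is correct. The paper actually states this lemma without proof, treating it as a standard fact; your derivation via the representation $\hat{\mathbb{E}}=\sup_{P\in\mathcal{P}}E_P$, the classical monotone convergence theorem under each $P$ (using $E_P[X_1^-]<\infty$, which follows from $E_P[X_1]$ existing and being $>-\infty$), and the interchange of the two suprema is exactly the natural proof one would supply.
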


For each $T>0$ and $p\geq1$, we define
\begin{align*}
\label{}
M_G^{p,0}(0,T):=& \{\eta=\sum_{j=0}^{N-1}\xi_j(\omega)I_{[t_j,t_{j+1})}(t): N\in\mathbb{N},\ 0\leq t_0\leq t_1\leq \cdots\leq t_N\leq T,  \\
&  \ \xi_j\in L_{G}^p(\Omega_{t_j}),\ j=0,1\cdots,N\}.
\end{align*}
For each $\eta\in M_G^{p,0}(0,T)$, set the norm $\|\eta\|_{M_G^{p}}:=(\hat
{\mathbb{E}}[\int_{0}^{T}|\eta_{t}|^{p}dt])^{\frac{1}{p}} $ and denote by
$M_G^{p}(0,T)$ the completion of $M_G^{p,0}(0,T)$ under $\|\cdot
\|_{M_G^{p}} $.

According to \cite{LP,P7}, we can define $\int_0^t\eta_sdB_s^i$, $\int_0^t\xi_sd\langle B^i,B^j\rangle_s$ and $\int_0^t\xi_sds$  for $\eta\in M_G^2(0,T)$ and $\xi\in M_G^1(0,T)$, where $\langle B^i,B^j\rangle$ denotes the cross-variation process, for  $1\leq i,j\leq d$.

\subsection{Stochastic differential equations driven by $G$-Brownian motion}

We consider the following $G$-SDEs: for each given $0\leq t\leq T<\infty$,
\begin{equation}\label{SDE}
\begin{cases}
dX^{t,\xi}_{s}=b(X_{s}^{t,\xi})ds+\sum_{i,j=1}^dh_{ij}(X_{s}^{t,\xi})d\langle B^i,B^j\rangle_s+\sum_{j=1}^{d}\sigma_j(X_{s}^{t,\xi})dB^j_s,\ \ \ \ s\in [t,T],\\
X_{t}^{t,\xi}=\xi,
\end{cases}
\end{equation}
where $\xi\in L_G^p(\Omega_t;\mathbb{R}^n)$, $p\geq 2$ and
$b,h_{ij},\sigma_j:\mathbb{R}^n\rightarrow \mathbb{R}^n$ are given deterministic functions satisfying the following assumptions:
\begin{description}
	\item [(H1)]Symmetry: $h_{ij}=h_{ji}, 1\leq i,j\leq d$;
	\item [(H2)]Lipschitz continuity: there exists a constant $L$ such that for each  $x,x'\in\mathbb{R}^n$,
	$$|b(x)-b(x')|+\sum_{i,j=1}^d|h_{ij}(x)-h_{ij}(x')|+\sum_{j=1}^d |\sigma_j(x)-\sigma_j(x')|\leq L|x-x'|.$$
\end{description}
For simplicity,  $X_{s}^{0,x}$ will be denoted by $X_{s}^{x}$ for $x\in\mathbb{R}^n$. We have the following estimates for $G$-SDE (\ref{SDE}) which can be found in \cite{P7,G1}.
\begin{lemma}\label{GSDE}
	Assume that the conditions $(H1)$ and $(H2)$ hold. Then $G$-SDE (\ref{SDE}) has a unique solution $(X_{s}^{t,\xi})_{s\in[t,T]}\in M^p_G(t,T;\mathbb{R}^n)$.  Moreover, there exists a constant $C$ depending on $p,T,L,G$ such that for any $x,y\in \mathbb{R}^n,\ t,t'\in[0,T]$,
	\begin{equation}\label{SDE sup control}
	\hat{\mathbb{E}}[\sup_{s\in [0,t]}|X^{x}_s|^p]\leq C(1+|x|^p),
	\end{equation}
	\begin{equation}
	\label{SDE3}\hat{\mathbb{E}}[|X_t^x-X^y_{t'}|^p]\leq C(|x-y|^p+(1+|x|^p)|t-t'|^{p/2}).
	\end{equation}
\end{lemma}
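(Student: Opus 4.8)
The plan is to establish the two estimates in Lemma~\ref{GSDE} by the standard route for $G$-SDEs: first show well-posedness in $M_G^p(t,T;\mathbb{R}^n)$ via the contraction mapping principle under the norm $\|\cdot\|_{M_G^p}$ (indeed under an equivalent $\beta$-weighted norm $\|\eta\|_\beta^p := \hat{\mathbb{E}}[\int_t^T e^{-\beta s}|\eta_s|^p\,ds]$ to absorb the Lipschitz constants), and then derive \eqref{SDE sup control} and \eqref{SDE3} from a priori bounds. I would first recall the Burkholder--Davis--Gundy-type inequalities available in the $G$-framework: for $\eta\in M_G^p(0,T)$ one has $\hat{\mathbb{E}}[\sup_{s\le t}|\int_0^s\eta_r\,dB_r^j|^p]\le C_p\,\hat{\mathbb{E}}[(\int_0^t|\eta_r|^2\,dr)^{p/2}]$, and similarly the $d\langle B^i,B^j\rangle$-integral and the $ds$-integral are controlled because $\langle B^i,B^j\rangle$ has bounded-variation paths with $\hat{\mathbb{E}}$-integrable total variation (these are the estimates from \cite{P7,G1} that I am entitled to cite). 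Combining these with assumption (H2) and the elementary inequality $|a+b+c|^p\le 3^{p-1}(|a|^p+|b|^p+|c|^p)$ gives, for the Picard iteration $X^{(k+1)}$, a bound of the form $\hat{\mathbb{E}}[\sup_{s\le u}|X_s^{(k+1)}-X_s^{(k)}|^p]\le C\int_t^u\hat{\mathbb{E}}[\sup_{r\le s}|X_r^{(k)}-X_r^{(k-1)}|^p]\,ds$; iterating and summing the resulting series (a Gronwall/Grönwall-type argument on the partial sums) yields convergence in $M_G^p$, hence existence and uniqueness of the solution.

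Next, for \eqref{SDE sup control}, I would apply the same integral estimates to the equation itself rather than to differences: writing $X_s^x = x + \int_0^s b(X_r^x)\,dr + \sum_{i,j}\int_0^s h_{ij}(X_r^x)\,d\langle B^i,B^j\rangle_r + \sum_j\int_0^s\sigma_j(X_r^x)\,dB_r^j$, using the BDG-type inequality and the linear-growth consequence of Lipschitz continuity ($|b(x)|\le |b(0)|+L|x|$, etc.), I obtain $\hat{\mathbb{E}}[\sup_{s\le u}|X_s^x|^p]\le C(1+|x|^p) + C\int_0^u\hat{\mathbb{E}}[\sup_{r\le s}|X_r^x|^p]\,ds$; Grönwall's inequality (valid here since everything is a deterministic function of the upper limit $u$, after first checking finiteness of $\hat{\mathbb{E}}[\sup_{s\le T}|X_s^x|^p]$, which follows from the fact that the solution lies in $M_G^p$ together with the a priori bound on Picard iterates) gives the claim with a constant depending only on $p,T,L,G$.

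Finally, for \eqref{SDE3} I would split $|X_t^x - X_{t'}^y|$ using the triangle inequality into $|X_t^x - X_t^y|$ and $|X_t^y - X_{t'}^y|$, assuming WLOG $t'\le t$. For the first piece I apply the difference estimate from the well-posedness step with the same initial time $0$ but different initial data, yielding $\hat{\mathbb{E}}[\sup_{s\le T}|X_s^x - X_s^y|^p]\le C|x-y|^p$ by Grönwall. For the second piece, $X_t^y - X_{t'}^y = \int_{t'}^t b(X_r^y)\,dr + \sum_{i,j}\int_{t'}^t h_{ij}(X_r^y)\,d\langle B^i,B^j\rangle_r + \sum_j\int_{t'}^t\sigma_j(X_r^y)\,dB_r^j$; the BDG-type inequality gives that the stochastic-integral term is bounded by $C\,\hat{\mathbb{E}}[(\int_{t'}^t|\sigma_j(X_r^y)|^2\,dr)^{p/2}]\le C|t-t'|^{p/2}\sup_{r\le T}\hat{\mathbb{E}}[|\sigma_j(X_r^y)|^p]\le C(1+|y|^p)|t-t'|^{p/2}$ (using Hölder in the time variable and \eqref{SDE sup control}), and the $dr$ and $d\langle B^i,B^j\rangle_r$ terms are handled similarly, the latter using the boundedness of the density of $\langle B^i,B^j\rangle$ with respect to $ds$ (a known feature of $G$-Brownian motion). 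Adding the two pieces and using $|t-t'|^{p/2}\ge|t-t'|^p$ on $[0,T]$ absorbed into the constant gives \eqref{SDE3}.

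The main obstacle is not any single estimate but the replacement of the classical Doob/BDG machinery: in the $G$-setting the dominated convergence theorem fails and expectations are nonlinear, so one cannot freely interchange limits or pass $\hat{\mathbb{E}}$ through integrals. The resolution is that all the Grönwall-type arguments are run on the \emph{real-valued} function $u\mapsto\hat{\mathbb{E}}[\sup_{s\le u}|\cdots|^p]$, which is classical, provided one has first secured finiteness of these quantities and the requisite $G$-stochastic-calculus inequalities; since the latter are exactly the cited results of \cite{P7,G1}, the proof reduces to a careful bookkeeping of constants, which I will only sketch.
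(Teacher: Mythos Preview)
Your sketch is correct and follows the standard route (contraction mapping for well-posedness, BDG-type inequalities plus Gr\"onwall for the a priori bounds), but note that the paper does not actually prove this lemma: it is stated in the preliminaries as a known result and simply cites \cite{P7,G1}. Your outline is essentially what one finds in those references, so there is nothing to compare beyond observing that you have reproduced the expected argument rather than just invoking the citation.
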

Noting that $X^x_s=X^{t,X^x_t}_s$ for $s\geq t$,  we see from Theorem 4.4 in \cite{HJPS1} that
\begin{lemma}\label{HJPS2 lemma}For each given $\varphi\in C_{b.Lip}(\mathbb{R}^{n})$ and $0\leq t\leq T$, we have
	$$\hat{\mathbb{E}}_t[\varphi(X_{t+s}^{x})]=\hat{\mathbb{E}}[\varphi(X_{t+s}^{t,y})]_{y=X_t^x},\ \ \ \ \text{for}\ s\in [0,T-t].$$
\end{lemma}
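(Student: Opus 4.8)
\textbf{Proof proposal for Lemma \ref{HJPS2 lemma}.}

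The plan is to reduce the claim to the flow property $X^x_s = X^{t,X^x_t}_s$ for $s\ge t$ (which holds q.s.\ by uniqueness of solutions to \eqref{SDE}) together with a conditional-expectation version of the ``independence of increments'' that is available for $G$-SDEs. First I would observe that, by the flow property, $X^x_{t+s} = X^{t,X^x_t}_{t+s}$, so that $\hat{\mathbb{E}}_t[\varphi(X^x_{t+s})] = \hat{\mathbb{E}}_t[\varphi(X^{t,X^x_t}_{t+s})]$. The key point is then that the solution map $y\mapsto X^{t,y}_{t+s}$, when $y$ is frozen, is built only from the increments of $B$ on $[t,t+s]$ (via Picard iteration: each iterate is a $C_{b.Lip}$-type functional of $B_{\cdot}-B_t$ on $[t,t+s]$ applied to the constant $y$, and the $M_G^p$-limit inherits this), hence $X^{t,y}_{t+s}\in L_G^p(\Omega^t_{t+s};\mathbb{R}^n)$ where $\Omega^t$ denotes the shifted path space generated by $(B_{t+r}-B_t)_{r\ge 0}$; this increment $\sigma$-algebra is ``independent'' of $\mathcal{F}_t$ in the $G$-framework. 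Meanwhile $X^x_t\in L_G^p(\Omega_t;\mathbb{R}^n)$.

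Next I would invoke the conditional independence property, Proposition \ref{condition expectation property}(v): for $\eta\in L_G^1(\Omega_t;\mathbb{R}^n)$ and a random variable of the form $\psi(\text{increments after }t)$ one has $\hat{\mathbb{E}}_t[F(\eta,\text{increments after }t)] = \hat{\mathbb{E}}_t[F(p,\text{increments after }t)]_{p=\eta}$. Applying this with $\eta = X^x_t$ and $F(p,\cdot) = \varphi(X^{t,p}_{t+s})$ gives
\[
\hat{\mathbb{E}}_t[\varphi(X^{t,X^x_t}_{t+s})] = \hat{\mathbb{E}}_t\big[\varphi(X^{t,y}_{t+s})\big]_{y=X^x_t}.
\]
Finally, since $X^{t,y}_{t+s}$ (for frozen $y$) depends only on increments of $B$ after time $t$, which are independent of $\mathcal{F}_t$ and have the same $G$-distribution as $B_{\cdot}$ shifted to start at $0$, the inner conditional expectation equals the unconditional one and moreover coincides with $\hat{\mathbb{E}}[\varphi(X^{0,y}_{s})] = \hat{\mathbb{E}}[\varphi(X^y_s)]$; but here I am told I may cite Theorem 4.4 of \cite{HJPS1}, so I would instead quote from there that $\hat{\mathbb{E}}_t[\varphi(X^{t,y}_{t+s})] = \hat{\mathbb{E}}[\varphi(X^{t,y}_{t+s})]$ is deterministic and equals $\hat{\mathbb{E}}[\varphi(X^y_s)]$ by the time-homogeneity of $G$-Brownian increments, i.e.\ $(B_{t+r}-B_t)_{r\ge0}\overset{d}{=}(B_r)_{r\ge0}$.

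The main obstacle is the first step: rigorously establishing that $X^{t,y}_{t+s}\in L_G^p(\Omega^t_{t+s})$ with the right measurability and ``independence-from-$\mathcal{F}_t$'' structure, and that one may legitimately substitute the random $\eta=X^x_t$ into the frozen-parameter solution — i.e.\ a parameter-dependent version of Proposition \ref{condition expectation property}(v) valid for solutions of $G$-SDEs rather than for cylinder functions. This requires a stability/continuity estimate in the parameter $y$, which is exactly \eqref{SDE3}, combined with an approximation of $X^{t,y}_{t+s}$ by elements of $L_{ip}$ uniformly in $y$ on compacts; once that is in hand, the substitution and the passage to the unconditional expectation are routine. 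Since the excerpt explicitly permits citing \cite[Theorem 4.4]{HJPS1}, in the actual write-up I would simply note that the desired identity is the special case of that theorem applied to the pair $(t,t+s)$ together with the flow property $X^x_s=X^{t,X^x_t}_s$, and omit re-deriving the independence machinery.
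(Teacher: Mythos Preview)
Your proposal is correct and, in its final form, coincides with the paper's own argument: the paper simply notes the flow property $X^x_s = X^{t,X^x_t}_s$ for $s\ge t$ and then cites Theorem~4.4 of \cite{HJPS1}, without any further proof. Your middle paragraphs (Picard iterates in $L_G^p(\Omega^t)$, substitution via Proposition~\ref{condition expectation property}(v), time-homogeneity) are a reasonable sketch of what lies behind that citation, but the paper omits all of this, and the extra step $\hat{\mathbb{E}}[\varphi(X^{t,y}_{t+s})]=\hat{\mathbb{E}}[\varphi(X^y_s)]$ is actually not part of this lemma---it is only invoked later in Lemma~\ref{markov property of sde}.
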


\section{Construction of the conditional $G$-expectation $\hat{\mathbb{E}}_{\tau+}$}
In this section, we  provide a construction of   the conditional $G$-expectation $\hat{\mathbb{E}}_{\tau+}$ for any optional time $\tau$ and study its properties. This notion is needed in the derivation of strong Markov property for $G$-SDEs in Section 4. We shall also give an application on the reflection principle for $G$-Brownian motion at the end of this section.
\subsection{The construction of  conditional $G$-expectation $\hat{\mathbb{E}}_{\tau+}$ on ${L}_{G}^{1,\tau+}(\Omega)$}
The mapping $\tau:\Omega\rightarrow[0,\infty)$ is called a stopping time if $\{\tau\leq t\}\in \mathcal{F}_t$ for each $t\geq 0$ and an optional time if $\{\tau< t\}\in \mathcal{F}_t$ for each $t\geq 0$. A stopping time is an optional time but the converse may not hold.

For each optional time $\tau$, we define the $\sigma$-field
$$
\mathcal{F}_{\tau+}:=\{A\in\mathcal{F}:A\cap \{\tau<t\}\in\mathcal{F}_t,\ \forall t\geq0\}=\{A\in\mathcal{F}:A\cap \{\tau\leq t\}\in\mathcal{F}_{t+},\ \forall t\geq0\},
$$
where $\mathcal{F}_{t+}=\cap_{s>t}\mathcal{F}_s.$
If $\tau$ is a stopping time, we also define
$$
\mathcal{F}_{\tau}:=\{A\in\mathcal{F}:A\cap \{\tau\leq t\}\in\mathcal{F}_{t},\ \forall t\geq0\}.
$$

Let $\tau$ be an optional time. For each $p\geq1$, we set
$${L}_{G}^{0,p,\tau+}(\Omega)=\{X=\sum_{i=1}^n\xi_iI_{A_i}: \ n\in\mathbb{N},\ \{A_i\}_{i=1}^n\text{ is an}\ \mathcal{F}_{\tau+}\text{-partition of}\ \Omega,\ \xi_i\in L_G^p(\Omega),\ i=1,\cdots,n\}$$
 and denote by ${L}_{G}^{p,\tau+}(\Omega)$  the completion of ${L}_{G}^{0,p,\tau+}(\Omega)$ under the norm $||\cdot||_p$.
In this subsection, we want to define the conditional $G$-expectation  $$
\hat{\mathbb{E}}_{\tau+}:{L}_{G}^{1,\tau+}(\Omega)\rightarrow{L}_{G}^{1,\tau+}(\Omega)\cap L^0(\mathcal{F}_{\tau+}).$$
This will be accomplished
 in three stages by progressively constructing the conditional expectation  on  $L_{ip}(\Omega)$,  $L^1_G(\Omega)$ and finally ${L}_{G}^{1,\tau+}(\Omega)$.

\begin{remark}
\label{ui remark}
	\upshape{	According to Theorem 25 in \cite{DHP}, for $X\in L^1_G(\Omega)$, we have
	\begin{equation}\label{ui property}
	\hat{\mathbb{E}}[|X|I_{\{|X|>N\}}]\rightarrow 0, \ \ \ \ \text{as} \ N\rightarrow \infty.
	\end{equation}
	This, together with a direct calculation, implies that (\ref{ui property}) still holds for $X\in L^{1,\tau+}_G(\Omega)$.}
\end{remark}
In the following, unless stated otherwise, we shall  always assume that the optional time $\tau$ satisfying the following assumption:
\begin{description}
	\item[(H3)] $c(\{\tau>T\})\rightarrow 0$,\ \ \ \  as $ T\rightarrow \infty$.
\end{description}

\subsubsection*{Stage one: $\hat{\mathbb{E}}_{\tau+}$ on $L_{ip}(\Omega)$}
Let $X\in L_{ip}(\Omega)$. The construction of $$\hat{\mathbb{E}}_{\tau+}:L_{ip}(\Omega)\rightarrow L^{1,\tau+}_G(\Omega)\cap L^0(\mathcal{F}_{\tau+})$$ consists of two steps.

\textit{Step 1.}
For any given simple discrete stopping time $\tau$ taking values in $\{t_i:i\geq 1\}$, we define
\begin{equation}\label{Etau for discrete stopping time, lip}
\hat{\mathbb{E}}_{\tau+}[X]:=\sum_{i=1}^{\infty}\hat{\mathbb{E}}_{t_i}[X]I_{\{\tau=t_i\}},
\end{equation}
where a discrete  stopping (or optional) time is \textit{simple} means that $t_i\uparrow\infty$, as $i\rightarrow\infty$. Here we
employ the convention that $t_{n+i}:=t_n+i$, $i\geq 1$, if $\tau$ is a discrete stopping (or optional) time taking finitely many values $\{t_i:i\leq n\}$ with $t_i\leq t_{i+1}$.

\textit{Step 2.} For a general optional time $\tau$, let $\tau_n$ be a sequence of simple discrete stopping times  such that $\tau_n \rightarrow \tau$ uniformly.
We define \begin{equation}\label{890377433993}
\hat{\mathbb{E}}_{\tau+}[X]:=\mathbb{L}^1\text{-}\lim_{n\rightarrow \infty}\hat{\mathbb{E}}_{\tau_n+}[X].
\end{equation}

\begin{proposition}\label{taun Cauchy lemma}
The conditional expectation $\hat{\mathbb{E}}_{\tau+}:L_{ip}(\Omega)\rightarrow L^{1,\tau+}_G(\Omega)\cap L^0(\mathcal{F}_{\tau+})$ is well-defined.
\end{proposition}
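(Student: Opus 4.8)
The plan is to verify that the two-step definition produces a well-defined map by checking consistency and convergence at each stage. First I would address Step 1: for a simple discrete stopping time $\tau$ taking values in $\{t_i\}$, the sets $\{\tau=t_i\}=\{\tau\le t_i\}\setminus\{\tau\le t_{i-1}\}$ belong to $\mathcal{F}_{t_i}$, so $I_{\{\tau=t_i\}}\in L^1_G(\Omega_{t_i})$ is bounded and $\mathcal{F}_{t_i}$-measurable; hence each summand $\hat{\mathbb{E}}_{t_i}[X]I_{\{\tau=t_i\}}$ lies in $L^1_G(\Omega)$, and since the $\{\tau=t_i\}$ form a countable $\mathcal{F}_{\tau+}$-partition, the series $\sum_i\hat{\mathbb{E}}_{t_i}[X]I_{\{\tau=t_i\}}$ converges in $\mathbb{L}^1$ (using (H3) to control the tail $\hat{\mathbb{E}}[|X|I_{\{\tau>t_n\}}]\to0$ together with $|\hat{\mathbb{E}}_{t_i}[X]|\le\hat{\mathbb{E}}_{t_i}[|X|]$ and the tower property from Proposition \ref{condition expectation property}(vi)). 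That the resulting random variable is $\mathcal{F}_{\tau+}$-measurable follows because on $\{\tau<t\}$ it agrees with a finite sum of $\mathcal{F}_t$-measurable terms; membership in $L^{1,\tau+}_G(\Omega)$ is immediate from the definition of that space since the partial sums lie in $L^{0,1,\tau+}_G(\Omega)$.

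Next I would check that the value in Step 1 does not depend on the chosen representation of $\tau$ as a simple discrete stopping time, i.e. if $\tau$ simultaneously takes values in $\{t_i\}$ and in a refinement $\{s_j\}$, the two formulas agree. This reduces, after splitting each $\{\tau=t_i\}$ into the finitely many $\{\tau=s_j\}$ with $s_j=t_i$ it contains, to the trivial identity $\hat{\mathbb{E}}_{t_i}[X]I_{\{\tau=t_i\}}=\hat{\mathbb{E}}_{s_j}[X]I_{\{\tau=s_j\}}$ summed over those $j$, since $s_j=t_i$ there. This is the routine but necessary bookkeeping that makes Step 1 canonical and is what allows one to compare two approximating sequences $\tau_n$, $\tau_n'$ in Step 2 by passing to a common refinement.

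For Step 2, I would first confirm that simple discrete stopping times $\tau_n\downarrow\tau$ (or $\tau_n\to\tau$ uniformly) exist — the standard dyadic construction $\tau_n=\sum_k (k2^{-n})I_{\{(k-1)2^{-n}\le\tau<k2^{-n}\}}$ works and gives stopping times approximating an optional time from above, with $\tau_n-\tau\to0$ uniformly after truncating the range and invoking (H3). Then the core estimate is that $(\hat{\mathbb{E}}_{\tau_n+}[X])_n$ is $\mathbb{L}^1$-Cauchy: for $X\in L_{ip}(\Omega)$, say $X=\varphi(B_{r_1},\dots)$, one compares $\hat{\mathbb{E}}_{\tau_n+}[X]$ and $\hat{\mathbb{E}}_{\tau_m+}[X]$ on the common-refinement partition, where on each atom both are of the form $\phi(\cdots)$ evaluated at nearby conditioning times, and the difference is controlled by the modulus of continuity of $\varphi$ and the Hölder estimate $\hat{\mathbb{E}}[|B_t-B_s|^p]\le C|t-s|^{p/2}$ — this is precisely the universal continuity estimate for $\hat{\mathbb{E}}_t[\cdot]$ alluded to as Lemma \ref{Et continuity lemma}. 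I expect \emph{this Cauchy estimate} to be the main obstacle: one must carefully handle the mismatch between the partitions of $\tau_n$ and $\tau_m$ and the fact that conditioning times can straddle the $r_k$'s, so the bound should be phrased uniformly in $n,m$ in terms of $\|\tau_n-\tau_m\|_\infty\to0$. Granting the Cauchy property, the limit exists in the Banach space $\mathbb{L}^1(\Omega)$, lies in the closed subspace $L^{1,\tau+}_G(\Omega)$, is $\mathcal{F}_{\tau+}$-measurable as an $\mathbb{L}^1$-limit of $\mathcal{F}_{\tau_n+}\subset\mathcal{F}_{\tau+}$-measurable variables (after passing to a q.s.-convergent subsequence), and is independent of the approximating sequence by the common-refinement comparison above. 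This establishes that $\hat{\mathbb{E}}_{\tau+}:L_{ip}(\Omega)\to L^{1,\tau+}_G(\Omega)\cap L^0(\mathcal{F}_{\tau+})$ is well-defined.
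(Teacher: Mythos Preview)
Your overall plan matches the paper's: verify Step~1 gives a canonical element of $L^{1,\tau+}_G(\Omega)$, prove the approximating sequence in Step~2 is $\mathbb{L}^1$-Cauchy via the continuity estimate of Lemma~\ref{Et continuity lemma}, and check the limit lands in the right space. Two points need correction.

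First, your $\mathcal{F}_{\tau+}$-measurability argument has the inclusion backwards: since the dyadic $\tau_n\ge\tau$, one has $\mathcal{F}_{\tau+}\subset\mathcal{F}_{\tau_n+}$, not the reverse, so knowing each $\hat{\mathbb{E}}_{\tau_n+}[X]$ is $\mathcal{F}_{\tau_n+}$-measurable does not directly give $\mathcal{F}_{\tau+}$-measurability of the limit. The paper instead argues that for the \emph{specific} dyadic approximation the sets $\{\tau_n=t_i^n\}=\{(i-1)2^{-n}\le\tau<i2^{-n}\}$ lie in $\mathcal{F}_{\tau+}$, and uses this to place the partial sums directly in $L^{0,1,\tau+}_G(\Omega)\cap L^0(\mathcal{F}_{\tau+})$. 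If you prefer a limit argument, note that for $n\ge m$ one has $\hat{\mathbb{E}}_{\tau_n+}[X]\in L^0(\mathcal{F}_{\tau_n})\subset L^0(\mathcal{F}_{\tau_m})$, so the q.s.\ limit is $\mathcal{F}_{\tau_m}$-measurable for every $m$, and then invoke $\mathcal{F}_{\tau+}=\bigcap_m\mathcal{F}_{\tau_m}$.

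Second, the Cauchy estimate is more delicate than your sketch suggests. The H\"older bound $\hat{\mathbb{E}}[|B_t-B_s|^p]\le C|t-s|^{p/2}$ applies to deterministic times, but after decomposing on the common partition you would need to control increments of $B$ at \emph{random} times $\tau_n,\tau_m$, to which that bound does not directly apply. The paper's route is (a) localize to $\{\tau\le T\}$ via Lemma~\ref{pre consistant Etau lemma} so the approximants become bounded stopping times, (b) bound $|\hat{\mathbb{E}}_{\tau_n+}[X]-\hat{\mathbb{E}}_{\tau_m+}[X]|$ \emph{pointwise} by the pathwise modulus $\sup_{(u_1,u_2)\in\Lambda_{\delta,T}}(|B_{u_2}-B_{u_1}|\wedge1)$ (Lemma~\ref{Etau continuity lemma}), and (c) use tightness of $\mathcal{P}$ to show this modulus tends to zero in $\mathbb{L}^1$ (Lemma~\ref{sup continuity lemma}). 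You correctly point to Lemma~\ref{Et continuity lemma}, but its pathwise-uniform form---rather than an $L^p$ estimate at fixed times---is what makes the argument go through.
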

In the following, for notation simplicity, we always use $C_X$ to denote the bound of $X$ for any bounded function $X:\Omega\rightarrow\mathbb{R}$. Similarly, for any given bounded, Lipschitz function $\varphi:\mathbb{R}^n\rightarrow\mathbb{R}$, we always use $C_\varphi$ and $L_\varphi$ to denote its bound and Lipschitz constant respectively.

The proof relies on the following lemmas.
We set
$$\Lambda_{\delta,T}:=\{(u_1,u_2):0\leq u_1,u_2\leq T,\ |u_1-u_2|\leq \delta\}.$$ The first three lemmas concern the continuity properties of conditional expectation $\hat{\mathbb{E}}_t$ on $L_{ip}(\Omega)$.

\begin{lemma}\label{Et continuity lemma}
	Let $X=\varphi(B_{t_1},B_{t_2}-B_{t_1},\cdots,B_{t_n}-B_{t_{n-1}})$ for $\varphi\in C_{b.Lip}(\mathbb{R}^{n\times d})$ with $0\leq t_1<t_2<\cdots<t_n<\infty$. Then for any  $T\geq 0$ and $0\leq s_1\leq s_2\leq T$
	, we have
	\begin{equation}\label{8765433667889}
	|\hat{\mathbb{E}}_{s_2}[X]-\hat{\mathbb{E}}_{s_1}[X]|\leq C\{\sup_{(u_1,u_2)\in \Lambda_{s_2-s_1,T}}(|B_{u_2}-B_{u_1}|\wedge 1)+\sqrt{s_2-s_1}\},
	\end{equation}
	where $C$ is a constant depending only on $X$ and $G$.
\end{lemma}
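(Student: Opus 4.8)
The plan is to reduce the estimate to a bound on $\hat{\mathbb{E}}_{s_2}[X] - \hat{\mathbb{E}}_{s_1}[X]$ that can be controlled uniformly in $\omega$ by the modulus of continuity of the path between times $s_1$ and $s_2$. First I would handle the case where $s_1$ and $s_2$ fall between consecutive partition points, say $t_{k-1} \le s_1 \le s_2 \le t_k$ (with the conventions $t_0 = 0$, $t_{n+1} = \infty$ handled separately); the general case follows by inserting the intermediate partition times and summing, using $\sqrt{s_2 - t_j} + \sqrt{t_j - s_1} \le 2\sqrt{s_2 - s_1}$ and the fact that each such sub-interval is contained in $\Lambda_{s_2 - s_1, T}$. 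So the core is: with all $t_i$ fixed, compare $\hat{\mathbb{E}}_{s_1}[X]$ and $\hat{\mathbb{E}}_{s_2}[X]$ when no partition point lies strictly between $s_1$ and $s_2$.

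For this core step I would use the explicit formula for the conditional $G$-expectation together with the independence and stationarity of the increments of $G$-Brownian motion. Write $X = \varphi(B_{t_1}, B_{t_2}-B_{t_1}, \dots)$. Introducing $s_1$ and $s_2$ as extra grid points, split each increment $B_{t_k} - B_{t_{k-1}}$ straddling $s_1$ or $s_2$ into its pieces $B_{s_1} - B_{t_{k-1}}$, $B_{s_2} - B_{s_1}$, $B_{t_k} - B_{s_2}$. Then $\hat{\mathbb{E}}_{s_i}[X]$ is obtained by freezing the increments up to time $s_i$ and taking $\hat{\mathbb{E}}$ over the rest. The difference $\hat{\mathbb{E}}_{s_2}[X] - \hat{\mathbb{E}}_{s_1}[X]$ then compares (a) freezing $B_{s_1} - B_{t_{k-1}}$ and integrating out the block containing $B_{s_2} - B_{s_1}$, against (b) additionally freezing $B_{s_2} - B_{s_1}$. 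Using sub-additivity and positive homogeneity of $\hat{\mathbb{E}}$ (Proposition \ref{condition expectation property}) I would bound this difference by $L_\varphi$ times $\hat{\mathbb{E}}$ of the relevant increment, split into the part where $B_{s_2} - B_{s_1}$ is small (giving the $|B_{u_2} - B_{u_1}| \wedge 1$ term, or rather its sup over $\Lambda$, once one observes $(s_1, s_2) \in \Lambda_{s_2 - s_1, T}$) and the part where it is large, whose $G$-expectation is $O(\sqrt{s_2 - s_1})$ by the scaling $B_{s_2} - B_{s_1} \overset{d}{=} \sqrt{s_2 - s_1}\,\xi$ and $\hat{\mathbb{E}}[|\xi|] < \infty$. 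The boundedness of $\varphi$ is what lets us truncate at $1$ and absorb the far tail. The constant $C$ then depends on $n$, $L_\varphi$, $C_\varphi$, and $\hat{\mathbb{E}}[|\xi|]$, i.e.\ only on $X$ and $G$.

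The main obstacle I anticipate is bookkeeping rather than conceptual: carefully tracking how a single comparison of $\hat{\mathbb{E}}_{s_1}$ and $\hat{\mathbb{E}}_{s_2}$ decomposes when $s_1, s_2$ land in the same gap versus different gaps, and making sure that each elementary increment $B_{u_2} - B_{u_1}$ that appears genuinely has $(u_1, u_2) \in \Lambda_{s_2 - s_1, T}$ — in particular that $u_2 \le T$, which forces the restriction of the supremum to $[0,T]^2$ and is the reason $T$ enters the statement. One must also be slightly careful at the right end, where $s_2$ may exceed all of $t_1, \dots, t_n$, in which case $\hat{\mathbb{E}}_{s_1}[X] = \hat{\mathbb{E}}_{s_2}[X] = X$ past $t_n$ and the estimate is trivial once $s_1 \ge t_n$, and otherwise reduces to the generic gap argument on $[s_1 \vee t_n, \dots]$. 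Collecting these cases and summing the per-gap bounds, each of the form $C(\sup_{\Lambda_{s_2-s_1,T}}(|B_{u_2}-B_{u_1}| \wedge 1) + \sqrt{s_2-s_1})$, yields \eqref{8765433667889}.
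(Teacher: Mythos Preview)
Your proposal is correct and follows essentially the same route as the paper: first treat the case $t_i \le s_1 \le s_2 \le t_{i+1}$ via the explicit conditional-expectation formula $\hat{\mathbb{E}}_{s_j}[X] = \psi_j(B_{t_1},\ldots,B_{s_j}-B_{t_i})$, bound $|\psi_1 - \psi_2|$ using the Lipschitz constant and the bound of $\varphi$ together with $\hat{\mathbb{E}}[|B_{s_2}-B_{s_1}|]\le \hat{\mathbb{E}}[|B_1|]\sqrt{s_2-s_1}$, and then for general $s_1,s_2$ insert the intermediate $t_k$'s and sum the at most $n+1$ resulting estimates. One small clarification: in the paper the $\sqrt{s_2-s_1}$ contribution comes from the shift in the \emph{future} law ($B_{t_{i+1}}-B_{s_1}$ versus $B_{t_{i+1}}-B_{s_2}$, under $\hat{\mathbb{E}}$), while the pathwise $|B_{s_2}-B_{s_1}|\wedge 1$ term comes from the change in the \emph{frozen} argument $B_{s_j}-B_{t_i}$; these are two separate sources rather than a small/large split of a single increment, but this is only a cosmetic difference from your sketch.
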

\begin{proof}
	First suppose $s_1,s_2\in [t_i,t_{i+1}]$ for some $0\leq i\leq n$ with the convention that $t_0=0,t_{n+1}=\infty$. By the definition of conditional $G$-expectation on $L_{ip}(\Omega)$, we have
	\begin{equation}\label{78787899}
	\hat{\mathbb{E}}_{s_j}[X]=\psi_j({B_{t_1},\cdots,B_{t_{i}}-B_{t_{i-1}},B_{s_j}-B_{t_{i}}}),\ \ \ \ \text{for}\ j=1,2,
	\end{equation}
	where $$
	\psi_j(x_1,\cdots,x_{i},x_{i+1})=\hat{\mathbb{E}}[\varphi(x_1,\cdots,x_{i},x_{i+1}+B_{t_{i+1}}-B_{s_j},\cdots,B_{t_{n}}-B_{t_{n-1}})].
	$$
    From the sub-additivity of $\hat{\mathbb{E}}$,
	\begin{align*}
	&|\psi_1(x_1,\cdots,x_{i},x_{i+1})-\psi_2(x'_1,\cdots,x'_{i},x'_{i+1})|\\
	&\ \ \leq (L_\varphi(\sum_{j=1}^{i+1}|x_j-x'_j|+\hat{\mathbb{E}}[|B_{s_2}-B_{s_1}|]))\wedge (2C_\varphi)\\
	&\ \ \leq C_1(\sum_{j=1}^{i+1}|x_j-x'_j|\wedge 1+\sqrt{s_2-s_1}),
	\end{align*}
	where $C_1=(L_\varphi(1\vee \hat{\mathbb{E}}[|B_1|]))\vee (2C_\varphi)$.
	Combining this with (\ref{78787899}), we obtain
	\begin{equation}\label{888888}
	|\hat{\mathbb{E}}_{s_2}[X]-\hat{\mathbb{E}}_{s_1}[X]|
	\leq
	C_1(|B_{s_2}-B_{s_1}|\wedge 1+\sqrt{s_2-s_1}).
	\end{equation}
	
	Next, suppose $s_1\in [t_{i},t_{i+1}],s_2\in [t_{j},t_{j+1}]$ for some $j\geq i$.
	Applying estimate (\ref{888888}), we have
	\begin{align*}
	|\hat{\mathbb{E}}_{s_2}[X]-\hat{\mathbb{E}}_{s_1}[X]|
	&\leq |\hat{\mathbb{E}}_{s_2}[X]-\hat{\mathbb{E}}_{t_j}[X]|+|\hat{\mathbb{E}}_{t_j}[X]-\hat{\mathbb{E}}_{t_{j-1}}[X]|+\cdots+|\hat{\mathbb{E}}_{t_{i+1}}[X]-\hat{\mathbb{E}}_{s_1}[X]|\\
	&\leq C_1(|B_{s_2}-B_{t_j}|\wedge 1+\cdots+|B_{t_{i+1}}-B_{s_1}|\wedge 1) + C_1(\sqrt{{s_2}-{t_j}}+\cdots+\sqrt{{t_{i+1}}-{s_1}})\\
	& \leq C\{\sup_{(u_1,u_2)\in \Lambda_{s_2-s_1,T}}(|B_{u_2}-B_{u_1}|\wedge 1)+\sqrt{s_2-s_1}\},
	\end{align*}
	where $C=(n+1)C_1$.
\end{proof}\\
Note that the estimate in the above lemma is universal: the   right-hand side of  estimate (\ref{8765433667889})  depends only on the difference $s_2-s_1$ instead of the values of $s_1$ and $s_2$. Then we can easily get the following discrete stopping time version. A more general form is given in Lemma
\ref{generalized Etau continuity lemma}.
\begin{lemma}\label{Etau continuity lemma}
	Let  $X\in L_{ip}(\Omega)$. Then for any $T,\delta>0$ and  discrete stopping times $\tau,\sigma\leq T$ taking finitely many values such that $|\tau-\sigma|\leq \delta$, we have
	\begin{equation}|\hat{\mathbb{E}}_{\tau+}[X]-\hat{\mathbb{E}}_{\sigma+}[X]|
	\leq C\{\sup_{(u_1,u_2)\in \Lambda_{\delta,T}}(|B_{u_2}-B_{u_1}|\wedge 1)+\sqrt{\delta}\},
	\end{equation}
	where $C$ is a constant depending only on $X$ and $G$.
\end{lemma}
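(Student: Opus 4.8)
The plan is to reduce the discrete-stopping-time statement to the deterministic-time estimate of Lemma \ref{Et continuity lemma} by decomposing $\Omega$ according to the (finitely many) joint values of $\tau$ and $\sigma$. Write $\tau = \sum_{i} t_i I_{\{\tau = t_i\}}$ and $\sigma = \sum_{j} s_j I_{\{\sigma = s_j\}}$ with all $t_i, s_j \in [0,T]$; since both take only finitely many values, the sets $A_{ij} := \{\tau = t_i\} \cap \{\sigma = s_j\}$ form a finite partition of $\Omega$, each lying in the appropriate $\sigma$-field, and on $A_{ij}$ we have $|t_i - s_j| \le \delta$. From the definition \eqref{Etau for discrete stopping time, lip} one gets
\[
\hat{\mathbb{E}}_{\tau+}[X] - \hat{\mathbb{E}}_{\sigma+}[X] = \sum_{i,j}\big(\hat{\mathbb{E}}_{t_i}[X] - \hat{\mathbb{E}}_{s_j}[X]\big) I_{A_{ij}},
\]
so that pointwise $|\hat{\mathbb{E}}_{\tau+}[X] - \hat{\mathbb{E}}_{\sigma+}[X]| = \sum_{i,j} |\hat{\mathbb{E}}_{t_i}[X] - \hat{\mathbb{E}}_{s_j}[X]| I_{A_{ij}}$, because the indicators are disjoint.

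Next I would apply Lemma \ref{Et continuity lemma} to each term. For the pair $(t_i, s_j)$, writing $s_1 = t_i \wedge s_j$, $s_2 = t_i \vee s_j$, we have $s_2 - s_1 = |t_i - s_j|$, and on $A_{ij}$ this is $\le \delta$; hence $\Lambda_{s_2 - s_1, T} \subseteq \Lambda_{\delta, T}$ and $\sqrt{s_2 - s_1} \le \sqrt{\delta}$. Therefore
\[
|\hat{\mathbb{E}}_{t_i}[X] - \hat{\mathbb{E}}_{s_j}[X]| \le C\Big\{\sup_{(u_1,u_2)\in \Lambda_{\delta,T}}(|B_{u_2}-B_{u_1}|\wedge 1)+\sqrt{\delta}\Big\}
\]
with $C$ depending only on $X$ and $G$ (and \emph{not} on $i,j$, which is exactly the point of the universality remark following Lemma \ref{Et continuity lemma}). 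Since this bound is uniform over all pairs and $\sum_{i,j} I_{A_{ij}} = 1$, summing the decomposition above gives the claimed estimate with the same constant $C$.

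I do not expect any serious obstacle here; the lemma is essentially a bookkeeping consequence of Lemma \ref{Et continuity lemma} once one notices that its right-hand side depends on $s_1,s_2$ only through $s_2 - s_1$. The one mild subtlety worth a sentence is the convention for when $\tau$ or $\sigma$ takes only finitely many values: one should check that the definition \eqref{Etau for discrete stopping time, lip} is still applied consistently (using $t_{n+i} := t_n + i$), but since all actual values lie in $[0,T]$ the extra fictitious values carry indicator $I_{\{\tau = t_{n+i}\}} = 0$ and do not affect the computation. A closely related point is that $X \in L_{ip}(\Omega)$ means $X = \varphi(B_{r_1}, \dots, B_{r_k} - B_{r_{k-1}})$ for finitely many times, so Lemma \ref{Et continuity lemma} applies directly with its constant $C$ fixed by this representation; no limiting argument in the $L_{ip}$-completion is needed at this stage.
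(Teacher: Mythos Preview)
Your proof is correct and follows essentially the same route as the paper: decompose $\Omega$ into the finitely many sets $\{\tau=t_i\}\cap\{\sigma=s_j\}$, apply Lemma~\ref{Et continuity lemma} on each piece, and use that on nonempty pieces $|t_i-s_j|\le\delta$ so the bound is uniform. The paper's argument is identical in structure and detail.
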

\begin{proof}
Assume $\tau=\sum_{i=1}^nt_iI_{\{\tau=t_i\}},\sigma=\sum_{i=1}^ms_iI_{\{\sigma=s_i\}}$. By the definition (\ref{Etau for discrete stopping time, lip}), we have
\begin{align*}
|\hat{\mathbb{E}}_{\tau+}[X]-\hat{\mathbb{E}}_{\sigma+}[X]|
& =|\sum_{i=1}^{n}\hat{\mathbb{E}}_{t_i}[X]I_{\{\tau=t_i\}}-\sum_{j=1}^{m}\hat{\mathbb{E}}_{s_j}[X]I_{\{\sigma=s_j\}}|\\
&\leq \sum_{i=1}^{n}\sum_{j=1}^{m}|\hat{\mathbb{E}}_{t_i}[X]-\hat{\mathbb{E}}_{s_j}[X]|I_{\{\tau=t_i\}\cap \{\sigma=s_j\} }.
\end{align*}
Then by
Lemma \ref{Et continuity lemma}, there exists  a constant $C$ depending on $X$ and $G$ such that
\begin{align*}
|\hat{\mathbb{E}}_{\tau+}[X]-\hat{\mathbb{E}}_{\sigma+}[X]|
&\leq \sum_{i=1}^{n}\sum_{j=1}^{m}C(\sup_{(u_1,u_2)\in \Lambda_{|t_i-s_j|,T}}(|B_{u_2}-B_{u_1}|\wedge 1)+\sqrt{|t_i-s_j|})I_{\{\tau=t_i\}\cap \{\sigma=s_j\} }\\
&\leq C(\sup_{(u_1,u_2)\in \Lambda_{\delta,T}}(|B_{u_2}-B_{u_1}|\wedge 1)+\sqrt{\delta}).
\end{align*}
The proof is complete.
\end{proof}

\begin{lemma}\label{sup continuity lemma}Let $T>0$ be a given constant. Then
\begin{equation}\label{12345678901}\hat{\mathbb{E}}[\sup_{(u_1,u_2)\in \Lambda_{\delta,T}}(|B_{u_2}-B_{u_1}|\wedge 1)]\downarrow  0,\ \ \ \  \text{as}\ \delta\downarrow 0.
\end{equation}
\end{lemma}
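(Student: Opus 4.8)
The plan is to show that the quantity $M_\delta := \sup_{(u_1,u_2)\in\Lambda_{\delta,T}}(|B_{u_2}-B_{u_1}|\wedge 1)$ decreases to $0$ quasi-surely along any sequence $\delta_k\downarrow 0$, and then to invoke a monotone/dominated convergence principle valid in the $G$-framework to pass the limit inside $\hat{\mathbb E}$. First I would note that $\delta\mapsto M_\delta$ is nondecreasing in $\delta$ and bounded by $1$, so the limit $M_{0+}:=\lim_{\delta\downarrow0}M_\delta$ exists pointwise (equivalently, take any sequence $\delta_k\downarrow0$; the $M_{\delta_k}$ are monotone decreasing). The key pathwise fact is that for \emph{every} $\omega\in\Omega=C([0,\infty);\mathbb R^d)$, the path $t\mapsto\omega_t$ is uniformly continuous on the compact interval $[0,T]$, hence $\sup_{|u_1-u_2|\le\delta,\ u_i\le T}|\omega_{u_2}-\omega_{u_1}|\to0$ as $\delta\downarrow0$. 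Therefore $M_{\delta}(\omega)\downarrow0$ for every $\omega$, not merely quasi-surely — there is no exceptional set at all.

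Next I would transfer this pointwise convergence to convergence of the $G$-expectations. Since $0\le M_{\delta_k}\le 1$ for all $k$ and $M_{\delta_k}\downarrow0$ everywhere, I want $\hat{\mathbb E}[M_{\delta_k}]\downarrow0$. Using the representation Theorem \ref{DHP representation}, write $\hat{\mathbb E}[M_{\delta_k}]=\sup_{P\in\mathcal P}E_P[M_{\delta_k}]$; one could try to apply the classical dominated convergence theorem under each fixed $P$ and then control the supremum, but that does not immediately give uniform decay of the sup. A cleaner route is to use the downward monotone convergence available for $G$-expectations of decreasing sequences of \emph{bounded, regular} random variables: because the $M_{\delta}$ are each bounded continuous functionals of the path (indeed each $M_\delta$ is a bona fide element controllable in $L^1_G(\Omega)$, being a uniform-on-compacts supremum of $C_{b.Lip}$-type increments, approximable by $L_{ip}(\Omega)$ functions), the sequence $(-M_{\delta_k})$ increases q.s. to $0$ with $-\hat{\mathbb E}[-(-M_{\delta_1})]=-\hat{\mathbb E}[M_{\delta_1}]>-\infty$, so Lemma \ref{upward mct for rv} applies and yields $\hat{\mathbb E}[M_{\delta_k}]=-\hat{\mathbb E}[-M_{\delta_k}]\cdot(-1)$... more directly, Lemma \ref{upward mct for rv} gives $\hat{\mathbb E}[-M_{\delta_k}]\uparrow\hat{\mathbb E}[0]=0$, i.e. $-\hat{\mathbb E}[M_{\delta_k}]$... which is the wrong inequality direction for a sublinear $\hat{\mathbb E}$. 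So instead I would use the capacity version: since $M_{\delta_k}\le1$, for any $\varepsilon>0$ we have $\hat{\mathbb E}[M_{\delta_k}]\le \varepsilon + c(\{M_{\delta_k}>\varepsilon\})$, and the events $\{M_{\delta_k}>\varepsilon\}$ decrease to $\bigcap_k\{M_{\delta_k}>\varepsilon\}=\varnothing$ (by the pointwise convergence $M_{\delta_k}(\omega)\to0$), so by the downward continuity of $c$ along decreasing sequences — which follows from Lemma \ref{upward mct for capacity} applied to the complements — $c(\{M_{\delta_k}>\varepsilon\})\downarrow 0$. Letting first $k\to\infty$ and then $\varepsilon\to0$ gives $\hat{\mathbb E}[M_{\delta_k}]\to0$, and monotonicity in $\delta$ upgrades this to the full limit $\delta\downarrow0$.

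The main obstacle — and the step deserving the most care in the write-up — is the legitimacy of the capacity manipulation $\hat{\mathbb E}[M_{\delta_k}]\le\varepsilon+c(\{M_{\delta_k}>\varepsilon\})$ together with the downward continuity of $c$: one must check that $M_{\delta_k}$ genuinely lies in a class where $\hat{\mathbb E}$ admits the representation and where $c$ controls it (it does, being bounded and Borel — indeed lower semicontinuous on $\Omega$), and that Lemma \ref{upward mct for capacity}, stated for increasing sequences, transfers to decreasing sequences via complementation inside a fixed ambient set of full capacity. Once these measurability/continuity points are nailed down, everything else is the elementary observation that continuous paths are uniformly continuous on compacts. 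I would therefore structure the proof as: (1) monotonicity and boundedness of $\delta\mapsto M_\delta$; (2) pathwise convergence $M_\delta\downarrow0$ everywhere via uniform continuity; (3) the $\varepsilon$-truncation plus downward capacity continuity to conclude $\hat{\mathbb E}[M_{\delta_k}]\to0$.
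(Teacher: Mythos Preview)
Your argument has a genuine gap at the capacity step. You claim that the downward continuity $c(\{M_{\delta_k}>\varepsilon\})\downarrow 0$ ``follows from Lemma~\ref{upward mct for capacity} applied to the complements'', but this deduction is invalid: the capacity $c=\sup_{P\in\mathcal P}P$ is sublinear, not additive, so in general $c(A)+c(A^c)\ne 1$. From $A_n\downarrow\varnothing$ you get $A_n^c\uparrow\Omega$ and hence $c(A_n^c)\uparrow 1$ by Lemma~\ref{upward mct for capacity}, but this says nothing about $c(A_n)$; it is perfectly consistent with $c(A_n)\equiv 1$ for all $n$. Downward continuity of $c$ (equivalently, of $\hat{\mathbb E}$ along decreasing sequences in $C_b$) is a genuinely extra property that hinges on the weak compactness of $\mathcal P$ and cannot be extracted from the upward monotone lemmas you cite. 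Your own earlier attempt via Lemma~\ref{upward mct for rv} fails for the same structural reason: $\hat{\mathbb E}[-M_{\delta_k}]\uparrow 0$ only gives $-\hat{\mathbb E}[-M_{\delta_k}]\downarrow 0$, which is the lower expectation, not $\hat{\mathbb E}[M_{\delta_k}]$.

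The paper's proof supplies exactly the missing ingredient, and does so directly rather than through a convergence theorem. By Theorem~\ref{DHP representation} the family $\mathcal P$ is weakly compact, hence tight: given $\varepsilon>0$ one picks a compact $K\subset\Omega_T$ with $c(K^c)<\varepsilon$. By Arzel\`a--Ascoli the paths in $K$ are \emph{uniformly} equicontinuous, so there is a single $\delta>0$ with $|B_{u_1}(\omega)-B_{u_2}(\omega)|\le\varepsilon$ for all $\omega\in K$ and $(u_1,u_2)\in\Lambda_{\delta,T}$. Splitting $\hat{\mathbb E}[M_\delta]$ over $K$ and $K^c$ then gives the bound $\varepsilon+c(K^c)<2\varepsilon$. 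Your pathwise observation that each individual $\omega$ is uniformly continuous on $[0,T]$ is correct but insufficient: it yields convergence under each fixed $P$, whereas the tightness of $\mathcal P$ is what upgrades this to the uniform equicontinuity needed to control the supremum over all $P\in\mathcal P$ simultaneously.
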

\begin{proof}
Given any $\varepsilon>0$, by the tightness of $\mathcal{P}$, we may pick a compact set $K\subset \Omega_T$ such that $c(K^c)<\varepsilon$. Then by the Arzel\`{a}-Ascoli theorem, there exists a $\delta>0$  such that $|B_{u_1}(\omega)-B_{u_2}(\omega)|\leq \varepsilon$ for $\omega\in K$ and $|u_1-u_2|\leq\delta$, $0\leq u_1,u_2\leq T$.
Consequently,
\begin{align*}
\hat{\mathbb{E}}[\sup_{(u_1,u_2)\in \Lambda_{\delta,T}}(|B_{u_2}-B_{u_1}|\wedge1)]
 \leq \hat{\mathbb{E}}[\sup_{(u_1,u_2)\in \Lambda_{\delta,T}}|B_{u_2}-B_{u_1}|I_K]+c({K^c})
 \leq 2\varepsilon.
\end{align*}
Since $\varepsilon$ can be arbitrarily small, we obtain the lemma.
\end{proof}
\begin{remark}\label{remark after sup continuity lemma}
	\upshape{	From the proof, we  know that the above lemma is still true for a more general case that $\hat{\mathbb{E}}$ is the upper expectation of a tight family of probability measures. To be precise, for any fixed $T$, let $\Omega_T$ be defined as in Section 2, $(B_t)_{0\leq t\leq T}$ be the canonical process and $\hat{\mathbb{E}}=\sup_{P\in\mathcal{P}'}E_P$, where $\mathcal{P}'$ is  a tight family of probability measures  on $\Omega_T$, then (\ref{12345678901}) holds.
	This  generalization will be used in the next section.}
\end{remark}

The following lemma is analogous to the classical one.
 \begin{lemma}\label{pre consistant Etau lemma}
Let $X\in L_{ip}(\Omega)$ and $\tau,\sigma$ be two simple discrete stopping times. Then $\hat{\mathbb{E}}_{(\tau\wedge\sigma)+}[X]=\hat{\mathbb{E}}_{\tau+}[X]$ on $\{\tau\leq \sigma\}$.
 \end{lemma}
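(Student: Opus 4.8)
The plan is to use that $\tau\wedge\sigma$ is again a simple discrete stopping time, so that both $\hat{\mathbb{E}}_{\tau+}[X]$ and $\hat{\mathbb{E}}_{(\tau\wedge\sigma)+}[X]$ are given by the explicit series (\ref{Etau for discrete stopping time, lip}), and then to compare them pointwise on $\{\tau\leq\sigma\}$ after writing both over one common set of values. First I would note that $\{\tau\wedge\sigma\leq t\}=\{\tau\leq t\}\cup\{\sigma\leq t\}\in\mathcal{F}_t$, so $\tau\wedge\sigma$ is a stopping time; if $\tau$ takes values in $\{t_i:i\geq1\}$ and $\sigma$ in $\{s_i:i\geq1\}$ (each enumerated increasingly, with the convention $t_{n+i}:=t_n+i$, $s_{m+i}:=s_m+i$ when the time takes only finitely many values), then $\{t_i\}$ and $\{s_i\}$ are discrete sets tending to $\infty$, hence so is their union, whose increasing enumeration $\{r_k:k\geq1\}$ again satisfies $r_k\uparrow\infty$. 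Thus $\{r_k\}$ is an admissible value set and $\tau$, $\sigma$, $\tau\wedge\sigma$ all take values in it.

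Next I would observe that enlarging the value set does not alter the conditional expectation produced by (\ref{Etau for discrete stopping time, lip}): if $\{t_i\}\subset\{r_k\}$ then $\{\tau=r_k\}=\emptyset$ whenever $r_k\notin\{t_i\}$, so $\sum_k\hat{\mathbb{E}}_{r_k}[X]I_{\{\tau=r_k\}}=\sum_i\hat{\mathbb{E}}_{t_i}[X]I_{\{\tau=t_i\}}=\hat{\mathbb{E}}_{\tau+}[X]$; here $\hat{\mathbb{E}}_{r_k}[X]$ for an arbitrary $r_k$ is the deterministic-time conditional $G$-expectation, which is unambiguously defined by inserting $r_k$ into the time partition of $X$, exactly as used in Lemma \ref{Et continuity lemma}. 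Hence, without loss of generality, $\hat{\mathbb{E}}_{\tau+}[X]=\sum_k\hat{\mathbb{E}}_{r_k}[X]I_{\{\tau=r_k\}}$ and $\hat{\mathbb{E}}_{(\tau\wedge\sigma)+}[X]=\sum_k\hat{\mathbb{E}}_{r_k}[X]I_{\{\tau\wedge\sigma=r_k\}}$, both over the same value set $\{r_k\}$.

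Finally I would carry out the pointwise comparison: for $\omega\in\{\tau\leq\sigma\}$ let $k$ be the unique index with $\tau(\omega)=r_k$; then also $(\tau\wedge\sigma)(\omega)=\tau(\omega)=r_k$, so in each of the two series only the $k$-th term is nonzero, and both reduce to $\hat{\mathbb{E}}_{r_k}[X](\omega)$. Therefore $\hat{\mathbb{E}}_{(\tau\wedge\sigma)+}[X]=\hat{\mathbb{E}}_{\tau+}[X]$ on $\{\tau\leq\sigma\}$. I expect this lemma to be essentially routine; the only point requiring a little care is the bookkeeping above — reconciling the possibly different value sets of $\tau$ and $\sigma$ (and handling the finite-valued convention) together with the verification that passing to a refined value set is harmless. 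Note that no feature of sublinearity enters, since after the reduction the claim is a pointwise identity between countable sums; this is what makes the $G$-expectation proof as simple as the classical one.
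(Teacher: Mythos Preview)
Your proof is correct and follows essentially the same elementary route as the paper. The only cosmetic difference is bookkeeping: you merge the two value sets into a single refined list $\{r_k\}$ and then compare the two series term by term, whereas the paper writes $\hat{\mathbb{E}}_{(\tau\wedge\sigma)+}[X]=\sum_{i,j}\hat{\mathbb{E}}_{t_i\wedge s_j}[X]I_{\{\tau=t_i,\sigma=s_j\}}$ as a double sum over pairs and then observes that on $\{\tau=t_i,\sigma=s_j\}\cap\{\tau\leq\sigma\}$ one has $t_i\wedge s_j=t_i$; both reduce to the same pointwise identity $\tau\wedge\sigma=\tau$ on $\{\tau\leq\sigma\}$.
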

 \begin{proof}
Assume $\tau,\sigma$ taking values in $\{t_i:i\geq 1\}$ and $\{s_i:i\geq 1\}$.
Then $$
\hat{\mathbb{E}}_{(\tau\wedge\sigma)+}[X]=\sum_{i,j=1}^\infty \hat{\mathbb{E}}_{t_i\wedge s_j}[X]I_{\{\tau=t_i,\sigma= s_j\}}.
$$
Multiplying $I_{\{\tau\leq \sigma\}}$ on both sides, since $t_i\leq s_j$ on $\{\tau=t_i,\sigma= s_j\}\cap {\{\tau\leq \sigma\}} $, it follows that
$$
I_{\{\tau\leq \sigma\}}\hat{\mathbb{E}}_{(\tau\wedge\sigma)+}[X]=\sum_{i,j=1}^\infty \hat{\mathbb{E}}_{t_i}[X]I_{\{\tau\leq \sigma\}}I_{\{\tau=t_i,\sigma= s_j\}}=\sum_{i=1}^\infty \hat{\mathbb{E}}_{t_i}[X]I_{\{\tau=t_i\}}I_{\{\tau\leq \sigma\}}=I_{\{\tau\leq \sigma\}}\hat{\mathbb{E}}_{\tau+}[X],
$$
which is the desired conclusion.
\end{proof}

\begin{proof}[Proof of Proposition \ref{taun Cauchy lemma}]
Assume $X\in L_{ip}(\Omega)$.
Let $\tau_n$ be a sequence of simple discrete stopping times such that $\tau_n\rightarrow \tau$ uniformly. We need to show that $\hat{\mathbb{E}}_{\tau_n+}[X]$ is a Cauchy sequence in $\mathbb{L}^1$ and the limit is independent of the choice of the approximation sequence $\tau_n$.
Assume $\tau_n=\sum_{i=1}^\infty t^n_iI_{\{\tau_n=t^n_i\}}$ and $|\tau_n-\tau|\leq \delta_n\rightarrow 0$, as $n\rightarrow\infty$. We can take $n_0$ large enough such that $\delta_n\leq 1$ for $n\geq n_0$, and hence $\{\tau\leq T\}\subset \{\tau_n\leq T+1\}$ and $\{\tau\leq T\}\subset \{\tau_m\leq T+1\}$, for $m,n\geq n_0$. Then it follows from  Lemma \ref{pre consistant Etau lemma} that
\begin{equation}\label{23453455676585}
\begin{split}
|\hat{\mathbb{E}}_{\tau_n+}[X]-\hat{\mathbb{E}}_{\tau_{m}+}[X]|
&=|\hat{\mathbb{E}}_{\tau_n+}[X]-\hat{\mathbb{E}}_{\tau_{m}+}[X]|I_{\{\tau\leq T\}}+|\hat{\mathbb{E}}_{\tau_n+}[X]-\hat{\mathbb{E}}_{\tau_{m}+}[X]|I_{\{\tau>T\}}\\
&\leq|\hat{\mathbb{E}}_{(\tau_n\wedge (T+1))+}[X]-\hat{\mathbb{E}}_{(\tau_m\wedge (T+1))+}[X]|I_{\{\tau\leq T\}}+2C_XI_{\{\tau>T\}}.
\end{split}
\end{equation}
 For any $\varepsilon>0$, we  choose $T$ large enough such that $c(\{\tau>T\})\leq \varepsilon$ by (H3). Taking expectation on both sides of (\ref{23453455676585}) and letting $n,m\rightarrow\infty$, we then obtain by Lemma \ref{Etau continuity lemma} and Lemma \ref{sup continuity lemma}
$$
\limsup_{n,m\rightarrow\infty}\hat{\mathbb{E}}[|\hat{\mathbb{E}}_{\tau_n+}[X]-\hat{\mathbb{E}}_{\tau_{m}+}[X]|]\leq 2C_Xc(\{\tau>T\})\leq 2C_X\varepsilon.
$$
Since $\varepsilon$ can be arbitrarily small, this implies $$\lim_{n,m\rightarrow\infty}\hat{\mathbb{E}}[|\hat{\mathbb{E}}_{\tau_n+}[X]-\hat{\mathbb{E}}_{\tau_{m}+}[X]|]= 0.$$
Similar argument shows that if there exists another simple discrete sequences  $\tau'_n$ such that  $\tau'_n\rightarrow \tau$ uniformly, we have
$$\lim_{n\rightarrow\infty}\hat{\mathbb{E}}[|\hat{\mathbb{E}}_{\tau_n+}[X]-\hat{\mathbb{E}}_{\tau'_{n}+}[X]|]=0.$$

Next, for each $n\geq 1$, we set
\begin{equation}\label{approximation tau in KS problem}
\tau_n:=f_n(\tau):=\sum_{i=1}^{\infty}t^n_iI_{\{t^n_{i-1}\leq \tau< t^n_i\}},\ \ \ \ \text{where}\ t^n_i:=\frac{i}{2^n},\ i\geq 0.
\end{equation}
Then we deduce  $\hat{\mathbb{E}}_{\tau_n+}[X]\in  L^{1,\tau+}_G(\Omega)\cap L^0(\mathcal{F}_{\tau+})$ by the observation that
$$
\sum_{i=1}^{m}\hat{\mathbb{E}}_{t^n_i}[X]I_{\{\tau_n=t^n_i\}}\in  L^{0,1,\tau+}_G(\Omega)\cap L^0(\mathcal{F}_{\tau+}),\ \ \ \ \text{for each}\ m\geq 1
$$
and
\begin{align*}
&\hat{\mathbb{E}}[|\sum_{i=1}^{\infty}\hat{\mathbb{E}}_{t^n_i}[X]I_{\{\tau=t^n_i\}}-\sum_{i=1}^{m}\hat{\mathbb{E}}_{t^n_i}[X]I_{\{\tau_n=t^n_i\}}|]\\
&\ \ \leq\hat{\mathbb{E}}[\sum_{i=m+1}^{\infty}|\hat{\mathbb{E}}_{t^n_i}[X]|I_{\{\tau_n=t^n_i\}}]\\
&\ \ \leq C_X\hat{\mathbb{E}}[\sum_{i=m+1}^{\infty}I_{\{\tau_n=t^n_i\}}]\\
&\ \ = C_Xc(\{\tau\geq t^n_m\})\rightarrow 0,  \ \ \ \  \text{as}\ n\rightarrow\infty.
\end{align*}
By the definition (\ref{890377433993}), this  implies $\hat{\mathbb{E}}_{\tau+}[X]\in  L^{1,\tau+}_G(\Omega)\cap L^0(\mathcal{F}_{\tau+})$.

Finally, if $\tau$ is itself a simple discrete stopping time, then $\hat{\mathbb{E}}_{\tau+}$  defined by (\ref{890377433993}) coincides with the one defined by (\ref{Etau for discrete stopping time, lip}) since we can take the approximation sequence $\tau_n\equiv\tau,n\geq 1$ in Step 2.
\end{proof}

Now we give three fundamental properties which are important for the extension of $\hat{\mathbb{E}}_{\tau+}$ to $L^1_G(\Omega)$.
\begin{proposition}\label{lip Etau lemma}
The conditional expectation $\hat{\mathbb{E}}_{\tau+}$ satisfies the following properties: for $X,Y\in L_{ip}(\Omega)$,
\begin{description}
  \item [{\rm (i)}] $\hat{\mathbb{E}}_{\tau+}[X]\leq \hat{\mathbb{E}}_{\tau+}[Y], \ \text{for} \ X\leq Y$;
  \item [{\rm(ii)}] $\hat{\mathbb{E}}_{\tau+}[X+Y]\leq \hat{\mathbb{E}}_{\tau+}[Y]+\hat{\mathbb{E}}_{\tau+}[Y]$;
\item  [{\rm(iii)}] $\hat{\mathbb{E}}[\hat{\mathbb{E}}_{\tau+}[X]]=\hat{\mathbb{E}}[X]$.
\end{description}
\end{proposition}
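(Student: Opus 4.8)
The strategy is to deduce all three properties by approximation from the corresponding (known) properties of the deterministic-time conditional $G$-expectation $\hat{\mathbb{E}}_{t}$, using the representation $\hat{\mathbb{E}}_{\tau_n+}[X]=\sum_i\hat{\mathbb{E}}_{t_i^n}[X]I_{\{\tau_n=t_i^n\}}$ for the simple discrete stopping times $\tau_n$ constructed in the proof of Proposition \ref{taun Cauchy lemma} via (\ref{approximation tau in KS problem}), together with the fact that $\hat{\mathbb{E}}_{\tau_n+}[X]\to\hat{\mathbb{E}}_{\tau+}[X]$ in $\mathbb{L}^1$. The point is that $\mathbb{L}^1$-convergence is strong enough to pass each of (i), (ii), (iii) to the limit.

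For (i): fix $X\le Y$ in $L_{ip}(\Omega)$ and pick the canonical approximating sequence $\tau_n=f_n(\tau)$. On each set $\{\tau_n=t_i^n\}$ we have $\hat{\mathbb{E}}_{\tau_n+}[X]=\hat{\mathbb{E}}_{t_i^n}[X]\le\hat{\mathbb{E}}_{t_i^n}[Y]=\hat{\mathbb{E}}_{\tau_n+}[Y]$ by Proposition \ref{condition expectation property}(i), and since the $\{\tau_n=t_i^n\}$ form a partition of $\Omega$, this gives $\hat{\mathbb{E}}_{\tau_n+}[X]\le\hat{\mathbb{E}}_{\tau_n+}[Y]$ q.s. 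Both sides converge in $\mathbb{L}^1$ (hence, along a subsequence, q.s.) to $\hat{\mathbb{E}}_{\tau+}[X]$ and $\hat{\mathbb{E}}_{\tau+}[Y]$ respectively, so the inequality is preserved q.s. The proof of (ii) is entirely parallel, invoking Proposition \ref{condition expectation property}(iii) on each atom $\{\tau_n=t_i^n\}$ and then passing to the $\mathbb{L}^1$-limit; one only has to note that the approximating sequences for $X$, $Y$ and $X+Y$ can all be taken to be the same $f_n(\tau)$, so the three limits are compatible.

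For (iii): again on $\{\tau_n=t_i^n\}$ we use Proposition \ref{condition expectation property}(vi) with $s=0$ (or directly the tower property $\hat{\mathbb{E}}[\hat{\mathbb{E}}_{t}[X]]=\hat{\mathbb{E}}[X]$) to write, for the discrete stopping time $\tau_n$,
\[
\hat{\mathbb{E}}[\hat{\mathbb{E}}_{\tau_n+}[X]]=\hat{\mathbb{E}}\Big[\sum_{i}\hat{\mathbb{E}}_{t_i^n}[X]I_{\{\tau_n=t_i^n\}}\Big].
\]
To handle this one must be a little careful: truncate the sum at $i\le m$, use that $I_{\{\tau_n=t_i^n\}}\in L_G^1(\Omega_{t_i^n})$ is bounded so that Proposition \ref{condition expectation property}(iv) lets us pull $\hat{\mathbb{E}}_{t_i^n}$ outside on each atom, and then let $m\to\infty$ using the tail estimate $C_X\,c(\{\tau\ge t_m^n\})\to 0$ already established in the proof of Proposition \ref{taun Cauchy lemma}; this yields $\hat{\mathbb{E}}[\hat{\mathbb{E}}_{\tau_n+}[X]]=\hat{\mathbb{E}}[X]$ for every $n$. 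Finally, since $|\hat{\mathbb{E}}[\hat{\mathbb{E}}_{\tau_n+}[X]]-\hat{\mathbb{E}}[\hat{\mathbb{E}}_{\tau+}[X]]|\le\hat{\mathbb{E}}[|\hat{\mathbb{E}}_{\tau_n+}[X]-\hat{\mathbb{E}}_{\tau+}[X]|]\to 0$ by (\ref{890377433993}), we conclude $\hat{\mathbb{E}}[\hat{\mathbb{E}}_{\tau+}[X]]=\hat{\mathbb{E}}[X]$.

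I expect the main obstacle to be purely bookkeeping rather than conceptual: justifying the interchange of $\hat{\mathbb{E}}$ with the infinite sum in (iii) (equivalently, that the partial sums converge to $\hat{\mathbb{E}}_{\tau_n+}[X]$ in $\mathbb{L}^1$ uniformly enough, which is exactly the tail estimate in the proof of Proposition \ref{taun Cauchy lemma}), and making sure that the subsequential q.s.\ convergence used in (i) and (ii) is along a single common subsequence so that the pointwise inequalities can be combined. Neither requires new ideas; both reuse the estimates and the construction already in place from Proposition \ref{taun Cauchy lemma}.
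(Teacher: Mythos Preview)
Your treatment of (i) and (ii) is fine and essentially matches the paper's one-line remark that these follow immediately from the definition and Proposition \ref{condition expectation property}.

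For (iii), however, there is a genuine gap. You assert that $I_{\{\tau_n=t_i^n\}}\in L_G^1(\Omega_{t_i^n})$ and then invoke Proposition \ref{condition expectation property}(iv). But in the $G$-expectation framework, $L_G^1(\Omega_t)$ is the closure of $L_{ip}(\Omega_t)$ under the $\|\cdot\|_1$-norm, and indicator functions of sets in $\mathcal{F}_t$ do \emph{not} in general belong to this space (they fail the quasi-continuity/uniform-integrability characterization of $L_G^1$). So Proposition \ref{condition expectation property}(iv) is simply not available for the factor $I_{\{\tau_n=t_i^n\}}$. Moreover, even granting that identity on each atom, you would still be left with $\hat{\mathbb{E}}\big[\sum_{i\le m}\hat{\mathbb{E}}_{t_i^n}[X I_{\{\tau_n=t_i^n\}}]\big]$, a sum of conditional expectations taken at \emph{different} times; there is no single tower property that collapses this to $\hat{\mathbb{E}}[X]$.

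This is precisely why the paper inserts and proves the nontrivial consistency result Proposition \ref{main proposition}: for an $\mathcal{F}_{t_i}$-partition $\{A_i\}$ one has $\hat{\mathbb{E}}[\sum_i \xi_i I_{A_i}]=\hat{\mathbb{E}}[\sum_i \hat{\mathbb{E}}_{t_i}[\xi_i]I_{A_i}]$. Its proof is not bookkeeping: it uses the representation $\hat{\mathbb{E}}=\sup_{P\in\mathcal{P}}E_P$, approximates the indicators $I_{A_i}$ and $I_{A_i^c}$ from above by continuous functions supported on disjoint compact sets, and exploits Proposition \ref{condition expectation property}(iv) only for those \emph{continuous} factors. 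Once Proposition \ref{main proposition} is in hand, applying it with $A_i=\{\tau_n=t_i^n\}$ and $\xi_i=X$ gives $\hat{\mathbb{E}}[\hat{\mathbb{E}}_{\tau_n+}[X]]=\hat{\mathbb{E}}[X]$ for simple discrete $\tau_n$, and your final limiting step is then correct. In short: your outline for (iii) is right at the level of the discrete-to-general passage, but the discrete case itself requires the substantive Proposition \ref{main proposition}, not just Proposition \ref{condition expectation property}.
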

In order to prove (iii), we need the following proposition. It  is  a generalized version of Proposition 2.5 (vi) in \cite{HJPS}.
\begin{proposition}\label{main proposition}
Let $A_i\in\mathcal{F}_{t_i},i\leq n$ for $0\leq t_1\leq \cdots\leq t_n$ such that $\cup_{i=1}^n A_i=\Omega$ and $A_i\cap A_j=\emptyset$ for $i\neq j$. Then for each $\xi_i\in L_G^{1}(\Omega),\ i\leq n$, we have
\begin{equation}\label{first proposition}
\hat{\mathbb{E}}[\sum_{i=1}^n\xi_iI_{A_i}]=\hat{\mathbb{E}}[\sum_{i=1}^n\hat{\mathbb{E}}_{t_i}[\xi_i]I_{A_i}].
\end{equation}
\end{proposition}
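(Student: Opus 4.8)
The plan is to reduce the identity to a single-step iteration of the tower property in Proposition \ref{condition expectation property} (vi), peeling off the sets $A_i$ from the outside in, starting with the one carrying the largest time index. More precisely, I would argue by induction on $n$. The case $n=1$ is trivial since $A_1=\Omega$ and $\hat{\mathbb{E}}[\hat{\mathbb{E}}_{t_1}[\xi_1]]=\hat{\mathbb{E}}[\xi_1]$ by the tower property with $s=0$. For the inductive step, the key observation is that $A_n\in\mathcal{F}_{t_n}$, so $I_{A_n}\in L^1_G(\Omega_{t_n})$ is bounded and $\mathcal{F}_{t_n}$-measurable; hence, using property (iv) of Proposition \ref{condition expectation property} (with the nonnegative random variable $I_{A_n}$, so that only the $\eta^+$ term survives), one can write $\hat{\mathbb{E}}_{t_n}[\xi_n I_{A_n}] = I_{A_n}\hat{\mathbb{E}}_{t_n}[\xi_n]$, and likewise $\hat{\mathbb{E}}_{t_n}[\xi_i I_{A_i}]$ behaves well for $i<n$ because $A_i$ is then $\mathcal{F}_{t_n}$-measurable too.

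The cleaner route, which I would actually follow, is this. Apply $\hat{\mathbb{E}}_{t_n}[\cdot]$ to $\sum_{i=1}^n \xi_i I_{A_i}$. Since $A_1,\dots,A_{n-1}\in\mathcal{F}_{t_{n-1}}\subset\mathcal{F}_{t_n}$ and $A_n\in\mathcal{F}_{t_n}$, and the $A_i$ are disjoint and cover $\Omega$, one has $\hat{\mathbb{E}}_{t_n}\big[\sum_{i=1}^n\xi_iI_{A_i}\big] = \sum_{i=1}^{n-1}\hat{\mathbb{E}}_{t_n}[\xi_i]I_{A_i} + \hat{\mathbb{E}}_{t_n}[\xi_n]I_{A_n}$; this is a finite-partition version of property (iv) that follows by splitting the sum on each atom of the partition and using that on $A_i$ the indicator of every other $A_j$ vanishes. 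Taking $\hat{\mathbb{E}}$ and using the tower property $\hat{\mathbb{E}}=\hat{\mathbb{E}}\circ\hat{\mathbb{E}}_{t_n}$ gives
\[
\hat{\mathbb{E}}\Big[\sum_{i=1}^n\xi_iI_{A_i}\Big]=\hat{\mathbb{E}}\Big[\sum_{i=1}^{n-1}\hat{\mathbb{E}}_{t_n}[\xi_i]I_{A_i}+\hat{\mathbb{E}}_{t_n}[\xi_n]I_{A_n}\Big].
\]
Now set $\widetilde{\xi}_{n-1}:=\hat{\mathbb{E}}_{t_{n-1}}[\hat{\mathbb{E}}_{t_n}[\xi_{n-1}]]=\hat{\mathbb{E}}_{t_{n-1}}[\xi_{n-1}]$ — wait, rather I keep the terms as they are and simply note the right-hand side is of the same form as the left with $n-1$ "free" sets $A_1,\dots,A_{n-1}$ at times $t_1,\dots,t_{n-1}$, carrying the random variables $\hat{\mathbb{E}}_{t_n}[\xi_1],\dots,\hat{\mathbb{E}}_{t_n}[\xi_{n-1}]$, plus the already-reduced term $\hat{\mathbb{E}}_{t_n}[\xi_n]I_{A_n}$ which is frozen. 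Applying the induction hypothesis to the first $n-1$ terms converts each $\hat{\mathbb{E}}_{t_n}[\xi_i]I_{A_i}$ into $\hat{\mathbb{E}}_{t_i}[\hat{\mathbb{E}}_{t_n}[\xi_i]]I_{A_i}$, and by the tower property (vi) with $t_i\le t_n$ this equals $\hat{\mathbb{E}}_{t_i}[\xi_i]I_{A_i}$. This yields exactly \eqref{first proposition}.

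The only points requiring care — and the place I expect the main friction — are the measurability and integrability bookkeeping needed to invoke property (iv) of Proposition \ref{condition expectation property} in the form "$\hat{\mathbb{E}}_{t_n}[\xi I_A]=I_A\,\hat{\mathbb{E}}_{t_n}[\xi]$ for $A\in\mathcal{F}_{t_n}$": property (iv) as stated requires $\eta=I_A\in L^1_G(\Omega_{t_n})$ and bounded, which is automatic here since indicators are bounded, but one should observe that a general $A\in\mathcal{F}_{t_n}$ need not have $I_A\in L^1_G(\Omega_{t_n})$ — however it does lie in $L^{1,\tau+}_G$-type completions and the product $\xi I_A$ lies in $\mathcal{L}(\Omega)$, so one works in $\mathbb{L}^1(\Omega)$ and $\mathcal{L}(\Omega)$ rather than $L^1_G(\Omega)$, extending (iv) by the usual approximation of $I_A$ by elements of $L_{ip}$ together with Lemma \ref{upward mct for rv}. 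Once this extension is in hand, the rest is the routine induction above. I would present the $n=2$ case in detail to fix the use of (iv) and then state the induction succinctly.
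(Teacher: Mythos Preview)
Your peeling-via-tower-property skeleton is the right intuition, and the paper's proof also contains such a peeling step. But the place you flag as ``requiring care'' is not a bookkeeping detail --- it is the entire proof, and as written your argument has a genuine gap.

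The problem is that at this point in the paper $\hat{\mathbb{E}}_{t_n}[\cdot]$ is defined only on $L^1_G(\Omega)$, and for a general $A\in\mathcal{F}_{t_n}$ the indicator $I_A$ does \emph{not} lie in $L^1_G(\Omega_{t_n})$ (this space is the closure of Lipschitz cylinder functions, not of bounded $\mathcal{F}_{t_n}$-measurable functions). Hence $\sum_i\xi_iI_{A_i}$ need not belong to $L^1_G(\Omega)$, so the expression $\hat{\mathbb{E}}_{t_n}\big[\sum_i\xi_iI_{A_i}\big]$ on which your induction rests is undefined, and Proposition~\ref{condition expectation property}(iv) cannot be invoked. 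The extension of $\hat{\mathbb{E}}_t$ to the larger space $L^{1,t}_G(\Omega)$ (where your pull-out would be legitimate, cf.\ Lemma~\ref{LG1t conditional expectation 2}) is constructed later in the paper and its well-definedness relies, through Proposition~\ref{L01tauG lemma}, on the very identity you are trying to prove --- so appealing to it is circular.

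Your proposed fix, ``approximate $I_A$ by $L_{ip}$ elements and use Lemma~\ref{upward mct for rv}'', is precisely what the paper does, but it is far from routine. The paper splits the identity into two inequalities. The easy direction $\hat{\mathbb{E}}[\sum\xi_iI_{A_i}]\le\hat{\mathbb{E}}[\sum\hat{\mathbb{E}}_{t_i}[\xi_i]I_{A_i}]$ comes from the representation $\hat{\mathbb{E}}=\sup_{P\in\mathcal P}E_P$ together with $E_P[\xi_i\mid\mathcal F_{t_i}]\le\hat{\mathbb{E}}_{t_i}[\xi_i]$. For the hard direction, for each fixed $P$ one approximates $I_{A_i}$ and $I_{A_i^c}$ from above by continuous functions $\varphi^{i,m}_l,\ \widetilde\varphi^{i,m}_l\in C_b(\Omega_{t_i})$ chosen, via separated compact subsets, so that $\varphi^{i,m}_l\cdot\widetilde\varphi^{i,m}_l=0$. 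This disjoint-support condition is what makes your peeling step legitimate \emph{for the continuous approximants} (there property~(iv) does apply), and one then passes to the limit using the weak compactness of $\mathcal P$. Without preserving disjointness at the level of the approximants, the sublinearity of $\hat{\mathbb{E}}_t$ only gives inequalities, and the argument collapses.
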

\begin{proof}
	\textit{Step 1.}
	Suppose first $\xi_i\geq 0$, $i=1,\cdots,n$. For any $P\in\mathcal{P}$,  by
	Lemma 17 in \cite{HP1}, we have
	$$E_P[\xi_i|\mathcal{F}_{t_i}]\leq \hat{\mathbb{E}}_{t_i}[\xi_i]\ \ \ \  P\text{-a.s.}$$
	Then
	$$ E_P[\sum_{i=1}^n\xi_iI_{A_i}]
	=E_P[\sum_{i=1}^nE_P[\xi_i|\mathcal{F}_{t_i}]I_{A_i}]\leq E_P[\sum_{i=1}^n\hat{\mathbb{E}}_{t_i}[\xi_i]I_{A_i}]\leq \hat{\mathbb{E}}[\sum_{i=1}^n\hat{\mathbb{E}}_{t_i}[\xi_i]I_{A_i}].$$
	This implies
	$$
	\hat{\mathbb{E}}[\sum_{i=1}^n\xi_iI_{A_i}]=\sup_{P\in\mathcal{P}}E_P[\sum_{i=1}^n\xi_iI_{A_i}]\leq \hat{\mathbb{E}}[\sum_{i=1}^n\hat{\mathbb{E}}_{t_i}[\xi_i]I_{A_i}].$$

	Now we prove the reverse inequality. We only need to show that, for each  $P\in\mathcal{P}$,
	\begin{equation}
	\label{87666879854}E_P[\sum_{i=1}^n\hat{\mathbb{E}}_{t_i}[\xi_i]I_{A_i}]\leq\hat{\mathbb{E}}[\sum_{i=1}^n\xi_iI_{A_i}].
	\end{equation}
	Let $P\in\mathcal{P}$ be given. For $i\leq n$, noting that $A_i,A_i^c \in\mathcal{F}_{t_i}$, we can choose a sequence of  increasing compact sets  $K^{i}_m\subset A_i$, $m\geq 1$ such that $P(A_i\backslash K^{i}_m)\downarrow0$, as $m\uparrow \infty$ and a sequence of  increasing compact sets $\widetilde{K}^i_m\subset A_i^c$, $m\geq 1$ such that $P(A_i^c \backslash\widetilde{K}^i_m)\downarrow0$, as $m\uparrow \infty$.
	Moreover, since $K^{i}_m\cap\widetilde{K}^i_m=\emptyset$ and $K^{i}_m,\widetilde{K}^i_m$ are compact sets, we have
	\begin{equation}
	\label{111111223232}
	\rho_d(K^{i}_m, \widetilde{K}^i_m)>0.
	\end{equation}
	For each $i, m$, by Theorem 1.2 in \cite{Bi} and  (\ref{111111223232}), there exist two sequences $\{\varphi^{i,m}_{l}\}_{l=1}^{\infty},\{\widetilde{\varphi}^{i,m}_{l}\}_{l=1}^\infty\subset C_b(\Omega_{t_i})$ such that $\varphi^{i,m}_{l}\downarrow I_{K^{i}_m}$, $\widetilde{\varphi}^{i,m}_{l}\downarrow I_{\widetilde{K}^i_k}$, as $l\rightarrow\infty$ and
	\begin{equation}\label{3453534}
	\varphi^{i,m}_{l}\cdot\widetilde{\varphi}^{i,m}_{l}=0,\ \ \ \ \text{for all}\ l\geq 1.
	\end{equation}
	Applying the classical monotone convergence theorem under $P$, we have
	\begin{equation}\label{899870776}
	\begin{split}
	E_P[\sum_{i=1}^n\hat{\mathbb{E}}_{t_i}[\xi_i]I_{A_i}]&= E_P[\sum_{i=1}^n\hat{\mathbb{E}}_{t_i}[\xi_i]I_{A_i}\prod_{j=1}^{i-1}I_{A_j^c}]\\
	&=\lim_{m\rightarrow\infty} E_P[\sum_{i=1}^{n}\hat{\mathbb{E}}_{t_i}[\xi_i]I_{K^{i}_m}\prod_{j=1}^{i-1}I_{\widetilde{K}^{j}_m}] \\
	&=\lim_{m\rightarrow\infty}\lim_{l\rightarrow\infty} E_P[\sum_{i=1}^{n}\hat{\mathbb{E}}_{t_i}[\xi_i]{\varphi^{i,m}_{l}}\prod_{j=1}^{i-1}{\widetilde{\varphi}^{j,m}_l}] \\
	&\leq\lim_{m\rightarrow\infty}\lim_{l\rightarrow\infty} \hat{\mathbb{E}}[\sum_{i=1}^{n}\hat{\mathbb{E}}_{t_i}[\xi_i]{\varphi^{i,m}_{l}}\prod_{j=1}^{i-1}{\widetilde{\varphi}^{j,m}_l}] .
	\end{split}
	\end{equation}
	For any fixed $m,l$,  by (vi), (ii), (iv) of Proposition \ref{condition expectation property}, we have
	\begin{equation*}
	\label{2342345334464}
	\begin{split}
	\hat{\mathbb{E}}[\sum_{i=1}^{n}\hat{\mathbb{E}}_{t_i}[\xi_i]{\varphi^{i,m}_{l}}\prod_{j=1}^{i-1}{\widetilde{\varphi}^{j,m}_l}] &= \hat{\mathbb{E}}[\hat{\mathbb{E}}_{t_{n-1}}[\sum_{i=1}^{n}\hat{\mathbb{E}}_{t_i}[\xi_i]{\varphi^{i,m}_{l}}\prod_{j=1}^{i-1}{\widetilde{\varphi}^{j,m}_l}] ]\\
	&=\hat{\mathbb{E}}[\sum_{i=1}^{n-1}\hat{\mathbb{E}}_{t_i}[\xi_i]{\varphi^{i,m}_{l}}\prod_{j=1}^{i-1}{\widetilde{\varphi}^{j,m}_l}+\hat{\mathbb{E}}_{t_{n-1}}[\xi_n{\varphi^{n,m}_{l}}]\prod_{j=1}^{n-1}{\widetilde{\varphi}^{j,m}_l}].
	\end{split}
	\end{equation*}
	By (\ref{3453534}) and  Proposition \ref{condition expectation property} (iv), we note that
	$$
	\hat{\mathbb{E}}_{t_{n-1}}[\xi_{n-1}]{\varphi^{n-1,m}_{l}}+\hat{\mathbb{E}}_{t_{n-1}}[\xi_n{\varphi^{n,m}_{l}}]\widetilde{\varphi}^{n-1,m}_l
	=\hat{\mathbb{E}}_{t_{n-1}}[\xi_{n-1}{\varphi^{n-1,m}_{l}}+\xi_n{\varphi^{n,m}_{l}}\widetilde{\varphi}^{n-1,m}_l].
	$$
	We thus obtain $$\hat{\mathbb{E}}[\sum_{i=1}^{n}\hat{\mathbb{E}}_{t_i}[\xi_i]{\varphi^{i,m}_{l}}\prod_{j=1}^{i-1}{\widetilde{\varphi}^{j,m}_l}]=\hat{\mathbb{E}}[\sum_{i=1}^{n-2}\hat{\mathbb{E}}_{t_i}[\xi_i]{\varphi^{i,m}_{l}}\prod_{j=1}^{i-1}{\widetilde{\varphi}^{j,m}_l}
	+\hat{\mathbb{E}}_{t_{n-1}}[\xi_{n-1}{\varphi^{n-1,m}_{l}}+\xi_n{\varphi^{n,m}_{l}}\widetilde{\varphi}^{n-1,m}_l]\prod_{j=1}^{n-2}{\widetilde{\varphi}^{j,m}_l}].$$
	Repeating this procedure, we conclude that
	\begin{equation}\label{987676655}
	\hat{\mathbb{E}}[\sum_{i=1}^{n}\hat{\mathbb{E}}_{t_i}[\xi_i]{\varphi^{i,m}_{l}}\prod_{j=1}^{i-1}{\widetilde{\varphi}^{j,m}_l}]=\hat{\mathbb{E}}[\hat{\mathbb{E}}_{t_1}[\sum_{i=1}^{n}\xi_i{\varphi^{i,m}_{l}}\prod_{j=1}^{i-1}{\widetilde{\varphi}^{j,m}_l}]]=\hat{\mathbb{E}}[\sum_{i=1}^{n}\xi_i{\varphi^{i,m}_{l}}\prod_{j=1}^{i-1}{\widetilde{\varphi}^{j,m}_l}].
	\end{equation}
	Substituting (\ref{987676655}) into (\ref{899870776}), we arrive at the inequality
	\begin{equation}\label{88776554545}
	E_P[\sum_{i=1}^n\hat{\mathbb{E}}_{t_i}[\xi_i]I_{A_i}] \leq\lim_{m\rightarrow\infty}\lim_{l\rightarrow\infty} \hat{\mathbb{E}}[\sum_{i=1}^{n}\xi_i{\varphi^{i,m}_{l}}\prod_{j=1}^{i-1}{\widetilde{\varphi}^{j,m}_l}]
	.
	\end{equation}
	By Theorem 1.31 in Chap VI of \cite{P7}, we note that
	\begin{align*}
	\label{}
	\lim_{l\rightarrow\infty}\hat{\mathbb{E}}[\sum_{i=1}^{n}\xi_i{\varphi^{i,m}_{l}}\prod_{j=1}^{i-1}{\widetilde{\varphi}^{j,m}_l}]
	&= \hat{\mathbb{E}}[\sum_{i=1}^{n}\xi_iI_{K^{i}_m}\prod_{j=1}^{i-1}I_{\widetilde{K}^{j}_m}]\\
	&\leq\hat{\mathbb{E}}[\sum_{i=1}^{n}\xi_iI_{K^{i}_m}]\\
	&\leq\hat{\mathbb{E}}[\sum_{i=1}^n\xi_iI_{A_i}].
	\end{align*}
	Thus  (\ref{87666879854}) is proved.

	\textit{Step 2.}
	Consider now the general case. We define $\xi^N_i=\xi_i\vee (-N)$
	for constant $N>0$. By Step 1,
	\begin{equation}
	\label{3247839273465}
	\hat{\mathbb{E}}[\sum_{i=1}^n(\xi_i^N+N)I_{A_i}]=\hat{\mathbb{E}}[\sum_{i=1}^n\hat{\mathbb{E}}_{t_i}[\xi^N_i+N]I_{A_i}].
	\end{equation}
	Note that
	$$
	\hat{\mathbb{E}}[\sum_{i=1}^n(\xi^N_i+N)I_{A_i}]=\hat{\mathbb{E}}[\sum_{i=1}^n\xi^N_iI_{A_i}]+N
	$$ and $$\hat{\mathbb{E}}[\sum_{i=1}^n\hat{\mathbb{E}}_{t_i}[\xi^N_i+N]I_{A_i}]=\hat{\mathbb{E}}[\sum_{i=1}^n\hat{\mathbb{E}}_{t_i}[\xi^N_i]I_{A_i}]+N.$$
	Subtracting $N$ from both sides of (\ref{3247839273465}), we obtain
	$$
	\hat{\mathbb{E}}[\sum_{i=1}^n\xi^N_iI_{A_i}]=\hat{\mathbb{E}}[\sum_{i=1}^n\hat{\mathbb{E}}_{t_i}[\xi^N_i]I_{A_i}].
	$$
	Letting $N\rightarrow \infty$ yields (\ref{first proposition})
	by  (\ref{ui property})
\end{proof}

\begin{proof}[Proof of Proposition \ref{lip Etau lemma}]
(i), (ii) follows immediately from  the definition of $\hat{\mathbb{E}}_{\tau+}$ and Proposition \ref{condition expectation property} (i), (iii). We just need to prove (iii).

First suppose $\tau$ is a simple discrete stopping time. By Proposition \ref{main proposition}, noting that $\{\tau=t_i\}\in \mathcal{F}_{t_i},i\geq 1$, we have,
$$
\hat{\mathbb{E}}[\hat{\mathbb{E}}_{\tau+}[X]]=\hat{\mathbb{E}}[\sum_{i=1}^{\infty}\hat{\mathbb{E}}_{t_i}[X]I_{\{\tau=t_i\}}]=\lim_{n\rightarrow\infty}\hat{\mathbb{E}}[\sum_{i=1}^{n}\hat{\mathbb{E}}_{t_i}[X]I_{\{\tau=t_i\}}]=\lim_{n\rightarrow\infty}\hat{\mathbb{E}}[\sum_{i=1}^{n}X I_{\{\tau=t_i\}}]=\hat{\mathbb{E}}[X].
$$

Now we consider the general optional time $\tau$. Taking a simple discrete stopping time sequence $\tau_n\rightarrow\tau$ uniformly, we obtain $$\hat{\mathbb{E}}[\hat{\mathbb{E}}_{\tau+}[X]]=\hat{\mathbb{E}}[\mathbb{L}^1\text{-}\lim_{n\rightarrow\infty}\hat{\mathbb{E}}_{\tau_n+}[X]]=\lim_{n\rightarrow\infty}\hat{\mathbb{E}}[\hat{\mathbb{E}}_{\tau_n+}[X]]= \hat{\mathbb{E}}[X],$$
which is the desired result.
\end{proof}
\subsubsection*{Stage two: $\hat{\mathbb{E}}_{\tau+}$ on $L_G^1(\Omega)$}
We proceed to define $$\hat{\mathbb{E}}_{\tau+}: L_G^1(\Omega)\rightarrow L^{1,\tau+}_G(\Omega)\cap L^0(\mathcal{F}_{\tau+}). $$
Let $X\in L_G^1(\Omega)$. Then there exists a sequence $\{X_n\}_{n=1}^\infty\subset L_{ip}(\Omega)$ such that $X_n\rightarrow X$ in $\mathbb{L}^1$.
We define
$$
\hat{\mathbb{E}}_{\tau+}[X]:=\mathbb{L}^1\text{-}\lim_{n\rightarrow \infty}\hat{\mathbb{E}}_{\tau+}[X_n].
$$
This extension of $\hat{\mathbb{E}}_{\tau+}$ also satisfies the  basic properties in Proposition \ref{lip Etau lemma}.
\begin{proposition}\label{LG1 welldefined lemma}
The conditional expectation $\hat{\mathbb{E}}_{\tau+}:L_G^1(\Omega)\rightarrow L^{1,\tau+}_G(\Omega)\cap L^0(\mathcal{F}_{\tau+})$ is well-defined and satisfies: for $X,Y\in L_{G}^1(\Omega)$,
\begin{description}
  \item [{\rm (i)}] $\hat{\mathbb{E}}_{\tau+}[X]\leq \hat{\mathbb{E}}_{\tau+}[Y], \ \text{for} \ X\leq Y$;
  \item [{\rm (ii)}] $\hat{\mathbb{E}}_{\tau+}[X+Y]\leq \hat{\mathbb{E}}_{\tau+}[Y]+\hat{\mathbb{E}}_{\tau+}[Y]$;
\item  [{\rm (iii)}] $\hat{\mathbb{E}}[\hat{\mathbb{E}}_{\tau+}[X]]=\hat{\mathbb{E}}[X]$.
\end{description}
\end{proposition}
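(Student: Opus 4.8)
The plan is to first show $\hat{\mathbb{E}}_{\tau+}$ is well-defined on $L_G^1(\Omega)$ by verifying that the construction does not depend on the approximating sequence, then transfer the three properties from $L_{ip}(\Omega)$ by a density argument. The key technical fact underlying both parts is that $\hat{\mathbb{E}}_{\tau+}:L_{ip}(\Omega)\to \mathbb{L}^1(\Omega)$ is a nonexpansive (contraction) map for the norm $||\cdot||_1$. To see this, let $X,X'\in L_{ip}(\Omega)$ and pick simple discrete stopping times $\tau_n\to\tau$ uniformly. For each $n$, using the definition \eqref{Etau for discrete stopping time, lip} together with the sub-additivity and monotonicity from Proposition \ref{condition expectation property}(i),(iii), one gets $|\hat{\mathbb{E}}_{\tau_n+}[X]-\hat{\mathbb{E}}_{\tau_n+}[X']|\le \sum_i\hat{\mathbb{E}}_{t_i^n}[|X-X'|]I_{\{\tau_n=t_i^n\}}$, hence $\hat{\mathbb{E}}[|\hat{\mathbb{E}}_{\tau_n+}[X]-\hat{\mathbb{E}}_{\tau_n+}[X']|]\le \hat{\mathbb{E}}[\sum_i\hat{\mathbb{E}}_{t_i^n}[|X-X'|]I_{\{\tau_n=t_i^n\}}]=\hat{\mathbb{E}}[|X-X'|]$ by Proposition \ref{lip Etau lemma}(iii) applied to $|X-X'|\in L_{ip}(\Omega)$. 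Passing to the $\mathbb{L}^1$-limit in $n$ gives $||\hat{\mathbb{E}}_{\tau+}[X]-\hat{\mathbb{E}}_{\tau+}[X']||_1\le ||X-X'||_1$.

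Granted the contraction estimate, well-definedness is routine: if $X\in L_G^1(\Omega)$ and $\{X_n\},\{X'_n\}\subset L_{ip}(\Omega)$ both converge to $X$ in $\mathbb{L}^1$, then $\{\hat{\mathbb{E}}_{\tau+}[X_n]\}$ is Cauchy in the Banach space $\mathbb{L}^1(\Omega)$ (Lemma from \cite{DHP}) since $||\hat{\mathbb{E}}_{\tau+}[X_n]-\hat{\mathbb{E}}_{\tau+}[X_m]||_1\le ||X_n-X_m||_1$, and the two limits agree because $||\hat{\mathbb{E}}_{\tau+}[X_n]-\hat{\mathbb{E}}_{\tau+}[X'_n]||_1\le ||X_n-X'_n||_1\to 0$. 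Moreover each $\hat{\mathbb{E}}_{\tau+}[X_n]\in L_G^{1,\tau+}(\Omega)\cap L^0(\mathcal{F}_{\tau+})$ by Proposition \ref{taun Cauchy lemma}; since $L_G^{1,\tau+}(\Omega)$ is closed under $||\cdot||_1$ by definition and $\mathbb{L}^1$-convergence passes to a q.s.-convergent subsequence which preserves $\mathcal{F}_{\tau+}$-measurability, the limit $\hat{\mathbb{E}}_{\tau+}[X]$ again lies in $L_G^{1,\tau+}(\Omega)\cap L^0(\mathcal{F}_{\tau+})$.

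For the three properties, fix $X,Y\in L_G^1(\Omega)$ with approximating sequences $X_n,Y_n\in L_{ip}(\Omega)$. Property (iii) is immediate: $\hat{\mathbb{E}}[\hat{\mathbb{E}}_{\tau+}[X]]=\lim_n\hat{\mathbb{E}}[\hat{\mathbb{E}}_{\tau+}[X_n]]=\lim_n\hat{\mathbb{E}}[X_n]=\hat{\mathbb{E}}[X]$, using that $\hat{\mathbb{E}}$ is $||\cdot||_1$-continuous and Proposition \ref{lip Etau lemma}(iii). Property (ii) follows by passing to the limit in $\hat{\mathbb{E}}_{\tau+}[X_n+Y_n]\le \hat{\mathbb{E}}_{\tau+}[X_n]+\hat{\mathbb{E}}_{\tau+}[Y_n]$ (Proposition \ref{lip Etau lemma}(ii)), since all three terms converge in $\mathbb{L}^1$ and hence, along a subsequence, q.s. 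For (i), when $X\le Y$ one cannot directly assume $X_n\le Y_n$, but one can replace $Y_n$ by $Y_n\vee X_n$ (still in $L_{ip}(\Omega)$ and still converging to $Y$ in $\mathbb{L}^1$, as $|Y_n\vee X_n - Y|\le |Y_n-Y|+|X_n-X|$ using $Y=X\vee Y$), then apply Proposition \ref{lip Etau lemma}(i) to the pair $X_n\le Y_n\vee X_n$ and pass to the q.s. limit along a subsequence. The main obstacle is really only the first paragraph — establishing the contraction property cleanly — since everything downstream is a standard density/subsequence argument; the one point requiring care is that monotonicity (i) needs the $X_n\vee Y_n$ trick because $\mathbb{L}^1$-approximation does not preserve the pointwise order.
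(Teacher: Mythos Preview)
Your proof is correct and follows essentially the same approach as the paper: establish a contraction estimate $\|\hat{\mathbb{E}}_{\tau+}[X]-\hat{\mathbb{E}}_{\tau+}[X']\|_1\le\|X-X'\|_1$ on $L_{ip}(\Omega)$, then extend by density. The paper obtains the contraction more directly---from Proposition~\ref{lip Etau lemma}(i),(ii) one gets $|\hat{\mathbb{E}}_{\tau+}[X_n]-\hat{\mathbb{E}}_{\tau+}[X_m]|\le \hat{\mathbb{E}}_{\tau+}[|X_n-X_m|]$ already at the level of $\tau$, and then (iii) gives the bound---whereas you route through the discrete approximants $\tau_n$, which is unnecessary but harmless. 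Conversely, your treatment of (i) with the $Y_n\vee X_n$ replacement actually fills in a detail the paper glosses over as ``obvious''; the paper's implicit argument would be to observe that sub-additivity reduces (i) to showing $\hat{\mathbb{E}}_{\tau+}[Z]\le 0$ for $Z\le 0$, which follows by approximating with $Z_n\wedge 0$.
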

\begin{proof}
(i)-(iii) are obvious by the definition and Proposition \ref{lip Etau lemma}. We just show that $\hat{\mathbb{E}}_{\tau+} $ is well-defined on $L_G^1(\Omega)$.

Let $X\in L_G^1(\Omega)$. Take any $\{X_n\}_{n=1}^\infty\subset L_{ip}(\Omega)$ such that $X_n\rightarrow X$ in $\mathbb{L}^1$. By (i), (ii), (iii) of Proposition \ref{lip Etau lemma}, we have
\begin{align*}
\hat{\mathbb{E}}[|\hat{\mathbb{E}}_{\tau+}[X_n]-\hat{\mathbb{E}}_{\tau+}[X_m]|]
\leq \hat{\mathbb{E}}[\hat{\mathbb{E}}_{\tau+}[|X_n-X_m|]]=\hat{\mathbb{E}}[|X_n-X_m|]\rightarrow 0,\ \ \ \ \text{as}\  n,m\rightarrow\infty.
\end{align*}
Moreover, a similar argument shows that the limit is independent of the choice of the approximation sequence $\{X_n\}_{n=1}^\infty$.
\end{proof}

\subsubsection*{Stage three: $\hat{\mathbb{E}}_{\tau+}$ on $L_G^{1,\tau+}(\Omega)$}
Finally, we define
$$\hat{\mathbb{E}}_{\tau+}: L^{1,\tau+}_G(\Omega)\rightarrow L^{1,\tau+}_G(\Omega)\cap L^0(\mathcal{F}_{\tau+})$$
by two steps.

\textit{Step 1.}
Let $X=\sum_{i=1}^{n}\xi_iI_{A_i}\in L_G^{0,1,\tau+}(\Omega)$, where $\xi_i\in L_G^1(\Omega)$ and  $\{A_i\}_{i=1}^n$ is an $\mathcal{F}_{\tau+}$-partition of $\Omega$.
We define $$
\hat{\mathbb{E}}_{\tau+}[X]:=\sum_{i=1}^{n}\hat{\mathbb{E}}_{\tau+} [\xi_i]I_{A_i}.
$$
Then $\hat{\mathbb{E}}_{\tau+}$ is well-defined by the following lemma.
\begin{lemma}\label{well define lemma}
Let $A\in \mathcal{F}_{\tau+}$ and $\xi,\eta\in L_G^1(\Omega)$. Then $\xi I_{A}\geq \eta I_A$ implies
\begin{equation}\label{34567}
I_{A}\hat{\mathbb{E}}_{\tau+}[\xi]\geq I_{A}\hat{\mathbb{E}}_{\tau+}[\eta].
\end{equation}
\end{lemma}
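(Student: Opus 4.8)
The plan is to reduce the claim to the known properties of $\hat{\mathbb{E}}_{\tau+}$ on $L_G^1(\Omega)$ (Proposition \ref{LG1 welldefined lemma}) by exploiting the fact that $A\in\mathcal{F}_{\tau+}$ can be well-approximated, in capacity, by sets of the form $\{\tau<t_i\}$-type events that are compatible with the conditional expectation at deterministic times. Concretely, I would first record the relevant localization identity: for $A\in\mathcal{F}_{\tau+}$, it is natural to expect that $I_A\hat{\mathbb{E}}_{\tau+}[X]=I_A\hat{\mathbb{E}}_{\tau+}[XI_A+\zeta I_{A^c}]$ for any $\zeta\in L_G^1(\Omega)$, i.e. that $\hat{\mathbb{E}}_{\tau+}$ is ``local'' in the sense that the value on $A$ depends only on the values of $X$ on $A$. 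If such a localization is available, then the lemma follows immediately: given $\xi I_A\geq \eta I_A$, pick any constant (say $0$) for the complement, so that $\xi I_A + \eta I_{A^c}\geq \eta I_A+\eta I_{A^c}=\eta$ holds quasi-surely on all of $\Omega$ is \emph{not} quite what we want — rather one writes $\xi I_A+c\,I_{A^c}$ and $\eta I_A+c\,I_{A^c}$ with a common large negative constant $c$, uses monotonicity from Proposition \ref{LG1 welldefined lemma}(i) to get $\hat{\mathbb{E}}_{\tau+}[\xi I_A+cI_{A^c}]\geq \hat{\mathbb{E}}_{\tau+}[\eta I_A+cI_{A^c}]$ globally, and then multiplies by $I_A$ and applies localization on both sides to cancel the contribution of $A^c$.

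So the real work is to prove the localization property $I_A\hat{\mathbb{E}}_{\tau+}[X]=I_A\hat{\mathbb{E}}_{\tau+}[XI_A+\zeta I_{A^c}]$ for $A\in\mathcal{F}_{\tau+}$, $X,\zeta\in L_G^1(\Omega)$. First I would establish this for $X,\zeta\in L_{ip}(\Omega)$ and $\tau$ a simple discrete stopping time taking values $\{t_i\}$. In that case $\hat{\mathbb{E}}_{\tau+}[X]=\sum_i\hat{\mathbb{E}}_{t_i}[X]I_{\{\tau=t_i\}}$, and since $A\in\mathcal{F}_{\tau+}$ we have $A\cap\{\tau=t_i\}\in\mathcal{F}_{t_i}$ (because $A\cap\{\tau<t\}\in\mathcal F_t$ for all $t$, and $\{\tau=t_i\}=\{\tau<t_i+\varepsilon\}\setminus\{\tau<t_i\}$ up to the discreteness of the range). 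But $XI_A+\zeta I_{A^c}$ need not be in $L_{ip}(\Omega)$ since $I_A$ is merely an indicator; however, $XI_{A\cap\{\tau=t_i\}}$ can be handled because $A\cap\{\tau=t_i\}\in\mathcal F_{t_i}$ and one can invoke Proposition \ref{condition expectation property}(iv), the ``taking out what is known'' property, after approximating $I_{A\cap\{\tau=t_i\}}$ by bounded $\mathcal{F}_{t_i}$-measurable functions — precisely the mechanism used in the proof of Proposition \ref{main proposition} via the compact-set / Urysohn approximation and Theorem 1.31 of Chapter VI of \cite{P7}. Thus on $\{\tau=t_i\}$ one computes $I_{A}\hat{\mathbb{E}}_{t_i}[X]=I_A\hat{\mathbb{E}}_{t_i}[XI_A]=I_A\hat{\mathbb{E}}_{t_i}[XI_A+\zeta I_{A^c}]$ using $I_A\hat{\mathbb{E}}_{t_i}[\zeta I_{A^c}]=0$ on $\{\tau=t_i\}$, which follows from $A\cap A^c=\emptyset$ together with Proposition \ref{condition expectation property}(iv); summing over $i$ gives the discrete case.

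Next I would pass to a general optional time $\tau$ satisfying (H3) by approximation: take simple discrete stopping times $\tau_n\to\tau$ uniformly with $\tau_n\geq\tau$ (e.g. the $f_n(\tau)$ of (\ref{approximation tau in KS problem})), note $\mathcal{F}_{\tau+}\subset\mathcal{F}_{\tau_n+}$ so $A\in\mathcal{F}_{\tau_n+}$, apply the discrete localization for each $n$, and let $n\to\infty$ using $\hat{\mathbb{E}}_{\tau_n+}[X]\to\hat{\mathbb{E}}_{\tau+}[X]$ in $\mathbb{L}^1$ (definition (\ref{890377433993})) together with $\hat{\mathbb{E}}_{\tau_n+}[XI_A+\zeta I_{A^c}]\to\hat{\mathbb{E}}_{\tau+}[XI_A+\zeta I_{A^c}]$; the latter requires first extending $\hat{\mathbb{E}}_{\tau+}$ to $L_G^{0,1,\tau+}(\Omega)$-type elements, which is exactly the content of the current Stage three, so I would actually be careful to prove the lemma \emph{before} invoking that extension — that is, I would keep everything on $L_G^1(\Omega)$ by approximating $I_A$ itself. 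Finally, extend from $X,\zeta\in L_{ip}(\Omega)$ to $X,\zeta\in L_G^1(\Omega)$ by $\mathbb{L}^1$-continuity of $\hat{\mathbb{E}}_{\tau+}$ (Proposition \ref{LG1 welldefined lemma}), using $\hat{\mathbb{E}}[|\,\hat{\mathbb{E}}_{\tau+}[Z_n]-\hat{\mathbb{E}}_{\tau+}[Z]\,|]\leq\hat{\mathbb{E}}[|Z_n-Z|]$. The main obstacle is the very first reduction — showing that indicators of $\mathcal{F}_{\tau+}$-sets can be ``pulled through'' $\hat{\mathbb{E}}_{t_i}$ on the events $\{\tau=t_i\}$; once one knows $A\cap\{\tau=t_i\}\in\mathcal{F}_{t_i}$ and imitates the monotone/compact approximation of Proposition \ref{main proposition}'s proof, the rest is routine.
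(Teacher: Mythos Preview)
Your overall architecture---reduce to $\xi,\eta\in L_{ip}(\Omega)$, then to simple discrete $\tau$, then pass to general $\tau$ via $\tau_n=f_n(\tau)$ from (\ref{approximation tau in KS problem}) using $\mathcal{F}_{\tau+}\subset\mathcal{F}_{\tau_n}$---matches the paper exactly. The divergence is at the discrete step, and there the paper is considerably shorter.

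The paper does \emph{not} prove a localization identity of the form $I_A\hat{\mathbb{E}}_{\tau+}[X]=I_A\hat{\mathbb{E}}_{\tau+}[XI_A+\zeta I_{A^c}]$. Instead, for discrete $\tau$ with values $\{t_i\}$ and $A\in\mathcal{F}_\tau$ (so $A\cap\{\tau=t_i\}\in\mathcal{F}_{t_i}$), it simply invokes Lemma~2.4 of \cite{HJPS}, which is precisely the deterministic-time version of the present lemma: for $A'\in\mathcal{F}_t$ and $\xi,\eta\in L_G^1(\Omega)$, $\xi I_{A'}\ge\eta I_{A'}$ implies $I_{A'}\hat{\mathbb{E}}_t[\xi]\ge I_{A'}\hat{\mathbb{E}}_t[\eta]$. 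Applying this with $A'=A\cap\{\tau=t_i\}$ and summing over $i$ gives the discrete case in one line; the $\mathbb{L}^1$-limit along $\tau_n\downarrow\tau$ then finishes the proof.

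Your route via a localization identity can be made to work, but the circularity you yourself flag is real: $\xi I_A+cI_{A^c}$ lies in $L_G^{0,1,\tau+}(\Omega)$, not $L_G^1(\Omega)$, and $\hat{\mathbb{E}}_{\tau+}$ on that space is exactly what this lemma is used to define. Your proposed fix (``approximate $I_A$ itself'') is vague---$A\in\mathcal{F}_{\tau+}$, not $\mathcal{F}_t$ for any fixed $t$, so the Urysohn machinery of Proposition~\ref{main proposition} does not apply to $A$ directly; it would have to be applied to each $A\cap\{\tau=t_i\}\in\mathcal{F}_{t_i}$ separately, at which point you are essentially re-proving Lemma~2.4 of \cite{HJPS}. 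That is a legitimate but longer path; the paper simply cites the result.
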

\begin{proof}
By approximation, we may assume that $\xi,\eta \in L_{ip}(\Omega)$.

We first prove the case that $\tau$ is a simple discrete stopping time taking values in $\{t_i:i\geq 1\}$ and $A\in \mathcal{F}_{\tau}$. Applying Lemma 2.4 in \cite{HJPS}, we have
\begin{equation*}
I_{A}\hat{\mathbb{E}}_{\tau+}[\xi]=\sum_{i=1}^\infty\hat{\mathbb{E}}_{t_i}[\xi]I_{A\cap \{\tau=t_i\}}\geq \sum_{i=1}^\infty\hat{\mathbb{E}}_{t_i}[\eta]I_{A\cap \{\tau=t_i\}}=I_{A}\hat{\mathbb{E}}_{\tau+}[\eta].
\end{equation*}

Now for the general $\tau$, take $\tau_n$ as (\ref{approximation tau in KS problem}). Since $A\in \mathcal{F}_{\tau+}\subset\mathcal{F}_{\tau_n}$, we have
$$
I_{A}\hat{\mathbb{E}}_{\tau+}[\xi]=\mathbb{L}^1\text{-}\lim_{n\rightarrow \infty}I_{A}\hat{\mathbb{E}}_{\tau_n+}[\xi]\geq \mathbb{L}^1\text{-}\lim_{n\rightarrow \infty}I_{A}\hat{\mathbb{E}}_{\tau_n+}[\eta]=I_{A}\hat{\mathbb{E}}_{\tau+}[\eta].
$$
This proves the lemma.
\end{proof}
\begin{proposition}\label{L01tauG lemma}The conditional expectation $\hat{\mathbb{E}}_{\tau+}:L_G^{0,1,\tau+}(\Omega)\rightarrow L^{1,\tau+}_G(\Omega)\cap L^0(\mathcal{F}_{\tau+})$ satisfies: for $X,Y\in L^{0,1,\tau+}_G(\Omega)$,
\begin{description}
  \item [{\rm (i)}] $\hat{\mathbb{E}}_{\tau+}[X]\leq \hat{\mathbb{E}}_{\tau+}[Y], \ \text{for} \ X\leq Y$;
  \item [{\rm (ii)}] $\hat{\mathbb{E}}_{\tau+}[X+Y]\leq \hat{\mathbb{E}}_{\tau+}[Y]+\hat{\mathbb{E}}_{\tau+}[Y]$;
  \item  [{\rm (iii)}] $\hat{\mathbb{E}}[\hat{\mathbb{E}}_{\tau+}[X]]=\hat{\mathbb{E}}[X]$.
\end{description}
\end{proposition}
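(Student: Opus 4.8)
The three assertions are handled separately. Parts (i) and (ii) are routine reductions to the corresponding facts for $\hat{\mathbb{E}}_{\tau+}$ on $L_G^1(\Omega)$ (Proposition \ref{LG1 welldefined lemma}); part (iii) is the substantial one and will be proved by first treating simple discrete stopping times via Proposition \ref{main proposition} and then passing to a general optional time by dyadic approximation. For (i) and (ii): given $X=\sum_{i=1}^{n}\xi_iI_{A_i}$ and $Y=\sum_{j=1}^{m}\eta_jI_{B_j}$ in $L_G^{0,1,\tau+}(\Omega)$, I would pass to the common $\mathcal{F}_{\tau+}$-partition $\{A_i\cap B_j\}_{i,j}$; by the well-definedness already noted (Lemma \ref{well define lemma}), $\hat{\mathbb{E}}_{\tau+}[X]=\sum_{i,j}\hat{\mathbb{E}}_{\tau+}[\xi_i]I_{A_i\cap B_j}$ and likewise for $Y$ and for $X+Y=\sum_{i,j}(\xi_i+\eta_j)I_{A_i\cap B_j}$. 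If $X\le Y$ then $\xi_i\le\eta_j$ on $A_i\cap B_j$, so $\xi_iI_{A_i\cap B_j}\le\eta_jI_{A_i\cap B_j}$, and Lemma \ref{well define lemma} gives $I_{A_i\cap B_j}\hat{\mathbb{E}}_{\tau+}[\xi_i]\le I_{A_i\cap B_j}\hat{\mathbb{E}}_{\tau+}[\eta_j]$; summing over $i,j$ yields (i), and applying the sub-additivity of $\hat{\mathbb{E}}_{\tau+}$ on $L_G^1(\Omega)$ to each $\hat{\mathbb{E}}_{\tau+}[\xi_i+\eta_j]$ and summing yields (ii).

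For (iii), \textbf{Step 1} treats the case where $\tau$ is a simple discrete stopping time valued in $\{t_l\}_{l\ge1}$ with each $A_i\in\mathcal{F}_\tau$. Then $\hat{\mathbb{E}}_{\tau+}[\xi_i]=\sum_l\hat{\mathbb{E}}_{t_l}[\xi_i]I_{\{\tau=t_l\}}$ (extended from $L_{ip}(\Omega)$ by $\mathbb{L}^1$-density), hence $\hat{\mathbb{E}}_{\tau+}[X]=\sum_{i,l}\hat{\mathbb{E}}_{t_l}[\xi_i]I_{A_i\cap\{\tau=t_l\}}$ with $A_i\cap\{\tau=t_l\}\in\mathcal{F}_{t_l}$. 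Since Proposition \ref{main proposition} covers only finite partitions, I would fix $N$ and apply it to the finite $\mathcal{F}$-partition $\{A_i\cap\{\tau=t_l\}:i\le n,\ l\le N\}\cup\{\{\tau>t_N\}\}$ (with $\{\tau>t_N\}\in\mathcal{F}_{t_N}$), taking the random variable $\xi_i$ on $A_i\cap\{\tau=t_l\}$ and $0$ on $\{\tau>t_N\}$; this yields $\hat{\mathbb{E}}[XI_{\{\tau\le t_N\}}]=\hat{\mathbb{E}}[\hat{\mathbb{E}}_{\tau+}[X]I_{\{\tau\le t_N\}}]$. Because $X$ and $\hat{\mathbb{E}}_{\tau+}[X]$ belong to $L_G^{1,\tau+}(\Omega)$ and are hence uniformly integrable (Remark \ref{ui remark}), and $c(\{\tau>t_N\})\to0$ by (H3), letting $N\to\infty$ replaces the two truncations by $X$ and $\hat{\mathbb{E}}_{\tau+}[X]$ in $\mathbb{L}^1$, giving $\hat{\mathbb{E}}[X]=\hat{\mathbb{E}}[\hat{\mathbb{E}}_{\tau+}[X]]$.

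\textbf{Step 2} is the general optional $\tau$. I would use the dyadic simple discrete stopping times $\tau_k$ of (\ref{approximation tau in KS problem}). Since $\mathcal{F}_{\tau+}\subset\mathcal{F}_{\tau_k}$, each $A_i\in\mathcal{F}_{\tau_k}$, so Step 1 applies (note $\tau_k$ satisfies (H3) because $\{\tau_k>T\}\subset\{\tau>T-2^{-k}\}$) and gives $\hat{\mathbb{E}}[\hat{\mathbb{E}}_{\tau_k+}[X]]=\hat{\mathbb{E}}[X]$ for every $k$. On the other hand, $\hat{\mathbb{E}}_{\tau_k+}[\xi_i]\to\hat{\mathbb{E}}_{\tau+}[\xi_i]$ in $\mathbb{L}^1$ for each $i$ (from (\ref{890377433993}) on $L_{ip}(\Omega)$, $\mathbb{L}^1$-density and the contraction property of $\hat{\mathbb{E}}_{\tau_k+}$), so $\hat{\mathbb{E}}_{\tau_k+}[X]=\sum_i\hat{\mathbb{E}}_{\tau_k+}[\xi_i]I_{A_i}\to\hat{\mathbb{E}}_{\tau+}[X]$ in $\mathbb{L}^1$; letting $k\to\infty$ and using the $\mathbb{L}^1$-continuity of $\hat{\mathbb{E}}$ gives $\hat{\mathbb{E}}[\hat{\mathbb{E}}_{\tau+}[X]]=\lim_k\hat{\mathbb{E}}[\hat{\mathbb{E}}_{\tau_k+}[X]]=\hat{\mathbb{E}}[X]$.

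The main obstacle is Step 1. Proposition \ref{main proposition} is intrinsically a finite statement, so the countable sum over the level sets $\{\tau=t_l\}$ must be truncated and the identity recovered in the limit; the two ingredients that make this work are the measurability $A_i\cap\{\tau=t_l\}\in\mathcal{F}_{t_l}$ (without which Proposition \ref{main proposition} cannot be invoked — this is exactly why the general optional time is reached only via the dyadic $\tau_k$, for which $\mathcal{F}_{\tau+}\subset\mathcal{F}_{\tau_k}$) and the uniform integrability of $X$ and $\hat{\mathbb{E}}_{\tau+}[X]$ together with (H3), which kills the truncation error. Everything else is bookkeeping with disjoint indicator sums, for which only monotonicity, sub-additivity and the $L_G^1(\Omega)$-consistency of $\hat{\mathbb{E}}_{\tau+}$ are needed.
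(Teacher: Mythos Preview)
Your proof is correct and follows essentially the same route as the paper: Proposition \ref{main proposition} handles the finitely-truncated discrete case, (H3) removes the truncation, and the dyadic $\tau_k$ with $\mathcal{F}_{\tau+}\subset\mathcal{F}_{\tau_k}$ transfers the result to a general optional time. The only organizational difference is that the paper first carries out Steps 1 and 2 under the additional hypothesis $\xi_i\in L_{ip}(\Omega)$ (so the truncation error is controlled by the bound $C_{\xi_i}\cdot c(\{\tau>t_N\})$ rather than by uniform integrability) and only afterwards passes to $\xi_i\in L_G^1(\Omega)$ by $\mathbb{L}^1$-density; you instead absorb this density argument into Step 1 and invoke Remark \ref{ui remark} to kill the tail. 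Both arrangements work and require the same ingredients.
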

\begin{proof}
We just prove (iii). The proof for (i), (ii) is trivial.

 First assume that $\tau$ is a simple discrete stopping time taking  values in $\{t_j:j\geq 1\}$ and $X=\sum_{i=1}^{n}\xi_iI_{A_i}$, where $\xi_i\in L_{ip}(\Omega)$ and $\{A_i\}_{i=1}^n$ is an $\mathcal{F}_{\tau}$-partition of $\Omega$. By Proposition \ref{main proposition},
$$
\hat{\mathbb{E}}[\hat{\mathbb{E}}_{\tau+}[X]]=\hat{\mathbb{E}}[\sum_{i=1}^{n}\hat{\mathbb{E}}_{\tau+}[\xi_i]I_{A_i}]=\lim_{m\rightarrow\infty}\hat{\mathbb{E}}[\sum_{i=1}^{n}\sum_{j=1}^{m}\hat{\mathbb{E}}_{t_j}[\xi_i]I_{{A_i}\cap{\{\tau=t_j\}}}]
=\lim_{m\rightarrow\infty}\hat{\mathbb{E}}[\sum_{i=1}^{n}\sum_{j=1}^{m}\xi_iI_{{A_i}\cap{\{\tau=t_j\}}}]=\hat{\mathbb{E}}[X].$$

Next suppose that $\tau$ is an optional time and $X=\sum_{i=1}^{n}\xi_iI_{A_i}$, where $\xi_i\in L_{ip}(\Omega)$ and $\{A_i\}_{i=1}^n$ is an $\mathcal{F}_{\tau+}$-partition of $\Omega$. Taking $\tau_m$ as (\ref{approximation tau in KS problem}), then we derive that
$$
\hat{\mathbb{E}}[\hat{\mathbb{E}}_{\tau+}[X]]=\hat{\mathbb{E}}[\sum_{i=1}^{n}\hat{\mathbb{E}}_{\tau+}[\xi_i]I_{A_i}]
=\lim_{m\rightarrow\infty}\hat{\mathbb{E}}[\sum_{i=1}^{n}\hat{\mathbb{E}}_{\tau_m+}[\xi_i]I_{A_i}]=\lim_{m\rightarrow\infty}\hat{\mathbb{E}}[\hat{\mathbb{E}}_{\tau_m+}[\sum_{i=1}^{n}\xi_iI_{A_i}]]
=\lim_{m\rightarrow\infty}\hat{\mathbb{E}}[X]=\hat{\mathbb{E}}[X].
$$

Consider finally the general case  that $\tau$ is an optional time and $X=\sum_{i=1}^{n}\xi_iI_{A_i}$, where $\xi_i\in L^1_G(\Omega)$ and $\{A_i\}_{i=1}^n$ is an $\mathcal{F}_{\tau+}$-partition of $\Omega$. We can take  sequences $\xi^k_i\in L_{ip}(\Omega)$ such that $\xi^k_i\rightarrow \xi_i$ in $\mathbb{L}^1$, $i\leq n$ to conclude that
$$
\hat{\mathbb{E}}[\hat{\mathbb{E}}_{\tau+}[X]]=\hat{\mathbb{E}}[\sum_{i=1}^{n}\hat{\mathbb{E}}_{\tau+}[\xi_i]I_{A_i}]
=\lim_{k\rightarrow\infty}\hat{\mathbb{E}}[\sum_{i=1}^{n}\hat{\mathbb{E}}_{\tau+}[\xi^k_i]I_{A_i}]
=\lim_{k\rightarrow\infty}\hat{\mathbb{E}}[\sum_{i=1}^{n}\xi^k_iI_{A_i}]=\hat{\mathbb{E}}[X],
$$
as desired.
\end{proof}

\textit{Step 2.}   Let $X\in L_G^{1,\tau+}(\Omega)$. Then there exists a sequence $\{X_n\}_{n=1}^\infty\subset L_G^{0,1,\tau+}(\Omega)$ such that $X_n\rightarrow X$  in $\mathbb{L}^1$. We define
$$
\hat{\mathbb{E}}_{\tau+}[X]:=\mathbb{L}^1\text{-}\lim_{n\rightarrow \infty}\hat{\mathbb{E}}_{\tau+}[X_n].
$$

\begin{proposition}\label{Etau welldefined} The conditional expectation $\hat{\mathbb{E}}_{\tau+}:L_G^{1,\tau+}(\Omega)\rightarrow L^{1,\tau+}_G(\Omega)\cap L^0(\mathcal{F}_{\tau+})$ is well-defined and satisfies the following properties: for $X,Y\in L^{1,\tau+}_G(\Omega)$,
\begin{description}
  \item [{\rm (i)}] $\hat{\mathbb{E}}_{\tau+}[X]\leq \hat{\mathbb{E}}_{\tau+}[Y], \ \text{for} \ X\leq Y$;
  \item [{\rm (ii)}] $\hat{\mathbb{E}}_{\tau+}[X+Y]\leq \hat{\mathbb{E}}_{\tau+}[Y]+\hat{\mathbb{E}}_{\tau+}[Y]$;
  \item  [{\rm (iii)}] $\hat{\mathbb{E}}[\hat{\mathbb{E}}_{\tau+}[X]]=\hat{\mathbb{E}}[X]$.
\end{description}
\end{proposition}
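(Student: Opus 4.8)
The plan is to mirror the argument of Stage two (Proposition \ref{LG1 welldefined lemma}): first verify that the defining $\mathbb{L}^1$-limit exists and is independent of the approximating sequence, that it lands in the stated space, and then transfer properties (i)--(iii) from $L_G^{0,1,\tau+}(\Omega)$ (Proposition \ref{L01tauG lemma}) to $L_G^{1,\tau+}(\Omega)$ by continuity. For well-definedness, take $X\in L_G^{1,\tau+}(\Omega)$ and any $\{X_n\}_{n=1}^\infty\subset L_G^{0,1,\tau+}(\Omega)$ with $X_n\to X$ in $\mathbb{L}^1$. Combining sub-additivity and monotonicity on $L_G^{0,1,\tau+}(\Omega)$ (Proposition \ref{L01tauG lemma} (i),(ii)) gives $|\hat{\mathbb{E}}_{\tau+}[X_n]-\hat{\mathbb{E}}_{\tau+}[X_m]|\leq \hat{\mathbb{E}}_{\tau+}[|X_n-X_m|]$ q.s.; taking $\hat{\mathbb{E}}$ and using Proposition \ref{L01tauG lemma} (iii) yields $\hat{\mathbb{E}}[|\hat{\mathbb{E}}_{\tau+}[X_n]-\hat{\mathbb{E}}_{\tau+}[X_m]|]\leq \hat{\mathbb{E}}[|X_n-X_m|]\to 0$, so $\{\hat{\mathbb{E}}_{\tau+}[X_n]\}$ is Cauchy in $\mathbb{L}^1$ and converges; applying the same estimate to an interlaced sequence shows the limit does not depend on $\{X_n\}$. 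Since each $\hat{\mathbb{E}}_{\tau+}[X_n]\in L^{1,\tau+}_G(\Omega)\cap L^0(\mathcal{F}_{\tau+})$, and $L^{1,\tau+}_G(\Omega)$ is $\|\cdot\|_1$-closed while an $\mathbb{L}^1$-convergent sequence admits a q.s.-convergent subsequence with an $\mathcal{F}_{\tau+}$-measurable limit, the limit lies in $L^{1,\tau+}_G(\Omega)\cap L^0(\mathcal{F}_{\tau+})$. Taking $X_n\equiv X$ shows this extension is compatible with the definitions in the earlier stages.

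Properties (ii) and (iii) follow by passing to the limit. For sub-additivity, choose $X_n\to X$ and $Y_n\to Y$ in $\mathbb{L}^1$ with $X_n,Y_n\in L_G^{0,1,\tau+}(\Omega)$; then $X_n+Y_n\to X+Y$ in $\mathbb{L}^1$ and $\hat{\mathbb{E}}_{\tau+}[X_n+Y_n]\leq \hat{\mathbb{E}}_{\tau+}[X_n]+\hat{\mathbb{E}}_{\tau+}[Y_n]$ by Proposition \ref{L01tauG lemma} (ii); choosing a subsequence along which all three sides converge q.s. preserves the inequality, giving (ii). For (iii), use that $\hat{\mathbb{E}}$ is $\mathbb{L}^1$-continuous (since $|\hat{\mathbb{E}}[U]-\hat{\mathbb{E}}[V]|\leq \hat{\mathbb{E}}[|U-V|]$) together with Proposition \ref{L01tauG lemma} (iii):
\[
\hat{\mathbb{E}}[\hat{\mathbb{E}}_{\tau+}[X]]=\lim_{n\to\infty}\hat{\mathbb{E}}[\hat{\mathbb{E}}_{\tau+}[X_n]]=\lim_{n\to\infty}\hat{\mathbb{E}}[X_n]=\hat{\mathbb{E}}[X].
\]

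Property (i) is the delicate point, since the $\mathbb{L}^1$-approximations $X_n\to X$ and $Y_n\to Y$ need not satisfy $X_n\leq Y_n$, so Proposition \ref{L01tauG lemma} (i) cannot be invoked term by term. The remedy is to exploit that $L_G^1(\Omega)$ is a lattice (closed under $\vee$), hence so is $L_G^{0,1,\tau+}(\Omega)$ after passing to a common $\mathcal{F}_{\tau+}$-refinement of the partitions: replacing $Y_n$ by $Y_n\vee X_n\in L_G^{0,1,\tau+}(\Omega)$, which still converges to $Y\vee X=Y$ in $\mathbb{L}^1$ because $|Y_n\vee X_n-Y\vee X|\leq |Y_n-Y|+|X_n-X|$, Proposition \ref{L01tauG lemma} (i) gives $\hat{\mathbb{E}}_{\tau+}[Y_n\vee X_n]\geq \hat{\mathbb{E}}_{\tau+}[X_n]$ q.s.; passing to a subsequence along which both sides converge q.s. yields $\hat{\mathbb{E}}_{\tau+}[Y]\geq \hat{\mathbb{E}}_{\tau+}[X]$ q.s. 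I expect this extension of monotonicity to be the only step requiring genuine care; the remaining assertions are routine $\mathbb{L}^1$-continuity arguments resting on Proposition \ref{L01tauG lemma}.
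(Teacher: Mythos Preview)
Your proposal is correct and is precisely the standard density argument the paper has in mind; the paper's own proof is a single sentence (``immediate from the definition of $\hat{\mathbb{E}}_{\tau+}$ on $L_G^{1,\tau+}(\Omega)$ and Proposition \ref{L01tauG lemma}''), and you have simply spelled out the details. Your extra care with (i) via the lattice replacement $Y_n\mapsto Y_n\vee X_n$ is valid but not really more delicate than (ii) or (iii): once (ii) is established you could equally well note $\hat{\mathbb{E}}_{\tau+}[X]\leq \hat{\mathbb{E}}_{\tau+}[Y]+\hat{\mathbb{E}}_{\tau+}[X-Y]$ and approximate $X-Y\leq 0$ by $Z_n\wedge 0\in L_G^{0,1,\tau+}(\Omega)$.
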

\begin{proof}
It is immediate from the definition of $\hat{\mathbb{E}}_{\tau+}$ on $L_G^{1,\tau+}(\Omega)$ and Proposition \ref{L01tauG lemma}.
\end{proof}
\begin{remark}\label{Etau+ remark}
\upshape{If $G(A)=\frac12{\text{tr}(A)}$, we have  $L^{1}_G(\Omega)=L^{1,\tau+}_G(\Omega)=L^{1}_{P}(\Omega)$ for the Wiener measure $P$, where $L^{1}_{P}(\Omega):=\{X\in\mathcal{F}:\ E_{P}[|X|]<\infty\}$. Moreover, $\hat{\mathbb{E}}_{\tau+}[\cdot]$  is just  the classical conditional expectation ${{E}}_{P}[\cdot|\mathcal{F}_{\tau+}]$.}
\end{remark}
\begin{remark}\label{Etau remark}
\upshape{
	Let $\tau$ be a stopping time satisfying (H3).
	\begin{description}	\item [{\rm (i)}]  We define $L^{1,\tau}_G(\Omega)$ as $L^{1,\tau+}_G(\Omega)$ with $\mathcal{F}_\tau$ in place of $\mathcal{F}_{\tau+}$. By a similar manner,  we can define the conditional expectation at  $\tau$
	$$
	\hat{\mathbb{E}}_{\tau}:{L}_{G}^{1,\tau}(\Omega)\rightarrow{L}_{G}^{1,\tau}(\Omega)\cap L^0(\mathcal{F}_{\tau}),$$
	and analogous properties (throughout this paper)  hold for $\hat{\mathbb{E}}_{\tau}$ and $L^{1,\tau}_G(\Omega)$. For convenience of readers, we sketch  the construction.

		 \textit{Stage one.} Let $X\in L_{ip}(\Omega)$. First for a simple discrete stopping time $\tau$ taking values in $\{t_i:i\geq 1\}$, we define
		\begin{equation*}
		\hat{\mathbb{E}}_{\tau}[X]:=\sum_{i=1}^{\infty}\hat{\mathbb{E}}_{t_i}[X]I_{\{\tau=t_i\}}.
		\end{equation*}
		Then for the general $\tau$, we take  a sequence of simple discrete stopping times $\tau_n$ such that $\tau_n \rightarrow \tau$ uniformly and define \begin{equation*}
		\hat{\mathbb{E}}_{\tau}[X]:=\mathbb{L}^1\text{-}\lim_{n\rightarrow \infty}\hat{\mathbb{E}}_{\tau_n}[X].
		\end{equation*}
		 \textit{Stage two.} Let $X\in L_G^1(\Omega)$. Then there exists a sequence $\{X_n\}_{n=1}^\infty\subset L_{ip}(\Omega)$ such that $X_n\rightarrow X$ in $\mathbb{L}^1$.
		We define
		$$
		\hat{\mathbb{E}}_{\tau}[X]:=\mathbb{L}^1\text{-}\lim_{n\rightarrow \infty}\hat{\mathbb{E}}_{\tau}[X_n].
		$$
		 \textit{Stage three.}  First for $X=\sum_{i=1}^{n}\xi_iI_{A_i}\in L_G^{0,1,\tau}(\Omega)$, where $\xi_i\in L_G^1(\Omega)$ and $\{A_i\}_{i=1}^{n}$ is an $\mathcal{F}_{\tau}$-partition of $\Omega$,
		we define $$
		\hat{\mathbb{E}}_{\tau}[X]:=\sum_{i=1}^{n}\hat{\mathbb{E}}_{\tau} [\xi_i]I_{A_i}.
		$$
		For $X\in L_G^{1,\tau}(\Omega)$, there exists a sequence $\{X_n\}_{n=1}^\infty\subset L_G^{0,1,\tau}(\Omega)$ such that $X_n\rightarrow X$  in $\mathbb{L}^1$. We define
		$$
		\hat{\mathbb{E}}_{\tau}[X]:=\mathbb{L}^1\text{-}\lim_{n\rightarrow \infty}\hat{\mathbb{E}}_{\tau}[X_n].
		$$

    \item [{\rm (ii)}] If $\tau\equiv t$ for some constant $t\geq 0$, then $\hat{\mathbb{E}}_{\tau}$ and $L_G^{1,\tau}(\Omega)$ reduce to $\hat{\mathbb{E}}_{t}$ and $L_G^{1,t}(\Omega)$ defined in \cite{HJPS}.

\item [{\rm (iii)}] In the case that $\tau$ is a stopping time, both $\hat{\mathbb{E}}_{\tau+}$ and $\hat{\mathbb{E}}_{\tau}$ are defined. From the definitions of $\hat{\mathbb{E}}_{\tau+}$ and $\hat{\mathbb{E}}_{\tau}$, it is easy to see that
	$$ \hat{\mathbb{E}}_{\tau+}[X]=\hat{\mathbb{E}}_{\tau}[X],\ \ \  \ \text{for}\ X\in {L}^{1,\tau}_G(\Omega).$$
	
		If $G(A)=\frac12{\text{tr}(A)}$, then  $L^{1}_G(\Omega)=L^{1,\tau}_G(\Omega)=L^{1}_{P}(\Omega)$ and $\hat{\mathbb{E}}_{\tau}[\cdot]$   reduces to the classical conditional expectation  ${{E}}_{P}[\cdot|\mathcal{F}_{\tau}]$, where  $P$ is the Wiener measure.
	\end{description}}
\end{remark}

\subsection{Some further properties of $\hat{\mathbb{E}}_{\tau+}$ on ${L}_{G}^{1,\tau+}(\Omega)$}
Let  $\tau$ be an optional time satisfying (H3). In this subsection, we  describe several interesting properties enjoyed by the conditional expectation $\hat{\mathbb{E}}_{\tau+}$ on $L^{1,{\tau}+}_G(\Omega)$. We begin by observing the following four significant statements.
\begin{proposition}\label{Etau proposition on L1tau}The conditional expectation $\hat{\mathbb{E}}_{\tau+}:L_G^{1,\tau+}(\Omega)\rightarrow L^{1,\tau+}_G(\Omega)\cap L^0(\mathcal{F}_{\tau+})$ satisfies the following properties:
	\begin{description}
		\item [{\rm (i)}]If $X_i\in L^{1,{\tau}+}_G(\Omega)$, $i=1,\cdots,n$ and $\{A_i\}_{i=1}^n$ is an $\mathcal{F}_{\tau+}$-partition of $\Omega$, then $\hat{\mathbb{E}}_{\tau+}[\sum_{i=1}^nX_iI_{A_i}]=\\ \sum_{i=1}^n\hat{\mathbb{E}}_{\tau+}[X_i]I_{A_i}$;
		\item [{\rm (ii)}] If $\tau$ and $\sigma$ are two optional times and $X\in L^{1,{\tau}+}_G(\Omega)$, then $\hat{\mathbb{E}}_{\tau+}[X]I_{\{\tau\leq \sigma\}}=\hat{\mathbb{E}}_{(\tau\wedge\sigma)+}[XI_{\{\tau\leq \sigma\}}]$;
		\item [{\rm (iii)}] If $X\in L^{1,{\tau}+}_G(\Omega)$, then $\hat{\mathbb{E}}_{(\tau\wedge T)+}[XI_{\{\tau\leq T\}}]\rightarrow \hat{\mathbb{E}}_{\tau+}[X]$ in $\mathbb{L}^1$, as $ T\rightarrow\infty$;
		\item [{\rm (iv)}] If $\{\tau_n\}_{n=1}^\infty,\tau$ are optional times such that  $\tau_n\rightarrow\tau$ uniformly, as $n\rightarrow\infty$ and $X\in L^{1,\tau_0+}_G(\Omega) $, where $\tau_0:=\tau \wedge (\wedge_{n=1}^\infty\tau_n)$, then  $\hat{\mathbb{E}}_{\tau_n+}[X]\rightarrow \hat{\mathbb{E}}_{\tau+}[X]$ in $\mathbb{L}^1$, as $n\rightarrow\infty$; in particular,
		if $\tau_n\downarrow\tau$ uniformly, as $n\rightarrow\infty$ and $X\in L^{1,{\tau}+}_G(\Omega)$, then $\hat{\mathbb{E}}_{\tau_n+}[X]\rightarrow \hat{\mathbb{E}}_{\tau+}[X]$ in $\mathbb{L}^1$, as $n\rightarrow\infty$.
	\end{description}
\end{proposition}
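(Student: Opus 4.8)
The plan is to prove the four parts in the order (i)$\to$(ii)$\to$(iii)$\to$(iv), each reducing by $\mathbb{L}^1$‑approximation to the level of $L_{ip}(\Omega)$, where the construction of Subsection 3.1, Lemma \ref{pre consistant Etau lemma}, and the indicator rule supply the content. Throughout I will use that $\hat{\mathbb{E}}_{\tau+}$ is an $\mathbb{L}^1$‑contraction, $\hat{\mathbb{E}}[\,|\hat{\mathbb{E}}_{\tau+}[U]-\hat{\mathbb{E}}_{\tau+}[V]|\,]\le\hat{\mathbb{E}}[\hat{\mathbb{E}}_{\tau+}[|U-V|]]=\hat{\mathbb{E}}[|U-V|]$, which follows from monotonicity, sub‑additivity and part (iii) of Proposition \ref{Etau welldefined} and legitimizes all the limit interchanges. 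For (i), first take $X_i\in L^{0,1,\tau+}_G(\Omega)$, write $X_i=\sum_j\xi_{ij}I_{B_{ij}}$, observe that $\{A_i\cap B_{ij}\}$ is again an $\mathcal{F}_{\tau+}$‑partition and $\sum_iX_iI_{A_i}=\sum_{i,j}\xi_{ij}I_{A_i\cap B_{ij}}\in L^{0,1,\tau+}_G(\Omega)$, so the claim is the defining formula after regrouping; then for general $X_i\in L^{1,\tau+}_G(\Omega)$ pass to the limit along $X_i^k\to X_i$ in $\mathbb{L}^1$ with $X_i^k\in L^{0,1,\tau+}_G(\Omega)$. The case $n=2$, $A_1=A\in\mathcal{F}_{\tau+}$, $X_2=0$ yields the indicator rule $\hat{\mathbb{E}}_{\tau+}[XI_A]=I_A\hat{\mathbb{E}}_{\tau+}[X]$, which I will use below.

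For (ii), I would first record the elementary facts that $\{\tau\le\sigma\}\in\mathcal{F}_{(\tau\wedge\sigma)+}$ and $B\cap\{\tau\le\sigma\}\in\mathcal{F}_{(\tau\wedge\sigma)+}$ for $B\in\mathcal{F}_{\tau+}$ (because $\tau\wedge\sigma=\tau$ on $\{\tau\le\sigma\}$), which also give $XI_{\{\tau\le\sigma\}}\in L^{1,(\tau\wedge\sigma)+}_G(\Omega)$ by approximation. The core is the identity $\hat{\mathbb{E}}_{(\tau\wedge\sigma)+}[X]I_{\{\tau\le\sigma\}}=\hat{\mathbb{E}}_{\tau+}[X]I_{\{\tau\le\sigma\}}$ for $X\in L_{ip}(\Omega)$, a generalization of Lemma \ref{pre consistant Etau lemma}. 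To get it, put $\tau_n:=f_n(\tau)$, $\sigma_n:=f_n(\sigma)$ with the monotone dyadic approximations $f_n$ of (\ref{approximation tau in KS problem}); since $f_n$ is nondecreasing, $\{\tau\le\sigma\}\subset\{\tau_n\le\sigma_n\}$, so multiplying Lemma \ref{pre consistant Etau lemma} (for the simple discrete stopping times $\tau_n,\sigma_n$) by $I_{\{\tau\le\sigma\}}$ gives $\hat{\mathbb{E}}_{(\tau_n\wedge\sigma_n)+}[X]I_{\{\tau\le\sigma\}}=\hat{\mathbb{E}}_{\tau_n+}[X]I_{\{\tau\le\sigma\}}$; because $\tau_n\downarrow\tau$ and $\tau_n\wedge\sigma_n=f_n(\tau\wedge\sigma)\downarrow\tau\wedge\sigma$ uniformly, Proposition \ref{taun Cauchy lemma} gives $\hat{\mathbb{E}}_{\tau_n+}[X]\to\hat{\mathbb{E}}_{\tau+}[X]$ and $\hat{\mathbb{E}}_{(\tau_n\wedge\sigma_n)+}[X]\to\hat{\mathbb{E}}_{(\tau\wedge\sigma)+}[X]$ in $\mathbb{L}^1$, and letting $n\to\infty$ with the indicator held fixed proves the identity. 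The indicator rule of (i) then turns it into (ii) for $X\in L_{ip}(\Omega)$: $\hat{\mathbb{E}}_{(\tau\wedge\sigma)+}[XI_{\{\tau\le\sigma\}}]=\hat{\mathbb{E}}_{(\tau\wedge\sigma)+}[X]I_{\{\tau\le\sigma\}}=\hat{\mathbb{E}}_{\tau+}[X]I_{\{\tau\le\sigma\}}$. Extension to $L^1_G(\Omega)$ is routine $\mathbb{L}^1$‑approximation; extension to $L^{1,\tau+}_G(\Omega)$ uses (i): approximate $X$ by $\sum_i\xi_iI_{B_i}\in L^{0,1,\tau+}_G(\Omega)$, apply the $L^1_G$ case to each $\xi_i$, and recombine via (i) using $B_i\cap\{\tau\le\sigma\}\in\mathcal{F}_{(\tau\wedge\sigma)+}$. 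The point worth stressing — and the only place where some real care is needed — is that keeping $I_{\{\tau\le\sigma\}}$ fixed (rather than trying to pass $I_{\{\tau_n\le\sigma_n\}}\to I_{\{\tau\le\sigma\}}$) avoids invoking continuity from above of the capacity, which fails in general.

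For (iii), apply (ii) with the constant optional time $\sigma\equiv T$ to get $\hat{\mathbb{E}}_{(\tau\wedge T)+}[XI_{\{\tau\le T\}}]=\hat{\mathbb{E}}_{\tau+}[X]I_{\{\tau\le T\}}$, so with $Y:=\hat{\mathbb{E}}_{\tau+}[X]\in L^{1,\tau+}_G(\Omega)$ it remains to see $\hat{\mathbb{E}}[|Y|I_{\{\tau>T\}}]\to0$; this follows from $\hat{\mathbb{E}}[|Y|I_{\{\tau>T\}}]\le\hat{\mathbb{E}}[|Y|I_{\{|Y|>N\}}]+N\,c(\{\tau>T\})$ by letting $N\to\infty$ (Remark \ref{ui remark}) and then $T\to\infty$ ((H3)). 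For (iv), the "in particular" clause is the case $\tau_0=\tau$, so only the general statement needs proof, and my plan has three steps. (a) Reduce to $X\in L_{ip}(\Omega)$: approximate $X\in L^{1,\tau_0+}_G(\Omega)$ by $\sum_i\xi^k_iI_{A^k_i}\in L^{0,1,\tau_0+}_G(\Omega)$ and note $\mathcal{F}_{\tau_0+}\subset\mathcal{F}_{\tau+}\cap\bigcap_n\mathcal{F}_{\tau_n+}$, so by (i) $\hat{\mathbb{E}}_{\tau_n+}[\sum_i\xi^k_iI_{A^k_i}]-\hat{\mathbb{E}}_{\tau+}[\sum_i\xi^k_iI_{A^k_i}]=\sum_i(\hat{\mathbb{E}}_{\tau_n+}[\xi^k_i]-\hat{\mathbb{E}}_{\tau+}[\xi^k_i])I_{A^k_i}$; a $3\varepsilon$‑argument brings the problem first to $X\in L^1_G(\Omega)$ and then to $X\in L_{ip}(\Omega)$. (b) For $X\in L_{ip}(\Omega)$ and optional times bounded by $T$ with $|\tau_n-\tau|\le\delta_n\to0$, pass to the dyadic approximations $f_m(\tau_n),f_m(\tau)$, apply Lemma \ref{Etau continuity lemma}, and let $m\to\infty$ to obtain $\hat{\mathbb{E}}[\,|\hat{\mathbb{E}}_{\tau_n+}[X]-\hat{\mathbb{E}}_{\tau+}[X]|\,]\le C\,\hat{\mathbb{E}}[\sup_{(u_1,u_2)\in\Lambda_{\delta_n,T}}(|B_{u_2}-B_{u_1}|\wedge1)]+C\sqrt{\delta_n}$ (this is the optional‑time continuity estimate of Lemma \ref{generalized Etau continuity lemma}), which tends to $0$ by Lemma \ref{sup continuity lemma}. (c) Remove boundedness by replacing $\tau_n,\tau$ with $\tau_n\wedge T,\tau\wedge T$: these are bounded, converge uniformly, and by (ii) together with the indicator rule they agree with $\hat{\mathbb{E}}_{\tau_n+}[X],\hat{\mathbb{E}}_{\tau+}[X]$ up to $\mathbb{L}^1$‑errors $2C_X\,c(\{\tau_n>T\})\le 2C_X\,c(\{\tau>T-1\})$ and $2C_X\,c(\{\tau>T\})$, both uniformly small by (H3). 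The hard part will be orchestrating the two approximations and the truncation in (iv) so that the error terms remain uniform in $n$; apart from that, and the capacity‑continuity subtlety in (ii), everything is bookkeeping of $\sigma$‑field memberships and $\mathbb{L}^1$‑limits.
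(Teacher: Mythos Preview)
Your proposal is correct and follows essentially the same route as the paper's own proof: part (i) by unpacking the Step~1 definition on $L^{0,1,\tau+}_G(\Omega)$ and passing to the $\mathbb{L}^1$ closure; part (ii) by dyadic approximation $\tau_n=f_n(\tau)$, $\sigma_n=f_n(\sigma)$, multiplying Lemma~\ref{pre consistant Etau lemma} by $I_{\{\tau\le\sigma\}}$ and then climbing from $L_{ip}(\Omega)$ to $L^{1,\tau+}_G(\Omega)$ via (i); part (iv) via Lemma~\ref{generalized Etau continuity lemma} in the bounded case plus a truncation/localization argument using (ii) in the unbounded case. The only visible differences are cosmetic: in (iii) you argue directly from the uniform integrability of $Y=\hat{\mathbb{E}}_{\tau+}[X]$ (Remark~\ref{ui remark}) rather than first treating bounded $X$ and then truncating, and in (iv)(c) you truncate via $\tau_n\wedge T$ with the uniform error bound $c(\{\tau_n>T\})\le c(\{\tau>T-1\})$, whereas the paper localizes on the common set $\{\tau_0\le T\}$; both variants work.
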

\begin{remark}
	\upshape{
For two optional times $\tau$ and $\sigma$, since $A\cap{\{\tau\leq \sigma\}}, A\cap\{\tau=\sigma\} \in \mathcal{F}_{(\tau\wedge \sigma)+}\subset \mathcal{F}_{\sigma+}$ for $A\in \mathcal{F}_{\tau+}$, we have  $XI_{\{\tau\leq \sigma\}}, XI_{\{\tau= \sigma\}}\in L^{1,{(\tau\wedge \sigma)}+}_G(\Omega)\subset L^{1,{\sigma}+}_G(\Omega)$ for $X\in L^{1,{\tau}+}_G(\Omega) $. Hence  the  conditional expectations $\hat{\mathbb{E}}_{(\tau\wedge\sigma)+}[XI_{\{\tau\leq \sigma\}}]$, $ \hat{\mathbb{E}}_{(\tau\wedge\sigma)+}[XI_{\{\tau=\sigma\}}]$, $\hat{\mathbb{E}}_{\sigma+}[XI_{\{\tau\leq \sigma\}}]$ and $ \hat{\mathbb{E}}_{\sigma+}[XI_{\{\tau=\sigma\}}]$  are all   meaningful. }
\end{remark}
The following generalization of Lemma \ref{Etau continuity lemma} is needed for the proof of  Proposition \ref{Etau proposition on L1tau} (iv).
\begin{lemma}\label{generalized Etau continuity lemma}
	Let $X\in L_{ip}(\Omega)$. Then there exists a constant $C$ depending on $X$ and $G$ such that
	$$|\hat{\mathbb{E}}_{\tau+}[X]-\hat{\mathbb{E}}_{\sigma+}[X]|
	\leq C\{\sup_{(u_1,u_2)\in \Lambda_{\delta,T}}(|B_{u_2}-B_{u_1}|\wedge 1)+\sqrt{\delta}\},
	$$
	for any $T,\delta>0$ and optional times $\tau,\sigma\leq T$ such that $|\tau-\sigma|\leq \delta$.
\end{lemma}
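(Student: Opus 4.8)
The plan is to reduce the general-optional-time estimate to the already-established discrete stopping time version, Lemma \ref{Etau continuity lemma}, by a careful approximation. First I would approximate $\tau$ and $\sigma$ from above by the dyadic stopping times $\tau_n:=f_n(\tau)$ and $\sigma_n:=f_n(\sigma)$ defined in (\ref{approximation tau in KS problem}), truncated at a large level so that they take finitely many values; these are simple discrete stopping times with $\tau_n\downarrow\tau$, $\sigma_n\downarrow\sigma$ uniformly, and $|\tau_n-\tau|\le 2^{-n}$, $|\sigma_n-\sigma|\le 2^{-n}$. Since $\tau,\sigma\le T$ we may take $\tau_n,\sigma_n\le T+1$ and $|\tau_n-\sigma_n|\le\delta+2^{-n+1}$. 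By definition (\ref{890377433993}) of $\hat{\mathbb{E}}_{\tau+}$ on $L_{ip}(\Omega)$ we have $\hat{\mathbb{E}}_{\tau_n+}[X]\to\hat{\mathbb{E}}_{\tau+}[X]$ and $\hat{\mathbb{E}}_{\sigma_n+}[X]\to\hat{\mathbb{E}}_{\sigma+}[X]$ in $\mathbb{L}^1$, hence along a subsequence also q.s.

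Next I would apply Lemma \ref{Etau continuity lemma} to the discrete stopping times $\tau_n,\sigma_n\le T+1$ with separation $\delta+2^{-n+1}$, obtaining
$$|\hat{\mathbb{E}}_{\tau_n+}[X]-\hat{\mathbb{E}}_{\sigma_n+}[X]|\le C\Big\{\sup_{(u_1,u_2)\in\Lambda_{\delta+2^{-n+1},T+1}}(|B_{u_2}-B_{u_1}|\wedge 1)+\sqrt{\delta+2^{-n+1}}\Big\},$$
with $C$ depending only on $X$ and $G$. Letting $n\to\infty$ along the q.s.-convergent subsequence, the left side tends to $|\hat{\mathbb{E}}_{\tau+}[X]-\hat{\mathbb{E}}_{\sigma+}[X]|$ q.s., while the right side is non-increasing in $n$ and converges to $C\{\sup_{(u_1,u_2)\in\Lambda_{\delta,T+1}}(|B_{u_2}-B_{u_1}|\wedge 1)+\sqrt{\delta}\}$ (using continuity of paths so the supremum over the shrinking rectangles converges down to the supremum over $\Lambda_{\delta,T+1}$). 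This yields the claimed bound, possibly with $T$ replaced by $T+1$ in the supremum; since $\Lambda_{\delta,T}\subset\Lambda_{\delta,T+1}$ one either states the lemma with this harmless enlargement, or—if the precise constant and window matter—avoids the truncation shift by choosing the dyadic grid so that $T$ is a grid point, keeping $\tau_n,\sigma_n\le T$ and $|\tau_n-\sigma_n|\le\delta+2^{-n+1}$, which still gives $\Lambda_{\delta,T}$ in the limit. Either way the constant $C$ is the one from Lemma \ref{Etau continuity lemma}.

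The main obstacle is the interchange of limits on the right-hand side: I must argue that $\sup_{(u_1,u_2)\in\Lambda_{\delta_n,T}}(|B_{u_2}-B_{u_1}|\wedge 1)\downarrow\sup_{(u_1,u_2)\in\Lambda_{\delta,T}}(|B_{u_2}-B_{u_1}|\wedge 1)$ as $\delta_n\downarrow\delta$, which is a pathwise statement following from uniform continuity of each continuous path on $[0,T]$ (the sets $\Lambda_{\delta_n,T}$ decrease to $\Lambda_{\delta,T}$, the function $(u_1,u_2)\mapsto|B_{u_2}-B_{u_1}|\wedge 1$ is continuous, and a decreasing sequence of suprema over nested compacts of a continuous function converges to the supremum over the intersection). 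A secondary technical point is ensuring the q.s.-convergence is along a single subsequence for both $\tau_n$ and $\sigma_n$ simultaneously, which is arranged by a diagonal extraction; since the final inequality is an almost-sure (q.s.) statement between fixed random variables, passing to a subsequence loses nothing.
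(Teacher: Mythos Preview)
Your proposal is correct and follows essentially the same approach as the paper: approximate $\tau,\sigma$ by simple discrete stopping times, apply Lemma~\ref{Etau continuity lemma}, and pass to the limit. The only notable difference is in how the limit is organized: rather than letting $\delta_n=\delta+2^{-n+1}\downarrow\delta$ and arguing pathwise convergence of the suprema (plus extracting a common q.s.-convergent subsequence), the paper simply fixes an auxiliary $\varepsilon>0$, observes that $|\tau_n-\sigma_n|\le\delta+\varepsilon$ for all large $n$, applies Lemma~\ref{Etau continuity lemma} with the $n$-independent window $\Lambda_{\delta+\varepsilon,T}$, sends $n\to\infty$ (the right-hand side is fixed, so $\mathbb{L}^1$-convergence on the left suffices with no subsequence or q.s.\ argument needed), and finally lets $\varepsilon\downarrow0$; this avoids both of the technical points you flagged at the end.
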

\begin{proof}
	Let $\tau_n,\sigma_n\leq T+1$ be two sequences of discrete stopping times taking finitely many values such that $\tau_n\rightarrow\tau,\sigma_n\rightarrow \sigma$ uniformly, as $n\rightarrow\infty$. For any $\varepsilon>0$, we have $|\tau_n-\sigma_n|\leq \delta+\varepsilon$ when $n$ large enough. Then by
	Lemma \ref{Etau continuity lemma}, there exists a constant $C$ depending on $X,G$ such that
	$$|\hat{\mathbb{E}}_{\tau_n+}[X]-\hat{\mathbb{E}}_{\sigma_n+}[X]|
	\leq C\{\sup_{(u_1,u_2)\in \Lambda_{\delta+\varepsilon,T}}(|B_{u_2}-B_{u_1}|\wedge 1)+\sqrt{\delta+\varepsilon}\}.
	$$
	First letting $n\rightarrow \infty$ and then letting $\varepsilon\downarrow 0$, we get the desired conclusion.
\end{proof}

\begin{proof}[Proof of Proposition \ref{Etau proposition on L1tau}]
(i) Let $X_i=\sum_{j=1}^m\xi_j^iI_{B_j^i}\in  L_G^{0,1,\tau+}(\Omega) $, where  $\xi_j^i\in L_G^{1}(\Omega)$ and $\{B_j^i\}_{j=1}^m$ is an $\mathcal{F}_{\tau+}$-partition of $\Omega$.
By the definition of $\hat{\mathbb{E}}_{\tau+}$ on $L^{0,1,\tau+}_G(\Omega)$, we have
	\begin{align*}
	\hat{\mathbb{E}}_{\tau+}[\sum_{i=1}^nX_iI_{A_i}] & =\hat{\mathbb{E}}_{\tau+}[\sum_{i=1}^n\sum_{j=1}^m\xi_j^iI_{B_j^i}I_{A_i}] \\
	&=\hat{\mathbb{E}}_{\tau+}[\sum_{i=1}^n\sum_{j=1}^m\xi_j^iI_{A_i\cap B_j^i}] \\
	& =\sum_{i=1}^n\sum_{j=1}^m\hat{\mathbb{E}}_{\tau+}[\xi_j^i]I_{A_i\cap B_j^i}.
	\end{align*}
Using the definition of $\hat{\mathbb{E}}_{\tau+}$ again, this can be further written as
	$$\sum_{i=1}^n(\sum_{j=1}^m\hat{\mathbb{E}}_{\tau+}[\xi_j^i]I_{B_j^i})I_{A_i}=\sum_{i=1}^n\hat{\mathbb{E}}_{\tau+}[X_i]I_{A_i}.$$
Now the result for the general case of $X_i\in L_G^{1,\tau+}(\Omega) $ follows from a direct limit argument.

(ii)
First assume $X\in L_{ip}(\Omega)$.
Let  $\tau_n:=f_n(\tau),\sigma_n:=f_n(\sigma)$  be  as  in (\ref{approximation tau in KS problem}). Since $\{\tau\leq \sigma\}\subset\{\tau_n\leq \sigma_n\}$, by Lemma \ref{pre consistant Etau lemma}, we have
$$
I_{\{\tau\leq \sigma\}}\hat{\mathbb{E}}_{(\tau_n\wedge\sigma_n)+}[X]=I_{\{\tau\leq \sigma\}}\hat{\mathbb{E}}_{\tau_n+}[X].
$$
Letting $n\rightarrow\infty$, we obtain $$
I_{\{\tau\leq \sigma\}}\hat{\mathbb{E}}_{(\tau\wedge\sigma)+}[X]=I_{\{\tau\leq \sigma\}}\hat{\mathbb{E}}_{\tau+}[X].
$$

Then by a simple approximation, we get for $X\in L^1_G(\Omega)$
$$
I_{\{\tau\leq \sigma\}}\hat{\mathbb{E}}_{(\tau\wedge\sigma)+}[X]=I_{\{\tau\leq \sigma\}}\hat{\mathbb{E}}_{\tau+}[X].
$$
Now it follows from (i) that
$$
\hat{\mathbb{E}}_{(\tau\wedge\sigma)+}[XI_{\{\tau\leq \sigma\}}]=I_{\{\tau\leq \sigma\}}\hat{\mathbb{E}}_{(\tau\wedge\sigma)+}[X]=I_{\{\tau\leq \sigma\}}\hat{\mathbb{E}}_{\tau+}[X].
$$

Next we consider the case $X=\sum_{i=1}^n \xi_iI_{A_i}$, where $\xi_i\in L_G^1(\Omega)$ and $\{A_i\}_{i=1}^n$ is an $\mathcal{F}_{\tau+}$-partition of $\Omega$. We have
\begin{align*}
\hat{\mathbb{E}}_{(\tau\wedge\sigma)+}[XI_{\{\tau\leq \sigma\}}]
&=\hat{\mathbb{E}}_{(\tau\wedge\sigma)+}[\sum_{i=1}^n\xi_iI_{A_i\cap\{\tau\leq \sigma\}}]\\
&=\sum_{i=1}^n\hat{\mathbb{E}}_{(\tau\wedge\sigma)+}[\xi_i]I_{A_i\cap\{\tau\leq \sigma\}}\\
&=\sum_{i=1}^n\hat{\mathbb{E}}_{(\tau\wedge\sigma)+}[\xi_i]I_{\{\tau\leq \sigma\}}I_{A_i}.
\end{align*}
Since $\hat{\mathbb{E}}_{(\tau\wedge\sigma)+}[\xi_i]I_{\{\tau\leq \sigma\}}=\hat{\mathbb{E}}_{\tau+}[\xi_i]I_{\{\tau\leq \sigma\}}$, it follows that
\begin{align*}
\hat{\mathbb{E}}_{(\tau\wedge\sigma)+}[XI_{\{\tau\leq \sigma\}}]
&=\sum_{i=1}^n\hat{\mathbb{E}}_{\tau+}[\xi_i]I_{A_i}I_{\{\tau\leq \sigma\}}\\
&=\hat{\mathbb{E}}_{\tau+}[X]I_{\{\tau\leq \sigma\}}.
\end{align*}

Finally, we obtain the the conclusion for  $X\in L^{1,\tau+}_G(\Omega)$ after an approximation.

(iii)  We first assume that $X$ is bounded. By (i) and (ii),  $$\hat{\mathbb{E}}_{(\tau\wedge T)+}[XI_{\{\tau\leq T\}}]I_{\{\tau\leq T\}}=\hat{\mathbb{E}}_{(\tau\wedge T)+}[XI_{\{\tau\leq T\}}]=\hat{\mathbb{E}}_{\tau+}[X]I_{\{\tau\leq T\}}.$$
Then we directly calculate
\begin{align*}
\hat{\mathbb{E}}[|\hat{\mathbb{E}}_{\tau+}[X]-\hat{\mathbb{E}}_{(\tau\wedge T)+}[XI_{\{\tau\leq T\}}]|]
&=\hat{\mathbb{E}}[|\hat{\mathbb{E}}_{\tau+}[X]-\hat{\mathbb{E}}_{(\tau\wedge T)+}[XI_{\{\tau\leq T\}}]|I_{\{\tau> T\}}]\\
&\leq C_Xc({\{\tau> T\}})\rightarrow 0,\ \ \ \ \text{as} \ T\rightarrow\infty.
\end{align*}

To pass to the case of general $X$ we may argue as follows. Set $X_N:=(X\wedge N)\vee (-N)$ for constant $N>0$. For any $\varepsilon>0$, by Remark \ref{ui remark}, we can take $N$ large enough such that
$$
\hat{\mathbb{E}}[|X-X_N|]\leq\varepsilon.
$$
Then
\begin{align*}
\hat{\mathbb{E}}[|\hat{\mathbb{E}}_{\tau+}[X]-\hat{\mathbb{E}}_{(\tau\wedge T)+}[XI_{\{\tau\leq T\}}]|]
&\leq \hat{\mathbb{E}}[|\hat{\mathbb{E}}_{\tau+}[X]-\hat{\mathbb{E}}_{\tau+}[X_N]|]
+\hat{\mathbb{E}}[|\hat{\mathbb{E}}_{\tau+}[X_N]-\hat{\mathbb{E}}_{(\tau\wedge T)+}[X_NI_{\{\tau\leq T\}}]|]\\
&\ \ \ +\hat{\mathbb{E}}[|\hat{\mathbb{E}}_{(\tau\wedge T)+}[X_NI_{\{\tau\leq T\}}]-\hat{\mathbb{E}}_{(\tau\wedge T)+}[XI_{\{\tau\leq T\}}]|]\\
&\leq 2\varepsilon+\hat{\mathbb{E}}[|\hat{\mathbb{E}}_{\tau+}[X_N]-\hat{\mathbb{E}}_{(\tau\wedge T)+}[X_NI_{\{\tau\leq T\}}]|].
\end{align*}
Letting $T\rightarrow\infty$, we get
$$
\limsup_{T\rightarrow\infty}\hat{\mathbb{E}}[|\hat{\mathbb{E}}_{\tau+}[X]-\hat{\mathbb{E}}_{(\tau\wedge T)+}[XI_{\{\tau\leq T\}}]|]\leq 2\varepsilon,
$$
which implies, since $\varepsilon$ can be arbitrarily small,
 $$
\hat{\mathbb{E}}[|\hat{\mathbb{E}}_{\tau+}[X]-\hat{\mathbb{E}}_{(\tau\wedge T)+}[XI_{\{\tau\leq T\}}]|]\rightarrow 0,\ \ \ \ \text{as}\ T\rightarrow\infty.
$$

(iv) \textit{Step 1.}
Suppose that $\tau_n,\tau\leq T$. We first assume $X\in L^1_G(\Omega)$. For any given $\varepsilon>0$, there exists an $\widetilde{X}\in L_{ip}(\Omega)$ such that
$$\hat{\mathbb{E}}[|\widetilde{X}-X|]\leq \varepsilon.$$ Then
\begin{align*}
\hat{\mathbb{E}}[|\hat{\mathbb{E}}_{\tau_n+}[X]-\hat{\mathbb{E}}_{\tau+}[X]|]
&\leq \hat{\mathbb{E}}[|\hat{\mathbb{E}}_{\tau_n+}[X]-\hat{\mathbb{E}}_{\tau_n+}[\widetilde{X}]|]+\hat{\mathbb{E}}[|\hat{\mathbb{E}}_{\tau_n+}[\widetilde{X}]-\hat{\mathbb{E}}_{\tau+}[\widetilde{X}]|]+\hat{\mathbb{E}}[|\hat{\mathbb{E}}_{\tau+}[\widetilde{X}]-\hat{\mathbb{E}}_{\tau+}[X]|]\\
&\leq 2\varepsilon +\hat{\mathbb{E}}[|\hat{\mathbb{E}}_{\tau_n+}[\widetilde{X}]-\hat{\mathbb{E}}_{\tau+}[\widetilde{X}]|].
\end{align*}
We now let $n\rightarrow \infty$ and use Lemma \ref{generalized Etau continuity lemma} and Lemma \ref{sup continuity lemma} to obtain $$\limsup_{n\rightarrow\infty}\hat{\mathbb{E}}[|\hat{\mathbb{E}}_{\tau_n+}[X]-\hat{\mathbb{E}}_{\tau+}[X]|]\leq 2\varepsilon,$$
which implies
\begin{equation}\label{867655678993} \hat{\mathbb{E}}_{\tau_n+}[X]\rightarrow\hat{\mathbb{E}}_{\tau+}[X] \ \ \ \   \text{in}\  \mathbb{L}^1.
\end{equation}
Next, for $X=\sum_{i=1}^k X_iI_{A_i}$, where $X_i\in L_G^1(\Omega)$ and $\{A_i\}_{i=1}^k$ is an $\mathcal{F}_{\tau_0+}$-partition of $\Omega$, the conclusion follows from (\ref{867655678993}) and the observation that
$$
\hat{\mathbb{E}}[|\hat{\mathbb{E}}_{\tau_n+}[X]-\hat{\mathbb{E}}_{\tau+}[X]|]\leq  \sum_{i=1}^k\hat{\mathbb{E}}[|\hat{\mathbb{E}}_{\tau_n+}[X_i]-\hat{\mathbb{E}}_{\tau+}[X_i]|].
$$
Finally, for $X\in L^{1,\tau_0+}_G(\Omega)$, we can find an  $\widetilde{X}\in L^{0,1,\tau_0+}_G(\Omega)$ such that
$$\hat{\mathbb{E}}[|\widetilde{X}-X|]\leq \varepsilon.$$
Following the argument for the case of $X\in L^1_G(\Omega)$ we can then obtain the conclusion for $L^{1,\tau_0+}_G(\Omega)$.

\textit{Step 2.}
We now  consider the case that $\tau$ is not bounded. Without loss of generality, we can assume $0\leq \tau \vee (\vee_{n=1}^\infty\tau_n)-\tau_0\leq 1$. For any $T>0$,
by (ii),
$$\hat{\mathbb{E}}_{\tau_n+}[X]I_{\{\tau_n\leq T+1\}}=\hat{\mathbb{E}}_{(\tau_n\wedge (T+1))+}[XI_{\{\tau_n\leq T+1\}}].$$
Multiplying $I_{\{\tau_0\leq T \}}$, (i) implies
$$\hat{\mathbb{E}}_{\tau_n+}[X]I_{\{\tau_0\leq T \}}=\hat{\mathbb{E}}_{(\tau_n\wedge (T+1))+}[XI_{\{\tau_0\leq T \}}].$$
Similarly, we have
$$\hat{\mathbb{E}}_{\tau+}[X]I_{\{\tau_0\leq T \}}=\hat{\mathbb{E}}_{(\tau\wedge (T+1))+}[XI_{\{\tau_0\leq T \}}].$$
Let  first $X$ be bounded. We have
\begin{equation}\label{788766654366789}
\begin{split}
|\hat{\mathbb{E}}_{\tau_n+}[X]-\hat{\mathbb{E}}_{\tau+}[X]|
&= |\hat{\mathbb{E}}_{\tau_n+}[X]-\hat{\mathbb{E}}_{\tau+}[X]|I_{\{\tau_0\leq T\}}+|\hat{\mathbb{E}}_{\tau_n+}[X]-\hat{\mathbb{E}}_{\tau+}[X]|I_{\{\tau_0>T\}}\\
&\leq |\hat{\mathbb{E}}_{(\tau_n\wedge (T+1))+}[XI_{\{\tau_0\leq T\}}]-\hat{\mathbb{E}}_{(\tau\wedge (T+1))+}[XI_{\{\tau_0\leq T\}}]|+2C_XI_{\{\tau_0>T\}}.
\end{split}
\end{equation}
For any $\varepsilon>0$, we choose $T$ large enough such that $c({\{\tau_0>T\}})\leq c({\{\tau>T\}})\leq \varepsilon$. Taking expectation $\hat{\mathbb{E}}$ on both sides of (\ref{788766654366789}) and letting $n\rightarrow\infty$, we then obtain
$$\hat{\mathbb{E}}[|\hat{\mathbb{E}}_{\tau_n+}[X]-\hat{\mathbb{E}}_{\tau+}[X]|]\leq
2C_X\varepsilon,$$
which implies the conclusion. If $X$ is not necessarily bounded, we obtain the same conclusion by  a  similar truncation technique as in (iii).
\end{proof}

The next result concerns the pull-out properties.
\begin{proposition}\label{Etau proposition on L1tau2}The conditional expectation $\hat{\mathbb{E}}_{\tau+}$ satisfies:
	\begin{description}
		\item [{\rm (i)}] If $X\in L^{1,{\tau}+}_G(\Omega)$ and $\eta ,Y\in L^{1,{\tau}+}_G(\Omega) \cap L^0(\mathcal{F}_{\tau+})$ such that $\eta$ is bounded,  then $ \hat{\mathbb{E}}_{\tau+}[\eta X +Y]=\eta^+\hat{\mathbb{E}}_{\tau+}[X]+\eta^-\hat{\mathbb{E}}_{\tau+}[-X]+Y$;
		\item [{\rm (ii)}] If $\eta\in L^{1,{\tau+}}_G(\Omega;\mathbb{R}^d)\cap  L^0(\mathcal{F}_{\tau+};\mathbb{R}^d)$, $X\in L^{1,{\tau+}}_G(\Omega;\mathbb{R}^n)$ and  $\varphi\in C_{b.Lip}(\mathbb{R}^{d+n})$, then $ \hat{\mathbb{E}}_{\tau+}[\varphi(\eta,X)]=\hat{\mathbb{E}}_{\tau+}[\varphi(p,X)]_{p=\eta}$.
	\end{description}
\end{proposition}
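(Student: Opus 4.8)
The plan is to prove both parts by the same three-layer bootstrapping scheme used throughout Section 3: first verify the identity when $\tau$ is a simple discrete stopping time (reducing to the known deterministic-time pull-out properties in Proposition \ref{condition expectation property} (iv), (v)), then pass to a general optional time $\tau$ via a uniformly convergent approximating sequence $\tau_n := f_n(\tau)$ and Proposition \ref{Etau proposition on L1tau} (iv), and finally extend from $L_{ip}(\Omega)$ to $L^{1,\tau+}_G(\Omega)$ by the usual $\mathbb{L}^1$-approximation together with the properties already established in Proposition \ref{Etau welldefined}. For part (i), the additive term $Y$ is handled immediately by Proposition \ref{Etau proposition on L1tau} (i), since $Y$ is $\mathcal{F}_{\tau+}$-measurable and can be absorbed: writing $Y = \sum_j y_j I_{B_j}$ along a partition, one reduces to showing $\hat{\mathbb{E}}_{\tau+}[\eta X] = \eta^+\hat{\mathbb{E}}_{\tau+}[X] + \eta^-\hat{\mathbb{E}}_{\tau+}[-X]$ for bounded $\mathcal{F}_{\tau+}$-measurable $\eta$.

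For the discrete-time case of (i), with $\tau = \sum_i t_i I_{\{\tau=t_i\}}$, I would expand $\hat{\mathbb{E}}_{\tau+}[\eta X] = \sum_i \hat{\mathbb{E}}_{t_i}[\eta X] I_{\{\tau=t_i\}}$; but $\eta$ is $\mathcal{F}_{\tau+}$-measurable, hence on $\{\tau=t_i\}$ it agrees (q.s.) with an $\mathcal{F}_{t_i}$-measurable random variable, so Proposition \ref{condition expectation property} (iv) applies termwise and yields $\sum_i (\eta^+\hat{\mathbb{E}}_{t_i}[X] + \eta^-\hat{\mathbb{E}}_{t_i}[-X]) I_{\{\tau=t_i\}} = \eta^+\hat{\mathbb{E}}_{\tau+}[X] + \eta^-\hat{\mathbb{E}}_{\tau+}[-X]$. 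The subtlety here — which I expect to be the main obstacle — is the careful bookkeeping that an $\mathcal{F}_{\tau+}$-measurable (not merely $\mathcal{F}_\tau$-measurable) $\eta$ restricted to $\{\tau = t_i\}$ lies in $L^0(\mathcal{F}_{t_i})$ up to a polar set: this uses the definition $\mathcal{F}_{\tau+} = \{A : A\cap\{\tau<t\}\in\mathcal{F}_t\}$ together with right-continuity considerations, and should first be checked for indicators $\eta = I_A$, $A\in\mathcal{F}_{\tau+}$, then for simple $\eta$, then by bounded approximation. Passing to general $\tau$: choosing $\tau_n=f_n(\tau)\downarrow\tau$, each $\eta I_{\{\tau_n=t^n_i\}}$ is $\mathcal{F}_{t^n_i}$-measurable, so the discrete result applies to each $\tau_n$, and Proposition \ref{Etau proposition on L1tau} (iv) (with the version where $\tau_n\downarrow\tau$) gives $\hat{\mathbb{E}}_{\tau_n+}[\eta X]\to\hat{\mathbb{E}}_{\tau+}[\eta X]$ and likewise for $\hat{\mathbb{E}}_{\tau_n+}[X]$, $\hat{\mathbb{E}}_{\tau_n+}[-X]$, provided $\eta, X$ lie in the appropriate space; the $\eta^{\pm}$ multipliers being bounded and $\mathcal{F}_{\tau+}\subset\mathcal{F}_{\tau_n+}$-measurable, the limits pass through. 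Finally the extension from $L_{ip}(\Omega)$ (or $L^{0,1,\tau+}_G$) to $L^{1,\tau+}_G(\Omega)$ is routine: if $X_k\to X$ in $\mathbb{L}^1$ then $\eta X_k\to\eta X$ in $\mathbb{L}^1$ (boundedness of $\eta$) and $\hat{\mathbb{E}}_{\tau+}$ is $\mathbb{L}^1$-continuous with $\hat{\mathbb{E}}[|\hat{\mathbb{E}}_{\tau+}[U]-\hat{\mathbb{E}}_{\tau+}[V]|]\leq\hat{\mathbb{E}}[|U-V|]$ by Proposition \ref{Etau welldefined} (i)-(iii).

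For part (ii), the argument is parallel: in the discrete case $\hat{\mathbb{E}}_{\tau+}[\varphi(\eta,X)] = \sum_i\hat{\mathbb{E}}_{t_i}[\varphi(\eta,X)]I_{\{\tau=t_i\}}$, and on $\{\tau=t_i\}$ the random vector $\eta$ is (q.s.) $\mathcal{F}_{t_i}$-measurable, so Proposition \ref{condition expectation property} (v) gives $\hat{\mathbb{E}}_{t_i}[\varphi(\eta,X)] = \hat{\mathbb{E}}_{t_i}[\varphi(p,X)]_{p=\eta}$ on that event; summing over $i$ yields $\hat{\mathbb{E}}_{\tau+}[\varphi(p,X)]_{p=\eta}$. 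For general $\tau$ one uses $\tau_n=f_n(\tau)\downarrow\tau$ and Proposition \ref{Etau proposition on L1tau} (iv) as above, noting that the map $p\mapsto\hat{\mathbb{E}}_{\tau_n+}[\varphi(p,X)]$ is Lipschitz in $p$ uniformly in $n$ (with constant $L_\varphi$, by sub-additivity of $\hat{\mathbb{E}}_{\tau_n+}$ and $|\varphi(p,\cdot)-\varphi(p',\cdot)|\leq L_\varphi|p-p'|$), so that evaluating the convergent family of Lipschitz functions at the common argument $\eta$ preserves the limit. The extension to $X\in L^{1,\tau+}_G(\Omega;\mathbb{R}^n)$ and $\eta\in L^{1,\tau+}_G(\Omega;\mathbb{R}^d)\cap L^0(\mathcal{F}_{\tau+};\mathbb{R}^d)$ is again by approximation, using that $\varphi\in C_{b.Lip}$ so $\varphi(\eta,X_k)\to\varphi(\eta,X)$ and $\varphi(\eta_k,X)\to\varphi(\eta,X)$ in $\mathbb{L}^1$.
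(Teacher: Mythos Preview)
Your overall architecture---reduce to deterministic time via the dyadic approximation $\tau_n=f_n(\tau)$, pull out termwise, and pass to the limit using Proposition~\ref{Etau proposition on L1tau}~(iv)---is exactly the paper's approach. But you have misidentified the essential obstacle, and as written the argument has a genuine gap.

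The deterministic-time pull-out you plan to invoke, Proposition~\ref{condition expectation property}~(iv)--(v), requires the multiplier to lie in $L_G^1(\Omega_t)$, i.e.\ in the $\|\cdot\|_1$-closure of cylindrical functions in $L_{ip}(\Omega_t)$. After localizing to $\{\tau_n=t^n_i\}$, the random variable $\eta I_{\{\tau_n=t^n_i\}}$ is $\mathcal{F}_{t^n_i}$-measurable and lies in $L_G^{1,t^n_i}(\Omega)$, but there is no reason it should belong to the smaller space $L_G^1(\Omega_{t^n_i})$; indicators of general $\mathcal{F}_t$-sets typically fail to be approximable in $\mathbb{L}^1$ by Lipschitz cylinder functions. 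So Proposition~\ref{condition expectation property}~(iv) simply does not apply. The paper closes this gap by first proving a separate deterministic-time lemma (Lemma~\ref{LG1t conditional expectation 2}) extending the pull-out to $\eta,Y\in L_G^{1,t}(\Omega)\cap L^0(\mathcal{F}_t)$; the proof of that lemma uses a dyadic discretization of $\eta$ and $Y$ (see (\ref{980897897487303544})--(\ref{9805665567303544})) to reduce to finite $\mathcal{F}_t$-partitions, where the definition of $\hat{\mathbb{E}}_t$ on $L_G^{0,1,t}(\Omega)$ handles the indicators directly. Your sketch skips this step.

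By contrast, the subtlety you flag---whether $\eta$ restricted to $\{\tau=t_i\}$ is $\mathcal{F}_{t_i}$-measurable for a \emph{given} simple discrete $\tau$---is a non-issue in the paper's route: because $\tau_n=f_n(\tau)$ satisfies $\tau<\tau_n$ strictly, one has $\mathcal{F}_{\tau+}\subset\mathcal{F}_{\tau_n}$ automatically, and the paper never needs to treat a general simple discrete $\tau$ separately. The paper also uses Lemma~\ref{Etau discrete lemma 01} to write $\hat{\mathbb{E}}_{\tau_n+}[\eta X+Y]=\sum_i\hat{\mathbb{E}}_{t_i^n}[(\eta X+Y)I_{\{\tau_n=t_i^n\}}]$ with the indicator \emph{inside} the conditional expectation, rather than your expansion with the indicator outside, which is only the definition for arguments in $L_{ip}(\Omega)$.
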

In the proof of Proposition \ref{Etau proposition on L1tau2}, we shall need the following lemmas. We first study the local property of $\hat{\mathbb{E}}_{\tau+}$.
\begin{lemma}\label{Etau consistency identity0}
	Let  $X\in L^{1,\tau+}_G(\Omega)$ for two optional times $\tau$ and $\sigma$. Then
	\begin{equation} \hat{\mathbb{E}}_{\tau+}[X]I_{\{\tau=\sigma\}}=\hat{\mathbb{E}}_{\sigma+}[XI_{\{\tau=\sigma\}}].
	\end{equation}
\end{lemma}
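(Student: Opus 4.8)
The plan is to bootstrap from the consistency identity already recorded as Proposition \ref{Etau proposition on L1tau} (ii), namely $\hat{\mathbb{E}}_{\tau+}[X]I_{\{\tau\le\sigma\}}=\hat{\mathbb{E}}_{(\tau\wedge\sigma)+}[XI_{\{\tau\le\sigma\}}]$, combined with the ``taking out what is known'' property (i) of that same proposition and the measurability facts $\{\tau=\sigma\}\in\mathcal{F}_{(\tau\wedge\sigma)+}\subseteq\mathcal{F}_{\sigma+}$ and $XI_{\{\tau=\sigma\}}\in L^{1,(\tau\wedge\sigma)+}_G(\Omega)\subseteq L^{1,\sigma+}_G(\Omega)$ noted in the Remark following Proposition \ref{Etau proposition on L1tau}. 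The key step I would isolate first is the auxiliary symmetric identity
$$\hat{\mathbb{E}}_{\tau+}[X]\,I_{\{\tau=\sigma\}}=\hat{\mathbb{E}}_{(\tau\wedge\sigma)+}\big[XI_{\{\tau=\sigma\}}\big],\qquad X\in L^{1,\tau+}_G(\Omega).\qquad(\star)$$

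To establish $(\star)$ I would multiply the identity in Proposition \ref{Etau proposition on L1tau} (ii) by $I_{\{\tau=\sigma\}}$, which is harmless since $\{\tau=\sigma\}\subseteq\{\tau\le\sigma\}$, obtaining $\hat{\mathbb{E}}_{\tau+}[X]I_{\{\tau=\sigma\}}=\hat{\mathbb{E}}_{(\tau\wedge\sigma)+}[XI_{\{\tau\le\sigma\}}]I_{\{\tau=\sigma\}}$. Since $XI_{\{\tau\le\sigma\}}\in L^{1,(\tau\wedge\sigma)+}_G(\Omega)$ and $\{\tau=\sigma\}\in\mathcal{F}_{(\tau\wedge\sigma)+}$, applying property (i) of Proposition \ref{Etau proposition on L1tau} to the $\mathcal{F}_{(\tau\wedge\sigma)+}$-partition $\{\{\tau=\sigma\},\{\tau\neq\sigma\}\}$ — with $XI_{\{\tau\le\sigma\}}$ on the first cell and $0$ on the second — lets me pull the indicator inside and rewrite the right-hand side as $\hat{\mathbb{E}}_{(\tau\wedge\sigma)+}[XI_{\{\tau\le\sigma\}}I_{\{\tau=\sigma\}}]=\hat{\mathbb{E}}_{(\tau\wedge\sigma)+}[XI_{\{\tau=\sigma\}}]$, which is $(\star)$.

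For the final step I would apply $(\star)$ a second time with the roles of $\tau$ and $\sigma$ interchanged, to the random variable $Y:=XI_{\{\tau=\sigma\}}\in L^{1,(\tau\wedge\sigma)+}_G(\Omega)\subseteq L^{1,\sigma+}_G(\Omega)$; since $Y=YI_{\{\tau=\sigma\}}$ and $\{\tau=\sigma\}\in\mathcal{F}_{\sigma+}$, property (i) gives $\hat{\mathbb{E}}_{\sigma+}[Y]I_{\{\tau=\sigma\}}=\hat{\mathbb{E}}_{\sigma+}[Y]$, so the $(\sigma,\tau)$-version of $(\star)$ reads $\hat{\mathbb{E}}_{\sigma+}[XI_{\{\tau=\sigma\}}]=\hat{\mathbb{E}}_{(\tau\wedge\sigma)+}[XI_{\{\tau=\sigma\}}]$. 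Chaining this with $(\star)$ yields $\hat{\mathbb{E}}_{\tau+}[X]I_{\{\tau=\sigma\}}=\hat{\mathbb{E}}_{\sigma+}[XI_{\{\tau=\sigma\}}]$, as desired. There is no real analytic difficulty here — the argument is purely formal manipulation of indicators — and the only point that requires vigilance is checking at each stage that the random variable fed into $\hat{\mathbb{E}}_{(\tau\wedge\sigma)+}$, $\hat{\mathbb{E}}_{\sigma+}$ or property (i) genuinely lies in the space for which the operator was defined, and that the events involved lie in the appropriate $\sigma$-fields; this is exactly what the Remark after Proposition \ref{Etau proposition on L1tau} supplies.
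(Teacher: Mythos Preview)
Your proposal is correct and follows essentially the same route as the paper: use Proposition \ref{Etau proposition on L1tau} (ii) together with (i) to obtain the intermediate identity $\hat{\mathbb{E}}_{\tau+}[X]I_{\{\tau=\sigma\}}=\hat{\mathbb{E}}_{(\tau\wedge\sigma)+}[XI_{\{\tau=\sigma\}}]$, then swap the roles of $\tau$ and $\sigma$ with $\tilde X=XI_{\{\tau=\sigma\}}$ to obtain the matching identity for $\hat{\mathbb{E}}_{\sigma+}$, and chain the two. The only cosmetic difference is that you spell out more carefully how property (i) is invoked to pull the indicator inside and how $\hat{\mathbb{E}}_{\sigma+}[Y]I_{\{\tau=\sigma\}}=\hat{\mathbb{E}}_{\sigma+}[Y]$; the paper simply says ``a similar argument''.
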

\begin{proof}
	By Proposition \ref{Etau proposition on L1tau}  (ii),
	\begin{equation*}
	\hat{\mathbb{E}}_{(\tau\wedge\sigma)+}[XI_{\{\tau\leq \sigma\}}]=\hat{\mathbb{E}}_{\tau+}[X]I_{\{\tau\leq \sigma\}}.
	\end{equation*}
	Multiplying $I_{\{\tau=\sigma\}}$ on both sides, we see from  Proposition  \ref{Etau proposition on L1tau} (i)  that
	\begin{equation}
	\label{987739783432}
	\hat{\mathbb{E}}_{(\tau\wedge\sigma)+}[XI_{\{\tau= \sigma\}}]=\hat{\mathbb{E}}_{\tau+}[X]I_{\{\tau=\sigma\}}.
	\end{equation}
	Noting that $XI_{\{\tau= \sigma\}}\in L^{1,\sigma+}_G(\Omega)$, we can apply a similar argument to $\tilde{X}=XI_{\{\tau= \sigma\}},\tilde{\sigma}=\tau,\tilde{\tau}=\sigma$ to obtain
	$$\hat{\mathbb{E}}_{(\tau\wedge\sigma)+}[XI_{\{\tau= \sigma\}}]=\hat{\mathbb{E}}_{\sigma+}[XI_{\{\tau=\sigma\}}].$$
	Combining this with (\ref{987739783432}), we obtain the lemma.
\end{proof}

\begin{lemma}\label{Etau discrete lemma 01}
	Let  $X\in L^{1,\tau+}_G(\Omega)$ for a simple optional time $\tau$ taking values in $\{t_i:i\geq 1 \}$. Then
	$$\hat{\mathbb{E}}_{\tau+}[X]=\sum_{i=1}^\infty\hat{\mathbb{E}}_{t_i+}[XI_{\{\tau=t_i\}}].$$
\end{lemma}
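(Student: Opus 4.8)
The plan is to reduce the statement to the local consistency property already established in Lemma \ref{Etau consistency identity0}. Since $\tau$ is a simple optional time taking values in $\{t_i:i\geq1\}$, the sets $\{\tau=t_i\}$, $i\geq1$, are pairwise disjoint, belong to $\mathcal{F}_{\tau+}$, and satisfy $\bigcup_{i=1}^\infty\{\tau=t_i\}=\Omega$ q.s.; moreover, by the remark following Proposition \ref{Etau proposition on L1tau} applied with $\sigma$ the constant time $t_i$, $XI_{\{\tau=t_i\}}\in L^{1,(\tau\wedge t_i)+}_G(\Omega)\subset L^{1,t_i+}_G(\Omega)$, so that each $\hat{\mathbb{E}}_{t_i+}[XI_{\{\tau=t_i\}}]$ is well-defined and the right-hand side of the asserted identity makes sense (as a q.s., and in fact $\mathbb{L}^1$, limit of its partial sums).

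First I would apply Lemma \ref{Etau consistency identity0} with the optional time $\sigma\equiv t_i$: for every $i\geq1$,
$$\hat{\mathbb{E}}_{\tau+}[X]I_{\{\tau=t_i\}}=\hat{\mathbb{E}}_{t_i+}[XI_{\{\tau=t_i\}}].$$
Since $\{\{\tau=t_i\}\}_{i\geq1}$ is an $\mathcal{F}_{\tau+}$-partition of $\Omega$, at each $\omega$ exactly one indicator is nonzero, so summing these identities over $i$ gives, q.s.,
$$\hat{\mathbb{E}}_{\tau+}[X]=\sum_{i=1}^\infty\hat{\mathbb{E}}_{\tau+}[X]I_{\{\tau=t_i\}}=\sum_{i=1}^\infty\hat{\mathbb{E}}_{t_i+}[XI_{\{\tau=t_i\}}].$$
Summing only up to $N$, the same term-by-term identity shows that the partial sums are $S_N:=\sum_{i=1}^N\hat{\mathbb{E}}_{t_i+}[XI_{\{\tau=t_i\}}]=\hat{\mathbb{E}}_{\tau+}[X]I_{\{\tau\leq t_N\}}$; here Proposition \ref{Etau proposition on L1tau}(i) is available if one prefers to phrase this reassembly of finite sums more formally.

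It then remains to check that $S_N\to\hat{\mathbb{E}}_{\tau+}[X]$ in $\mathbb{L}^1$. Because the dominated convergence theorem is unavailable, this is where one must use (H3) together with uniform integrability: writing $Y:=\hat{\mathbb{E}}_{\tau+}[X]\in L^{1,\tau+}_G(\Omega)$, for every $M>0$,
$$\hat{\mathbb{E}}[|Y-S_N|]=\hat{\mathbb{E}}[|Y|I_{\{\tau>t_N\}}]\leq\hat{\mathbb{E}}[|Y|I_{\{|Y|>M\}}]+M\,c(\{\tau>t_N\}),$$
and letting first $N\to\infty$ (so $c(\{\tau>t_N\})\to0$ by (H3)) and then $M\to\infty$ (so $\hat{\mathbb{E}}[|Y|I_{\{|Y|>M\}}]\to0$ by Remark \ref{ui remark} for $L^{1,\tau+}_G(\Omega)$) yields $\hat{\mathbb{E}}[|Y-S_N|]\to0$. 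Combining this with the previous display proves the lemma. The computation is short; the only point requiring real care is this final passage to the limit in the infinite sum, where the $M$-truncation above substitutes for the dominated convergence theorem.
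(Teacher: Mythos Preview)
Your proof is correct and follows essentially the same approach as the paper: both apply Lemma \ref{Etau consistency identity0} with $\sigma\equiv t_i$ to obtain $\hat{\mathbb{E}}_{\tau+}[X]I_{\{\tau=t_i\}}=\hat{\mathbb{E}}_{t_i+}[XI_{\{\tau=t_i\}}]$ and then sum over $i$. The paper's proof is two lines and leaves the convergence of the infinite sum implicit, whereas you add a careful $\mathbb{L}^1$-convergence argument via (H3) and Remark \ref{ui remark}; this extra care is a harmless (and arguably welcome) addition.
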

\begin{proof}
	Note that $\{\tau=t_i\}\in \mathcal{F}_{\tau+}$. Applying   Lemma \ref{Etau consistency identity0}, we have
	$$
	\hat{\mathbb{E}}_{\tau+}[X]=\sum_{i=1}^\infty\hat{\mathbb{E}}_{\tau+}[X]I_{\{\tau=t_i\}}=\sum_{i=1}^\infty\hat{\mathbb{E}}_{t_i+}[XI_{\{\tau=t_i\}}].
	$$
\end{proof}

The following deterministic-time version of Proposition \ref{Etau proposition on L1tau2} is also needed.
\begin{lemma}\label{LG1t conditional expectation 2}For each $t\geq 0$, the conditional expectation $\hat{\mathbb{E}}_t$ satisfies the following properties:
	\begin{description}
		\item [{\rm (i)}] If $X\in L_G^{1,t}(\Omega)$ and $\eta,Y\in L_G^{1,t}(\Omega)\cap L^0(\mathcal{F}_t)$ such that $\eta$ is bounded, then $ \hat{\mathbb{E}}_t[\eta X+Y]=\eta^+\hat{\mathbb{E}}_t[X]+\eta^-\hat{\mathbb{E}}_t[-X]+Y$;
		\item [{\rm (ii)}] If $\eta\in L_G^{1,t}(\Omega;\mathbb{R}^{d})\cap L^{0}(\mathcal{F}_t;\mathbb{R}^{d})$, $X\in L_G^{1,t}(\Omega;\mathbb{R}^{n})$, then $ \hat{\mathbb{E}}_t[\varphi(\eta,X)]=\hat{\mathbb{E}}_t[\varphi(p,X)]_{p=\eta}$, for each $\varphi\in C_{b.Lip}(\mathbb{R}^{d+n})$.
	\end{description}
\end{lemma}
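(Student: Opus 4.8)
The plan is to prove both identities first for $\mathcal{F}_t$-measurable \emph{simple} $\eta$, where they collapse to the partition property of $\hat{\mathbb{E}}_t$, and then to pass to a general $\eta$ by approximation. Throughout I would use that, by Remark \ref{Etau remark} (applied with $\tau\equiv t$, so that $\hat{\mathbb{E}}_{\tau+}=\hat{\mathbb{E}}_t$), the conditional expectation $\hat{\mathbb{E}}_t$ on $L_G^{1,t}(\Omega)$ inherits the analogues of Propositions \ref{Etau welldefined} and \ref{Etau proposition on L1tau}: it is monotone, sub-additive, positively homogeneous, $\mathbb{L}^1$-continuous (the latter being a consequence of $|\hat{\mathbb{E}}_t[X]-\hat{\mathbb{E}}_t[X']|\le\hat{\mathbb{E}}_t[|X-X'|]$ together with $\hat{\mathbb{E}}[\hat{\mathbb{E}}_t[\cdot]]=\hat{\mathbb{E}}[\cdot]$), and satisfies $\hat{\mathbb{E}}_t[\sum_i X_iI_{A_i}]=\sum_i\hat{\mathbb{E}}_t[X_i]I_{A_i}$ for every $\mathcal{F}_t$-partition $\{A_i\}$ with $X_i\in L_G^{1,t}(\Omega)$. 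I would also record $\hat{\mathbb{E}}_t[\zeta]=\zeta$ for $\zeta\in L_G^{1,t}(\Omega)\cap L^0(\mathcal{F}_t)$; if this is not already at hand it follows from the simple-$\eta$ case of (i) with $X\equiv1$, after truncating $\zeta$ and letting the truncation level tend to infinity using the uniform-integrability property of Remark \ref{ui remark}.

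For (i), I would first peel off the summand $Y$: sub-additivity and $\hat{\mathbb{E}}_t[\pm Y]=\pm Y$ give $\hat{\mathbb{E}}_t[\eta X+Y]\le\hat{\mathbb{E}}_t[\eta X]+Y$, and, applied to $(\eta X+Y)+(-Y)$, the reverse inequality, so $\hat{\mathbb{E}}_t[\eta X+Y]=\hat{\mathbb{E}}_t[\eta X]+Y$; it then remains to treat $\hat{\mathbb{E}}_t[\eta X]$. When $\eta=\sum_j c_jI_{A_j}$ is $\mathcal{F}_t$-simple, $\eta X=\sum_j(c_jX)I_{A_j}$ with $c_jX\in L_G^{1,t}(\Omega)$, so by the partition property and positive homogeneity $\hat{\mathbb{E}}_t[\eta X]=\sum_j\hat{\mathbb{E}}_t[c_jX]I_{A_j}=\sum_j(c_j^+\hat{\mathbb{E}}_t[X]+c_j^-\hat{\mathbb{E}}_t[-X])I_{A_j}=\eta^+\hat{\mathbb{E}}_t[X]+\eta^-\hat{\mathbb{E}}_t[-X]$. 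For general bounded $\eta\in L_G^{1,t}(\Omega)\cap L^0(\mathcal{F}_t)$, I would choose $\mathcal{F}_t$-simple $\eta_m$ with $\eta_m\to\eta$ uniformly and $\sup_m\|\eta_m\|_\infty<\infty$; then $\|\eta_mX-\eta X\|_1\le\|\eta_m-\eta\|_\infty\|X\|_1\to0$ and $\eta_m^\pm\to\eta^\pm$ uniformly, and the $\mathbb{L}^1$-continuity of $\hat{\mathbb{E}}_t$ passes the identity to the limit.

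For (ii), the simple case is identical in spirit: if $\eta=\sum_jp_jI_{A_j}$ then $\varphi(\eta,X)=\sum_j\varphi(p_j,X)I_{A_j}$ with each $\varphi(p_j,X)\in L_G^{1,t}(\Omega)$ (bounded Lipschitz image of $X$), hence $\hat{\mathbb{E}}_t[\varphi(\eta,X)]=\sum_j\hat{\mathbb{E}}_t[\varphi(p_j,X)]I_{A_j}=\hat{\mathbb{E}}_t[\varphi(p,X)]_{p=\eta}$. For general $\eta\in L_G^{1,t}(\Omega;\mathbb{R}^d)\cap L^0(\mathcal{F}_t;\mathbb{R}^d)$, I would set $\Phi(p):=\hat{\mathbb{E}}_t[\varphi(p,X)]$ and pick $\mathcal{F}_t$-simple $\eta_m$ (a dyadic discretization of $\eta$ on $[-m,m]^d$, set to $0$ outside) so that $|\eta_m-\eta|\le c_m+|\eta|I_{\{|\eta|>m\}}$ with $c_m\downarrow0$. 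Monotonicity and sub-additivity of $\hat{\mathbb{E}}_t$ yield the \emph{quantitative} bounds $|\varphi(\eta_m,X)-\varphi(\eta,X)|\le(L_\varphi|\eta_m-\eta|)\wedge(2C_\varphi)$ and $|\Phi(\eta_m)-\Phi(\eta)|\le(L_\varphi|\eta_m-\eta|)\wedge(2C_\varphi)$ q.s.; since $L_G^{1,t}(\Omega)\subset\mathbb{L}^1(\Omega)$ gives $\hat{\mathbb{E}}[|\eta|]<\infty$, Markov's inequality gives $c(\{|\eta|>m\})\le\hat{\mathbb{E}}[|\eta|]/m\to0$, so both right-hand sides tend to $0$ in $\mathbb{L}^1$. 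Therefore $\varphi(\eta_m,X)\to\varphi(\eta,X)$ and $\Phi(\eta_m)\to\Phi(\eta)$ in $\mathbb{L}^1$, and combining with the simple case and the $\mathbb{L}^1$-continuity of $\hat{\mathbb{E}}_t$ gives $\hat{\mathbb{E}}_t[\varphi(\eta,X)]=\Phi(\eta)=\hat{\mathbb{E}}_t[\varphi(p,X)]_{p=\eta}$.

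The hard part will be exactly this last limit passage for unbounded $\eta$ in (ii): since the dominated convergence theorem is unavailable under $\hat{\mathbb{E}}$, one must not argue from q.s.\ convergence of $\varphi(\eta_m,X)$ alone, but instead combine the explicit Lipschitz control of $\varphi$ and of $p\mapsto\Phi(p)$ with the capacity tail estimate $c(\{|\eta|>m\})\to0$. A routine but necessary side task is to check that each intermediate object --- $\eta X$, $\varphi(p_j,X)$, $\Phi(\eta)$, the products $\eta^{\pm}\hat{\mathbb{E}}_t[\pm X]$, and $Y+\hat{\mathbb{E}}_t[\eta X]$ --- indeed lies in $L_G^{1,t}(\Omega)$ (and in $L^0(\mathcal{F}_t)$ where asserted), which follows from the same approximation schemes together with the stability of $L_G^{1,t}(\Omega)$ under bounded Lipschitz maps and under $\mathbb{L}^1$-limits.
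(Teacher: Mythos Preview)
Your proposal is correct and follows essentially the same strategy as the paper: reduce to simple $\mathcal{F}_t$-measurable data via the partition property of $\hat{\mathbb{E}}_t$ on $L_G^{1,t}(\Omega)$, then pass to the limit using $\mathbb{L}^1$-continuity and the uniform-integrability property of Remark \ref{ui remark}. The differences are organizational rather than substantive: the paper approximates $\eta$, $Y$, and $X$ simultaneously (taking the simple case to be $\eta=\sum_i\eta_iI_{A_i}$, $Y=\sum_iY_iI_{A_i}$, $X=\sum_iX_iI_{A_i}$ with $\eta_i,Y_i\in L_G^1(\Omega_t)$, $X_i\in L_G^1(\Omega)$, and invoking Proposition \ref{condition expectation property} (ii), (iv) directly), whereas you first peel off $Y$ by sub-additivity and $\hat{\mathbb{E}}_t[\pm Y]=\pm Y$, then approximate only $\eta$ by constant-coefficient simple functions; your route is slightly leaner in that it avoids approximating $X$ altogether, at the cost of having to establish $\hat{\mathbb{E}}_t[\zeta]=\zeta$ for $\zeta\in L_G^{1,t}(\Omega)\cap L^0(\mathcal{F}_t)$ as a separate preliminary.
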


\begin{proof}
	We just prove (i). Statement (ii) can be proved similarly.
	
	\textit{Step 1.} We first assume
	$$\eta=\sum_{i=1}^n \eta_i I_{A_i},\ \ Y=\sum_{i=1}^n Y_i I_{A_i},\ \ X=\sum_{i=1}^n X_i I_{A_i},$$
	where $\eta_i, Y_i\in L_G^1(\Omega_t)$, $X_i\in L_G^1(\Omega)$ such that $\eta_i$ is bounded and $\{A_i\}_{i=1}^n$ is an $\mathcal{F}_t$-partition of $\Omega$.
	By the definition of $\hat{\mathbb{E}}_t$ on $L^{0,1,t}_G(\Omega)$ (see Remark \ref{Etau remark}) and properties (ii), (iv) of Proposition \ref{condition expectation property}, we have
	\begin{align*}
	\hat{\mathbb{E}}_t[\eta X+Y] & =\hat{\mathbb{E}}_t[\sum_{i=1}^n(\eta_i X_i+Y_i)I_{A_i}] \\
	& =\sum_{i=1}^n\hat{\mathbb{E}}_t[\eta_i X_i+Y_i]I_{A_i} \\
	&=\sum_{i=1}^n(\eta_i^+\hat{\mathbb{E}}_t[X_i]+\eta_i^-\hat{\mathbb{E}}_t[-X_i]+Y_i)I_{A_i} \\
	&=\eta^+\hat{\mathbb{E}}_t[X]+\eta^-\hat{\mathbb{E}}_t[-X]+Y.
	\end{align*}
	
	\textit{Step 2.} Now we consider the general case. We take a sequence $\{X_n\}_{n=1}^\infty\subset L_G^{0,1,t}(\Omega)$ such that
	$$X_n\rightarrow X \ \ \ \  \text{in}\  \mathbb{L}^1.$$
	Moreover, we define
	\begin{equation}
	\label{980897897487303544}
	\eta_n:=\sum_{-2^n}^{2^n}\frac{kC_\eta}{2^{n}}I_{\{\frac{kC_\eta}{2^{n}}\leq \eta<\frac{(k+1)C_\eta}{2^{n}}\}}\end{equation}
	 and
\begin{equation}
	\label{9805665567303544}
Y_n:=\sum_{-n2^n}^{n2^n-1}\frac{k}{2^{n}}I_{\{\frac{k}{2^{n}}\leq Y<\frac{k+1}{2^{n}}\}}+nI_{\{Y\geq n \}}-nI_{\{Y< -n \}}.
\end{equation}
	Then
	$$|\eta_n-\eta|\leq\frac{C_\eta}{2^n} \ \ \ \ \text{and} \ \ \ \  Y_n\rightarrow Y \ \  \text{in} \  \mathbb{L}^1, \ \text{as}\ n\rightarrow\infty$$
	 since
	 $$\hat{\mathbb{E}}[|Y_n-Y|]\leq \hat{\mathbb{E}}[|Y_n-Y|I_{\{-n\leq Y<n\}}]+
 \hat{\mathbb{E}}[|Y_n-Y|I_{\{|Y|\geq n\}}]\leq \frac{1}{2^{n}}+\hat{\mathbb{E}}[|Y|I_{\{|Y|\geq n\}}]\rightarrow 0, \ \ \ \ \text{as} \ n\rightarrow \infty$$
 because of Remark \ref{ui remark}.
  Applying Step 1, we have
	\begin{equation}\label{345611}
	\hat{\mathbb{E}}_t[\eta_n X_n+Y_n] =\eta_n^+\hat{\mathbb{E}}_t[X_n]+\eta_n^-\hat{\mathbb{E}}_t[-X_n]+Y_n.
	\end{equation}
	We note that
	\begin{align*}
	\hat{\mathbb{E}}[|\eta_nX_n+Y_n-\eta X-Y|]
	&\leq \hat{\mathbb{E}}[|\eta_nX_n-\eta_n X|]+\hat{\mathbb{E}}[|X||\eta_n-\eta |]+\hat{\mathbb{E}}[|Y_n-Y|]\\
	&\leq C_\eta\hat{\mathbb{E}}[|X_n-X|]+\frac{C_\eta}{2^n}\hat{\mathbb{E}}[|X|]+\hat{\mathbb{E}}[|Y_n-Y|]\\
	&\rightarrow 0, \ \ \ \  \text{as} \ n\rightarrow\infty
	\end{align*}
	and similarly,
	$$\hat{\mathbb{E}}[|\eta_n^+\hat{\mathbb{E}}_t[X_n]+\eta_n^
	-\hat{\mathbb{E}}_t[-X_n]+Y_n-(\eta^+\hat{\mathbb{E}}_t[X]+\eta^-\hat{\mathbb{E}}_t[-X]+Y)|]\rightarrow 0,\ \ \ \ \text{as}\ n\rightarrow\infty.$$
    Thus the left-hand side (right-hand side resp.) of (\ref{345611}) converges to the left-hand side (right-hand side resp.) of
    $$ \hat{\mathbb{E}}_t[\eta X+Y]=\eta^+\hat{\mathbb{E}}_t[X]+\eta^-\hat{\mathbb{E}}_t[-X]+Y,$$ which completes the proof.
\end{proof}

\begin{proof}[Proof of Proposition \ref{Etau proposition on L1tau2}]
	We define  $\tau_n$  as (\ref{approximation tau in KS problem}).
	Since $\mathcal{F}_{\tau+}\subset \mathcal{F}_{\tau_n}$, we have $L^{1,{\tau+}}_G(\Omega)\subset L^{1,{\tau_n}}_G(\Omega)$. Thus for any $Z\in L^{1,{\tau+}}_G(\Omega)$, we have $ZI_{\{\tau_n=t^n_i\}}\in L^{1,t^n_i}_G(\Omega)$, and hence $\hat{\mathbb{E}}_{t^n_i+}[ZI_{\{\tau_n=t^n_i\}}]=\hat{\mathbb{E}}_{t^n_i}[ZI_{\{\tau_n=t^n_i\}}]$  according to Remark \ref{Etau remark} (iii).
	Then by Proposition \ref{Etau proposition on L1tau} (iv) and Lemma \ref{Etau discrete lemma 01},
	\begin{align*}
	\hat{\mathbb{E}}_{\tau+}[\eta X +Y]
	&=\mathbb{L}^1\text{-}\lim_{n\rightarrow\infty}\hat{\mathbb{E}}_{\tau_n+}[\eta X +Y]\\
	&=\mathbb{L}^1\text{-}\lim_{n\rightarrow\infty}\sum_{i=1}^{\infty}\hat{\mathbb{E}}_{t^n_i+}[(\eta X +Y)I_{\{\tau_n=t^n_i\}}]\\
	&=\mathbb{L}^1\text{-}\lim_{n\rightarrow\infty}\sum_{i=1}^{\infty}\hat{\mathbb{E}}_{t^n_i}[(\eta X +Y)I_{\{\tau_n=t^n_i\}}]\\
	&=\mathbb{L}^1\text{-}\lim_{n\rightarrow\infty}\sum_{i=1}^{\infty}\hat{\mathbb{E}}_{t^n_i}[\eta I_{\{\tau_n=t^n_i\}} XI_{\{\tau_n=t^n_i\}} +YI_{\{\tau_n=t^n_i\}}].
	\end{align*}
	By Lemma \ref{LG1t conditional expectation 2} (i), we note that
	\begin{align*}
	&\hat{\mathbb{E}}_{t^n_i}[\eta I_{\{\tau_n=t^n_i\}} XI_{\{\tau_n=t^n_i\}} +YI_{\{\tau_n=t^n_i\}}]\\
	&\ \ =\eta^+ I_{\{\tau_n=t^n_i\}}\hat{\mathbb{E}}_{t^n_i}[XI_{\{\tau_n=t^n_i\}}]+\eta^- I_{\{\tau_n=t^n_i\}}\hat{\mathbb{E}}_{t^n_i}[-XI_{\{\tau_n=t^n_i\}}]+YI_{\{\tau_n=t^n_i\}}\\
	&\ \ =\eta^+ \hat{\mathbb{E}}_{t^n_i}[XI_{\{\tau_n=t^n_i\}}]+\eta^-\hat{\mathbb{E}}_{t^n_i}[-XI_{\{\tau_n=t^n_i\}}]+YI_{\{\tau_n=t^n_i\}}.
	\end{align*}
	We thus have
	\begin{align*}
	\hat{\mathbb{E}}_{\tau+}[\eta X +Y]
	&=\mathbb{L}^1\text{-}\lim_{n\rightarrow\infty} (\eta^+\hat{\mathbb{E}}_{\tau_n+}[ X ]+\eta^- \hat{\mathbb{E}}_{\tau_n+}[ -X])+Y\\
	&=\eta^+\hat{\mathbb{E}}_{\tau+}[ X ]+\eta^- \hat{\mathbb{E}}_{\tau+}[ -X]+Y.
	\end{align*}

The property (ii) is proved  similarly.
\end{proof}

\subsection{Extension from below}
For  a sequence $\{X_n\}_{n=1}^\infty$ in $ L^{1,\tau+}_G(\Omega)$ such that $X_n\uparrow X$ q.s., we  can not expect $X\in L^{1,\tau+}_G(\Omega)$  (e.g., $X_n:= n, n\geq 1$). So it is necessary to introduce the extension of $\hat{\mathbb{E}}_{\tau+}$ from below as follows to guarantee the upward monotone convergence.

Let $\tau$ be a given optional time and recall the convention (H3). We set
$$L^{1,\tau+,*}_G(\Omega):=\{X\in L^0(\mathcal{F}):\text{there exists}\ X_n\in L^{1,\tau+}_G(\Omega)\ \text{such that}\ X_n\uparrow X \ q.s.\}.$$
For $X\in L^{1,\tau+,*}_G(\Omega)$, let $\{X_n\}_{n=1}^\infty\subset L^{1,\tau+}_G(\Omega)$ such
that $X_n\uparrow X$  q.s. We define
$$
\hat{\mathbb{E}}_{\tau+}[X]:=\lim_{n\rightarrow\infty}\hat{\mathbb{E}}_{\tau+}[X_n].
$$
\begin{proposition}\label{L1tau* welldefine}
The conditional expectation $\hat{\mathbb{E}}_{\tau+}:L^{1,\tau+,*}_G(\Omega)\rightarrow L^{1,\tau+,*}_G(\Omega)\cap L^0(\mathcal{F}_{\tau+})$ is well-defined and satisfies:
for $X,Y\in L^{1,\tau+,*}_G(\Omega)$,
\begin{description}
	\item [{\rm (i)}] $\hat{\mathbb{E}}_{\tau+}[X]\leq \hat{\mathbb{E}}_{\tau+}[Y], \ \text{for} \ X\leq Y$;
	\item [{\rm (ii)}] $\hat{\mathbb{E}}_{\tau+}[X+Y]\leq \hat{\mathbb{E}}_{\tau+}[Y]+\hat{\mathbb{E}}_{\tau+}[Y]$;
	\item  [{\rm (iii)}] $\hat{\mathbb{E}}[\hat{\mathbb{E}}_{\tau+}[X]]=\hat{\mathbb{E}}[X]$.
\end{description}
\end{proposition}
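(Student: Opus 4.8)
The plan is to verify well-definedness of the construction first, and then to deduce (i)--(iii) as routine consequences, with (iii) coming straight from the upward monotone convergence already available in Lemma~\ref{upward mct for rv}. Fix $X\in L^{1,\tau+,*}_G(\Omega)$ with an approximating sequence $X_n\in L^{1,\tau+}_G(\Omega)$, $X_n\uparrow X$ q.s. By the monotonicity part of Proposition~\ref{Etau welldefined}, $\hat{\mathbb{E}}_{\tau+}[X_n]$ is non-decreasing q.s., so $\lim_n\hat{\mathbb{E}}_{\tau+}[X_n]$ exists in $(-\infty,+\infty]$; being an increasing q.s.-limit of elements of $L^{1,\tau+}_G(\Omega)\cap L^0(\mathcal{F}_{\tau+})$, it lies in $L^{1,\tau+,*}_G(\Omega)\cap L^0(\mathcal{F}_{\tau+})$. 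The substantive point is independence of the approximating sequence. Given $X_n\uparrow X$ and $Y_m\uparrow X$, I would compare them through $X_n\wedge Y_m$: since $L^{1,\tau+}_G(\Omega)$ is a lattice (it is the $\|\cdot\|_1$-completion of a lattice and $Z\mapsto|Z|$ is $1$-Lipschitz for $\|\cdot\|_1$), $X_n\wedge Y_m\in L^{1,\tau+}_G(\Omega)$, and for fixed $m$ we have $X_n\wedge Y_m\uparrow Y_m$ q.s. as $n\to\infty$. Granting that $\hat{\mathbb{E}}_{\tau+}$ is continuous from below on $L^{1,\tau+}_G(\Omega)$, i.e.\ that $Z_m\uparrow Z$ with all terms in $L^{1,\tau+}_G(\Omega)$ forces $\hat{\mathbb{E}}_{\tau+}[Z_m]\uparrow\hat{\mathbb{E}}_{\tau+}[Z]$ q.s., one gets $\hat{\mathbb{E}}_{\tau+}[Y_m]=\lim_n\hat{\mathbb{E}}_{\tau+}[X_n\wedge Y_m]\le\lim_n\hat{\mathbb{E}}_{\tau+}[X_n]$, and letting $m\to\infty$ and using symmetry yields equality of the two limits.

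Thus everything is reduced to the continuity from below of $\hat{\mathbb{E}}_{\tau+}$ on $L^{1,\tau+}_G(\Omega)$, and this is where I expect the real work. From the sub-additivity of Proposition~\ref{Etau welldefined}, $0\le\hat{\mathbb{E}}_{\tau+}[Z]-\hat{\mathbb{E}}_{\tau+}[Z_m]\le\hat{\mathbb{E}}_{\tau+}[Z-Z_m]$, and $\hat{\mathbb{E}}_{\tau+}[Z-Z_m]$ is non-increasing q.s.\ to some $R\ge0$; applying $\hat{\mathbb{E}}$ and the tower property of Proposition~\ref{Etau welldefined}\,(iii) gives $\hat{\mathbb{E}}[R]\le\hat{\mathbb{E}}[\hat{\mathbb{E}}_{\tau+}[Z-Z_m]]=\hat{\mathbb{E}}[Z-Z_m]$. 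It therefore suffices to show $\hat{\mathbb{E}}[Z-Z_m]\to0$: then $\hat{\mathbb{E}}[R]=0$ with $R\ge0$ q.s.\ forces $R=0$ q.s., since $\sup_{P\in\mathcal{P}}E_P[R]=0$ and $R\ge0$ give $E_P[R]=0$, hence $R=0$, $P$-a.s.\ for every $P\in\mathcal{P}$.

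The convergence $\hat{\mathbb{E}}[Z-Z_m]\to0$ --- continuity from above of $\hat{\mathbb{E}}$ along a sequence in $L^{1,\tau+}_G(\Omega)$ decreasing to $0$ q.s.\ --- is the main obstacle. It does not follow from the sublinear-expectation axioms alone, since the capacity $c$ is not continuous from above on general Borel sets. I would obtain it by truncation, $\hat{\mathbb{E}}[Z-Z_m]\le\hat{\mathbb{E}}[(Z-Z_m)\wedge N]+\hat{\mathbb{E}}[(Z-Z_1)I_{\{Z-Z_1>N\}}]$, making the second term small uniformly in $m$ via the uniform integrability of Remark~\ref{ui remark}, and handling the bounded term by the downward monotone convergence of $\hat{\mathbb{E}}$ that rests on the weak compactness of $\mathcal{P}$ (cf.~\cite{DHP}). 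Some care is needed here because elements of $L^{1,\tau+}_G(\Omega)$ need not be quasi-continuous, so one may have to descend through the $\mathbb{L}^1$-approximation structure of $L^{1,\tau+}_G(\Omega)$ and the tower property before invoking the $\mathcal{P}$-compactness argument.

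Finally, once well-definedness is in hand the three listed properties are immediate. For (iii), apply Lemma~\ref{upward mct for rv} twice (its integrability hypothesis holds because $X_1,\hat{\mathbb{E}}_{\tau+}[X_1]\in L^{1,\tau+}_G(\Omega)\subset\mathbb{L}^1(\Omega)$): $\hat{\mathbb{E}}[\hat{\mathbb{E}}_{\tau+}[X]]=\lim_n\hat{\mathbb{E}}[\hat{\mathbb{E}}_{\tau+}[X_n]]=\lim_n\hat{\mathbb{E}}[X_n]=\hat{\mathbb{E}}[X]$, the middle step being Proposition~\ref{Etau welldefined}\,(iii). For (i), with $X\le Y$ choose $X_n\uparrow X$, $Y_n\uparrow Y$ and note $X_n\vee Y_n\uparrow Y$ in $L^{1,\tau+}_G(\Omega)$, so by well-definedness and monotonicity on $L^{1,\tau+}_G(\Omega)$, $\hat{\mathbb{E}}_{\tau+}[Y]=\lim_n\hat{\mathbb{E}}_{\tau+}[X_n\vee Y_n]\ge\lim_n\hat{\mathbb{E}}_{\tau+}[X_n]=\hat{\mathbb{E}}_{\tau+}[X]$. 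For (ii), use $X_n+Y_n\uparrow X+Y$ together with the sub-additivity of $\hat{\mathbb{E}}_{\tau+}$ on $L^{1,\tau+}_G(\Omega)$ and pass to the limit. The only genuinely delicate ingredient is the continuity-from-below of $\hat{\mathbb{E}}_{\tau+}$ on $L^{1,\tau+}_G(\Omega)$ invoked in the well-definedness step.
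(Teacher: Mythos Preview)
Your overall architecture coincides with the paper's: reduce well-definedness to the continuity-from-below of $\hat{\mathbb{E}}_{\tau+}$ on $L^{1,\tau+}_G(\Omega)$ (this is exactly Lemma~\ref{L1tau upward mct} in the paper), then run the standard $X_n\wedge Y_m$ comparison, and derive (i)--(iii) by passing to the limit using Lemma~\ref{upward mct for rv} and Proposition~\ref{Etau welldefined}. Those parts are fine.

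The genuine gap is in your proposed proof of the continuity-from-below statement itself. You reduce it to showing $\hat{\mathbb{E}}[Z-Z_m]\to 0$ whenever $Z_m\uparrow Z$ q.s.\ with $Z_m,Z\in L^{1,\tau+}_G(\Omega)$, and you plan to obtain this via truncation and a downward monotone convergence argument resting on the weak compactness of $\mathcal{P}$. But this downward convergence of $\hat{\mathbb{E}}$ on $L^{1,\tau+}_G(\Omega)$ is \emph{false} in general, because that space contains indicators of arbitrary $\mathcal{F}_{\tau+}$-sets and hence non-quasi-continuous elements. For a concrete counterexample in dimension one with $\underline{\sigma}^2<\overline{\sigma}^2$, take $\tau\equiv 1$ and set $A_m:=\{\langle B\rangle_1\in(\underline{\sigma}^2,\underline{\sigma}^2+1/m)\}\in\mathcal{F}_1\subset\mathcal{F}_{1+}$. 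Then $Z:=1$, $Z_m:=I_{A_m^c}$ lie in $L^{0,1,\tau+}_G(\Omega)$, $Z_m\uparrow Z$ everywhere, yet $\hat{\mathbb{E}}[Z-Z_m]=c(A_m)=1$ for every $m$, since for each $\sigma^2\in(\underline{\sigma}^2,\underline{\sigma}^2+1/m)$ there is $P\in\mathcal{P}$ with $P(\langle B\rangle_1=\sigma^2)=1$. So the step ``$\hat{\mathbb{E}}[Z-Z_m]\to 0$'' cannot be carried out, even after truncation, and the vague suggestion to ``descend through the $\mathbb{L}^1$-approximation structure'' does not repair this: the obstruction is precisely the indicator factors in the approximants.

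The paper proves Lemma~\ref{L1tau upward mct} by a different route that avoids any downward convergence of $\hat{\mathbb{E}}$. It argues by contradiction: if $\eta:=\lim_n\hat{\mathbb{E}}_{\tau+}[X_n]<\hat{\mathbb{E}}_{\tau+}[X]$ on a set of positive capacity, one localizes to $D_k:=\{\eta+1/k\le\hat{\mathbb{E}}_{\tau+}[X]\}\cap\{|\eta|\le k\}\in\mathcal{F}_{\tau+}$ with $c(D_k)>0$, and then computes $\hat{\mathbb{E}}[(X+k)I_{D_k}]$ in two ways using only the \emph{upward} monotone convergence of Lemma~\ref{upward mct for rv}, the tower property (Proposition~\ref{Etau welldefined}\,(iii)), and the pull-out properties (Proposition~\ref{Etau proposition on L1tau}\,(i), Proposition~\ref{Etau proposition on L1tau2}\,(i)). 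This yields $\hat{\mathbb{E}}[(\eta+k)I_{D_k}]=\hat{\mathbb{E}}[(\hat{\mathbb{E}}_{\tau+}[X]+k)I_{D_k}]\ge \hat{\mathbb{E}}[(\eta+1/k+k)I_{D_k}]$, a contradiction. Once Lemma~\ref{L1tau upward mct} is in place, your remaining steps for well-definedness and for (i)--(iii) go through exactly as you wrote them.
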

We need the following lemmas for the proof of the above proposition.
\begin{lemma}\label{L1tau upward mct}
Let $X_n,X\in  L^{1,\tau+}_G(\Omega)$ such that $X_n\uparrow X$ q.s. Then
$
\hat{\mathbb{E}}_{\tau+}[X_n]\uparrow \hat{\mathbb{E}}_{\tau+}[X]\ \text{q.s.}
$
\end{lemma}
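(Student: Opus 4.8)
The plan is to bootstrap from the deterministic-time theory, localising $\tau$ by dyadic stopping times. I would first reduce to bounded $X$: with $X^{(N)}:=(X\wedge N)\vee(-N)$ and $X_n^{(N)}:=(X_n\wedge N)\vee(-N)$ — all in $L^{1,\tau+}_G(\Omega)$, bounded, with $X_n^{(N)}\uparrow X^{(N)}$ q.s. — the bounds $X_1\le X_n\le X$ give $|X_n-X_n^{(N)}|\le R_N:=(|X_1|\vee|X|-N)^+$ uniformly in $n$, so the sub-additivity of $\hat{\mathbb{E}}_{\tau+}$ yields $|\hat{\mathbb{E}}_{\tau+}[X_n]-\hat{\mathbb{E}}_{\tau+}[X_n^{(N)}]|\le\hat{\mathbb{E}}_{\tau+}[R_N]$ q.s.\ (and likewise for $X$). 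Since $\hat{\mathbb{E}}_{\tau+}[R_N]$ decreases and $\hat{\mathbb{E}}[\hat{\mathbb{E}}_{\tau+}[R_N]]=\hat{\mathbb{E}}[R_N]\to0$ by Remark \ref{ui remark}, its q.s.\ limit has vanishing $\hat{\mathbb{E}}$, hence is $0$ q.s.; letting $N\to\infty$ after $n\to\infty$ shows that the bounded case implies the general one.

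For bounded $X$, take $\tau_m:=f_m(\tau)$ as in (\ref{approximation tau in KS problem}), so $\tau_m\downarrow\tau$ uniformly. Since $\{\tau_m=t^m_i\}\in\mathcal{F}_{t^m_i}$ and $A\cap\{\tau_m=t^m_i\}\in\mathcal{F}_{t^m_i}$ for every $A\in\mathcal{F}_{\tau+}$, the variables $X_nI_{\{\tau_m=t^m_i\}}$ and $XI_{\{\tau_m=t^m_i\}}$ lie in $L_G^{1,t^m_i}(\Omega)$ with $X_nI_{\{\tau_m=t^m_i\}}\uparrow XI_{\{\tau_m=t^m_i\}}$ q.s. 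Applying the deterministic-time upward monotone convergence for $\hat{\mathbb{E}}_t$ on $L_G^{1,t}(\Omega)$ (\cite{HJPS}) to each piece and recombining via Lemma \ref{Etau discrete lemma 01} and Remark \ref{Etau remark} (iii) (the summands having disjoint supports), I obtain, for each fixed $m$,
$$\hat{\mathbb{E}}_{\tau_m+}[X_n]\ \uparrow\ \hat{\mathbb{E}}_{\tau_m+}[X]\qquad\text{q.s., as }n\to\infty .$$
By Proposition \ref{Etau proposition on L1tau} (iv), $\hat{\mathbb{E}}_{\tau_m+}[Z]\to\hat{\mathbb{E}}_{\tau+}[Z]$ in $\mathbb{L}^1$ for every $Z\in L^{1,\tau+}_G(\Omega)$, so along a common subsequence $(m_j)$ (extracted with the help of Lemma \ref{upward mct for capacity}) one has $\hat{\mathbb{E}}_{\tau_{m_j}+}[X_n]\to\hat{\mathbb{E}}_{\tau+}[X_n]$ q.s.\ for every $n$ and $\hat{\mathbb{E}}_{\tau_{m_j}+}[X]\to\hat{\mathbb{E}}_{\tau+}[X]$ q.s.

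Monotonicity already gives $Y:=\lim_n\hat{\mathbb{E}}_{\tau+}[X_n]\le\hat{\mathbb{E}}_{\tau+}[X]$ q.s.\ for free, so the real work — and the main obstacle — is the reverse inequality $\hat{\mathbb{E}}_{\tau+}[X]\le Y$ q.s.: this amounts to interchanging the monotone limit in $n$ with the merely $\mathbb{L}^1$-limit in $m$, which is not legitimate for the sublinear $\hat{\mathbb{E}}$ since dominated/downward monotone convergence fails in general. I would close the gap by passing to the representing family $\mathcal{P}$: from Proposition \ref{main proposition} applied along $\tau_m$, together with $\bigcap_m\mathcal{F}_{\tau_m}=\mathcal{F}_{\tau+}$ and reverse martingale convergence, one derives the domination $E_P[X\,|\,\mathcal{F}_{\tau+}]\le\hat{\mathbb{E}}_{\tau+}[X]$ $P$-a.s.\ for each $P\in\mathcal{P}$; classical conditional monotone convergence then gives $E_P[X\,|\,\mathcal{F}_{\tau+}]=\lim_nE_P[X_n\,|\,\mathcal{F}_{\tau+}]\le Y$ $P$-a.s., and combining this with the representation $\hat{\mathbb{E}}_{\tau+}[X]=\operatorname{ess\,sup}_{P\in\mathcal{P}}E_P[X\,|\,\mathcal{F}_{\tau+}]$ (obtained by the same $\tau_m$-localisation from its deterministic-time counterpart) forces $\hat{\mathbb{E}}_{\tau+}[X]\le Y$ q.s. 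In the special case $X_n,X\in L^1_G(\Omega)$ this detour is unnecessary: then $X-X_n\in L^1_G(\Omega)$ and the standard downward convergence on $L^1_G(\Omega)$ gives $\hat{\mathbb{E}}[X-X_n]\to0$, so $\hat{\mathbb{E}}\big[\hat{\mathbb{E}}_{\tau+}[X]-\hat{\mathbb{E}}_{\tau+}[X_n]\big]\le\hat{\mathbb{E}}[X-X_n]\to0$ and the claim is immediate.
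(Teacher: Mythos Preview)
Your route is far more circuitous than necessary, and the crucial step has a real gap. The representation $\hat{\mathbb{E}}_{\tau+}[X]=\operatorname{ess\,sup}_{P\in\mathcal{P}}E_P[X\,|\,\mathcal{F}_{\tau+}]$ that you invoke at the end is neither proved in the paper nor obtainable by a one-line ``same $\tau_m$-localisation'' argument. Even at deterministic times the correct statement (Soner--Touzi--Zhang type) reads $\hat{\mathbb{E}}_t[X]=\operatorname*{ess\,sup}^P_{P'\in\mathcal{P}(t,P)}E_{P'}[X\,|\,\mathcal{F}_t]$ $P$-a.s.\ for each fixed $P$, with $\mathcal{P}(t,P)$ the measures agreeing with $P$ on $\mathcal{F}_t$; it is a nontrivial result relying on a measurable-selection / pasting argument, and carrying it to $\mathcal{F}_{\tau+}$ is a further step. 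Without it, from $E_P[X\,|\,\mathcal{F}_{\tau+}]\le Y$ $P$-a.s.\ for every $P$ you cannot conclude $\hat{\mathbb{E}}_{\tau+}[X]\le Y$ q.s. The entire machinery you build (truncation, dyadic approximation, reverse martingales, common q.s.\ subsequences) leads up to this missing link.

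The paper's proof bypasses all of this with a short contradiction argument using only tools already in hand. One assumes $c\big(\{\eta<\hat{\mathbb{E}}_{\tau+}[X]\}\big)>0$ for $\eta:=\lim_n\hat{\mathbb{E}}_{\tau+}[X_n]$, localises to $D_k:=\{\eta+\tfrac1k\le\hat{\mathbb{E}}_{\tau+}[X]\}\cap\{|\eta|\le k\}\in\mathcal{F}_{\tau+}$ with $c(D_k)>0$, and then computes $\hat{\mathbb{E}}[(X+k)I_{D_k}]$ two ways. Using upward MCT for $\hat{\mathbb{E}}$ (Lemma~\ref{upward mct for rv}), the tower identity $\hat{\mathbb{E}}[\hat{\mathbb{E}}_{\tau+}[\cdot]]=\hat{\mathbb{E}}[\cdot]$ (Proposition~\ref{Etau welldefined}\,(iii)), and the pull-out on $\mathcal{F}_{\tau+}$-sets (Propositions~\ref{Etau proposition on L1tau}\,(i) and~\ref{Etau proposition on L1tau2}\,(i)), one gets
\[
\hat{\mathbb{E}}[(X+k)I_{D_k}]=\hat{\mathbb{E}}[(\eta+k)I_{D_k}]\quad\text{and}\quad \hat{\mathbb{E}}[(X+k)I_{D_k}]=\hat{\mathbb{E}}[(\hat{\mathbb{E}}_{\tau+}[X]+k)I_{D_k}]\ge\hat{\mathbb{E}}[(\eta+\tfrac1k+k)I_{D_k}],
\]
which is impossible for $c(D_k)>0$. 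No truncation, no approximation of $\tau$, no ess-sup representation.
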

\begin{proof}
	Since $X_n\leq X$ implies $\hat{\mathbb{E}}_{\tau+}[{X}_n]\leq \hat{\mathbb{E}}_{\tau+}[{X}]$ by Proposition \ref{Etau welldefined} (i), we have $$\lim_{n\rightarrow\infty}\hat{\mathbb{E}}_{\tau+}[{X}_n] \leq \hat{\mathbb{E}}_{\tau+}[X].$$
	Then it suffices to prove the reverse inequality.
 Assume on the contrary that $\eta:=\lim_{n\rightarrow\infty}\hat{\mathbb{E}}_{\tau+}[X_n]\geq \hat{\mathbb{E}}_{\tau+}[X]$ q.s. does not hold, i.e.,
$$
c(\{\eta<\hat{\mathbb{E}}_{\tau+}[X]\})>0.
$$
Since
$$
D_k:=\{\eta+\frac1k\leq \hat{\mathbb{E}}_{\tau+}[X]\}\cap \{|\eta|\leq k\}\uparrow \{\eta<\hat{\mathbb{E}}_{\tau+}[X]\},
$$
we can take $k$ large enough such that, by Lemma \ref{upward mct for capacity},
$$
c(D_k)>0.
$$
Then by Lemma \ref{upward mct for rv}, Proposition \ref{Etau welldefined} (iii),  Proposition \ref{Etau proposition on L1tau} (i) and Proposition \ref{Etau proposition on L1tau2} (i), we have
\begin{align*}
\hat{\mathbb{E}}[(X+k)I_{D_k}]=\lim_{n\rightarrow\infty}\hat{\mathbb{E}}[(X_n+k)I_{D_k}]=\lim_{n\rightarrow\infty}\hat{\mathbb{E}}[(\hat{\mathbb{E}}_{\tau+}[X_n]+k)I_{D_k}]
=\hat{\mathbb{E}}[(\eta+k) I_{D_k}].
\end{align*}
But
$$
\hat{\mathbb{E}}[(X+k)I_{D_k}]=\hat{\mathbb{E}}[(\hat{\mathbb{E}}_{\tau+}[X]+k)I_{D_k}]\geq  \hat{\mathbb{E}}[(\eta+\frac{1}{k}+k) I_{D_k}],
$$
which is a contradiction by Proposition 29 in \cite{HP1}
\end{proof}

\begin{proof}[Proof of Proposition \ref{L1tau* welldefine}]
   Let $X\in L^{1,\tau+,*}_G(\Omega)$.
   For any $X_n\in L^{1,\tau+}_G(\Omega)$ such that $X_n\uparrow X$ q.s., obviously $\lim_{n\rightarrow\infty}\hat{\mathbb{E}}_{\tau+}[X_n]$ exists. We now show that if moreover there is another sequence $\widetilde{X}_n\in L^{1,{\tau}+}_G(\Omega)$ such that
	$
	\widetilde{X}_n\uparrow X
	$ q.s.,
	it holds
	$$
	\lim_{n\rightarrow\infty}\hat{\mathbb{E}}_{\tau+}[X_n]=\lim_{n\rightarrow\infty}\hat{\mathbb{E}}_{\tau+}[\widetilde{X}_n] \ \ \ \ \text{q.s.}
	$$
Noting that $X_n\wedge \widetilde{X}_m\uparrow X_n$, as $m\rightarrow\infty$, by Lemma \ref{L1tau upward mct}, we have
$$
\hat{\mathbb{E}}_{\tau+}[X_n]=\lim_{m\rightarrow\infty} \hat{\mathbb{E}}_{\tau+}[X_n\wedge \widetilde{X}_m]\leq \lim_{m\rightarrow\infty} \hat{\mathbb{E}}_{\tau+}[\widetilde{X}_m].
$$
This follows
$$
\lim_{n\rightarrow\infty}\hat{\mathbb{E}}_{\tau+}[X_n]\leq \lim_{n\rightarrow\infty}\hat{\mathbb{E}}_{\tau+}[\widetilde{X}_n]
$$
Exchanging $X_n,\widetilde{X}_n$, we get the reverse
$$
\lim_{n\rightarrow\infty}\hat{\mathbb{E}}_{\tau+}[X_n]\geq \lim_{n\rightarrow\infty}\hat{\mathbb{E}}_{\tau+}[\widetilde{X}_n].
$$
Thus
$$
\lim_{n\rightarrow\infty}\hat{\mathbb{E}}_{\tau+}[X_n]= \lim_{n\rightarrow\infty}\hat{\mathbb{E}}_{\tau+}[\widetilde{X}_n].
$$
Therefore, $\hat{\mathbb{E}}_{\tau+}$ is well-defined.

Given the definition of $\hat{\mathbb{E}}_{\tau+}$ on $L^{1,\tau+,*}_G(\Omega)$ and Proposition \ref{Etau welldefined}, the proof for properties (i), (ii), (iii) is straightforward. We shall just omit it.
\end{proof}

\begin{proposition}\label{Etau proposition on L1tau*}The conditional expectation $\hat{\mathbb{E}}_{\tau+}$ on $ L^{1,{\tau}+,*}_G(\Omega)$ satisfies the following properties:
	\begin{description}
		\item [{\rm (i)}]If  $X_i\in L^{1,{\tau}+,*}_G(\Omega)$, $i=1,\cdots,n$ and $\{A_i\}_{i=1}^n$ is an $\mathcal{F}_{\tau+}$-partition of $\Omega$, then $\hat{\mathbb{E}}_{\tau+}[\sum_{i=1}^nX_iI_{A_i}]=\sum_{i=1}^n\hat{\mathbb{E}}_{\tau+}[X_i]I_{A_i}$;
		\item [{\rm (ii)}] If $\tau,\sigma$ are two optional times and $X\in L^{1,\tau+,*}_G(\Omega)$, then $\hat{\mathbb{E}}_{\tau+}[X]I_{\{\tau\leq \sigma\}}=\hat{\mathbb{E}}_{(\tau\wedge\sigma)+}[XI_{\{\tau\leq \sigma\}}]$;
		\item[{\rm (iii)}] If $X\in L^{1,{\tau}+,*}_G(\Omega)$ and $\eta,Y\in L^{1,{\tau}+,*}_G(\Omega)\cap L^0(\mathcal{F}_{\tau+})$ such that $\eta,X$ is nonnegative, then $ \hat{\mathbb{E}}_{\tau+}[\eta X +Y]=\eta\hat{\mathbb{E}}_{\tau+}[X]+Y$;
		\item[{\rm (iv)}]If $X_n\in  L^{1,\tau+,*}_G(\Omega)$ such that $X_n\uparrow X$ q.s., then  $X\in L^{1,\tau+,*}_G(\Omega)$ and
		$
		\hat{\mathbb{E}}_{\tau+}[X_n]\uparrow \hat{\mathbb{E}}_{\tau+}[X]\ \text{q.s.}
		$
	\end{description}
\end{proposition}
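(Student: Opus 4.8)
The plan is to obtain all four items by the single device that has already driven the construction: approximate the (possibly unbounded) data from below by elements of $L^{1,\tau+}_G(\Omega)$, apply the statements already available on that space --- Propositions \ref{Etau proposition on L1tau} and \ref{Etau proposition on L1tau2} together with the monotone convergence Lemma \ref{L1tau upward mct} --- and then pass to the limit using the very definition of $\hat{\mathbb{E}}_{\tau+}$ on $L^{1,\tau+,*}_G(\Omega)$ and its well-definedness (Proposition \ref{L1tau* welldefine}). I would establish (iv) first, since it is the monotone convergence statement that underlies the limit passages needed for (i)--(iii), and only then do (i), (ii) (which are short) and finally (iii).

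For (iv), given $X_n\uparrow X$ q.s.\ with $X_n\in L^{1,\tau+,*}_G(\Omega)$, pick for each $n$ a sequence $X_{n,k}\in L^{1,\tau+}_G(\Omega)$ with $X_{n,k}\uparrow X_n$ q.s., and set $Z_k:=\bigvee_{j=1}^{k}X_{j,k}$. Since $L^{1,\tau+}_G(\Omega)$ is a lattice (it is closed under $+$ and under $x\mapsto x^+$, hence under $\vee$), each $Z_k$ lies in $L^{1,\tau+}_G(\Omega)$; moreover $Z_k\uparrow$ q.s.\ and, from $X_{j,k}\le Z_k\le X_k$, one gets $\lim_k Z_k=X$ q.s. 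Thus $X\in L^{1,\tau+,*}_G(\Omega)$ and $\hat{\mathbb{E}}_{\tau+}[X]=\lim_k\hat{\mathbb{E}}_{\tau+}[Z_k]$; comparing $\hat{\mathbb{E}}_{\tau+}[X_{n,k}]\le\hat{\mathbb{E}}_{\tau+}[Z_k]\le\hat{\mathbb{E}}_{\tau+}[X_k]$ for $k\ge n$ and letting $k\to\infty$ sandwiches $\hat{\mathbb{E}}_{\tau+}[X_n]\le\hat{\mathbb{E}}_{\tau+}[X]\le\lim_n\hat{\mathbb{E}}_{\tau+}[X_n]$, which gives $\hat{\mathbb{E}}_{\tau+}[X_n]\uparrow\hat{\mathbb{E}}_{\tau+}[X]$ q.s. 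For (i), approximating each $X_i$ from below makes $\sum_i X_{i,n}I_{A_i}\uparrow\sum_i X_iI_{A_i}$ q.s.\ with $\sum_i X_{i,n}I_{A_i}\in L^{1,\tau+}_G(\Omega)$ by Proposition \ref{Etau proposition on L1tau}(i); applying that proposition and passing to the limit (the left side by the definition of the extension, the right side since $\hat{\mathbb{E}}_{\tau+}[X_{i,n}]\uparrow\hat{\mathbb{E}}_{\tau+}[X_i]$ q.s.) yields the identity. For (ii), with $X_n\uparrow X$ and $X_n\in L^{1,\tau+}_G(\Omega)$ we have $X_nI_{\{\tau\le\sigma\}}\uparrow XI_{\{\tau\le\sigma\}}$ q.s.\ and $X_nI_{\{\tau\le\sigma\}}\in L^{1,(\tau\wedge\sigma)+}_G(\Omega)$ (the remark after Proposition \ref{Etau proposition on L1tau}); multiplying the identity of Proposition \ref{Etau proposition on L1tau}(ii) by $I_{\{\tau\le\sigma\}}\ge0$ and passing to the limit on both sides gives the conclusion.

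For (iii) I would first record the auxiliary fact that every bounded $\mathcal{F}_{\tau+}$-measurable random variable lies in $L^{1,\tau+}_G(\Omega)$ (it is a uniform, hence $\|\cdot\|_1$-, limit of simple $\mathcal{F}_{\tau+}$-measurable functions, which belong to $L^{0,1,\tau+}_G(\Omega)$). Combined with Proposition \ref{Etau proposition on L1tau2}(i) (with $\eta=0$) this shows $\hat{\mathbb{E}}_{\tau+}$ is the identity on $L^{1,\tau+}_G(\Omega)\cap L^0(\mathcal{F}_{\tau+})$, and then, truncating by $Z\wedge m\uparrow Z$ and invoking (iv), that $\hat{\mathbb{E}}_{\tau+}[Z]=Z$ for every nonnegative $Z\in L^{1,\tau+,*}_G(\Omega)\cap L^0(\mathcal{F}_{\tau+})$. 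Next, for nonnegative $\eta\in L^{1,\tau+,*}_G(\Omega)\cap L^0(\mathcal{F}_{\tau+})$, bounded $W\in L^0(\mathcal{F}_{\tau+})$ and nonnegative $X\in L^{1,\tau+,*}_G(\Omega)$, one proves $\hat{\mathbb{E}}_{\tau+}[\eta X+W]=\eta\hat{\mathbb{E}}_{\tau+}[X]+W$ by the same double approximation: approximate $X$ from below by bounded $X_n\in L^{1,\tau+}_G(\Omega)$, apply Proposition \ref{Etau proposition on L1tau2}(i) to $(\eta\wedge m)X_n+W$, let $n\to\infty$ (definition of the extension), then $m\to\infty$ (part (iv)); a further monotone passage $W=W\wedge m\uparrow W$ removes the boundedness of $W$ when $W\ge0$. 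The general additive term is then absorbed through the identity $\hat{\mathbb{E}}_{\tau+}[Y]=Y$ for all $Y\in L^{1,\tau+,*}_G(\Omega)\cap L^0(\mathcal{F}_{\tau+})$: for "$\ge$" use subadditivity, $\hat{\mathbb{E}}_{\tau+}[Y]+\hat{\mathbb{E}}_{\tau+}[Y^-]\ge\hat{\mathbb{E}}_{\tau+}[Y^+]=Y^+$ with $\hat{\mathbb{E}}_{\tau+}[Y^{\pm}]=Y^{\pm}$; for "$\le$" use $Y\le Y\vee(-m)$ together with $\hat{\mathbb{E}}_{\tau+}[Y\vee(-m)]=Y\vee(-m)$ (reduce $Y\vee(-m)+m\ge0$ to the nonnegative case via the constant pull-out $\hat{\mathbb{E}}_{\tau+}[\,\cdot\,+m]=\hat{\mathbb{E}}_{\tau+}[\cdot]+m$) and let $m\to\infty$. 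The two sides of (iii) are then squeezed: subadditivity gives $\hat{\mathbb{E}}_{\tau+}[\eta X+Y]\le\hat{\mathbb{E}}_{\tau+}[\eta X]+\hat{\mathbb{E}}_{\tau+}[Y]=\eta\hat{\mathbb{E}}_{\tau+}[X]+Y$, while $\hat{\mathbb{E}}_{\tau+}[\eta X+Y]+\hat{\mathbb{E}}_{\tau+}[Y^-]\ge\hat{\mathbb{E}}_{\tau+}[\eta X+Y^+]=\eta\hat{\mathbb{E}}_{\tau+}[X]+Y^+$ gives the reverse.

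The main obstacle I anticipate is precisely part (iii), and inside it the additive term $Y$ of unrestricted sign: because $L^{1,\tau+,*}_G(\Omega)$ is closed only under increasing q.s.\ limits, $-Y^-$ need not lie in it, so one cannot pull $Y$ out by linearity and must instead play subadditivity against $\hat{\mathbb{E}}_{\tau+}[Y]=Y$ from both directions, proving the latter via a separate \emph{descending} approximation $Y\vee(-m)\downarrow Y$. The remaining points --- that $L^{1,\tau+}_G(\Omega)$ is stable under the relevant lattice operations, bounded truncations and multiplication by bounded $\mathcal{F}_{\tau+}$-measurable factors, and that each intermediate random variable indeed belongs to the appropriate space $L^{1,\cdot+,*}_G(\Omega)$ on which $\hat{\mathbb{E}}_{\tau+}$ has been defined --- are routine but need to be checked carefully.
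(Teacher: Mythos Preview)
Your proposal is correct and follows essentially the same strategy as the paper. For (iv) your diagonal construction $Z_k=\bigvee_{j\le k}X_{j,k}$ is exactly the paper's argument (the paper writes $\tilde X_m=\bigvee_{n\le m}X_n^m$ and uses the same sandwich), and for (i) and (ii) both approaches simply pass the identities of Proposition~\ref{Etau proposition on L1tau} to the monotone limit via the definition of the extension.

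The one difference worth flagging is (iii). The paper dismisses it in one line as following ``directly'' from Proposition~\ref{Etau proposition on L1tau2}(i) and the definition, whereas you devote most of your effort to working around the unrestricted sign of the additive term $Y$ via subadditivity and the identity $\hat{\mathbb{E}}_{\tau+}[Y]=Y$. Your caution is warranted: the paper's direct route tacitly relies on being able to approximate $Y$ from below by elements of $L^{1,\tau+}_G(\Omega)\cap L^0(\mathcal{F}_{\tau+})$, which in turn rests on the (unstated) observation that for $Y\in L^{1,\tau+,*}_G(\Omega)\cap L^0(\mathcal{F}_{\tau+})$ one already has $Y^-\in L^{1,\tau+}_G(\Omega)$ --- it is $\mathcal{F}_{\tau+}$-measurable, dominated by $(Y_1)^-$ for any approximant $Y_1\le Y$ in $L^{1,\tau+}_G(\Omega)$, and hence approximable in $\|\cdot\|_1$ by its bounded truncations via Remark~\ref{ui remark}. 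Once one sees this, $Y_k:=Y\wedge k$ furnishes the needed increasing sequence in $L^{1,\tau+}_G(\Omega)\cap L^0(\mathcal{F}_{\tau+})$ and Proposition~\ref{Etau proposition on L1tau2}(i) applies at each stage. Your subadditivity-based argument bypasses this observation at the cost of some length; either route is fine.
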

\begin{proof}
Statements (i), (ii), (iii) follow directly from   Proposition \ref{Etau proposition on L1tau} (i), (ii), Proposition \ref{Etau proposition on L1tau2} (i) and the definition of $\hat{\mathbb{E}}_{\tau+}$ on $L^{1,\tau+,*}_G(\Omega)$.

(iv)  By Proposition \ref{L1tau* welldefine} (i),  we have
$$
\hat{\mathbb{E}}_{\tau+}[X]\geq \lim_{n\rightarrow\infty}\hat{\mathbb{E}}_{\tau+}[{X}_n].$$ To
 prove the reverse inequality,  for each $X_n$, we take  a sequence $X_n^m\in {L}^{1,\tau+}_G(\Omega)$ such that
$X_n^m\uparrow X_n $, as $m\rightarrow \infty$. We define $\tilde{X}_m:=\vee_{n=1}^mX_n^m\in {L}^{1,\tau+}_G(\Omega)$. Then
$$
\tilde{X}_m\leq \vee_{n=1}^mX_n=X_m\ \ \ \ \text{and}\ \ \ \ \tilde{X}_m\uparrow X,\ \ \text{as}\ m\rightarrow\infty.
$$
It follows from the definition of $\hat{\mathbb{E}}_{\tau+}$ on $L^{1,\tau+,*}_G(\Omega)$ that
$$
\hat{\mathbb{E}}_{\tau+}[X]=\lim_{m\rightarrow\infty}\hat{\mathbb{E}}_{\tau+}[\tilde{X}_m]\leq \lim_{m\rightarrow\infty}\hat{\mathbb{E}}_{\tau+}[{X}_m],
$$
as desired.
\end{proof}
\begin{remark}
	\upshape{
 Let $\tau$ be a stopping time satisfying (H3). We define  $L^{1,\tau,*}_G(\Omega)$ as    ${L^{1,\tau+,*}_G(\Omega)}$ with  $\mathcal{F}_\tau$ replacing $\mathcal{F}_{\tau+}$. We can similarly extend
	$\hat{\mathbb{E}}_{\tau}$ from below to
	$L^{1,\tau,*}_G(\Omega)$ and  similar properties also hold for $\hat{\mathbb{E}}_{\tau}$ on
	$L^{1,\tau,*}_G(\Omega)$. Moreover,
	$$ \hat{\mathbb{E}}_{\tau+}[X]=\hat{\mathbb{E}}_{\tau}[X],\ \ \  \ \text{for}\ X\in {L}^{1,\tau,*}_G(\Omega).$$}
\end{remark}

\subsection{The reflection principle for $G$-Brownian motion}
As an application, we give the following reflection principle for $G$-Brownian motion.
\begin{theorem}
Let $\tau$ be an optional time (without the assumption that $\tau$ satisfies (H3)). Then
$$\widetilde{B}_t:=2B_{t\wedge\tau}-B_t=B_{t\wedge\tau}-(B_t-B_{\tau})I_{\{t>\tau\}},  \ \ \ \ \text{for}\ t\geq0,$$
is still a $G$-Brownian motion.
\end{theorem}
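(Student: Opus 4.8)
The plan is to show that $\widetilde{B}$ has the same finite-dimensional distributions under $\hat{\mathbb{E}}$ as the canonical $G$-Brownian motion $B$, i.e. that
\begin{equation}\label{reflfdd}
\hat{\mathbb{E}}[\varphi(\widetilde{B}_{t_1},\dots,\widetilde{B}_{t_n})]=\hat{\mathbb{E}}[\varphi(B_{t_1},\dots,B_{t_n})]
\end{equation}
for all $0\le t_1<\dots<t_n$ and all $\varphi\in C_{b.Lip}(\mathbb{R}^{d\times n})$. Since $\widetilde{B}$ has continuous paths, $\widetilde{B}_0=B_0=0$ q.s., and properties (i)--(iii) in the definition of $G$-Brownian motion are determined by the finite-dimensional distributions, (\ref{reflfdd}) yields the theorem. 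I would fix $t_1<\dots<t_n$ and note that $t_i\wedge\tau=t_i\wedge(\tau\wedge t_n)$ for $i\le n$, so replacing $\tau$ by the \emph{bounded} optional time $\tau\wedge t_n$ changes neither side of (\ref{reflfdd}); hence one may assume $\tau\le t_n$, so that (H3) is automatic and all the machinery of this section applies.

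The conceptual core is \emph{reflection at a deterministic time}: for any constant $s\ge0$, with $l$ given by $t_l\le s<t_{l+1}$ (conventions $t_0=0$, $t_{n+1}=\infty$),
\begin{equation}\label{refldet}
\hat{\mathbb{E}}_s[\varphi(2B_{t_1\wedge s}-B_{t_1},\dots,2B_{t_n\wedge s}-B_{t_n})]=\hat{\mathbb{E}}_s[\varphi(B_{t_1},\dots,B_{t_n})].
\end{equation}
Indeed, the first $l$ entries on the left equal $B_{t_1},\dots,B_{t_l}$ and the remaining ones equal $B_s-(B_{t_i}-B_s)$; writing both sides as $C_{b.Lip}$ functions of the $\mathcal{F}_s$-measurable vector $(B_{t_1},\dots,B_{t_l},B_s)$ and of the post-$s$ increments $(B_{t_{l+1}}-B_s,\dots,B_{t_n}-B_s)$, Proposition \ref{condition expectation property}(v) reduces (\ref{refldet}) to the identity $\hat{\mathbb{E}}[\varphi(x_1,\dots,x_l,x-z_1,\dots,x-z_{n-l})]=\hat{\mathbb{E}}[\varphi(x_1,\dots,x_l,x+z_1,\dots,x+z_{n-l})]$ for $z_j=B_{t_{l+j}}-B_s$, which holds because $(-B_t)_{t\ge0}$ is again a $G$-Brownian motion and hence $(B_{t_{l+1}}-B_s,\dots,B_{t_n}-B_s)\overset{d}{=}-(B_{t_{l+1}}-B_s,\dots,B_{t_n}-B_s)$.

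Next I would discretize $\tau$. Let $\tau_m:=f_m(\tau)$ be as in (\ref{approximation tau in KS problem}); since $\tau\le t_n$, $\tau_m$ is a simple discrete stopping time taking finitely many values $s^m_1<\dots<s^m_{k_m}$, and $\tau_m\to\tau$ uniformly. Put $\widetilde{B}^m_{t_i}:=2B_{t_i\wedge\tau_m}-B_{t_i}$ and $\psi^m_j:=\varphi(2B_{t_1\wedge s^m_j}-B_{t_1},\dots,2B_{t_n\wedge s^m_j}-B_{t_n})\in L_{ip}(\Omega)$. Since $\varphi(\widetilde{B}^m_{t_1},\dots,\widetilde{B}^m_{t_n})=\sum_j\psi^m_jI_{\{\tau_m=s^m_j\}}$ with $\{\tau_m=s^m_j\}\in\mathcal{F}_{s^m_j}$, two applications of Proposition \ref{main proposition} together with (\ref{refldet}) give
\begin{align*}
\hat{\mathbb{E}}[\varphi(\widetilde{B}^m_{t_1},\dots,\widetilde{B}^m_{t_n})]
&=\hat{\mathbb{E}}[\sum_j\hat{\mathbb{E}}_{s^m_j}[\psi^m_j]I_{\{\tau_m=s^m_j\}}]
=\hat{\mathbb{E}}[\sum_j\hat{\mathbb{E}}_{s^m_j}[\varphi(B_{t_1},\dots,B_{t_n})]I_{\{\tau_m=s^m_j\}}]\\
&=\hat{\mathbb{E}}[\varphi(B_{t_1},\dots,B_{t_n})],
\end{align*}
the last equality being Proposition \ref{main proposition} with all $\xi_i=\varphi(B_{t_1},\dots,B_{t_n})$ (equivalently, the tower property $\hat{\mathbb{E}}[\hat{\mathbb{E}}_{\tau_m+}[\varphi(B_{t_1},\dots,B_{t_n})]]=\hat{\mathbb{E}}[\varphi(B_{t_1},\dots,B_{t_n})]$ of Proposition \ref{lip Etau lemma}(iii)). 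Finally I would let $m\to\infty$: as $\tau_m\to\tau$ uniformly and the paths of $B$ are continuous, $\widetilde{B}^m_{t_i}\to\widetilde{B}_{t_i}$ pointwise, and using $|\varphi(a)-\varphi(b)|\le C\sum_i(|a_i-b_i|\wedge1)$ with $C=2(L_\varphi\vee C_\varphi)$ together with $|B_{t_i\wedge\tau_m}-B_{t_i\wedge\tau}|\wedge1\le\sup_{(u_1,u_2)\in\Lambda_{\delta_m,t_n}}(|B_{u_2}-B_{u_1}|\wedge1)$, where $\delta_m:=\|\tau_m-\tau\|_\infty\to0$, Lemma \ref{sup continuity lemma} yields $\hat{\mathbb{E}}[|\varphi(\widetilde{B}^m_{t_1},\dots,\widetilde{B}^m_{t_n})-\varphi(\widetilde{B}_{t_1},\dots,\widetilde{B}_{t_n})|]\to0$, whence $\hat{\mathbb{E}}[\varphi(\widetilde{B}^m_{t_1},\dots,\widetilde{B}^m_{t_n})]\to\hat{\mathbb{E}}[\varphi(\widetilde{B}_{t_1},\dots,\widetilde{B}_{t_n})]$ and (\ref{reflfdd}) follows. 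The main obstacle is precisely this last limit: since no dominated convergence theorem is available under $\hat{\mathbb{E}}$, it has to be replaced by the $(\cdot\wedge1)$-truncation combined with the tightness estimate of Lemma \ref{sup continuity lemma}; everything else is just the symmetry of $G$-Brownian motion (identity (\ref{refldet})) and the consistency property of Proposition \ref{main proposition}.
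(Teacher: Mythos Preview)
Your proof is correct and follows essentially the same route as the paper's: reduce to bounded $\tau$, establish reflection at a deterministic time via the symmetry of $G$-Brownian motion, handle discrete stopping times through the consistency property, and pass to the limit using the modulus-of-continuity estimate of Lemma \ref{sup continuity lemma}. The only cosmetic difference is that the paper works with increments $\widetilde{B}_{t_i}-\widetilde{B}_{t_{i-1}}$ and, for discrete $\tau$, refines the partition so that $\tau$ takes values among the $t_i$'s and then applies $\hat{\mathbb{E}}_\tau$ via Lemma \ref{Etau discrete lemma 01}, whereas you state the deterministic reflection identity at an arbitrary level $s$ and invoke Proposition \ref{main proposition} directly---slightly more economical, but the same argument.
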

\begin{proof}
It suffices to prove that the two processes have the same finite-dimensional distributions, i.e., for any $0\leq t_1<t_2<\cdots<t_n\leq T<\infty$, we have
\begin{equation}\label{reflect equation}
(\widetilde{B}_{t_1},\widetilde{B}_{t_2}-\widetilde{B}_{t_{1}},\cdots ,\widetilde{B}_{t_n}-\widetilde{B}_{t_{n-1}})\overset{d}{=}(B_{t_1},{B}_{t_2}-{B}_{t_{1}},\cdots ,B_{t_n}-B_{t_{n-1}}).
\end{equation}
Moreover, by replacing $\tau$ with $\tau\wedge T$ we may assume without loss of generality that $\tau\leq T$.

Suppose first that  $\tau$ is a stopping time taking finitely many values. We may assume that $\tau$ also takes values in $\{t_i:i\leq n\}$ since we can refine the partition in (\ref{reflect equation}).
Then by the version of Lemma \ref{Etau discrete lemma 01} for $\hat{\mathbb{E}}_{\tau}$, we have
\begin{align*}
&\hat{\mathbb{E}}_{\tau}[\varphi(\widetilde{B}_{t_1},\widetilde{B}_{t_2}-\widetilde{B}_{t_{1}},\cdots ,\widetilde{B}_{t_n}-\widetilde{B}_{t_{n-1}})]\\
&\ \  =\hat{\mathbb{E}}_{\tau}[\varphi(2B_{t_1\wedge\tau}-B_{t_1},\cdots,2B_{t_n\wedge\tau}-B_{t_n}-(2B_{t_{n-1}\wedge\tau}-B_{t_{n-1}}))]\\
&\ \ =\sum_{i=1}^n\hat{\mathbb{E}}_{t_i}[\varphi(2B_{t_1\wedge\tau}-B_{t_1},\cdots,2B_{t_n\wedge\tau}-B_{t_n}-(2B_{t_{n-1}\wedge\tau}-B_{t_{n-1}}))I_{\{\tau=t_i\}}]\\
&\ \ =\sum_{i=1}^n\hat{\mathbb{E}}_{t_i}[\varphi(2B_{t_1\wedge t_i}-B_{t_1},\cdots,2B_{t_n\wedge t_i}-B_{t_n}-(2B_{t_{n-1}\wedge t_i}-B_{t_{n-1}}))]I_{\{\tau=t_i\}}.
\end{align*}
Note that, for $k\leq i$,
$$2B_{t_k\wedge t_i}-B_{t_k}-(2B_{t_{k-1}\wedge t_i}-B_{t_{k-1}})=B_{t_k}-B_{t_{k-1}},$$
and for $k> i$,
$$
2B_{t_k\wedge t_i}-B_{t_k}-(2B_{t_{k-1}\wedge t_i}-B_{t_{k-1}})=-(B_{t_k}-B_{t_{k-1}})\overset{d}{=}B_{t_k}-B_{t_{k-1}}
$$
because of the symmetry of $G$-Brownian motion.
We see from the definition of conditional expectation $\hat{\mathbb{E}}_{t_i}$ on $L_{ip}(\Omega)$ that
 \begin{align*}
&\hat{\mathbb{E}}_{t_i}[\varphi(2B_{t_1\wedge t_i}-B_{t_1},\cdots,2B_{t_n\wedge t_i}-B_{t_n}-(2B_{t_{n-1}\wedge t_i}-B_{t_{n-1}}))]\\
&\ \ =\hat{\mathbb{E}}_{t_i}[\varphi(B_{t_1},\cdots,B_{t_i}-B_{t_{i-1}},-(B_{t_{i+1}}-B_{t_{i}}),\cdots,-(B_{t_{n}}-B_{t_{n-1}}))]\\
&\ \ =\hat{\mathbb{E}}_{t_i}[\varphi(B_{t_1},\cdots,B_{t_i}-B_{t_{i-1}},B_{t_{i+1}}-B_{t_{i}},\cdots,B_{t_{n}}-B_{t_{n-1}})].
\end{align*}
Therefore,
\begin{align*}
&\hat{\mathbb{E}}_{\tau}[\varphi(\widetilde{B}_{t_1},\widetilde{B}_{t_2}-\widetilde{B}_{t_{1}},\cdots ,\widetilde{B}_{t_n}-\widetilde{B}_{t_{n-1}})]\\
&\ \ =\sum_{i=1}^n\hat{\mathbb{E}}_{t_i}[\varphi(B_{t_1},\cdots,B_{t_i}-B_{t_{i-1}},B_{t_{i+1}}-B_{t_{i}},\cdots,B_{t_{n}}-B_{t_{n-1}})]I_{\{\tau=t_i\}}\\
&\ \ =\hat{\mathbb{E}}_{\tau}[\varphi({B}_{t_1},{B}_{t_2}-{B}_{t_{1}},\cdots ,{B}_{t_n}-{B}_{t_{n-1}})].
\end{align*}
Taking expectation $\hat{\mathbb{E}}$ on both sides, we have
$$
\hat{\mathbb{E}}[\varphi(\widetilde{B}_{t_1},\widetilde{B}_{t_2}-\widetilde{B}_{t_{1}},\cdots ,\widetilde{B}_{t_n}-\widetilde{B}_{t_{n-1}})]
=\hat{\mathbb{E}}[\varphi({B}_{t_1},{B}_{t_2}-{B}_{t_{1}},\cdots ,{B}_{t_n}-{B}_{t_{n-1}})].
$$

Turning to the general optional time $\tau\leq T$, we take a sequence of stopping times $\tau_k\leq T+1$ with finitely many values such that $0\leq \tau_k-\tau\leq \frac1k\downarrow 0$. Then
\begin{equation}\label{798374235739382489}
\hat{\mathbb{E}}[\varphi(2B_{t_1\wedge\tau_k}-B_{t_1},\cdots,2B_{t_n\wedge\tau_k}-B_{t_n}-(2B_{t_{n-1}\wedge\tau_k}-B_{t_{n-1}}))] =\hat{\mathbb{E}}[\varphi(B_{t_1},\cdots,B_{t_n}-B_{t_{n-1}})].
\end{equation}
By a  similar analysis  as in the first paragraph  in the proof of Lemma \ref{Et continuity lemma}, we have for some constant $C$ depending on $\varphi$
\begin{align*}
&\hat{\mathbb{E}}[|\varphi(2B_{t_1\wedge\tau_k}-B_{t_1},\cdots,2B_{t_n\wedge\tau_k}-B_{t_n}-(2B_{t_{n-1}\wedge\tau_k}-B_{t_{n-1}}))\\
&\ \ \ -\varphi(2B_{t_1\wedge\tau}-B_{t_1},\cdots,2B_{t_n\wedge\tau}-B_{t_n}-(2B_{t_{n-1}\wedge\tau}-B_{t_{n-1}}))|]\\
&\ \ \leq C\hat{\mathbb{E}}[\sup_{(u_1,u_2)\in \Lambda_{k^{-1},T+1}}(|B_{u_2}-B_{u_1}|\wedge 1)]\downarrow 0,\ \ \ \ \text{as}\ k\rightarrow \infty.
\end{align*}
Thus (\ref{reflect equation}) follows from letting $k\rightarrow\infty$ in (\ref{798374235739382489}).
\end{proof}

\section{Strong Markov Property for $G$-SDEs}
With the notion of conditional expectation $\hat{\mathbb{E}}_{\tau+}$ in hand, we now turn our attention to the strong Markov property for $G$-SDEs.
We first state the Markov property for $G$-SDEs.
\begin{lemma}\label{markov property of sde}
	For $\varphi\in C_{b.Lip}(\mathbb{R}^{m\times n})$, $0\leq t_1< t_2< \cdots< t_m<\infty$ and  $t\geq 0$, we have $$\hat{\mathbb{E}}_t[\varphi(X_{t+t_1}^{x},X_{t+t_2}^{x},\cdots,X_{t+t_m}^{x})]=\hat{\mathbb{E}}[\varphi(X_{t_1}^{y},X_{t_2}^{y},\cdots,X_{t_m}^{y})]_{y=X_t^x}.$$
\end{lemma}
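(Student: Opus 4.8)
The plan is to reduce the multi-time statement to the single-time Markov property of Lemma \ref{HJPS2 lemma}, by conditioning at the first time point $t+t_1$, freezing the $\mathcal{F}_{t+t_1}$-measurable value $X^x_{t+t_1}$, and exploiting that the coefficients of (\ref{SDE}) do not depend on time. I will use two properties of autonomous $G$-SDEs, both standard (cf. \cite{P7,HJPS1,G1}). First, \textit{translation invariance}: since $(B_{r+v}-B_r)_{v\geq0}$ is again a $G$-Brownian motion and the solution of (\ref{SDE}) is a fixed measurable functional of its driving path (obtained by Picard iteration), one has $\hat{\mathbb{E}}[\Psi(X^{r,w}_{r+u_1},\cdots,X^{r,w}_{r+u_l})]=\hat{\mathbb{E}}[\Psi(X^w_{u_1},\cdots,X^w_{u_l})]$ for all $w\in\mathbb{R}^n$, $0\leq u_1<\cdots<u_l$ and $\Psi\in C_{b.Lip}$. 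Second, if $Y\in L^1_G(\Omega)$ depends on $\omega$ only through the increments $(B_{r+v}-B_r)_{v\geq0}$, then $\hat{\mathbb{E}}_r[Y]=\hat{\mathbb{E}}[Y]$; this follows from the definition of $\hat{\mathbb{E}}_r$ on $L_{ip}(\Omega)$ and the continuity of $\hat{\mathbb{E}}_r$ on $L^1_G(\Omega)$. I also use the local property of $G$-SDEs (if $\xi=\xi'$ on $A\in\mathcal{F}_r$, then $X^{r,\xi}=X^{r,\xi'}$ on $A$ q.s.) and the stability estimates for (\ref{SDE}) with respect to the initial condition (see (\ref{SDE3}) and its random-initial-datum counterpart).

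The core step is the following \textit{one-step reduction}: for every $w_0\in\mathbb{R}^n$, $r\geq0$, $0\leq s_1<\cdots<s_k<\infty$ and $\psi\in C_{b.Lip}(\mathbb{R}^{k\times n})$,
\begin{equation}\label{one step reduction}
\hat{\mathbb{E}}_r[\psi(X^{w_0}_{r+s_1},\cdots,X^{w_0}_{r+s_k})]=\hat{\mathbb{E}}_r[\bar\psi(X^{w_0}_{r+s_1})],\qquad \bar\psi(w):=\hat{\mathbb{E}}[\psi(w,X^w_{s_2-s_1},\cdots,X^w_{s_k-s_1})],
\end{equation}
where $\bar\psi\in C_{b.Lip}(\mathbb{R}^n)$ (boundedness is immediate, the Lipschitz bound follows from (\ref{SDE3})). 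To prove (\ref{one step reduction}) I condition at $r+s_1$: the flow identity $X^{w_0}_{r+s_j}=X^{r+s_1,\,X^{w_0}_{r+s_1}}_{(r+s_1)+(s_j-s_1)}$ ($j\geq1$) together with the tower property (Proposition \ref{condition expectation property}(vi)) reduces the claim to
$$\hat{\mathbb{E}}_{r+s_1}[\psi(\zeta,X^{r+s_1,\zeta}_{(r+s_1)+(s_2-s_1)},\cdots,X^{r+s_1,\zeta}_{(r+s_1)+(s_k-s_1)})]=\bar\psi(\zeta),\qquad \zeta:=X^{w_0}_{r+s_1}.$$
For a simple $\mathcal{F}_{r+s_1}$-step function $\zeta=\sum_lw_lI_{A_l}$ this holds because, by the local properties of $G$-SDEs and of $\hat{\mathbb{E}}_{r+s_1}$, the left side equals $\sum_l\hat{\mathbb{E}}_{r+s_1}[\psi(w_l,X^{r+s_1,w_l}_{(r+s_1)+(s_2-s_1)},\cdots)]I_{A_l}$; for each fixed $l$ the argument of $\hat{\mathbb{E}}_{r+s_1}$ is a functional of the increments $(B_{(r+s_1)+v}-B_{r+s_1})_{v\geq0}$, so $\hat{\mathbb{E}}_{r+s_1}$ acts on it as $\hat{\mathbb{E}}$, and by translation invariance $\hat{\mathbb{E}}[\psi(w_l,X^{r+s_1,w_l}_{(r+s_1)+(s_2-s_1)},\cdots)]=\bar\psi(w_l)$. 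The general case $\zeta=X^{w_0}_{r+s_1}\in L^p_G(\Omega_{r+s_1};\mathbb{R}^n)$ is obtained by approximating $\zeta$ in $L^p_G$ by simple functions and passing to the limit, using the $G$-SDE stability estimates, the Lipschitz continuity of $\psi$ and of $\bar\psi$, and the continuity of $\hat{\mathbb{E}}_{r+s_1}$ on $L^1_G(\Omega)$.

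Granting (\ref{one step reduction}), the lemma follows immediately. Applying (\ref{one step reduction}) with $r=t$, $w_0=x$, $s_j=t_j$, $k=m$, $\psi=\varphi$ gives $\hat{\mathbb{E}}_t[\varphi(X^x_{t+t_1},\cdots,X^x_{t+t_m})]=\hat{\mathbb{E}}_t[\bar\varphi(X^x_{t+t_1})]$, with $\bar\varphi\in C_{b.Lip}(\mathbb{R}^n)$ as in (\ref{one step reduction}). By Lemma \ref{HJPS2 lemma} applied to $\bar\varphi$ with $s=t_1$, followed by translation invariance, $\hat{\mathbb{E}}_t[\bar\varphi(X^x_{t+t_1})]=\hat{\mathbb{E}}[\bar\varphi(X^{t,y}_{t+t_1})]_{y=X^x_t}=\hat{\mathbb{E}}[\bar\varphi(X^y_{t_1})]_{y=X^x_t}$. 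On the other hand, applying (\ref{one step reduction}) with $r=0$, $w_0=y$, $s_j=t_j$, $k=m$, $\psi=\varphi$ (recall $\hat{\mathbb{E}}_0=\hat{\mathbb{E}}$) gives $\hat{\mathbb{E}}[\varphi(X^y_{t_1},\cdots,X^y_{t_m})]=\hat{\mathbb{E}}[\bar\varphi(X^y_{t_1})]$ for every $y\in\mathbb{R}^n$. Combining these two identities yields the assertion.

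The main obstacle is the identification of the inner conditional expectation in (\ref{one step reduction}): because $X^{r+s_1,\zeta}$ depends on the random initial datum $\zeta$ only in an $L^p$-Lipschitz fashion, Proposition \ref{condition expectation property}(v) cannot be invoked directly, and one must go through the simple-function approximation, the locality of $G$-SDEs and of conditional $G$-expectation, and a careful limiting argument. A secondary point needing attention is the translation invariance of the $G$-SDE, which rests on the stationary-independent-increment structure of $G$-Brownian motion and on the solution being a fixed measurable functional of the driving path.
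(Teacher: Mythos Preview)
Your proof is correct, but it is organized differently from the paper's. Both arguments rest on the tower property, translation invariance of the autonomous $G$-SDE, and a freezing step; the difference is in \emph{which} time point you peel off first.

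The paper conditions at the \emph{last} time $t+t_{m-1}$: the first $m-1$ components $X^x_{t+t_1},\dots,X^x_{t+t_{m-1}}$ are already $\mathcal{F}_{t+t_{m-1}}$-measurable, so Proposition~\ref{condition expectation property}(v) applies directly to freeze them, and then the \emph{single-time} identity~(\ref{44444444}) (i.e.\ Lemma~\ref{HJPS2 lemma} plus $X^{t,y}_{t+s}\overset{d}{=}X^y_s$) handles the remaining one-dimensional conditioning. Iterating $m-1$ times reduces to $\hat{\mathbb{E}}_t[\varphi_1(X^x_{t+t_1})]$, and running the same iteration at $t=0$ identifies the right-hand side. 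No approximation of a random initial datum is ever needed.

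Your route instead conditions at the \emph{first} time $r+s_1$ and collapses all remaining times at once via your one-step reduction. The price is exactly the obstacle you flag: you must show $\hat{\mathbb{E}}_{r+s_1}[\psi(\zeta,X^{r+s_1,\zeta}_{\cdots})]=\bar\psi(\zeta)$ for the \emph{random} initial datum $\zeta=X^{w_0}_{r+s_1}$, which forces a simple-function approximation plus the locality of $G$-SDEs and the $L^p$-stability estimate. In effect you are reproving a multi-time version of Lemma~\ref{HJPS2 lemma}. This is correct, and once done the application is a single step rather than an $m$-fold iteration; the paper's backward peeling is more economical precisely because it never leaves the single-time Lemma~\ref{HJPS2 lemma} and the ready-made freezing property Proposition~\ref{condition expectation property}(v).
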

\begin{proof}
	Since $(B_{t+s}-B_t)_{s\geq 0}$ is still a $G$-Brownian motion and  the coefficients $b,h_{ij},\sigma_j$ in $G$-SDE (\ref{SDE}) are independent of the time variable, we have, for any $s\geq0,\ y\in\mathbb{R}^n$,
	$$
	X_{t+s}^{t,y}\overset{d}{=}X_{s}^{y}.
	$$
	This implies, for $\widetilde{\varphi}\in C_{b.Lip}(\mathbb{R}^{n})$, $$\hat{\mathbb{E}}[\widetilde{\varphi}(X_{t+s}^{t,y})]_{y=X_t^x}=\hat{\mathbb{E}}[\widetilde{\varphi}(X_{s}^{y})]_{y=X_t^x}.$$
	Hence by Lemma \ref{HJPS2 lemma},
	\begin{equation}\label{44444444}
	\hat{\mathbb{E}}_t[\widetilde{\varphi}(X_{t+s}^{x})]=\hat{\mathbb{E}}[\widetilde{\varphi}(X_{s}^{y})]_{y=X_t^x}.
	\end{equation}
	For $\varphi\in C_{b.Lip}(\mathbb{R}^{m\times n})$, by  Proposition \ref{condition expectation property}  (vi) and (v), we have
	\begin{align*}
	\hat{\mathbb{E}}_t[\varphi(X_{t+t_1}^{x},X_{t+t_2}^{x},\cdots,X_{t+t_m}^{x})]
	&=\hat{\mathbb{E}}_t[\hat{\mathbb{E}}_{t+t_{m-1}}[\varphi(X_{t+t_1}^{x},X_{t+t_2}^{x},\cdots,X_{t+t_m}^{x})]]\\
	&=\hat{\mathbb{E}}_t[\overline{\varphi}_{m-1}(X_{t+t_1}^{x},X_{t+t_2}^{x},\cdots,X_{t+t_{m-1}}^{x})],
	\end{align*}
	where $$\overline{\varphi}_{m-1}(y_1,\cdots,y_{m-1}):=\hat{\mathbb{E}}_{t+t_{m-1}}[\varphi(y_1,y_2,\cdots,y_{m-1},X_{t+t_m}^{x})],\ \ \ \  (y_1,\cdots,y_{m-1})\in\mathbb{R}^{(m-1)\times n}.$$
We note  that
	$$\overline{\varphi}_{m-1}(y_1,\cdots,y_{m-1})=\hat{\mathbb{E}}[\varphi(y_1,y_2,\cdots,y_{m-1},X_{t_m-t_{m-1}}^{y'_{m-1}})]_{y'_{m-1}=X_{t+t_{m-1}}^{x}}$$
	by (\ref{44444444}).
Then $$\overline{\varphi}_{m-1}(X_{t+t_1}^{x},X_{t+t_2}^{x},\cdots,X_{t+t_{m-1}}^{x})=\varphi_{m-1}(X_{t+t_1}^{x},X_{t+t_2}^{x},\cdots,X_{t+t_{m-1}}^{x}),$$
	where $$\varphi_{m-1}(y_1,\cdots,y_{m-1}):=\hat{\mathbb{E}}[\varphi(y_1,y_2,\cdots,y_{m-1},X_{t_m-t_{m-1}}^{y_{m-1}})] ,\ \ \ \  (y_1,\cdots,y_{m-1})\in\mathbb{R}^{(m-1)\times n}.$$
Thus we have
$$
\hat{\mathbb{E}}_t[\varphi(X_{t+t_1}^{x},X_{t+t_2}^{x},\cdots,X_{t+t_{m}}^{x})]=\hat{\mathbb{E}}_t[\varphi_{m-1}(X_{t+t_1}^{x},X_{t+t_2}^{x},\cdots,X_{t+t_{m-1}}^{x})].
$$
Repeating this procedure, we get
	\begin{align}
	\hat{\mathbb{E}}_t[\varphi(X_{t+t_1}^{x},X_{t+t_2}^{x},\cdots,X_{t+t_{m}}^{x})]\notag
	&=\hat{\mathbb{E}}_t[\varphi_{m-1}(X_{t+t_1}^{x},X_{t+t_2}^{x},\cdots,X_{t+t_{m-1}}^{x})]\notag\\
	&\ \vdots\label{eq1}\\
	&=\hat{\mathbb{E}}_t[\varphi_{1}(X_{t+t_1}^{x})]\notag\\
	&=\hat{\mathbb{E}}[\varphi_{1}(X_{t_1}^{y})]_{y=X_{t}^x},\notag
	\end{align}
	where
	$$
	\varphi_{m-i}(y_1,\cdots,y_{m-i}):=\hat{\mathbb{E}}[\varphi_{m-(i-1)}(y_1,y_2,\cdots,y_{m-i},X_{t_{m-(i-1)}-t_{m-i}}^{y_{m-i}})],\ \ \ \ 1\leq i\leq m-1.
	$$
	
	Taking $t=0,\ x=y$ in (\ref{eq1}), we obtain
	\begin{equation}\label{eq2}
	\hat{\mathbb{E}}[\varphi(X_{t_1}^{y},X_{t_2}^{y},\cdots,X_{t_m}^{y})]=\hat{\mathbb{E}}[\varphi_{1}(X_{t_1}^{y})],\ \ \ \ \text{for any } y\in\mathbb{R}^n.
	\end{equation}
	This, combining with (\ref{eq1}), proves the corollary.
\end{proof}

We now give the strong Markov property for $G$-SDEs. It  generalizes the well-known strong Markov property  for classical SDEs to $G$-SDEs in the framework of nonlinear $G$-expectation. We set $\Omega':=C([0,\infty);\mathbb{R}^n)$ with the distance $\rho_n$ and denote by $B'$  the corresponding canonical process.   Recall that we always assume that the optional time $\tau$ satisfies (H3).
\begin{theorem}\label{main theorem}
	Let $(X^x_t)_{t\geq 0}$ be the solution of $G$-SDE (\ref{SDE}) satisfying (H1), (H2) and  $\tau$ be an optional time. Then for each $\varphi\in C_{b.Lip}(\mathbb{R}^{m\times n})$ and $0\leq t_1\leq \cdots\leq t_m=:T'<\infty$, we have
	\begin{equation}\label{strong markov for sde}
	\hat{\mathbb{E}}_{\tau+}[\varphi(X_{\tau+t_1}^x,\cdots,X_{\tau+t_m}^x)]
	=\hat{\mathbb{E}}[\varphi(X_{t_1}^y,\cdots,X_{t_m}^y)]_{y=X_\tau^x}.
	\end{equation}
\end{theorem}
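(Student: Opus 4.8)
The plan is to reduce (\ref{strong markov for sde}) to the deterministic-time Markov property (Lemma \ref{markov property of sde}) by discretizing $\tau$, replacing the dominated convergence theorem---which is unavailable for $\hat{\mathbb{E}}$---by the tightness and consistency tools of Section 3. Write $\Phi(y):=\hat{\mathbb{E}}[\varphi(X_{t_1}^y,\cdots,X_{t_m}^y)]$; by estimate (\ref{SDE3}) and the Lipschitz continuity of $\varphi$ we have $\Phi\in C_{b.Lip}(\mathbb{R}^n)$, so (\ref{strong markov for sde}) reads $\hat{\mathbb{E}}_{\tau+}[\varphi(X_{\tau+t_1}^x,\cdots,X_{\tau+t_m}^x)]=\Phi(X_\tau^x)$, and after collapsing repeated entries of $\varphi$ we may assume $t_1<\cdots<t_m$, so that Lemma \ref{markov property of sde} applies directly.

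First I would treat a simple discrete stopping time $\tau$ with values $\{s_i\}$. Then $F:=\varphi(X_{\tau+t_1}^x,\cdots,X_{\tau+t_m}^x)=\sum_i\varphi(X_{s_i+t_1}^x,\cdots,X_{s_i+t_m}^x)I_{\{\tau=s_i\}}$ lies in $L^{1,\tau+}_G(\Omega)$, and Lemma \ref{Etau discrete lemma 01} gives $\hat{\mathbb{E}}_{\tau+}[F]=\sum_i\hat{\mathbb{E}}_{s_i+}[FI_{\{\tau=s_i\}}]$. Since $\hat{\mathbb{E}}_{s_i+}$ coincides with $\hat{\mathbb{E}}_{s_i}$ on this class (Remark \ref{Etau remark}(iii)) and $I_{\{\tau=s_i\}}$ is a bounded nonnegative element of $L_G^{1,s_i}(\Omega)\cap L^0(\mathcal{F}_{s_i})$, Lemma \ref{LG1t conditional expectation 2}(i) pulls it out: $\hat{\mathbb{E}}_{s_i+}[FI_{\{\tau=s_i\}}]=I_{\{\tau=s_i\}}\hat{\mathbb{E}}_{s_i}[\varphi(X_{s_i+t_1}^x,\cdots,X_{s_i+t_m}^x)]$. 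By Lemma \ref{markov property of sde} the inner term equals $\Phi(X_{s_i}^x)$, and summing over $i$ (a pointwise sum, as $\tau<\infty$ q.s. by (H3)) gives $\hat{\mathbb{E}}_{\tau+}[F]=\Phi(X_\tau^x)$.

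Next, for a bounded optional time $\tau\le T_0$, I would take $\tau_n:=f_n(\tau)$ as in (\ref{approximation tau in KS problem}), so that $\tau_n\downarrow\tau$ uniformly, $\tau_n\le T_0+1$, and the previous step applies to each $\tau_n$, giving $\hat{\mathbb{E}}_{\tau_n+}[\varphi(X_{\tau_n+t_1}^x,\cdots)]=\Phi(X_{\tau_n}^x)$. Two convergences then have to be supplied. First, the partition sets $\{\tau_n=i2^{-n}\}=\{(i-1)2^{-n}\le\tau<i2^{-n}\}$ belong to $\mathcal{F}_{\tau+}$, so $\varphi(X_{\tau_n+t_1}^x,\cdots)\in L_G^{0,1,\tau+}(\Omega)$; moreover, estimate (\ref{SDE3}) with an exponent $>2$ together with Kolmogorov's tightness criterion shows that the image of $\mathcal{P}$ under $(X^x_t)_{t\ge0}$ is a tight family of probability measures on $C([0,\infty);\mathbb{R}^n)$, so the version of Lemma \ref{sup continuity lemma} recorded in Remark \ref{remark after sup continuity lemma} gives uniform continuity of $X^x$ on $[0,T_0+1+T']$; combined with the moment bound (\ref{SDE sup control}) this upgrades to $X_{\tau_n+t_j}^x\to X_{\tau+t_j}^x$ and $X_{\tau_n}^x\to X_\tau^x$ in $\mathbb{L}^1$, whence $\varphi(X_{\tau_n+t_1}^x,\cdots)\to F:=\varphi(X_{\tau+t_1}^x,\cdots)$ in $\mathbb{L}^1$ with $F\in L^{1,\tau+}_G(\Omega)$ by closedness, and $\Phi(X_{\tau_n}^x)\to\Phi(X_\tau^x)$ in $\mathbb{L}^1$. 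Since $|\hat{\mathbb{E}}_{\tau_n+}[\varphi(X_{\tau_n+t_1}^x,\cdots)]-\hat{\mathbb{E}}_{\tau_n+}[F]|\le\hat{\mathbb{E}}_{\tau_n+}[|\varphi(X_{\tau_n+t_1}^x,\cdots)-F|]$, taking $\hat{\mathbb{E}}$ and using Proposition \ref{Etau welldefined}(ii),(iii) the first term tends to $0$ in $\mathbb{L}^1$, while $\hat{\mathbb{E}}_{\tau_n+}[F]\to\hat{\mathbb{E}}_{\tau+}[F]$ in $\mathbb{L}^1$ by Proposition \ref{Etau proposition on L1tau}(iv); passing to the limit yields $\hat{\mathbb{E}}_{\tau+}[F]=\Phi(X_\tau^x)$. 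Finally, for a general $\tau$ satisfying (H3) I would apply the bounded case to $\tau\wedge T$: on $\{\tau\le T\}$ one has $FI_{\{\tau\le T\}}=\varphi(X_{(\tau\wedge T)+t_1}^x,\cdots)I_{\{\tau\le T\}}$ with $\{\tau\le T\}\in\mathcal{F}_{(\tau\wedge T)+}$, so Proposition \ref{Etau proposition on L1tau2}(i) gives $\hat{\mathbb{E}}_{(\tau\wedge T)+}[FI_{\{\tau\le T\}}]=I_{\{\tau\le T\}}\Phi(X_\tau^x)$; letting $T\to\infty$ the left side converges to $\hat{\mathbb{E}}_{\tau+}[F]$ in $\mathbb{L}^1$ by Proposition \ref{Etau proposition on L1tau}(iii) and the right side to $\Phi(X_\tau^x)$ by (H3), which is (\ref{strong markov for sde}).

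The hard part is the passage to the limit for bounded $\tau$: because dominated convergence fails for $\hat{\mathbb{E}}$, the convergences $X_{\tau_n+t_j}^x\to X_{\tau+t_j}^x$ in $\mathbb{L}^1$ and the membership $F\in L^{1,\tau+}_G(\Omega)$ cannot be obtained routinely and genuinely require Kolmogorov's tightness criterion, the uniform-integrability bounds (\ref{ui property}) and (\ref{SDE sup control}), the continuity estimate underlying Proposition \ref{Etau proposition on L1tau}(iv), and the measurability fact $\{\tau_n=i2^{-n}\}\in\mathcal{F}_{\tau+}$.
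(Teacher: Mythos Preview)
Your proposal is correct and follows essentially the same approach as the paper: discretize $\tau$ via the dyadic stopping times $\tau_n=f_n(\tau)$, apply the deterministic-time Markov property (Lemma~\ref{markov property of sde}) at each value of $\tau_n$, and pass to the limit using Kolmogorov's tightness criterion for the pushforward family $\{P\circ(X^x_\cdot)^{-1}:P\in\mathcal P\}$ together with Proposition~\ref{Etau proposition on L1tau}(iii),(iv). The only organizational difference is that you isolate the simple discrete case as a separate step and handle the truncation $\tau\wedge T$ by multiplying by $I_{\{\tau\le T\}}$ before invoking Proposition~\ref{Etau proposition on L1tau}(iii), whereas the paper folds the discrete computation directly into the approximation and estimates the difference $\hat{\mathbb E}_{(\tau\wedge T)+}[\varphi(X^x_{\tau\wedge T+\cdot})]-\hat{\mathbb E}_{\tau+}[\varphi(X^x_{\tau+\cdot})]$ via a triangle-inequality splitting; both routes are equivalent.
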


We first need the following lemma to justify that the conditional expectation on the left-hand side of (\ref{strong markov for sde}) is meaningful. We denote the paths for a process $Y$ by
$
Y_\cdot:=(Y_t)_{t\geq 0}.
$
\begin{lemma}\label{belong to L1tau lemma 1}
We have
	\begin{equation}
	\varphi(X_{\tau+t_1}^x,\cdots,X_{\tau+t_m}^x)\in L^{1,\tau+}_G(\Omega).
	\end{equation}
\end{lemma}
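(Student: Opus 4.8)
The plan is to approximate $\tau$ from above by simple discrete stopping times and pass to the limit in $\|\cdot\|_1$. Set $\tau_n:=f_n(\tau)=\sum_{i\ge1}t^n_iI_{\{t^n_{i-1}\le\tau<t^n_i\}}$ with $t^n_i=i2^{-n}$ as in (\ref{approximation tau in KS problem}), so that $0\le\tau_n-\tau\le2^{-n}$ and $\tau_n\downarrow\tau$ uniformly. Fixing a continuous (q.s.) version of $s\mapsto X^x_s$, write $Y:=\varphi(X^x_{\tau+t_1},\dots,X^x_{\tau+t_m})$ and $Y_n:=\varphi(X^x_{\tau_n+t_1},\dots,X^x_{\tau_n+t_m})$. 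Since $L^{1,\tau+}_G(\Omega)$ is $\|\cdot\|_1$-closed, it suffices to show that $Y_n\in L^{1,\tau+}_G(\Omega)$ for every $n$ and that $\|Y_n-Y\|_1\to0$ as $n\to\infty$.

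The main ingredient is a modulus-of-continuity estimate for the solution process: for each fixed $S>0$,
\begin{equation}\label{sde modulus plan}
\hat{\mathbb{E}}\Big[\sup_{(u_1,u_2)\in\Lambda_{\delta,S}}\big(|X^x_{u_2}-X^x_{u_1}|\wedge1\big)\Big]\downarrow0,\qquad\text{as }\delta\downarrow0.
\end{equation}
To obtain this, let $\mathcal{P}'$ be the family of laws of $(X^x_s)_{s\in[0,S]}$ on $C([0,S];\mathbb{R}^n)$ induced by the elements of $\mathcal{P}$ (via the fixed continuous version). By the estimates (\ref{SDE sup control}) and (\ref{SDE3}) with an exponent $p>2$ together with Kolmogorov's tightness criterion, $\mathcal{P}'$ is tight on $C([0,S];\mathbb{R}^n)$; moreover the left-hand side of (\ref{sde modulus plan}) equals $\sup_{Q\in\mathcal{P}'}E_Q[\sup_{(u_1,u_2)\in\Lambda_{\delta,S}}(|w_{u_2}-w_{u_1}|\wedge1)]$ for the canonical process $w$ on $C([0,S];\mathbb{R}^n)$. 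Hence (\ref{sde modulus plan}) is exactly (\ref{12345678901}) for the tight family $\mathcal{P}'$, which holds by the generalization of Lemma \ref{sup continuity lemma} recorded in Remark \ref{remark after sup continuity lemma}.

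For the first claim, note $Y_n=\sum_{i\ge1}\varphi(X^x_{t^n_i+t_1},\dots,X^x_{t^n_i+t_m})I_{\{\tau_n=t^n_i\}}$, where $\{\tau_n=t^n_i\}=\{t^n_{i-1}\le\tau<t^n_i\}\in\mathcal{F}_{\tau+}$ and each coefficient lies in $L^1_G(\Omega)$ (the solution at a deterministic time belongs to $L^p_G$ and $\varphi\in C_{b.Lip}$). Thus every finite truncation $\sum_{i=1}^N\varphi(\cdots)I_{\{\tau_n=t^n_i\}}$ belongs to $L^{0,1,\tau+}_G(\Omega)$ (complete the $\mathcal{F}_{\tau+}$-partition with $\{\tau\ge t^n_N\}$, coefficient $0$), and since $\varphi$ is bounded, $\hat{\mathbb{E}}[|\sum_{i>N}\varphi(\cdots)I_{\{\tau_n=t^n_i\}}|]\le C_\varphi\,c(\{\tau\ge N2^{-n}\})\to0$ as $N\to\infty$ by (H3), so $Y_n\in L^{1,\tau+}_G(\Omega)$. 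For the second claim, using $|\varphi(u)-\varphi(v)|\le(L_\varphi\vee2C_\varphi)\sum_{j=1}^m(|u_j-v_j|\wedge1)$ and splitting over $\{\tau\le T\}$ and $\{\tau>T\}$: for $n\ge1$ one has $\tau_n\le\tau+1$, so on $\{\tau\le T\}$ both $\tau_n+t_j$ and $\tau+t_j$ lie in $[0,T+T'+1]$ and differ by at most $2^{-n}$, whence
\begin{equation*}
\hat{\mathbb{E}}[|Y_n-Y|]\le m(L_\varphi\vee2C_\varphi)\,\hat{\mathbb{E}}\Big[\sup_{(u_1,u_2)\in\Lambda_{2^{-n},T+T'+1}}\big(|X^x_{u_2}-X^x_{u_1}|\wedge1\big)\Big]+2C_\varphi\,c(\{\tau>T\}).
\end{equation*}
Given $\varepsilon>0$ choose $T$ with $c(\{\tau>T\})\le\varepsilon$ by (H3), let $n\to\infty$ using (\ref{sde modulus plan}), then let $\varepsilon\downarrow0$; this yields $\|Y_n-Y\|_1\to0$, hence $Y\in L^{1,\tau+}_G(\Omega)$.

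The step I expect to be the main obstacle is (\ref{sde modulus plan}): it requires fixing a path-continuous version of $(X^x_s)_{s\ge0}$ and, more importantly, tightness of the laws of $X^x_\cdot$ under all $P\in\mathcal{P}$ simultaneously, which is precisely where the Kolmogorov-type bound (\ref{SDE3}) with exponent $p>2$ is needed. Once (\ref{sde modulus plan}) is in hand, the remaining steps are routine truncation and approximation, parallel to the proofs of Proposition \ref{taun Cauchy lemma} and Proposition \ref{Etau proposition on L1tau}.
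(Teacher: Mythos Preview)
Your proof is correct and follows essentially the same route as the paper's: dyadic approximation $\tau_n\downarrow\tau$, Kolmogorov's tightness criterion applied to the family $\{P\circ(X^x_\cdot)^{-1}:P\in\mathcal{P}\}$ via the moment bound (\ref{SDE3}), and then the modulus-of-continuity estimate (\ref{12345678901}) in the form of Remark \ref{remark after sup continuity lemma} to obtain $\|Y_n-Y\|_1\to0$. The only organizational difference is that the paper first reduces to the bounded case $\tau\le T$ (so that $\tau_n$ takes finitely many values and $Y_n\in L^{0,1,\tau+}_G(\Omega)$ directly) and afterwards removes the truncation via $c(\{\tau>T\})\to0$, whereas you carry the split $\{\tau\le T\}\cup\{\tau>T\}$ through a single estimate and handle the infinite sum for $Y_n$ by a separate tail bound; both arrangements are equivalent.
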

\begin{proof}
	\textit{Step 1.} First assume $\tau\leq T$. Take discrete stopping time  $\tau_n\leq T+1$ as (\ref{approximation tau in KS problem}).
	By the definition of $L^{0,1,\tau+}_G(\Omega)$, we have $$\varphi(X_{\tau_n+t_1}^x,\cdots,X_{\tau_n+t_m}^x)\in L^{0,1,\tau+}_G(\Omega).$$
	Then it suffices to show that
	\begin{equation}\label{9876856758980978675}
	\hat{\mathbb{E}}[|\varphi(X_{\tau_n+t_1}^x,\cdots,X_{\tau_n+t_m}^x)-\varphi(X_{\tau+t_1}^x,\cdots,X_{\tau+t_m}^x)|]\rightarrow 0, \ \ \ \  \text{as}\ n\rightarrow\infty.
	\end{equation}
	
Consider now the mapping $\Omega\overset{X^x_\cdot}{\longrightarrow}\Omega'$. By   Lemma \ref{GSDE} (\ref{SDE3}), for each $T_1\geq 0$, there exists a constant $C_{T_1}$ (depending on $T_1$)  such that for each $t,s\leq T_1 $,
	$$E_P[|X^x_t-X^x_s|^4]\leq\hat{\mathbb{E}}[|X_t^x-X_s^x|^4]\leq C_{T_1}|t-s|^2,\ \ \ \ \text{for each} \ P\in\mathcal{P}.$$
	Then  we can apply  the well-known Kolmogorov's moment criterion for tightness (see, e.g.,  Problem 2.4.11 in \cite{KS}) to conclude  that the induced probability family $\{P\circ (X_{\cdot}^x)^{-1}: P\in\mathcal{P}\}$ is tight on $\Omega'$.
	We denote the induced capacity  by $c^x_2:=\sup_{P\in\mathcal{P}} P\circ (X_{\cdot}^x)^{-1} $ and the induced  sublinear expectation by $\hat{\mathbb{E}}^x_2:=\sup_{P\in\mathcal{P}}E_{P\circ (X_{\cdot}^x)^{-1}}$. Then
	\begin{align*}
	&\hat{\mathbb{E}}[|\varphi(X_{\tau_n+t_1}^x,\cdots,X_{\tau_n+t_m}^x)-\varphi(X_{\tau+t_1}^x,\cdots,X_{\tau+t_m}^x)|]\\
	&\ \ \leq \hat{\mathbb{E}}[\sup_{ s,s'\in \Lambda_{2^{-n},T+1}}|\varphi(X_{s'+t_1}^x,\cdots,X_{s'+t_m}^x)-\varphi(X_{s+t_1}^x,\cdots,X_{s+t_m}^x)|]\\
	&\ \ =\hat{\mathbb{E}}^x_2[\sup_{ s,s'\in \Lambda_{{2^{-n},T+1}}}|\varphi(B'_{s'+t_1},\cdots,B'_{s'+t_m})-\varphi(B'_{s+t_1},\cdots,B'_{s+t_m})|].
	\end{align*}
	Proceeding similarly  to  the first paragraph in proof of Lemma \ref{Et continuity lemma}, we   obtain for some constant $C$ depending on $\varphi$
	$$
	\hat{\mathbb{E}}^x_2[\sup_{ s,s'\in \Lambda_{{{2^{-n},T+1}}}}|\varphi(B'_{s'+t_1},\cdots,B'_{s'+t_m})-\varphi(B'_{s+t_1},\cdots,B'_{s+t_m})|]\leq C\hat{\mathbb{E}}^x_2[\sup_{ s,s'\in \Lambda_{{2^{-n},T+1+T'}}}(|B'_s-B'_{s'}|\wedge 1)],
	$$
which converges to $0$ as $n\rightarrow \infty$ by  Remark \ref{remark after sup continuity lemma}.

	\textit{Step 2.} For the general case, by Step 1, we have
	$$
	\varphi(X_{\tau\wedge T+t_1}^x,\cdots,X_{\tau\wedge T+t_m}^x)\in L^{1,(\tau\wedge T)+}_G(\Omega)\subset L^{1,\tau+}_G(\Omega)
	.
	$$
	Note that
	$$
	\hat{\mathbb{E}}[|\varphi(X_{\tau\wedge T+t_1}^x,\cdots,X_{\tau\wedge T+t_m}^x)-\varphi(X_{\tau+t_1}^x,\cdots,X_{\tau+t_m}^x)|]\leq 2C_\varphi c(\{\tau>T\})\rightarrow 0, \ \ \ \  \text{as}\ T\rightarrow\infty.
	$$
	The result now follows.
\end{proof}

\begin{proof}[Proof of Theorem \ref{main theorem}]
	Let $\tau\leq T$. We define $\tau_n$ as (\ref{approximation tau in KS problem}). Then $\tau_n\leq T+1$ takes finitely values $\{t^n_i:i\leq d_n\}$ with  $d_n:=[2^nT]+1$.
	By (\ref{9876856758980978675}) and Proposition \ref{Etau proposition on L1tau} (iv), we have
	\begin{equation*}
	\begin{split}
	&\hat{\mathbb{E}}[|\hat{\mathbb{E}}_{\tau_n+}[\varphi(X_{\tau_n+t_1}^x,\cdots,X_{\tau_n+t_m}^x)]-\hat{\mathbb{E}}_{\tau+}[\varphi(X_{\tau+t_1}^x,\cdots,X_{\tau+t_m}^x)]|]\\
	&\ \ \leq \hat{\mathbb{E}}[|\hat{\mathbb{E}}_{\tau_n+}[\varphi(X_{\tau_n+t_1}^x,\cdots,X_{\tau_n+t_m}^x)]-\hat{\mathbb{E}}_{\tau_n+}[\varphi(X_{\tau+t_1}^x,\cdots,X_{\tau+t_m}^x)]|]\\
	&\ \ \ \ \ +\hat{\mathbb{E}}[|\hat{\mathbb{E}}_{\tau_n+}[\varphi(X_{\tau+t_1}^x,\cdots,X_{\tau+t_m}^x)]-\hat{\mathbb{E}}_{\tau+}[\varphi(X_{\tau+t_1}^x,\cdots,X_{\tau+t_m}^x)]|]\\
	&\ \ \leq \hat{\mathbb{E}}[|\varphi(X_{\tau_n+t_1}^x,\cdots,X_{\tau_n+t_m}^x)-\varphi(X_{\tau+t_1}^x,\cdots,X_{\tau+t_m}^x)|]\\
	&\ \ \ \ \ +\hat{\mathbb{E}}[|\hat{\mathbb{E}}_{\tau_n+}[\varphi(X_{\tau+t_1}^x,\cdots,X_{\tau+t_m}^x)]-\hat{\mathbb{E}}_{\tau+}[\varphi(X_{\tau+t_1}^x,\cdots,X_{\tau+t_m}^x)]|]\\
	&\ \ \rightarrow 0,  \ \ \ \ \text{as}\ n\rightarrow \infty.
	\end{split}
	\end{equation*}
	Moreover, since  $\varphi(X_{\tau_n+t_1}^x,\cdots,X_{\tau_n+t_m}^x)\in  L^{1,\tau_n}_G(\Omega)$, by  Remark \ref{Etau remark}, we have $$\hat{\mathbb{E}}_{\tau_n+}[\varphi(X_{\tau_n+t_1}^x,\cdots,X_{\tau_n+t_m}^x)]=\hat{\mathbb{E}}_{\tau_n}[\varphi(X_{\tau_n+t_1}^x,\cdots,X_{\tau_n+t_m}^x)].$$
	Combining these with the version of Lemma \ref{Etau discrete lemma 01} for $\hat{\mathbb{E}}_{\tau_n}$, we have
	\begin{align*}
	\hat{\mathbb{E}}_{\tau+}[\varphi(X_{\tau+t_1}^x,\cdots,X_{\tau+t_m}^x)]
	&=\mathbb{L}^1\text{-}\lim_{n\rightarrow \infty}\hat{\mathbb{E}}_{\tau_n}[\varphi(X_{\tau_n+t_1}^x,\cdots,X_{\tau_n+t_m}^x)]\\
	&=\mathbb{L}^1\text{-}\lim_{n\rightarrow \infty}\sum_{i=1}^{d_n}\hat{\mathbb{E}}_{t^n_i}[\varphi(X_{{t^n_i}+t_1}^x,\cdots,X_{{t^n_i}+t_m}^x)]I_{\{\tau_n={t^n_i}\}}.
	\end{align*}
	Note that from Lemma \ref{markov property of sde}
	$$
	\hat{\mathbb{E}}_{t^n_i}[\varphi(X_{{t^n_i}+t_1}^x,\cdots,X_{{t^n_i}+t_m}^x)]=\hat{\mathbb{E}}[\varphi(X_{t_1}^y,\cdots,X_{t_m}^y)]_{y=X^x_{t^n_i}},
	$$
	We thus obtain
	\begin{align*}
	\hat{\mathbb{E}}_{\tau+}[\varphi(X_{\tau+t_1}^x,\cdots,X_{\tau+t_m}^x)]
	&=\mathbb{L}^1\text{-}\lim_{n\rightarrow \infty}\sum_{i=1}^{d_n}\hat{\mathbb{E}}[\varphi(X_{t_1}^y,\cdots,X_{t_m}^y)]_{y=X^x_{t^n_i}}I_{\{\tau_n={t^n_i}\}}\\
	&=\mathbb{L}^1\text{-}\lim_{n\rightarrow \infty}\hat{\mathbb{E}}[\varphi(X_{t_1}^y,\cdots,X_{t_m}^y)]_{y=X^x_{\tau_n}}\\
	&=\hat{\mathbb{E}}[\varphi(X_{t_1}^y,\cdots,X_{t_m}^y)]_{y=X^x_{{\tau}}},
	\end{align*}
	where the last equality is derived from a proof similar to that  of Lemma \ref{belong to L1tau lemma 1} by using (\ref{SDE3}) of Lemma \ref{GSDE}    for spatial variables.
	
Now for the general $\tau$, applying Step 1, we have
	\begin{equation}
	\label{23454464655}
	\hat{\mathbb{E}}_{(\tau\wedge T)+}[\varphi(X_{\tau\wedge T+t_1}^x,\cdots,X_{\tau\wedge T+t_m}^x)]
	=\hat{\mathbb{E}}[\varphi(X_{t_1}^y,\cdots,X_{t_m}^y)]_{y=X^x_{\tau\wedge T}}.
	\end{equation}
	Since $\varphi(X_{\tau+t_1}^x,\cdots,X_{\tau+t_m}^x)\in L^{1,\tau+}_G(\Omega)$ by Lemma \ref{belong to L1tau lemma 1}, we can apply  Proposition \ref{Etau proposition on L1tau} (iii)  to obtain
	\begin{align*}
	&\hat{\mathbb{E}}[|\hat{\mathbb{E}}_{(\tau\wedge T)+}[\varphi(X_{\tau\wedge T+t_1}^x,\cdots,X_{\tau\wedge T+t_m}^x)]-\hat{\mathbb{E}}_{\tau+}[\varphi(X_{\tau+t_1}^x,\cdots,X_{\tau+t_m}^x)]|]\\
	&\ \ \leq \hat{\mathbb{E}}[|\hat{\mathbb{E}}_{(\tau\wedge T)+}[\varphi(X_{\tau\wedge T+t_1}^x,\cdots,X_{\tau\wedge T+t_m}^x)]-\hat{\mathbb{E}}_{(\tau\wedge T)+}[\varphi(X_{\tau+t_1}^x,\cdots,X_{\tau+t_m}^x)I_{\{\tau\leq T\}}]|]\\
	&\ \  \ \ \ +\hat{\mathbb{E}}[|\hat{\mathbb{E}}_{(\tau\wedge T)+}[\varphi(X_{\tau+t_1}^x,\cdots,X_{\tau+t_m}^x)I_{\{\tau\leq T\}}]-\hat{\mathbb{E}}_{\tau+}[\varphi(X_{\tau+t_1}^x,\cdots,X_{\tau+t_m}^x)]|]\\
	&\ \ \leq \hat{\mathbb{E}}[|\hat{\mathbb{E}}_{(\tau\wedge T)+}[\varphi(X_{\tau\wedge T+t_1}^x,\cdots,X_{\tau\wedge T+t_m}^x)]-\hat{\mathbb{E}}_{(\tau\wedge T)+}[\varphi(X_{\tau+t_1}^x,\cdots,X_{\tau+t_m}^x)I_{\{\tau\leq T\}}]|I_{\{\tau\leq T\}}]\\
	&\ \  \ \ \ +\hat{\mathbb{E}}[|\hat{\mathbb{E}}_{(\tau\wedge T)+}[\varphi(X_{\tau\wedge T+t_1}^x,\cdots,X_{\tau\wedge T+t_m}^x)]-\hat{\mathbb{E}}_{(\tau\wedge T)+}[\varphi(X_{\tau+t_1}^x,\cdots,X_{\tau+t_m}^x)I_{\{\tau\leq T\}}]|I_{\{\tau> T\}}]\\
	&\ \ \ \ \ +\hat{\mathbb{E}}[|\hat{\mathbb{E}}_{(\tau\wedge T)+}[\varphi(X_{\tau+t_1}^x,\cdots,X_{\tau+t_m}^x)I_{\{\tau\leq T\}}]-\hat{\mathbb{E}}_{\tau+}[\varphi(X_{\tau+t_1}^x,\cdots,X_{\tau+t_m}^x)]|]\\
	&\ \ \leq C_\varphi c({\{\tau>T\}})+\hat{\mathbb{E}}[|\hat{\mathbb{E}}_{(\tau\wedge T)+}[\varphi(X_{\tau+t_1}^x,\cdots,X_{\tau+t_m}^x)I_{\{\tau\leq T\}}]-\hat{\mathbb{E}}_{\tau+}[\varphi(X_{\tau+t_1}^x,\cdots,X_{\tau+t_m}^x)]|]\\
	&\ \ \rightarrow 0, \ \ \ \ \text{as}\ T\rightarrow \infty.
	\end{align*}
	Thus letting $T\rightarrow \infty$ in (\ref{23454464655}) yields (\ref{strong markov for sde}).
\end{proof}

Next we consider an extension of Theorem \ref{main theorem} in which the cylinder functions $\varphi$ is replaced by   (lower semi-) continuous functions $\widetilde{\varphi}$ depending on the whole paths of $G$-SDEs. It maybe useful in the following work.
\begin{theorem}\label{extended SDE strongmarkov1}
	Let $\varphi\in C_b(\Omega')$. Then
	\begin{equation}\label{12312342421453}\hat{\mathbb{E}}_{\tau+}[\varphi(X^x_{\tau+\cdot})]=\hat{\mathbb{E}}[ \varphi(X^y_\cdot)]_{y=X^x_{\tau}}.
	\end{equation}
\end{theorem}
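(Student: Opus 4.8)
The plan is to reduce the path-dependent statement of Theorem~\ref{extended SDE strongmarkov1} to the cylinder case of Theorem~\ref{main theorem} by a monotone class / approximation argument, using the tightness of the induced family $\{P\circ(X^x_\cdot)^{-1}:P\in\mathcal{P}\}$ on $\Omega'$ that was already established in the proof of Lemma~\ref{belong to L1tau lemma 1}. First I would fix $\varphi\in C_b(\Omega')$ and, for each $N\in\mathbb{N}$, approximate it by cylinder functions: since $\varphi$ is bounded and continuous and the family of induced measures is tight, for any $\varepsilon>0$ there is a compact $K\subset\Omega'$ with $c^x_2(K^c)<\varepsilon$, and on $K$ the function $\varphi$ is uniformly continuous with respect to $\rho_n$; hence there is $N$ large and $\delta>0$ such that $\varphi$ can be approximated uniformly on $K$ by $\varphi_N(\omega):=\psi_N(\omega_{t_1^N},\dots,\omega_{t_{k_N}^N})$ for a suitable $\psi_N\in C_{b.Lip}$ and a suitable dyadic mesh. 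This gives $\hat{\mathbb{E}}_2^x[|\varphi-\varphi_N|]\le 2\varepsilon + 2C_\varphi\,c_2^x(K^c)\le 4\varepsilon$, which translates back (via the identity $\hat{\mathbb{E}}[\Psi(X^x_\cdot)]=\hat{\mathbb{E}}_2^x[\Psi]$) into $\hat{\mathbb{E}}[|\varphi(X^x_{\cdot})-\varphi_N(X^x_{\cdot})|]\le 4\varepsilon$.

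Next I would apply this approximation at the shifted time. Note $\varphi_N(X^x_{\tau+\cdot})$ is exactly a cylinder function of the form $\psi_N(X^x_{\tau+t_1^N},\dots,X^x_{\tau+t_{k_N}^N})$, so Theorem~\ref{main theorem} applies directly:
\begin{equation*}
\hat{\mathbb{E}}_{\tau+}[\varphi_N(X^x_{\tau+\cdot})]=\hat{\mathbb{E}}[\varphi_N(X^y_\cdot)]_{y=X^x_\tau}.
\end{equation*}
To pass to the limit on the left, I first check $\varphi(X^x_{\tau+\cdot})\in L^{1,\tau+}_G(\Omega)$ (the same Kolmogorov-tightness plus Remark~\ref{remark after sup continuity lemma} argument as in Lemma~\ref{belong to L1tau lemma 1} works, now with the modulus of continuity of $\varphi$ on a compact set playing the role of the modulus of $\varphi$ in the cylinder case), and then estimate, using Proposition~\ref{Etau welldefined} (i)--(iii),
\begin{equation*}
\hat{\mathbb{E}}[|\hat{\mathbb{E}}_{\tau+}[\varphi(X^x_{\tau+\cdot})]-\hat{\mathbb{E}}_{\tau+}[\varphi_N(X^x_{\tau+\cdot})]|]\le \hat{\mathbb{E}}[|\varphi(X^x_{\tau+\cdot})-\varphi_N(X^x_{\tau+\cdot})|]\le 4\varepsilon,
\end{equation*}
where the last inequality is the shift-invariant version of the bound above — here one uses that the law of $(X^x_{\tau+t})_{t\ge 0}$ under any $P\in\mathcal{P}$, while not necessarily equal to that of $(X^x_t)_{t\ge 0}$, still has the Kolmogorov moment estimate $E_P[|X^x_{\tau+t}-X^x_{\tau+s}|^4]\le C|t-s|^2$ uniformly, because increments of $X^x$ over the shifted clock satisfy the same SDE-type bound; alternatively this step can be carried out directly with the $\Lambda_{\delta,T}$-supremum estimate as in Lemma~\ref{belong to L1tau lemma 1}.

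For the right-hand side, I would show $\hat{\mathbb{E}}[|\hat{\mathbb{E}}[\varphi(X^y_\cdot)]_{y=X^x_\tau}-\hat{\mathbb{E}}[\varphi_N(X^y_\cdot)]_{y=X^x_\tau}|]\to 0$: the map $y\mapsto \hat{\mathbb{E}}[|\varphi(X^y_\cdot)-\varphi_N(X^y_\cdot)|]$ is bounded by $2C_\varphi$ and, by the spatial estimate (\ref{SDE3}) of Lemma~\ref{GSDE}, one gets uniform (in $y$ on compacts) control together with the tightness of $\{P\circ(X^y_\cdot)^{-1}\}$, so a dominated-type argument over the (tight) law of $X^x_\tau$ gives the convergence — concretely, split according to $\{|X^x_\tau|\le R\}$ and its complement, use $\hat{\mathbb{E}}[\sup_{s\le t}|X^x_s|^p]\le C(1+|x|^p)$ from (\ref{SDE sup control}) to make $c(\{|X^x_\tau|>R\})$ small, and on $\{|X^x_\tau|\le R\}$ use that the approximation $\hat{\mathbb{E}}[|\varphi(X^y_\cdot)-\varphi_N(X^y_\cdot)|]\le 4\varepsilon$ can be arranged uniformly for $|y|\le R$ by choosing the compact $K$ in the tightness argument uniformly over $|y|\le R$ (possible since $\{P\circ(X^y_\cdot)^{-1}:P\in\mathcal{P},|y|\le R\}$ is still tight by (\ref{SDE sup control}) and (\ref{SDE3})). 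Combining the three estimates and letting $\varepsilon\downarrow 0$ yields (\ref{12312342421453}). \textbf{The main obstacle} I anticipate is the uniformity in $y$ of the cylinder approximation of $\varphi(X^y_\cdot)$, i.e.\ making the single cylinder function $\varphi_N$ work simultaneously for all $y$ in a ball; this requires a uniform-tightness statement for the induced laws over $|y|\le R$, which does follow from Lemma~\ref{GSDE} but must be stated carefully.
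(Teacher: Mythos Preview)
Your proposal is correct and follows essentially the same strategy as the paper: approximate $\varphi$ by cylinder $C_{b.Lip}$ functions, invoke Theorem~\ref{main theorem}, and pass to the limit on both sides using tightness of the induced laws --- in particular the uniform tightness of $\{P\circ(X^y_\cdot)^{-1}:P\in\mathcal{P},\,|y|\le R\}$, which both you and the paper single out as the crucial point for the right-hand side. The paper organizes the approximation in two explicit stages (first a sup-convolution on path space to reduce to a Lipschitz functional, then piecewise-linear time interpolation to reduce to a genuine cylinder function) rather than your single compressed step, and it closes with a truncation $\tau\mapsto\tau\wedge T$ to lift the implicit boundedness assumption on $\tau$; you should add that last reduction as well.
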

The conditional expectation on the left-hand side of (\ref{12312342421453}) is meaningful by the following two lemmas.
\begin{lemma}\label{belong to L1tau lemma 2}
	Assume $\varphi\in C_{b}(\Omega')$ and there exists a constant  $\mu>0$ such that for some $T'>0$,
	\begin{equation}\label{3245323}|\varphi(\omega^1)-\varphi(\omega^2)|\leq \mu||\omega^1-\omega^2||_{C^n[0,T']}, \ \ \ \ \text{for each}\ \omega^1,\omega^2\in \Omega'.\end{equation}
	Then
	\begin{equation}
	\varphi(X^x_{\tau+\cdot})\in L^{1,\tau+}_G(\Omega).
	\end{equation}
\end{lemma}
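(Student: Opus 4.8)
The plan is to follow the two-stage scheme in the proof of Lemma \ref{belong to L1tau lemma 1}; the only genuinely new ingredient is that $\varphi$ depends on the whole path and must first be approximated by cylinder functions. Note that (\ref{3245323}) forces $\varphi(\omega)$ to depend only on $\omega|_{[0,T']}$, so $\varphi(X^x_{\tau+\cdot})$ depends only on $(X^x_u)_{\tau\le u\le\tau+T'}$. Replacing $\tau$ by $\tau\wedge T$, we first treat the case $\tau\le T$; the crucial preliminary step is to show that $\varphi(X^x_{t+\cdot})\in L^1_G(\Omega)$ for each fixed $t\ge 0$. To this end fix dyadic partitions $0=s^N_0<\cdots<s^N_N=T'$ of $[0,T']$ with mesh $|\pi^N|\to 0$, let $\ell_N:(\mathbb{R}^n)^{N+1}\to C([0,T'];\mathbb{R}^n)$ be piecewise-linear interpolation at the nodes $s^N_j$, and set $\varphi_N:=\varphi\circ\ell_N$. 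Since $\ell_N$ is $1$-Lipschitz from the max-of-coordinates norm into $\|\cdot\|_{C^n[0,T']}$ and $\varphi$ is bounded and $\mu$-Lipschitz on $[0,T']$, we have $\varphi_N\in C_{b.Lip}((\mathbb{R}^n)^{N+1})$; and since each $X^x_{t+s^N_j}$ belongs to $L^p_G(\Omega)$ (a standard property of the solution of (\ref{SDE}), cf.\ Lemma \ref{GSDE}), approximating the coordinates by elements of $L_{ip}(\Omega)$ shows $\varphi_N(X^x_t,X^x_{t+s^N_1},\dots,X^x_{t+T'})\in L^1_G(\Omega)$. Because the interpolation error on $[0,T']$ is at most twice the $|\pi^N|$-modulus of continuity of the path,
\begin{equation*}
|\varphi(X^x_{t+\cdot})-\varphi_N(X^x_t,\dots,X^x_{t+T'})|\le 2\mu\sup\{|X^x_{t+u_1}-X^x_{t+u_2}|:0\le u_1,u_2\le T',\ |u_1-u_2|\le|\pi^N|\}.
\end{equation*}
Exactly as in the proof of Lemma \ref{belong to L1tau lemma 1}, estimate (\ref{SDE3}) and Kolmogorov's criterion (Problem 2.4.11 in \cite{KS}) show that $\hat{\mathbb{E}}^x_2:=\sup_{P\in\mathcal{P}}E_{P\circ(X^x_\cdot)^{-1}}$ is the upper expectation of a tight family of probability measures on $\Omega'$; pushing the right-hand side forward to $\Omega'$ and invoking Remark \ref{remark after sup continuity lemma}, its $\hat{\mathbb{E}}$-expectation tends to $0$ as $N\to\infty$. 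Hence $\varphi(X^x_{t+\cdot})\in L^1_G(\Omega)$.

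Given this, the argument closes as in Lemma \ref{belong to L1tau lemma 1}. With $\tau\le T$, take the discrete stopping times $\tau_n:=f_n(\tau)\le T+1$ of (\ref{approximation tau in KS problem}), which take finitely many values. Since $\{\tau_n=t^n_i\}\in\mathcal{F}_{\tau+}$ and $\varphi(X^x_{\tau_n+\cdot})=\sum_i\varphi(X^x_{t^n_i+\cdot})I_{\{\tau_n=t^n_i\}}$ with $\varphi(X^x_{t^n_i+\cdot})\in L^1_G(\Omega)$ by the previous step, we get $\varphi(X^x_{\tau_n+\cdot})\in L^{0,1,\tau+}_G(\Omega)$ directly from the definition. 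Moreover, since $0\le\tau_n-\tau\le 2^{-n}$ and $\tau,\tau_n\le T+1$,
\begin{equation*}
|\varphi(X^x_{\tau_n+\cdot})-\varphi(X^x_{\tau+\cdot})|\le\mu\sup_{s\in[0,T']}|X^x_{\tau_n+s}-X^x_{\tau+s}|\le\mu\sup\{|X^x_{u_1}-X^x_{u_2}|:0\le u_1,u_2\le T+1+T',\ |u_1-u_2|\le 2^{-n}\},
\end{equation*}
so taking $\hat{\mathbb{E}}$, transferring to $\hat{\mathbb{E}}^x_2$, and applying Remark \ref{remark after sup continuity lemma} once more gives $\hat{\mathbb{E}}[|\varphi(X^x_{\tau_n+\cdot})-\varphi(X^x_{\tau+\cdot})|]\to 0$. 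Thus $\varphi(X^x_{\tau+\cdot})$ is the $\mathbb{L}^1$-limit of $\varphi(X^x_{\tau_n+\cdot})\in L^{0,1,\tau+}_G(\Omega)$, i.e.\ $\varphi(X^x_{\tau+\cdot})\in L^{1,\tau+}_G(\Omega)$.

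For general $\tau$, the case just settled gives $\varphi(X^x_{\tau\wedge T+\cdot})\in L^{1,(\tau\wedge T)+}_G(\Omega)\subset L^{1,\tau+}_G(\Omega)$, while
\begin{equation*}
\hat{\mathbb{E}}[|\varphi(X^x_{\tau\wedge T+\cdot})-\varphi(X^x_{\tau+\cdot})|]\le 2C_\varphi\, c(\{\tau>T\})\to 0\quad\text{as }T\to\infty
\end{equation*}
by (H3), whence $\varphi(X^x_{\tau+\cdot})\in L^{1,\tau+}_G(\Omega)$. The main obstacle is the deterministic-time claim $\varphi(X^x_{t+\cdot})\in L^1_G(\Omega)$: one must both approximate the path functional by cylinder functions of $X^x$ and control the interpolation error uniformly over $\mathcal{P}$, which is precisely where the piecewise-linear approximation and the tightness of $\{P\circ(X^x_\cdot)^{-1}\}$ on $\Omega'$ enter; everything else is a routine adaptation of the argument already given for Lemma \ref{belong to L1tau lemma 1}.
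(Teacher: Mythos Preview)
Your proof is correct, but it takes a somewhat longer route than the paper's. The paper observes that $\varphi\circ\phi_m\in C_{b.Lip}((\mathbb{R}^n)^{m+1})$ (with $\phi_m$ the piecewise-linear interpolation on $[0,T']$) and therefore applies Lemma~\ref{belong to L1tau lemma 1} \emph{directly} to conclude $\varphi(\phi_m(X^x_{\tau+t^m_0},\dots,X^x_{\tau+t^m_m}))\in L^{1,\tau+}_G(\Omega)$; one single $\mathbb{L}^1$-limit in $m$ (controlled by tightness and Remark~\ref{remark after sup continuity lemma}) then finishes. You instead first prove the deterministic-time statement $\varphi(X^x_{t+\cdot})\in L^1_G(\Omega)$ via interpolation, and then \emph{repeat} the discretization $\tau_n\downarrow\tau$ of Lemma~\ref{belong to L1tau lemma 1} from scratch. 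Both arguments rely on the same two ingredients (piecewise-linear cylinder approximation and tightness of $\{P\circ(X^x_\cdot)^{-1}\}$), but the paper's version avoids duplicating the stopping-time approximation already packaged in Lemma~\ref{belong to L1tau lemma 1}. One small point: when you invoke Remark~\ref{remark after sup continuity lemma} you should note (as the paper does) that since $\varphi$ is bounded the moduli of continuity may be taken $\wedge 1$; this is implicit in your argument but worth making explicit.
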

\begin{remark}
\upshape{	Note that (\ref{3245323}) implies that $\varphi$ only depends on the path of $\omega\in \Omega'$ on $[0,T']$.}
\end{remark}
\begin{proof}
	As in the Step 2 of the proof of Lemma \ref{belong to L1tau lemma 1}, it suffices to
 suppose  that $\tau\leq T$ for some $T>0$.
Consider for each $m\in\mathbb{N}$ the function from $\mathbb{R}^{({m+1})\times n}$ to $\Omega'$ defined by
	$$
	\phi_m(x_0,x_1,x_2,\cdots,x_m)(t)=\sum_{k=0}^{m-1}\frac{(t^m_{k+1}-t)x_{k}+(t-t^m_k)x_{k+1}}{t^m_{k+1}-t^m_k}I_{[t^m_k,t^m_{k+1})}(t)+x_mI_{[t_m^m,\infty)},
	$$
	where $t^m_k=\frac{kT'}{m},k=0,1,\cdots,m$. Since $\varphi\circ\phi_m$ is a bounded, Lipschitz function from $\mathbb{R}^{({m+1})\times n}$ to $\mathbb{R}$, by Lemma \ref{belong to L1tau lemma 1}, we have
	$$
	\varphi(\phi_m(X^x_{\tau+t^m_0},X^x_{\tau+t^m_1},X^x_{\tau+t^m_2},\cdots,X^x_{\tau+t_m^m})) \in L^{1,\tau+}_G(\Omega).
	$$
	We employ the notation in the proof of
	Lemma \ref{belong to L1tau lemma 1} and  proceed similarly  to obtain   some constant $C>0$ depending on $\varphi$ such that
	\begin{align*}
	& \hat{\mathbb{E}}[|\varphi(\phi_m(X^x_{\tau+t^m_0},X^x_{\tau+t^m_1},\cdots,X^x_{\tau+t_m^m}))-\varphi(X^x_{\tau+\cdot})|] \\
	&\ \  \leq\hat{\mathbb{E}}[\sup_{0\leq t\leq T}|\varphi(\phi_m(X^x_{t+t^m_0},X^x_{t+t^m_1},\cdots,X^x_{t+t_m^m})-\varphi(X^x_{t+\cdot})|] \\
	&\ \ =\hat{\mathbb{E}}^x_2[\sup_{0\leq t\leq T}|\varphi(\phi_m(B'_{t+t^m_0},B'_{t+t^m_1},\cdots,B'_{t+t_m^m})-\varphi(B'_{t+\cdot})|]\\
	&\ \ \leq C\hat{\mathbb{E}}^x_2[\sup_{ s,s'\in \Lambda_{{m^{-1}}{T'},T+T'}}(|B'_s-B'_{s'}|\wedge 1)]\\
	&\ \ \rightarrow 0, \ \ \ \  \text{as}\ m\rightarrow\infty,
	\end{align*}
	This completes the proof.
\end{proof}
\begin{lemma}\label{belong to L1tau* lemma}
	Let $\varphi\in C_b(\Omega')$. Then
	\begin{equation}\varphi(X^x_{\tau+\cdot})\in L^{1,\tau+}_G(\Omega).
	\end{equation}
\end{lemma}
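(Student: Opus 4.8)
The plan is to remove the extra regularity hypothesis (\ref{3245323}) present in Lemma \ref{belong to L1tau lemma 2}: I would approximate a general $\varphi\in C_b(\Omega')$, in the $\mathbb{L}^1$-norm, by bounded Lipschitz cylinder functions to which Lemma \ref{belong to L1tau lemma 2} applies, and then use that $L^{1,\tau+}_G(\Omega)$ is closed in $\mathbb{L}^1$ (being the $\|\cdot\|_1$-completion of $L^{0,1,\tau+}_G(\Omega)$ inside the Banach space $\mathbb{L}^1(\Omega)$). As in Step 2 of the proof of Lemma \ref{belong to L1tau lemma 1}, since $\varphi(X^x_{\tau\wedge T+\cdot})=\varphi(X^x_{\tau+\cdot})$ on $\{\tau\leq T\}$ and $\hat{\mathbb{E}}[|\varphi(X^x_{\tau\wedge T+\cdot})-\varphi(X^x_{\tau+\cdot})|]\leq 2C_\varphi\, c(\{\tau>T\})\to 0$ by (H3) (here $C_\varphi$ is a bound for $\varphi$), it suffices to treat the case $\tau\leq T$ for a fixed $T>0$.

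Next I would invoke the tightness established inside the proof of Lemma \ref{belong to L1tau lemma 1}: the induced family $\{P\circ(X^x_\cdot)^{-1}:P\in\mathcal{P}\}$ is tight on $\Omega'$, with induced capacity $c^x_2$. Fix $\varepsilon>0$ and pick a compact set $K\subset\Omega'$ with $c^x_2(K^c)<\varepsilon$. Because the shift map $(t,\omega)\mapsto\omega(t+\cdot)$ is continuous from $[0,T]\times\Omega'$ to $\Omega'$, the set $\widetilde{K}:=\{\omega(t+\cdot):t\in[0,T],\ \omega\in K\}$ is compact in $\Omega'$; moreover, since $\tau\leq T$, one has $X^x_{\tau+\cdot}\in\widetilde{K}$ on the event $\{X^x_\cdot\in K\}$.

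The core of the argument is to approximate $\varphi$ uniformly on the compact set $\widetilde{K}$ by a function of the form admitted by Lemma \ref{belong to L1tau lemma 2}. I would apply the Stone--Weierstrass theorem to the algebra of functions $\omega\mapsto F(\omega(s_1),\dots,\omega(s_k))$ with $F\in C_{b.Lip}(\mathbb{R}^{k\times n})$ and $0\leq s_1<\dots<s_k$: restricted to $\widetilde{K}$ this is a subalgebra of $C(\widetilde{K})$ that contains the constants and separates points, hence is dense. So there is such an $f$ with $\sup_{\widetilde{K}}|f-\varphi|<\varepsilon$; truncating, $f_\varepsilon:=(f\wedge C_\varphi)\vee(-C_\varphi)$ still satisfies $\sup_{\widetilde{K}}|f_\varepsilon-\varphi|<\varepsilon$ (because $|\varphi|\leq C_\varphi$ on $\widetilde{K}$) and $|f_\varepsilon|\leq C_\varphi$, while remaining Lipschitz in each coordinate, so $|f_\varepsilon(\omega^1)-f_\varepsilon(\omega^2)|\leq\mu_\varepsilon\|\omega^1-\omega^2\|_{C^n[0,s_k]}$ for some $\mu_\varepsilon>0$. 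Thus $f_\varepsilon$ satisfies (\ref{3245323}), and Lemma \ref{belong to L1tau lemma 2} yields $f_\varepsilon(X^x_{\tau+\cdot})\in L^{1,\tau+}_G(\Omega)$.

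Finally, using the subadditivity of $\hat{\mathbb{E}}$, the bound $|f_\varepsilon|,|\varphi|\leq C_\varphi$, and $X^x_{\tau+\cdot}\in\widetilde{K}$ on $\{X^x_\cdot\in K\}$, together with $c(\{X^x_\cdot\notin K\})=c^x_2(K^c)<\varepsilon$, I would get
\[
\hat{\mathbb{E}}[|f_\varepsilon(X^x_{\tau+\cdot})-\varphi(X^x_{\tau+\cdot})|]\leq\sup_{\widetilde{K}}|f_\varepsilon-\varphi|+2C_\varphi\, c(\{X^x_\cdot\notin K\})\leq(1+2C_\varphi)\varepsilon.
\]
Letting $\varepsilon\downarrow 0$ shows that $\varphi(X^x_{\tau+\cdot})$ is an $\mathbb{L}^1$-limit of elements of $L^{1,\tau+}_G(\Omega)$, so by closedness of $L^{1,\tau+}_G(\Omega)$ in $\mathbb{L}^1$ we conclude $\varphi(X^x_{\tau+\cdot})\in L^{1,\tau+}_G(\Omega)$. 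I expect the only points requiring care to be the compactness of $\widetilde{K}$ (continuity of the shift on the path space $\Omega'$) and checking that the cylinder algebra satisfies the hypotheses of Stone--Weierstrass and lands in the class governed by (\ref{3245323}); both are routine once the already-established tightness of $\{P\circ(X^x_\cdot)^{-1}\}$ is in hand, so there is no serious obstacle beyond these verifications.
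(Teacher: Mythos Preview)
Your proof is correct and follows the same overall strategy as the paper: reduce to $\tau\leq T$, invoke the tightness of $\{P\circ(X^x_\cdot)^{-1}\}$, and approximate $\varphi$ in $\mathbb{L}^1$ by functions to which Lemma \ref{belong to L1tau lemma 2} applies. The difference lies in how the approximation is carried out. The paper uses the inf-convolution
\[
\varphi_m(\omega):=\inf_{\omega'\in\Omega'}\{\varphi(\omega')+m\|\omega-\omega'\|_{C^n[0,m]}\},
\]
which produces a globally defined increasing sequence of Lipschitz functions $\varphi_m\uparrow\varphi$ with $|\varphi_m|\leq C_\varphi$, and then appeals to Dini's theorem on the compact set $K\times[0,T]$ to get uniform convergence of $(\omega,t)\mapsto\varphi_m(\omega_{t+\cdot})$. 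You instead push the shift into the compact set first, forming $\widetilde{K}=\{\omega(t+\cdot):t\in[0,T],\omega\in K\}$, and then approximate $\varphi$ directly on $\widetilde{K}$ by a single Lipschitz cylinder function via Stone--Weierstrass.

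Both approaches buy the same thing: uniform control of $|\varphi_m-\varphi|$ (or $|f_\varepsilon-\varphi|$) on the image of the shifted process, with a crude $2C_\varphi$ bound on the small-capacity complement. The paper's inf-convolution is perhaps cleaner in that the approximants are canonical and do not depend on the compact set $K$ or on $\varepsilon$, and the monotonicity is later reused in Corollary \ref{strong markov for lower semicontinuous function}. Your Stone--Weierstrass argument is equally valid but requires checking that the cylinder class is an algebra (true, since products of bounded Lipschitz functions are bounded Lipschitz) and that the shift map is continuous so $\widetilde{K}$ is compact; as you note, these are routine. A minor observation: since your $f_\varepsilon$ is already a cylinder function $G(\omega(s_1),\dots,\omega(s_k))$ with $G\in C_{b.Lip}$, you could invoke Lemma \ref{belong to L1tau lemma 1} directly rather than passing through Lemma \ref{belong to L1tau lemma 2}.
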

\begin{proof}
	Let
	$$
	\varphi_m(\omega):=\inf_{\omega'\in\Omega'}\{\varphi(\omega')+m||\omega-\omega'||_{C^n[0,m]}\},\ \ \ \ \text{for}\ \omega\in \Omega'.
	$$
	Then by Lemma 3.1 in Chap VI of \cite{P7}, $\varphi_m\in C_{b}(\Omega')$  satisfies
	\begin{itemize}
		\item [(i)]$|\varphi_m(\omega^1)-\varphi_m(\omega^2)|\leq m||\omega^1-\omega^2||_{C^n[0,m]},\ \text{for}\ \omega^1,\omega^2\in \Omega';$
		\item [(ii)]   $\varphi_m\uparrow \varphi$;
		\item [(iii)] $|\varphi_m|\leq C_\varphi$.
	\end{itemize}
Thus we have $\varphi_m(X^x_{\tau+\cdot})\in L^{1,\tau+}_G(\Omega)$ by Lemma \ref{belong to L1tau lemma 2}.

As discussed in the proof of Lemma \ref{belong to L1tau lemma 2}, it suffices to prove the result for $\tau\leq T$.
   Let $\hat{\mathbb{E}}^x_2$ and $c^x_2$ be defined as in the proof of
   Lemma \ref{belong to L1tau lemma 1}. We have
   \begin{align*}
 \hat{\mathbb{E}}[|\varphi_m(X^x_{\tau+\cdot})-\varphi(X^x_{\tau+\cdot})|]  &\leq\hat{\mathbb{E}}[\sup_{0\leq t\leq T}|\varphi_m(X^x_{t+\cdot})-\varphi(X^x_{t+\cdot})|]\\
   &=\hat{\mathbb{E}}^x_2[\sup_{0\leq t\leq T}|\varphi_m(B'_{t+\cdot})-\varphi(B'_{t+\cdot})|].
   \end{align*}
  Given any $\varepsilon>0$, since $c^x_2$ is tight on $\Omega'$, we can pick a compact set $K\subset \Omega'$ such that $c^x_2(K)<\varepsilon$. Note that $K\times [0,T]$ is still compact  and $(\omega,t)\mapsto \varphi_m(B'_{t+\cdot}),\varphi(B'_{t+\cdot})$ are  continuous functions such that $\varphi_m(B'_{t+\cdot})\uparrow\varphi(B'_{t+\cdot})$.  We have by Dini's theorem $$\varphi_m(B'_{t+\cdot})\uparrow\varphi(B'_{t+\cdot}) \ \ \ \  \text{uniformly on}\ \ K\times [0,T].$$
  Hence, we can choose $m$ large enough such that
   $$
  | \varphi_m(B'_{t+\cdot})-\varphi(B'_{t+\cdot})|\leq \varepsilon  \ \ \ \ \text{on}\ \ K\times [0,T].
   $$
   Then
   \begin{align*}
   &\hat{\mathbb{E}}^x_2[\sup_{0\leq t\leq T}|\varphi_m(B'_{t+\cdot})-\varphi(B'_{t+\cdot})|]\\
   &\ \ \leq \hat{\mathbb{E}}^x_2[\sup_{0\leq t\leq T}|\varphi_m(B'_{t+\cdot})-\varphi(B'_{t+\cdot})|I_K]+\hat{\mathbb{E}}^x_2[\sup_{0\leq t\leq T}|\varphi_m(B'_{t+\cdot})-\varphi(B'_{t+\cdot})|I_{K^c}]\\
   &\ \ \leq \varepsilon+2\varepsilon C_\varphi.
   \end{align*}
   Since $\varepsilon$ can be arbitrarily small, we obtain
   $$
    \hat{\mathbb{E}}[|\varphi_m(X^x_{\tau+\cdot})-\varphi(X^x_{\tau+\cdot})|] \rightarrow 0,\ \ \ \ \text{as}\ m\rightarrow\infty.
   $$
    This proves the lemma.
\end{proof}

\begin{proof}[Proof of Theorem \ref{extended SDE strongmarkov1}]
	\textit{Step 1.} Suppose $\tau\leq T$ for some $T>0$  and $\varphi\in C_{b}(\Omega')$ such that (\ref{3245323}) holds for some $T'>0$.
	
	For each $m\in\mathbb{N}$, we define $\phi_m$ as in  the proof of Lemma \ref{belong to L1tau lemma 2}.
	Then  Theorem \ref{main theorem} gives
	\begin{equation}\label{789000}
	\hat{\mathbb{E}}_{\tau+}[\varphi(\phi_m(X^x_{\tau+t^m_0},X^x_{\tau+t^m_1},X^x_{\tau+t^m_2},\cdots,X^x_{\tau+t_m^m}))]
	=\hat{\mathbb{E}}[\varphi(\phi_m(X^y_{t^m_0},X^y_{t^m_1},X^y_{t^m_2},\cdots,X^y_{t_m^m}))]_{y=X^x_{\tau}}.
	\end{equation}
	According to the proof of Lemma \ref{belong to L1tau lemma 2},
	$$
	\varphi(\phi_m(X^x_{\tau+t^m_0},X^x_{\tau+t^m_1},\cdots,X^x_{\tau+t_m^m}))\rightarrow \varphi(X^x_{\tau+\cdot}) \ \ \ \  \text{in}  \ \mathbb{L}^1,\ \text{as}\ m\rightarrow\infty.
	$$
	Consequently,
	\begin{equation*}\label{88888765656}
	\hat{\mathbb{E}}_{\tau+}[\varphi(\phi_m(X^x_{\tau+t^m_0},X^x_{\tau+t^m_1},\cdots,X^x_{\tau+t_m^m}))]\rightarrow \hat{\mathbb{E}}_{\tau+}[\varphi(X^x_{\tau+\cdot})]\ \ \ \ \text{in} \  \mathbb{L}^1,\ \text{as}\ m\rightarrow\infty.
	\end{equation*}

	It remains to consider the right side of (\ref{789000}).
	For any fixed $R>0$, by Kolmogorov's  criterion for tightness, the family $\mathcal{P}_R:=\bigcup_{y\in \overline{B_R(0)}}\{P\circ (X_{\cdot}^y)^{-1}:P\in\mathcal{P}\}$ is tight on $\Omega'$, where $B_R(0)$ is an open ball with center $0$ and radius $R$ in $\mathbb{R}^n$ and $\overline{B_R(0)}$ is its closure. We denote the corresponding sublinear expectation  by $\hat{\mathbb{E}}^R_2:=\sup_{P\in\mathcal{P},y\in \overline{B_R(0)} }E_{P\circ (X_{\cdot}^y)^{-1}}$.
	We may apply a similar analysis as in the proof of Lemma \ref{belong to L1tau lemma 2} to obtain for some constant $C$ depending on $\varphi$
	\begin{equation*}
	\begin{split}
	& \hat{\mathbb{E}}[|\varphi(\phi_m(X^y_{t^m_0},X^y_{t^m_1},\cdots,X^y_{t^m_m}))-\varphi(X^y_\cdot)|] \\
	& \ \ =\hat{\mathbb{E}}^y_2[|\varphi(\phi_m(B'_{t^m_0},B'_{t^m_1},\cdots,B'_{t^m_m}))-\varphi(B'_\cdot)|]\\
	& \ \ \leq \hat{\mathbb{E}}^R_2[|\varphi(\phi_m(B'_{t^m_0},B'_{t^m_1},\cdots,B'_{t^m_m}))-\varphi(B'_\cdot)|]\\
	&\ \ \leq C\hat{\mathbb{E}}^R_2[\sup_{ s,s'\in \Lambda_{{m^{-1}}{T'},T'}}(|B'_s-B'_{s'}|\wedge 1)]\\
	&\ \ \rightarrow 0,  \ \ \ \  \text{as} \ m\rightarrow \infty,  \   \text{for any}\ y \in \overline{B_R(0)},
	\end{split}
	\end{equation*}
where $\hat{\mathbb{E}}^y_2:=\sup_{P\in\mathcal{P}}E_{P\circ (X_{\cdot}^y)^{-1}}$.
That is,
	\begin{equation}\label{3324325343}
\hat{\mathbb{E}}[|\varphi(\phi_m(X^y_{t^m_0},X^y_{t^m_1},\cdots,X^y_{t^m_m}))-\varphi(X^y_\cdot)|]\rightarrow 0,  \ \ \ \ \text{as}\ m\rightarrow \infty, \ \text{uniformly for }\  y\in \overline{B_R(0)}.
	\end{equation}
	
For any fixed $\varepsilon>0$, we can first choose $R$ large enough such that  by  Lemma \ref{GSDE} (\ref{SDE sup control})
	$$
	c(\{|X^x_\tau|>R\})\leq \frac{\hat{\mathbb{E}}[|X^x_\tau|]}{R}\leq \frac{\hat{\mathbb{E}}[\sup_{t\in [0,T]}|X^x_t|]}{R}\leq \varepsilon
	$$
	and then choose $m$ large enough such that by  (\ref{3324325343})
	$$
	\hat{\mathbb{E}}[|\varphi(\phi_m(X^y_{t^m_0},X^y_{t^m_1},\cdots,X^y_{t^m_m}))-\varphi(X^y_\cdot)|]\leq \varepsilon,\ \ \ \ \text{for all}\ y\in \overline{B_R(0)}.
	$$
	Thus we have
	\begin{align*}
	&\hat{\mathbb{E}}[|\hat{\mathbb{E}}[\varphi(\phi_m(X^y_{t^m_0},X^y_{t^m_1},X^y_{t^m_2},\cdots,X^y_{t^m_m}))]_{y=X^x_{\tau}}-\hat{\mathbb{E}}[ \varphi(X^y_\cdot)]_{y=X^x_{\tau}}|]\\
	&\ \ \leq \hat{\mathbb{E}}[|\hat{\mathbb{E}}[\varphi(\phi_m(X^y_{t^m_0},X^y_{t^m_1},X^y_{t^m_2},\cdots,X^y_{t^m_m}))]_{y=X^x_{\tau}}-\hat{\mathbb{E}}[ \varphi(X^y_\cdot)]_{y=X^x_{\tau}}|I_{\{|X^x_\tau|\leq R\}}]+2C_\varphi c(\{|X^x_\tau|>R\})\\
	&\ \ \leq \varepsilon+2C_\varphi\varepsilon,
	\end{align*}
	which implies
	$$
	\hat{\mathbb{E}}[|\hat{\mathbb{E}}[\varphi(\phi_m(X^y_{t^m_0},X^y_{t^m_1},X^y_{t^m_2},\cdots,X^y_{t^m_m}))]_{y=X^x_{\tau}}-\hat{\mathbb{E}}[ \varphi(X^y_\cdot)]_{y=X^x_{\tau}}|]\rightarrow 0,  \ \ \ \  \text{as} \ m\rightarrow\infty.
	$$
	Therefore, letting $m\rightarrow\infty$ in  (\ref{789000}), we obtain
	$$\hat{\mathbb{E}}_{\tau+}[\varphi(X^x_{\tau+\cdot})]=\hat{\mathbb{E}}[ \varphi(X^y_\cdot)]_{y=X^x_{\tau}}.$$
	
	\textit{Step 2.}
	Assume $\tau\leq T$ and $\varphi\in C_b(\Omega)$. Define $\varphi_m$ as in the proof of Lemma \ref{belong to L1tau* lemma}.
    According to Step 1,
	\begin{equation}\label{444444}
	\hat{\mathbb{E}}_{\tau+}[\varphi_m(X^x_{\tau+\cdot})]=\hat{\mathbb{E}}[ \varphi_m(X^y_\cdot)]_{y=X^x_{\tau}}.
	\end{equation}
	Letting $m\rightarrow \infty$,  from the proof of  Lemma \ref{belong to L1tau* lemma}, we obtain  that
	$$
	\hat{\mathbb{E}}_{\tau+}[\varphi(X^x_{\tau+\cdot})]=\hat{\mathbb{E}}[ \varphi(X^y_\cdot)]_{y=X^x_{\tau}},
	$$
	where the convergence of right-hand side is obtained  by  a similar analysis as in  Step 1 and  the proof of Lemma \ref{belong to L1tau* lemma}.
	
	\textit{Step 3.} We proceed as in the last paragraph of the proof of Theorem \ref{main theorem} to obtain the result for the general case that $\tau$ is an optional time and $\varphi\in C_{b}(\Omega)$.
\end{proof}
\begin{corollary}\label{strong markov for lower semicontinuous function}
	Let $\varphi$ be lower semi-continuous on $\Omega'$ and bounded from below, i.e., $\varphi\geq c$ for some constant $c$. Then $\varphi(X^x_{\tau+\cdot}) \in L^{1,\tau+,*}_G(\Omega)$ and
	$$\hat{\mathbb{E}}_{\tau+}[ \varphi(X^x_{\tau+\cdot})]=\hat{\mathbb{E}}[ \varphi(X^y_{\cdot})]_{y=X^x_{\tau}}.$$
\end{corollary}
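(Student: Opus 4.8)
The plan is to reduce the lower semi-continuous case to the bounded continuous case already settled in Theorem \ref{extended SDE strongmarkov1} by an increasing approximation of $\varphi$, and then to pass the resulting monotone limit through both sides of the identity, using the upward monotone convergence property of $\hat{\mathbb{E}}_{\tau+}$ on $L^{1,\tau+,*}_G(\Omega)$ (Proposition \ref{Etau proposition on L1tau*}(iv)) on the left, and the scalar monotone convergence theorem for $\hat{\mathbb{E}}$ (Lemma \ref{upward mct for rv}), applied at each frozen value $y$, on the right.

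First I would construct the approximating sequence. For $k\in\mathbb{N}$ put
\[
\psi_k(\omega):=\inf_{\omega'\in\Omega'}\bigl\{(\varphi\wedge k)(\omega')+k\,\|\omega-\omega'\|_{C^n[0,k]}\bigr\},\qquad \omega\in\Omega'.
\]
Since $\varphi\wedge k$ is lower semi-continuous and $c\le \varphi\wedge k\le k$, the standard inf-convolution estimates (cf.\ Lemma 3.1 in Chap.\ VI of \cite{P7}) show that $\psi_k\in C_b(\Omega')$, that $c\le\psi_k\le k$, and that $|\psi_k(\omega^1)-\psi_k(\omega^2)|\le k\,\|\omega^1-\omega^2\|_{C^n[0,k]}$, so $\psi_k$ satisfies the hypothesis \eqref{3245323} of Lemma \ref{belong to L1tau lemma 2} with $\mu=T'=k$. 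Moreover $\psi_k\uparrow\varphi$ pointwise on $\Omega'$: monotonicity in $k$ holds because raising the truncation level, the Lipschitz constant and the time horizon simultaneously can only increase the infimand, while $\psi_k\le \varphi\wedge k\le\varphi$ together with the lower semi-continuity of $\varphi$ (off a small $\rho_n$-ball of $\omega$ the penalty $k\|\cdot\|_{C^n[0,k]}$ tends to $+\infty$, and $\varphi\wedge k\ge c$ there) gives $\psi_k(\omega)\to\varphi(\omega)$. The truncation by $k$ is needed here precisely because $\varphi$ may be unbounded above, so the inf-convolution of $\varphi$ itself would fail to be bounded.

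By Lemma \ref{belong to L1tau lemma 2}, $\psi_k(X^x_{\tau+\cdot})\in L^{1,\tau+}_G(\Omega)$ for every $k$, and since $\psi_k(X^x_{\tau+\cdot})\uparrow\varphi(X^x_{\tau+\cdot})$ q.s., Proposition \ref{Etau proposition on L1tau*}(iv) gives both $\varphi(X^x_{\tau+\cdot})\in L^{1,\tau+,*}_G(\Omega)$ and $\hat{\mathbb{E}}_{\tau+}[\psi_k(X^x_{\tau+\cdot})]\uparrow\hat{\mathbb{E}}_{\tau+}[\varphi(X^x_{\tau+\cdot})]$ q.s. For the right-hand side, for each fixed $y$ one has $\psi_k(X^y_\cdot)\uparrow\varphi(X^y_\cdot)$ q.s. with $-\hat{\mathbb{E}}[-\psi_1(X^y_\cdot)]>-\infty$ (the $\psi_k$ being bounded), so Lemma \ref{upward mct for rv} yields $\hat{\mathbb{E}}[\psi_k(X^y_\cdot)]\uparrow\hat{\mathbb{E}}[\varphi(X^y_\cdot)]$ (possibly to $+\infty$); freezing $y=X^x_\tau(\omega)$ this convergence holds $\omega$ by $\omega$, hence $\hat{\mathbb{E}}[\psi_k(X^y_\cdot)]_{y=X^x_\tau}\uparrow\hat{\mathbb{E}}[\varphi(X^y_\cdot)]_{y=X^x_\tau}$ q.s. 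Applying Theorem \ref{extended SDE strongmarkov1} to each $\psi_k\in C_b(\Omega')$ to get $\hat{\mathbb{E}}_{\tau+}[\psi_k(X^x_{\tau+\cdot})]=\hat{\mathbb{E}}[\psi_k(X^y_\cdot)]_{y=X^x_\tau}$ and letting $k\to\infty$ then yields the corollary. The work here is light since Theorem \ref{extended SDE strongmarkov1} has done the heavy lifting; the only delicate points are the construction of the single increasing sequence $\psi_k$ of bounded Lipschitz-type functions converging to $\varphi$, and the observation that the monotone limit on the right must be taken first in the frozen variable $y$ and only afterwards evaluated at $y=X^x_\tau$. If one prefers to avoid the diagonalisation, the same result follows in two stages: first prove the identity for bounded lower semi-continuous $\varphi$ by approximating from below by $C_b$-Lipschitz functions exactly as in the proof of Lemma \ref{belong to L1tau* lemma} and invoking Theorem \ref{main theorem} through the polygonal maps $\phi_m$, and then remove boundedness by letting $\varphi\wedge N\uparrow\varphi$.
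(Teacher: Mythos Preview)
Your proof is correct and follows essentially the same approach as the paper: approximate $\varphi$ from below by a sequence in $C_b(\Omega')$, apply Theorem \ref{extended SDE strongmarkov1} to each approximant, and pass to the limit via Proposition \ref{Etau proposition on L1tau*}(iv) on the left and Lemma \ref{upward mct for rv} on the right. The paper's proof simply invokes the existence of such a sequence $\varphi_m\in C_b(\Omega')$ with $\varphi_m\uparrow\varphi$ without writing it down, whereas you give an explicit diagonal inf-convolution construction; this is extra detail but not a different idea.
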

\begin{proof}
	We pick a sequence $\varphi_m\in C_b(\Omega')$ such that $\varphi_m\uparrow \varphi$. Then the conclusion follows from Theorem \ref{extended SDE strongmarkov1}, Lemma \ref{upward mct for rv} and  Proposition \ref{Etau proposition on L1tau*} (iv).
\end{proof}

Assuming $n=d$, $x= 0$, $b=h_{ij}=0$, $\sigma:=(\sigma_1,\cdots,\sigma_d)=I_{d\times d}$ in Corollary \ref{strong markov for lower semicontinuous function}, we immediately have the strong Markov property for $G$-Brownian motion.
\begin{corollary}\label{extended BM strongmarkov1}
	Let $\varphi$ be lower semi-continuous, bounded from below  on $\Omega$ and  $\tau$ be an optional time. Then
	\begin{equation}\label{43543}\hat{\mathbb{E}}_{\tau+}[\varphi(B_{\tau+\cdot})]=\hat{\mathbb{E}}[\varphi(B_\cdot^y)]_{y=B_{\tau}},
	\end{equation}
	where $B_t^y:=y+B_t,\ t\geq 0$ for $y\in\mathbb{R}^d$. In particular,
	for each $\phi\in C_{b.Lip}(\mathbb{R}^{m\times d})$ and $0\leq t_1\leq \cdots\leq t_m<\infty$,
	\begin{equation*}
	\hat{\mathbb{E}}_{\tau+}[\phi(B_{\tau+t_1},\cdots,B_{\tau+t_m})]=\hat{\mathbb{E}}[\phi(B^y_{t_1},\cdots,B^y_{t_m})]_{y=B_{\tau}}.
	\end{equation*}
\end{corollary}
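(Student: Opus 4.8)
The plan is to deduce this corollary as a direct specialization of Corollary \ref{strong markov for lower semicontinuous function}. First I would take $n=d$ and choose in the $G$-SDE \eqref{SDE} the coefficients $b\equiv 0$, $h_{ij}\equiv 0$ for all $1\le i,j\le d$, and $\sigma=(\sigma_1,\dots,\sigma_d)=I_{d\times d}$ (i.e. $\sigma_j$ the $j$-th coordinate unit vector), together with the initial datum $x=0$. These coefficients are constant, hence they trivially satisfy the Lipschitz condition (H2), and the symmetry condition (H1) holds vacuously since all $h_{ij}$ vanish; thus Lemma \ref{GSDE} applies and the SDE has a unique solution. With these choices the SDE reduces to $dX^{0}_s=dB_s$, $X^{0}_0=0$, so that $X^{x}_s=B_s$ q.s., and starting from an arbitrary deterministic point $y\in\mathbb{R}^d$ the (unique) solution is $X^{y}_s:=X^{0,y}_s=y+B_s=B^y_s$. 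Consequently, on the path level $X^{x}_{\tau+\cdot}=B_{\tau+\cdot}$ and $X^{y}_\cdot=B^y_\cdot$, and since $n=d$ the path space $\Omega'=C([0,\infty);\mathbb{R}^n)$ appearing in Corollary \ref{strong markov for lower semicontinuous function} coincides with $\Omega$.

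Next, for any lower semi-continuous $\varphi$ on $\Omega$ that is bounded from below, Corollary \ref{strong markov for lower semicontinuous function} directly gives both the membership $\varphi(B_{\tau+\cdot})=\varphi(X^{x}_{\tau+\cdot})\in L^{1,\tau+,*}_G(\Omega)$ and the identity
\[
\hat{\mathbb{E}}_{\tau+}[\varphi(B_{\tau+\cdot})]=\hat{\mathbb{E}}_{\tau+}[\varphi(X^{x}_{\tau+\cdot})]=\hat{\mathbb{E}}[\varphi(X^{y}_\cdot)]_{y=X^{x}_\tau}=\hat{\mathbb{E}}[\varphi(B^y_\cdot)]_{y=B_\tau},
\]
which is exactly (\ref{43543}); the measurability of $y\mapsto\hat{\mathbb{E}}[\varphi(B^y_\cdot)]$ needed to make the substitution $y=B_\tau$ meaningful, as well as the standing assumption (H3) on $\tau$, are inherited from the setup of Section 4 and from Theorem \ref{extended SDE strongmarkov1}. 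For the ``in particular'' part, I would observe that for $\phi\in C_{b.Lip}(\mathbb{R}^{m\times d})$ and $0\le t_1\le\cdots\le t_m<\infty$ the map $\omega\mapsto\phi(\omega_{t_1},\dots,\omega_{t_m})$ is continuous and bounded on $\Omega$, in particular lower semi-continuous and bounded from below, so the first part applies to $\varphi(\omega):=\phi(\omega_{t_1},\dots,\omega_{t_m})$; since $\varphi(B^y_\cdot)=\phi(B^y_{t_1},\dots,B^y_{t_m})$, this yields the stated cylinder-function identity.

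As for obstacles, there are essentially none of substance in this corollary: the only points requiring a line of care are verifying that the constant coefficients meet (H1)--(H2) and that, by the uniqueness part of Lemma \ref{GSDE}, the solution of this degenerate $G$-SDE is literally $y+B_\cdot$. All the genuinely analytic work --- the tightness of the induced families via Kolmogorov's criterion, the membership in $L^{1,\tau+}_G(\Omega)$ and $L^{1,\tau+,*}_G(\Omega)$, the extension-from-below monotone convergence, and the reduction of a general optional time to $\tau\wedge T$ by (H3) --- has already been carried out in Lemmas \ref{belong to L1tau lemma 1}--\ref{belong to L1tau* lemma}, Theorem \ref{extended SDE strongmarkov1} and Corollary \ref{strong markov for lower semicontinuous function}, so nothing new needs to be proved here.
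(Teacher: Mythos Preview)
Your proposal is correct and follows essentially the same approach as the paper: the paper obtains this corollary by specializing Corollary~\ref{strong markov for lower semicontinuous function} with $n=d$, $x=0$, $b=h_{ij}=0$, and $\sigma=I_{d\times d}$, exactly as you do. Your write-up simply spells out in more detail the verification of (H1)--(H2), the identification $X^y_\cdot=B^y_\cdot$ via uniqueness, and the ``in particular'' part, all of which the paper leaves implicit.
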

The following result says that  $G$-Brownian motion starts afresh at an optional time, i.e., $\overline{B}_t:=(B_{\tau+t}-B_{\tau})_{t\geq 0}$ is still a $G$-Brownian motion.
\begin{corollary}\label{refresh of G-Brownian motion}
	Let $\tau,\varphi$ be assumed as in the above Corollary. Then
		\begin{equation}\hat{\mathbb{E}}_{\tau+}[\varphi(B_{\tau+\cdot}-B_{\tau})]=\hat{\mathbb{E}}[\varphi(B_{\tau+\cdot}-B_{\tau})]=\hat{\mathbb{E}}[\varphi(B_\cdot)].
	\end{equation}
In particular, for each $\phi\in C_{b.Lip}(\mathbb{R}^{m\times d})$, $0\leq t_1\leq\cdots\leq t_m<+\infty$, $m\in\mathbb{N}$, we have
\begin{align*}
\hat{\mathbb{E}}_{\tau+}[\phi(B_{\tau+t_1}-B_{\tau},\cdots,B_{\tau+t_m}-B_{\tau})]=\hat{\mathbb{E}}[\phi(B_{\tau+t_1}-B_{\tau},\cdots,B_{\tau+t_m}-B_{\tau})]=\hat{\mathbb{E}}[\phi(B_{t_1},\cdots,B_{t_m})].
\end{align*}	
\end{corollary}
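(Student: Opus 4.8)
The plan is to deduce everything from Corollary \ref{extended BM strongmarkov1} by a translation trick that absorbs the subtraction $-B_\tau$ into the test function. Given $\varphi$ lower semi-continuous on $\Omega$ and bounded from below, I would define $\psi(\omega):=\varphi((\omega_t-\omega_0)_{t\geq0})$ for $\omega\in\Omega$. The map $\omega\mapsto(\omega_t-\omega_0)_{t\geq0}$ is continuous from $\Omega$ into itself, so $\psi$ is again lower semi-continuous and bounded from below by the same constant; hence Corollary \ref{extended BM strongmarkov1} applies to $\psi$ (and, as in Corollary \ref{strong markov for lower semicontinuous function}, the random variable $\psi(B_{\tau+\cdot})$ lies in $L^{1,\tau+,*}_G(\Omega)$), giving
$$\hat{\mathbb{E}}_{\tau+}[\psi(B_{\tau+\cdot})]=\hat{\mathbb{E}}[\psi(B^y_\cdot)]_{y=B_\tau}.$$

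Next I would unwind the two sides. On the left, the argument path is $(B_{\tau+t})_{t\geq0}$, whose value at $0$ is $B_\tau$, so $\psi(B_{\tau+\cdot})=\varphi((B_{\tau+t}-B_\tau)_{t\geq0})=\varphi(B_{\tau+\cdot}-B_\tau)$. On the right, $B^y_t=y+B_t$ with $B_0=0$, so the argument path has value $y$ at $0$ and $\psi(B^y_\cdot)=\varphi((y+B_t-y)_{t\geq0})=\varphi(B_\cdot)$, which no longer depends on $y$; therefore $\hat{\mathbb{E}}[\psi(B^y_\cdot)]_{y=B_\tau}=\hat{\mathbb{E}}[\varphi(B_\cdot)]$ is a deterministic constant. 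This already yields the first claimed identity
$$\hat{\mathbb{E}}_{\tau+}[\varphi(B_{\tau+\cdot}-B_\tau)]=\hat{\mathbb{E}}[\varphi(B_\cdot)].$$
For the middle identity I would take $\hat{\mathbb{E}}$ on both sides and use the tower property $\hat{\mathbb{E}}[\hat{\mathbb{E}}_{\tau+}[\cdot]]=\hat{\mathbb{E}}[\cdot]$ on $L^{1,\tau+,*}_G(\Omega)$ (Proposition \ref{L1tau* welldefine} (iii)) together with constant preservation: since $\varphi(B_{\tau+\cdot}-B_\tau)=\psi(B_{\tau+\cdot})\in L^{1,\tau+,*}_G(\Omega)$,
$$\hat{\mathbb{E}}[\varphi(B_{\tau+\cdot}-B_\tau)]=\hat{\mathbb{E}}[\hat{\mathbb{E}}_{\tau+}[\varphi(B_{\tau+\cdot}-B_\tau)]]=\hat{\mathbb{E}}[\hat{\mathbb{E}}[\varphi(B_\cdot)]]=\hat{\mathbb{E}}[\varphi(B_\cdot)].$$

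The ``in particular'' assertion is the special case $\varphi(\omega)=\phi(\omega_{t_1},\dots,\omega_{t_m})$ with $\phi\in C_{b.Lip}(\mathbb{R}^{m\times d})$ and $0\leq t_1\leq\cdots\leq t_m<\infty$; such a $\varphi$ is continuous and bounded, hence admissible, and reading off the three terms of the identity just proved gives the stated equalities for $\phi(B_{\tau+t_1}-B_\tau,\dots,B_{\tau+t_m}-B_\tau)$. Since these cover all finite-dimensional distributions, the process $(B_{\tau+t}-B_\tau)_{t\geq0}$ has the same law as $B$ and is therefore again a $G$-Brownian motion.

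The whole argument is routine once Corollary \ref{extended BM strongmarkov1} is granted; the only place calling for a line of care is the verification that the path translation $\omega\mapsto(\omega_t-\omega_0)_{t\geq0}$ preserves lower semi-continuity and the lower bound, so that $\psi$ genuinely falls under the hypotheses of that corollary, and the bookkeeping by which the auxiliary parameter $y$ enters and then cancels on the right-hand side. I do not expect any genuine obstacle beyond this.
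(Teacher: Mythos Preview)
Your proposal is correct and follows essentially the same route as the paper: define $\psi(\omega)=\varphi((\omega_t-\omega_0)_{t\geq0})$, apply Corollary~\ref{extended BM strongmarkov1} to $\psi$, observe that the $y$-dependence cancels on the right, and then take $\hat{\mathbb{E}}$ using the tower property. Your version is in fact slightly more careful than the paper's, both in verifying that $\psi$ inherits lower semi-continuity and the lower bound, and in citing Proposition~\ref{L1tau* welldefine}~(iii) rather than Proposition~\ref{Etau welldefined}~(iii) for the tower step (the latter applies to $L^{1,\tau+}_G(\Omega)$, whereas for general lower semi-continuous $\varphi$ one is working in $L^{1,\tau+,*}_G(\Omega)$).
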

\begin{proof}
	We only need to prove the first one, which implies the second one as a special case. Setting ${\tilde{\varphi}}(\omega):=\varphi((\omega_t-\omega_0)_{t\geq 0})$ in (\ref{43543}), we have
	$$
	\hat{\mathbb{E}}_{\tau+}[\varphi(B_{\tau+\cdot}-B_{\tau})]=\hat{\mathbb{E}}[\varphi(B_{\cdot})].
	$$
	Taking expectation on both sides, by
	 Proposition \ref{Etau welldefined}, we then obtain
	$$	\hat{\mathbb{E}}[\varphi(B_{\tau+\cdot}-B_{\tau})]=\hat{\mathbb{E}}[\varphi(B_{\cdot})].$$
\end{proof}

\section{An application}
Let $(B_t)_{t\geq 0}$ be a 1-dimensional $G$-Brownian motion such that $\underline{\sigma}^2:=-\mathbb{\hat{E}}[-B^2_1]>0$ (non-degeneracy). Let $a\in\mathbb{R}$ be given.  For each $\omega\in\Omega$, define the  level set
\begin{equation}
\mathcal{L}_\omega(a):=\{t\geq 0: B_t(\omega)=a\}.
\end{equation}
It is proved in \cite{WZ} that $\mathcal{L}_\omega(a)$ is q.s. closed and has zero Lebesgue measure. Using the strong Markov property for $G$-Brownian motion, we can obtain the following theorem.
\begin{theorem}\label{no isolate point theorem for G-Bm}For q.s. $\omega\in\Omega$, the level set $\mathcal{L}_\omega(a)$ has no isolated point in $[0,\infty)$.
\end{theorem}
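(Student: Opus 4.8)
The plan is to argue by contradiction together with a countable-union argument, reducing the claim to a single statement about the hitting behaviour of $G$-Brownian motion started afresh at a suitable optional time. First I would fix $a\in\mathbb R$ and, for each pair of rationals $0\le p<q$, consider the optional time
$$\tau_{p,q}:=\inf\{t\ge p: B_t=a\}$$
restricted to the event $\{\tau_{p,q}<q\}$ (one should check $\tau_{p,q}$ is indeed an optional time, using that $\{B_t=a\}$ is closed and that $\{\tau_{p,q}<t\}=\bigcup_{s\in\mathbb Q,\,p\le s<t}\{|B_s-a|<\varepsilon\ \text{for some small }\varepsilon\}$-type events, or more simply the standard fact that hitting times of closed sets by a continuous process are optional). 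An isolated point $t_0$ of $\mathcal L_\omega(a)$ with $t_0>0$ is, for some rationals $p<t_0<q$, the unique zero of $t\mapsto B_t-a$ in $(p,q)$, and in particular $\tau_{p,q}(\omega)=t_0<q$ while $B_{\tau_{p,q}+s}\ne a$ for all sufficiently small $s>0$; hence the set of $\omega$ with an isolated point in $(0,\infty)$ is contained in $\bigcup_{p<q\in\mathbb Q}\{\tau_{p,q}<q,\ \exists \delta>0: B_{\tau_{p,q}+s}\ne a\ \forall s\in(0,\delta]\}$. (The endpoint $0$ is handled separately: $0$ is not isolated in $\mathcal L_\omega(a)$ q.s.\ because, if $0\in\mathcal L_\omega(a)$, i.e.\ always since $B_0=0$ — note here $a$ should be thought of relative to the starting point, but for $a=0$ this is exactly the statement that $0$ is an accumulation point of zeros, which follows from the same local argument below applied at $\tau=0$.)

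So it suffices to show that for each fixed optional time $\tau$ (satisfying (H3) after the usual truncation $\tau\wedge T$, and using the pasting argument as in the proof of Lemma \ref{belong to L1tau lemma 1} to pass to unbounded $\tau$) the capacity of the event
$$A:=\{\exists\,\delta>0:\ B_{\tau+s}\ne B_\tau\ \text{for all } s\in(0,\delta]\}\cap\{B_\tau=a\}$$
is zero. By Corollary \ref{refresh of G-Brownian motion}, $\overline B_s:=B_{\tau+s}-B_\tau$ is again a $G$-Brownian motion, and the event in question is contained in the event that $\overline B$ does not return to $0$ immediately, i.e.\ $\{\exists\delta>0:\overline B_s\ne0\ \forall s\in(0,\delta]\}$. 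Thus the whole theorem reduces to the purely distributional fact:
$$c\big(\{\exists\,\delta>0:\ B_s\ne 0\ \text{for all } s\in(0,\delta]\}\big)=0$$
for a non-degenerate one-dimensional $G$-Brownian motion. To prove this I would use the symmetry of $G$-Brownian motion together with the reflection principle / the refresh property once more: the event $\{B_s>0 \text{ for all small } s\}$ and $\{B_s<0 \text{ for all small } s\}$ have equal capacity by symmetry, and one shows each has capacity $\le 1/2$ while their union has capacity $1$ minus the capacity of $\{0 \text{ is an accumulation point of zeros}\}$; pushing the quantitative version, for the non-degenerate case $\underline\sigma>0$ one has the two-sided bound $-\hat{\mathbb E}[-(B_h\wedge 0)]\ge \underline\sigma\sqrt{h}\cdot c_0$ and $\hat{\mathbb E}[B_h\vee 0]\ge \underline\sigma\sqrt h\cdot c_0$ for a universal $c_0>0$, which forces, for the \emph{smallest} probability measure direction, sign changes in every interval $(0,h]$; combined with Lemma \ref{upward mct for capacity} applied to $\{B_s \text{ has a zero in }(0,1/n)\}\downarrow$, one concludes.

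The main obstacle I anticipate is the last, purely analytic step: proving that under $\hat{\mathbb E}$ (i.e.\ for \emph{every} $P\in\mathcal P$ simultaneously, since capacity is a supremum) the event that $0$ is isolated in the zero set has zero capacity. In the classical Wiener case this is the standard fact that $0$ is a.s.\ not isolated, proved via the reflection principle or the law of the iterated logarithm; here one must get a bound uniform over the family $\mathcal P$. The non-degeneracy hypothesis $\underline\sigma^2=-\hat{\mathbb E}[-B_1^2]>0$ is exactly what makes this uniform estimate go through: under every $P\in\mathcal P$ the quadratic variation grows at least like $\underline\sigma^2 t$, so $B$ under $P$ dominates (after a time change) a Brownian motion with variance rate $\underline\sigma^2$, and the classical argument applies $P$-by-$P$ with constants depending only on $\underline\sigma$. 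The strong Markov property (Corollary \ref{refresh of G-Brownian motion}) is what lets us localize the problem to a neighbourhood of the right endpoint of an excursion and thereby reduce to this single distributional estimate; the bookkeeping with optional times $\tau_{p,q}$ and the countable union is routine once that reduction is in place.
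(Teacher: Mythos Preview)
Your overall architecture---reduce via a countable union over rational intervals $(p,q)$ to the single distributional statement that $0$ is q.s.\ an accumulation point of zeros of a fresh $G$-Brownian motion, and invoke Corollary~\ref{refresh of G-Brownian motion} to transplant this to each $\tau_{p,q}$---is exactly the paper's strategy. The real content is the distributional step, and here several of your sketches do not work. The symmetry claim that each of $\{B_s>0\text{ for all small }s\}$ and $\{B_s<0\text{ for all small }s\}$ has capacity $\le 1/2$ is unjustified for a sublinear capacity: $c(A)=c(A^c)$ does not force $c(A)\le 1/2$, and both can equal $1$. The moment bound $-\hat{\mathbb E}[-(B_h\wedge0)]\ge \underline\sigma\sqrt h\,c_0$ is impossible as written, since the left side equals $\inf_P E_P[B_h\wedge0]\le 0$. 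And Lemma~\ref{upward mct for capacity} is an \emph{upward} result, so it cannot be applied to the decreasing family $\{B\text{ has a zero in }(0,1/n)\}$.

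Your final paragraph, however, contains a valid argument: under each $P\in\mathcal P$, $B$ is a continuous martingale with $\langle B\rangle_t\ge\underline\sigma^2 t>0$, so the Dambis--Dubins--Schwarz representation plus the classical zero-set result gives the conclusion $P$-a.s., hence q.s. The paper instead proves this (Lemma~\ref{Bm change sign lemma}) by a direct optional-sampling argument: with $\tau_1=\inf\{t>0:B_t>0\}$ one gets $\hat{\mathbb E}[-B_{\tau_1\wedge t}]=0$ and $-B_{\tau_1\wedge t}\ge0$, hence $B_{\tau_1\wedge t}=0$ q.s., and then $\langle B\rangle_{\tau_1\wedge t}=0$ together with $\langle B\rangle_t\ge\underline\sigma^2 t$ forces $\tau_1=0$ q.s. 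One further point you gloss over: $\tau_{p,q}$ need not satisfy (H3) a priori, so before you can apply Corollary~\ref{refresh of G-Brownian motion} you need the q.s.\ finiteness of these hitting times; the paper secures this via Lemma~\ref{Bm unbound lemma} ($\sup B=+\infty$, $\inf B=-\infty$ q.s., proved by the same $P$-by-$P$ time change you propose) and then works with $\tau_{p,q}\wedge n$.
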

To prove Theorem 5.1, we need the following two lemmas.
\begin{lemma}\label{Bm change sign lemma}
	For q.s. $\omega$,	$G$-Brownian motion $(B_t)_{t\geq 0}$ changes sign infinitely many times in  $[0,\varepsilon]$, for any $\varepsilon>0$.
\end{lemma}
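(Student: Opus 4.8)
The plan is to reduce the lemma to two one‑sided accumulation statements and to prove these by an optional‑stopping argument carried out under each $P\in\mathcal P$. Precisely, I claim it suffices to show that for q.s. $\omega$,
\begin{equation*}
\inf\{t>0:\,B_t(\omega)>0\}=0\qquad\text{and}\qquad\inf\{t>0:\,B_t(\omega)<0\}=0.\tag{$\star$}
\end{equation*}
Indeed, fix $\omega$ for which $(\star)$ holds and (q.s.) $B_0(\omega)=0$ with $t\mapsto B_t(\omega)$ continuous. Given $\varepsilon>0$, choose recursively $\varepsilon>s_1>s_2>\cdots>0$ with $B_{s_{2k-1}}(\omega)>0$ and $B_{s_{2k}}(\omega)<0$; this is possible exactly because both infima in $(\star)$ vanish, so points with the prescribed sign exist inside every interval $(0,s_k)$. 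The values of $B(\omega)$ at the two endpoints of each $[s_{k+1},s_k]$ have opposite signs, so by the intermediate value theorem $B(\omega)$ has a zero in each $(s_{k+1},s_k)$ and changes sign there; these intervals have pairwise disjoint interiors, whence $B(\omega)$ changes sign infinitely often in $[0,\varepsilon]$. Since the same $\omega$ works for every $\varepsilon>0$, the lemma follows from $(\star)$.

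To prove the first identity in $(\star)$ (the second being identical, or a consequence of the symmetry of $G$-Brownian motion), set $\sigma:=\inf\{t>0:\,B_t>0\}$. By continuity of $B$ one has $\{\sigma<t\}=\bigcup_{s\in(0,t)\cap\mathbb Q}\{B_s>0\}\in\mathcal F_t$, so $\sigma$ is an optional time, and $B_{t\wedge\sigma}\le0$ for all $t$ by the definition of $\sigma$ (and continuity). Fix $P\in\mathcal P$. Since $B$ is a continuous $P$-martingale, its stopped process $B^\sigma=(B_{t\wedge\sigma})_{t\ge0}$ is again a continuous $P$-martingale (with respect to the $P$-augmented, right-continuous filtration, for which $\sigma$ is a stopping time), with $B^\sigma_0=0$. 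Hence $-B^\sigma$ is a nonnegative continuous martingale with $E_P[-B^\sigma_t]=0$, which forces $B^\sigma\equiv0$ $P$-a.s., and therefore $\langle B\rangle_{t\wedge\sigma}=0$ for all $t$, $P$-a.s. On the other hand, non-degeneracy $\underline\sigma^2>0$ yields the classical quasi-sure lower bound $\langle B\rangle_s\ge\underline\sigma^2 s$ for all $s\ge0$ (see \cite{P7,DHP}), hence $P$-a.s.; evaluating at $s=1\wedge\sigma$ gives $0=\langle B\rangle_{1\wedge\sigma}\ge\underline\sigma^2(1\wedge\sigma)$, so $\sigma=0$ $P$-a.s. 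Running the same argument for $\sigma':=\inf\{t>0:\,B_t<0\}$ gives $\sigma'=0$ $P$-a.s.

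Since $P\in\mathcal P$ was arbitrary, the Borel sets $\{\sigma>0\}$ and $\{\sigma'>0\}$ are $P$-null for every $P\in\mathcal P$, hence polar, and $(\star)$ holds q.s.; combined with the first paragraph this proves the lemma. I expect the only real difficulty to be the measure-theoretic bookkeeping in the middle step: one must invoke that under each $P\in\mathcal P$ the canonical process is a genuine continuous martingale whose semimartingale quadratic variation coincides with the $G$-cross-variation $\langle B\rangle$, and that the quasi-sure bound on $\langle B\rangle$ passes to a $P$-almost-sure statement. These are standard facts about $G$-Brownian motion, and once they are recorded the optional-stopping computation is routine.
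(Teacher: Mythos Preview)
Your proof is correct and follows essentially the same route as the paper: define the first hitting times of $(0,\infty)$ and $(-\infty,0)$, use that $B$ is a $P$-martingale for each $P\in\mathcal P$ together with optional stopping to conclude the stopped process vanishes, then invoke the non-degeneracy bound $\langle B\rangle_s\ge\underline\sigma^2 s$ to force these hitting times to be zero quasi-surely. Your write-up is in fact a bit more explicit than the paper's in two places---you spell out why $(\star)$ implies infinitely many sign changes via the intermediate value theorem, and you record the filtration-augmentation needed to treat $\sigma$ as a stopping time---but the argument is the same.
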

\begin{proof}
	Define $\tau_1:=\inf\{t> 0:B_t>0\}.$ Then $\tau_1$ is an optional time by Lemma 7.6 in Chap 7 of \cite{Ka}. Let $P\in\mathcal{P}$ and $t\geq 0$ be given. Since $B$ is a martingale,
	 we can apply  the classical optional sampling theorem to  obtain
	${E}_P[-B_{\tau_1\wedge t}]=0$. Thus $\mathbb{\hat{E}}[-B_{\tau_1\wedge t}]=0$. Noting that $-B_{\tau_1\wedge t}\geq 0$, we then have $-B_{\tau_1 \wedge t}=0$ q.s., i.e., $B_{\tau_1 \wedge t}=0\ \text{q.s.}$ Similar analysis for $-B$ shows $B_{\tau_2 \wedge t}=0$ q.s.,  for $\tau_2:=\{t>0:B_t<0\}$. Therefore, $B_{\tau_0 \wedge t}=0$ q.s., for $\tau_0:=\tau_1\vee\tau_2$. This implies $B_{\tau_0 \wedge t}=0\  \text{for each}\ t\geq 0,\ \text{q.s.}$
	
	Applying Proposition 1.13 in Chap IV of \cite{Marc} under each $P\in\mathcal{P}$, we then have $\langle B\rangle_{\tau_0 \wedge t}=0\ \text{for each}\ t\geq 0,\ \text{q.s.}$ But from Corollary 5.4 in Chap III of \cite{P7} that ${\langle B\rangle_{t+s}-\langle B\rangle_{t}}\geq  \underline{\sigma}^2s>0$ for each $s> 0$, we must have  $\tau_0=0$ q.s. Hence,
	$\tau_1=0$ and $\tau_2=0,\ \text{q.s.},$ which imply the desired result.
\end{proof}
\begin{lemma}\label{Bm unbound lemma}
	We have
	\begin{equation}
	\sup_{0\leq t<\infty}B_t=+\infty  \ \ \text{and}\ \  \inf_{0\leq t<\infty}B_t=-\infty, \ \ \ \ \text{q.s.}
	\end{equation}
\end{lemma}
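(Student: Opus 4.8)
The plan is to reduce the quasi-sure statement to a pathwise statement under each probability measure in the representing family $\mathcal{P}$ and then invoke classical continuous-martingale theory, exactly in the spirit of the proof of Lemma \ref{Bm change sign lemma}. It suffices to prove that for every $P\in\mathcal{P}$ one has $P(\sup_{0\le t<\infty}B_t=+\infty)=1$ and $P(\inf_{0\le t<\infty}B_t=-\infty)=1$. Indeed, by path-continuity $\sup_{0\le t<\infty}B_t=\sup_{t\in\mathbb{Q}_{\ge0}}B_t$ and $\inf_{0\le t<\infty}B_t=\inf_{t\in\mathbb{Q}_{\ge0}}B_t$, so the events $A^{+}:=\{\sup_{t}B_t<+\infty\}$ and $A^{-}:=\{\inf_{t}B_t>-\infty\}$ belong to $\mathcal{B}(\Omega)=\mathcal{F}$; once each $P\in\mathcal{P}$ assigns them zero mass, $c(A^{+})=c(A^{-})=\sup_{P\in\mathcal{P}}P(A^{\pm})=0$, which is precisely the assertion that $\sup_t B_t=+\infty$ and $\inf_t B_t=-\infty$ hold q.s.

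Fixing $P\in\mathcal{P}$, by the remark following Theorem \ref{DHP representation} the process $(B_t)_{t\ge0}$ is a continuous $P$-martingale with $B_0=0$. By Corollary 5.4 in Chap III of \cite{P7} (the non-degeneracy estimate already used in the proof of Lemma \ref{Bm change sign lemma}), $\langle B\rangle_t\ge\underline{\sigma}^2 t$ $P$-a.s., hence $\langle B\rangle_\infty=\lim_{t\to\infty}\langle B\rangle_t=+\infty$ $P$-a.s. By the Dambis--Dubins--Schwarz theorem (see, e.g., \cite{KS}), on a possibly enlarged probability space there is a standard Brownian motion $(W_u)_{u\ge0}$ with $B_t=W_{\langle B\rangle_t}$ for all $t\ge0$; since $t\mapsto\langle B\rangle_t$ is continuous, nondecreasing, and tends to $+\infty$, its range is all of $[0,\infty)$, so $\sup_{t\ge0}B_t=\sup_{u\ge0}W_u=+\infty$ and $\inf_{t\ge0}B_t=\inf_{u\ge0}W_u=-\infty$ $P$-a.s., using that one-dimensional Brownian motion is unbounded above and below. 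This yields the required per-$P$ statement. (If one prefers to bypass the time change: on $\{\sup_tB_t\le N\}$ the stopped process $N-B_{t\wedge\tau_N}$, with $\tau_N:=\inf\{t:B_t=N\}$, is a nonnegative $P$-supermartingale, hence $P$-a.s. convergent, whereas a continuous $P$-local martingale with $\langle B\rangle_\infty=+\infty$ cannot converge to a finite limit; thus $P(\sup_tB_t\le N)=0$ for every $N$, and likewise for the infimum after replacing $B$ by $-B$.)

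I do not anticipate a real obstacle here: the only $G$-expectation inputs are the representation theorem, the martingale property of $B$ under each $P$, and the non-degeneracy lower bound on $\langle B\rangle$; everything else is classical. The single point worth a line of care is the reduction step at the start — that controlling each $P\in\mathcal{P}$ separately is enough — which is immediate once one observes that $A^{+}$ and $A^{-}$ lie in $\mathcal{B}(\Omega)$ and $c=\sup_{P\in\mathcal{P}}P$.
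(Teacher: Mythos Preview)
Your proof is correct and follows essentially the same route as the paper: reduce to a per-$P$ statement, use that $B$ is a continuous $P$-martingale with $\langle B\rangle_\infty=+\infty$ (by non-degeneracy), apply the Dambis--Dubins--Schwarz time change to a classical Brownian motion, and invoke the unboundedness of the latter. You are slightly more explicit than the paper about the measurability of $A^{\pm}$ and about where non-degeneracy enters, and your parenthetical supermartingale-convergence alternative is a nice extra, but the core argument is the same.
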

\begin{proof}
	We only prove the first equality, from which  the second one follows by the symmetry of $G$-Brownian motion.
	
	Define $\tau_t=\inf\{s\geq 0:\langle B\rangle_s>t\}$. Under each $P\in\mathcal{P}$, $B$ is a martingale. Then by Theorem 1.6 in Chap V of \cite{Marc}, $(B_{\tau_t})_{t\geq 0}$ is a classical Brownian motion. Applying Lemma 3.6 in  Chap I of \cite{Rog}, we have
	$$
	\sup_{0\leq t<\infty}B_{\tau_t}=+\infty\ \ \ \ P\text{-a.s.}
	$$
	Since $\{\tau_t:t\in [0,\infty)\}=[0,\infty),$ we then obtain
	$$
	\sup_{0\leq t<\infty}B_{t}=+\infty\ \ \ \ P\text{-a.s.}
	$$
	Therefore,
	$$
	\sup_{0\leq t<\infty}B_{t}=+\infty\ \ \ \  \text{q.s.}
	$$
\end{proof}
\begin{remark}\label{level set unbound remark}
	\upshape{This lemma implies that $\mathcal{L}_\omega(a)$ is q.s. unbounded.}
\end{remark}
\begin{proof}[Proof of Theorem \ref{no isolate point theorem for G-Bm}]
	Let $t\geq 0$. Define the optional time after  $t$
	$$
	\tau_t=\inf\{s>t:B_s=a\}.
	$$
	By Lemma \ref{Bm unbound lemma} (see also Remark \ref{level set unbound remark}), $\tau_t$ is q.s. finite.
	Now we are going to show that
	\begin{equation}
	\label{332432353532}
	\tau_{\tau_t}=\inf\{{s>\tau_t}:B_s=a\}={\tau_t}\ \ \ \  \text{q.s.}
	\end{equation}
	
	For any $n\geq 1$, since $\tau_t\wedge n$ satisfies (H3), then Corollary \ref{refresh of G-Brownian motion} implies that $(B_{\tau_t\wedge n+s}-B_{\tau_t\wedge n})_{s\geq 0}$ is still a $G$-Brownian motion.
	Hence,  by Lemma \ref{Bm change sign lemma}, there exists a set $\Omega_n\subset \Omega$ such that $c(\Omega_n^c)=0$ and on $\Omega_n$, $(B_{\tau_t\wedge n+s}-B_{\tau_t\wedge n})_{s\geq 0}$ changes its sign infinitely many times on any $[0,\varepsilon]$.
	
	Let
	$$
	\Omega_0:=\bigcup_{n=1}^\infty(\Omega_n\cap\{\tau_t\leq n\}).
	$$
	For any $P\in\mathcal{P}$, we have
	$$P(\Omega_0^c)=P(\bigcap_{n=1}^\infty (\Omega_n^c\cup\{\tau_t> n\}))\leq P(\Omega_n^c\cup\{\tau_t> n\})=P(\{\tau_t> n\})\rightarrow P(\{\tau_t=\infty\})=0, \ \ \ \  \text{as} \ n\rightarrow\infty.
	$$
    Thus $$
	c(\Omega_0^c)=0.
	$$
	For any fixed $\omega\in \Omega_0$, there exists an $n$ such that $\omega\in \Omega_n\cap\{\tau_t\leq n\}$. Since $\tau_t(\omega)\wedge n=\tau_t(\omega)$, then $((B_{\tau_t+s}-B_{\tau_t})(\omega))_{s\geq 0}$ changes its sign infinitely many times on any $[0,\varepsilon]$. Therefore,
	$$\tau_{\tau_t}(\omega)={\tau_t}(\omega),$$
	which proves (\ref{332432353532}).
	
	Note that, for any fixed $p<q$,
	$$\Lambda_{p,q}:=\{\omega\in\Omega: \ \text{there is only one}\ s\in (p,q) \ \text{such that}\ B_s(\omega)=a\}\subset \{\omega\in\Omega:\tau_p<q, \tau_{\tau_p}\geq q \}.$$
	We must have $c(\Lambda_{p,q})=0$. Thus the set $$\{\omega\in \Omega:\ \mathcal{L}_\omega(a)\ \text{has isolated point}\}=\bigcup_{0\leq p<q;\ p,q\in Q}\Lambda_{p,q}$$
	is a zero capacity set.
\end{proof}


\begin{thebibliography}{99}

	
	
	\bibitem{BL} Bensoussan, A., Lions, J L., Applications of variational inequalities in stochastic control. Elsevier, 2011.
	\bibitem {Bi} Billingsley, P., Convergence of probability measures. John Wiley \& Sons, 1968.
	\bibitem {DHP}Denis, L., Hu, M., Peng S., Function spaces and capacity related
	to a sublinear expectation: application to $G$-Brownian motion pathes.
	Potential Analysis, 2011, 34: 139-161.

\bibitem{Fr} Friedman, A., Stochastic differentil equations and applications, Volume 1. Academic Press, 1975.
	\bibitem {G1}Gao, F., Pathwise properties and homomorphic flows for stochastic
	differential equations driven by $G$-Brownian motion. Stochastic Processes and
	their Applications, 2009, 119, 3356-3382.
	\bibitem {GJ} Gao, F., Jiang, H., Large deviations for stochastic differential equations driven by G-Brownian motion. Stochastic Processes and their Applications, 2010, 120(11): 2212-2240.
		\bibitem {HJPS}Hu, M., Ji, S., Peng, S., Song, Y., Backward stochastic differential equations driven by $G$-Brownian motion. Stochastic Processes and their Applications, 2014, 124(1): 759-784.
	\bibitem {HJPS1}Hu, M., Ji, S., Peng, S., Song, Y., Comparison theorem, Feynman-Kac formula and Girsanov transformation for BSDEs driven by $G$-Brownian motion. Stochastic Processes and their Applications, 2014, 124(2): 1170-1195.
	
	\bibitem {HP}Hu, M., Peng, S., On representation theorem of $G$-expectations and
	paths of $G$-Brownian motion. Acta Mathematicae Applicatae Sinica (English Series), 2009, 25(3):
	539-546.
	
	\bibitem {HP1}Hu, M., Peng, S., Extended conditional $G$-expectations and related stopping times. arXiv: 1309.3829v1, 2013.
	

    \bibitem {It0} It\^{o}, K., Differential equations determining a Markov process (in Japanese). J. Pan-Japan Math. Coll., 1942, 1077: 1352-1400.


	\bibitem {Ka} Kallenberg, O., Foundations of modern probability. Springer Science \& Business Media, 2006.

	\bibitem{KS} Karatzas, I., Shreve, S., Brownian motion and stochastic calculus. Springer Science \& Business Media, 2012.

	\bibitem {LP}Li, X., Peng, S., Stopping times and related It\^{o}'s calculus with $G$-Brownian motion. Stochastic Processes and their Applications, 2011, 121(7): 1492-1508.
    \bibitem {Linq} Lin, Q., Some properties of stochastic differential equations driven by the $G$-Brownian motion. Acta Mathematica Sinica, English Series, 2013, 29(5): 923-942.
	\bibitem {Liny} Lin, Y., Stochastic differential equations driven by $ G $-Brownian motion with reflecting boundary conditions. Electronic Journal of Probability, 2013, 18.
    \bibitem {LW}Luo, P., Wang, F., Stochastic differential equations driven by $G$-Brownian motion and ordinary differential equations. Stochastic Processes and their applications, 2014, 124(11): 3869-3885.
	\bibitem{NH}Nutz, M., Van Handel, R., Constructing sublinear expectations on path space. Stochastic Processes and their Applications, 2013, 123(8): 3100-3121.
    \bibitem {Ok}{\O}ksendal, B., Stochastic differential equations. Springer Berlin Heidelberg, 2003: 65-84.
	\bibitem {P3}Peng, S., $G$-expectation, $G$-Brownian motion and related
	stochastic calculus of It\^o type.  Stochastic Analysis and Applications,
	2006, 2(4):541-567.
	
	\bibitem {P4}Peng, S., Multi-dimensional $G$-Brownian motion and related
	stochastic calculus under $G$-expectation. Stochastic Processes and their
	Applications, 2008, 118(12): 2223-2253.
	\bibitem {P7}Peng, S., Nonlinear expectations and stochastic calculus under
	uncertainty. arXiv:1002.4546v1, 2010.
	
    \bibitem{P9} Peng, S., Backward stochastic differential equation, nonlinear expectation and their applications. Proceedings of the International Congress of Mathematicians. 2010, 1: 393-432.

	\bibitem {Marc}Revuz, D.,  Yor, M.,  Continuous martingales and Brownian motion. Springer Science \& Business Media, 2013.
	\bibitem {Rog} Rogers, L. C. G. and Williams, D., Diffusions, Markov processes and martingales.
	Vol. 1. John Wiley \& Sons, 1994.
	
	
	
	
	
	\bibitem{WZ} Wang, F., Zheng, G., Some sample path properties of $G$-Brownian motion. arXiv:1407.0211, 2014.
	%
	%
	
	%
	%
	%
	
	%
	%
	%
\end{thebibliography}
\end{document}